\documentclass[a4paper,10pt]{amsart}
\pagestyle{plain}
\usepackage{amsmath,amsthm,latexsym,verbatim}
\usepackage[all]{xy}
\usepackage{pxfonts}
\usepackage[latin1]{inputenc}
\usepackage{amssymb}
\usepackage{fancyhdr}
\usepackage{mathrsfs}
\usepackage{tikz}
\usetikzlibrary{arrows}

\theoremstyle{definition}

\theoremstyle{plain}
\newtheorem{Thm}{Theorem}[section]
\newtheorem{Def}[Thm]{Definition}%[section]
\newtheorem{Lem}[Thm]{Lemma}
\newtheorem{ex}[Thm]{Example}%[section]

\newtheorem{Prop}[Thm]{Proposition}

\newtheorem{Propr}[Thm]{Property}
\newtheorem{rem}[Thm]{Remark}%[section]

\newtheorem{Ass}{Assumptions}

%****************************
\newcommand{\supp}{\operatorname{supp}}

\newcommand{\singsupp}{\operatorname{singsupp}}

\newcommand{\ind}{\textnormal{ind-lim}}

\newcommand{\W}{\mathscr{W}}
\newcommand{\proj}{\textnormal{proj-lim}}

\newcommand{\R}{\mathbb{R}}

\newcommand{\C}{\mathbb{C}}

\newcommand{\Z}{\mathbb{Z}}
\newcommand{\N}{\mathbb{N}}
\newcommand{\Si}{\mathscr{S}}

\newcommand{\s}{\mathbf{s}}

\newcommand{\id}{\textnormal{Id}}

\newcommand{\tr}{\textrm{tr}}
\newcommand{\op}{\textnormal{Op}}

\usepackage{enumerate}

\newcommand{\dbar}{\;{\mathchar'26\mkern-10mu  d}}
\newcommand{\limproj}{\textnormal{proj-lim}}

\newcommand{\pt}[1]{\left( #1 \right)}
\newcommand{\ptg}[1]{\left\{ #1 \right\} }
\newcommand{\ptq}[1]{\left[ #1 \right] }
\newcommand{\abs}[1]{\left\lvert #1\right\rvert}
\newcommand{\norm}[1]{\left\lvert\left\lvert #1\right\rvert\right\rvert}
\newcommand{\giap}[1]{\langle  #1 \rangle}

\newcommand{\Ldwu}[1]{        \mathscr{L} \pt{     L^2_{w#1} }           }     
\newcommand{\Ldug}[1]{ \mathscr{L} \pt{  L^2 \pt{ \R_{#1} } } }

%opening
\title{Fourier Integral Operators of
\\ Boutet de Monvel Type} 
\author{U. Battisti, S. Coriasco and E. Schrohe}

\keywords{Fourier integral operator, Manifold with boundary, Canonical transformation, Boutet de Monvel algebra}

\subjclass[2010]{35S30; 58J40, 19K56, 47L80, 53D12, 53D22}

\date{}

\begin{document}

\begin{abstract}
Given two compact manifolds with boundary $X,Y,$ and a boundary preserving 
symplectomorphism $\chi:T^*Y\setminus0\to T^*X\setminus0$, which is one-homogeneous
in the fibers and satisfies the transmission condition, we introduce Fourier integral operators 
of Boutet de Monvel type associated with $\chi$. 
We study their mapping properties between Sobolev spaces, 
develop a calculus and prove a Egorov type theorem. 
We also introduce a notion of ellipticity which implies the Fredholm property.
Finally, we show how -- in the  spirit of a classical construction by A. Weinstein -- 
a Fredholm operator of this type can be associated with $\chi$ and a section of the Maslov bundle.
If $\dim Y>2$ or the Maslov bundle is trivial, the index is independent of the section
and thus an invariant of the symplectomorphism. 
\end{abstract}

\maketitle

\section*{Introduction}
We develop a calculus of Fourier integral operators (FIOs) on compact manifolds with boundary, 
which extends the calculus of pseudodifferential boundary value problems defined by  Boutet de Monvel \cite{BU71}. 
Given two compact manifolds with boundary, $Y$ and $X$, we base our operators on symplectomorphisms $\chi: T^*Y \setminus 0 \to T^*X\setminus 0$, which are positively 
homogeneous of degree $1$ in the fibers. In case $X=Y$ and $\chi=id$ we recover the 
Boutet de Monvel calculus. 

Apart from the general interest in operators of this type, our main objective is to provide the analytic framework for an index problem in the spirit of A.\ Weinstein  \cite{WE75}. 
Weinstein considered two {\em closed} manifolds and a corresponding symplectomorphism between the cotangent bundles with the zero section removed.
He proved that this symplectomorphism defines, in a natural way, a FIO $F$ with the Fredholm property and that its index is a remarkable quantity: 
Let $X$ and $Y$ be additionally riemannian with Laplacians $\Delta_X$ and $\Delta_Y$ and suppose that the principal symbols satisfy $\sigma(\Delta_Y)=\sigma(\Delta_X)\circ \chi$. 
Denote by $\lambda_j(X)$ and $\lambda_j(Y)$ their  sequences of eigenvalues. Then, under a mild additional assumption (the Maslov class of $\chi$ has to be $\equiv 0$ mod $4$)
the sequence $|\lambda_j(Y)-\lambda_{j-\mathrm{ind}\,F}(X)|$ is bounded. 
Weinstein then asked for an expression of this index in topological terms.   See \cite{WE97} for on overview of the problem. 

In this article we show that, under suitable natural assumptions on the symplectomorphism $\chi$, 
we can associate with $\chi$ a
FIO in the calculus we develop, and establish the Fredholm property.

In the boundaryless case, C. Epstein and R. Melrose \cite{EM98} solved Weinstein's problem under the assumption that both manifolds coincide, relying on previous results by V. Guillemin \cite{GU84}, L. Boutet de Monvel \cite{BMG81}, and S. Zelditch \cite{ZE97}.
They reduced the task to the computation of the index of a Dirac operator on a closed manifold, constructed from the data, and thus to the Atiyah-Singer index theorem.  
This construction has been refined by C. Epstein in \cite{EPI}, \cite{EPII}, \cite{EPIII}. 
The general case, where $X$  and $Y$ are closed but possibly different, was treated by E. Leichtnam, R. Nest and B. Tsygan \cite{LNT01} in the framework of deformation quantization. 
A generalization to symplectomorphisms on a manifold with conical singularities 
has been studied by V. E. Naza{\u\i}kinski{\u\i}, B. Sternin and B.-W. Schulze \cite{NSS01}, \cite{NS06}, relying on the work of Epstein and Melrose. 
In the present article, however, we shall not tackle the problem of the computation of the index.

A second motivation for considering this class of FIOs is a theorem 
of  J.J. Duistermaat and I. Singer \cite{DS76}. They showed that -- under a mild topological condition -- every order-preserving isomorphism $i : L_{cl} (X) \to L_{cl}(Y)$ between the algebras of classical pseudodifferential operators on closed manifolds $X$ and $Y$, respectively, is of the form $i(A) = F^{-1}AF$, 
where $F$ is a FIO associated with a symplectomorphism as above. 
Recently, V. Mathai and R. Melrose \cite{MM12} found a proof which avoids the topological
condition. An analog of the Duistermaat-Singer theorem in the semiclassical setting has been given by H. Christianson \cite{CH11}.

We show in this article that conjugation with a FIO in our class provides an order-preserving isomorphism of Boutet de Monvel's algebra and 
we expect these to be all.  

The paper is organized as follows. In Section \ref{sec:trans}, we define the class of symplectomorphisms we work with. 
A variant of Moser's trick shows that a symplectomorphism $\chi$ as above can always be extended to a symplectomorphism 
$\tilde{\chi}: T^*\widetilde{Y}\setminus 0 \to T^*\widetilde{X}\setminus 0$, where
$\widetilde{Y}$ and $\widetilde{X}$ are neighborhoods of $X$ and $Y$, respectively, in larger closed manifolds.
The homogeneity, together with the fact  that it preserves the boundary, implies that $\chi$ induces a symplectomorphism 
$\chi_\partial: T^*\partial Y \setminus 0 \to T^*\partial X\setminus 0 $, 
which is the lift of a diffeomorphism $b: \partial Y \to \partial X$. 

We then analyze operators of the form $r^+ A^\chi e^+$, 
where $A^\chi$ is a (FIO) associated with $\tilde{\chi}$, 
$r^+$ is the restriction operator to $\operatorname{int} X$ and $e^+$ is the
extension-by-zero operator on functions in $Y$. 

As $\chi$ preserves the boundary, $r^+A^\chi e^+$ maps $C^\infty(Y)$ 
to $C^\infty(\operatorname{int}X)$.  
We require in addition that each component of $\chi$ satisfies the transmission condition.
This implies the continuity of 
$r^+A^\chi e^+: C^\infty(Y) \to C^\infty(X)$ and  $r^+(A^\chi)^*e^+:C^\infty(X)\to C^\infty(Y)$. 
In fact, it is also necessary, as we will show in a forthcoming paper \cite{BCS2}.

The assumptions on $\chi$ place the analysis here in a framework which is in a sense complementary to that considered by A. Hirschowitz and A. Piriou in \cite{HP77}.
They studied the transmission property for Fourier distributions conormal to hypersurfaces in $T^*X\setminus 0$. 

In Section \ref{sec:oscilint}, we establish the continuity properties of the above truncated FIOs 
$r^+ A^\chi e^+$, relying in a crucial way on the technique of operator-valued symbols. 
We prove that, for a symbol $a\in S^m(\R^n\times \R^n)$ satisfying the transmission condition,  
\[
  r^+\op^\psi_{n}(a)e^+: u \mapsto r^+\int e^{i \psi(x', x_n, \xi', \xi_n)-i \psi_\partial(x',\xi')} a(x', x_n, \xi', \xi_n) \widehat{e^+u}(\xi_n) \dbar \xi_n
 \]
 is an operator-valued symbol in $S^m \pt{\R^{n-1}, \R^{n-1}; \Si(\R_+), \Si(\R_+)}$. 
 Here $\psi$ is a phase function which locally represents $\chi$, while the
phase $\psi_\partial$ represents the symplectomorphism $\chi_\partial$. 
A key point is the analysis of $r^+ \op^\psi_n (a) \delta_0^{(j)}$, 
where $\delta_0^{(j)}$ is the $j$-th derivative of the Dirac distribution at the origin,  see Theorem 
\ref{teodirac}.
In contrast to the corresponding result in Boutet de Monvel's calculus, however, it is not true that $r^+ \op^\psi_n(a)e^+$ belongs to
$S^m \pt{ \R^{n-1}, \R^{n-1}; H^s \pt{\R_+}, H^{s-m} \pt{\R_+}}$
for each $s\in\R$ as we show in Remark \ref{rem:contro}. 
The section ends with the proof of the continuity of $r^+A^\chi e^+$ in the scale of Sobolev spaces.
 
In Section \ref{sec:bound} we complement the above truncated FIOs to matrices of Boutet de Monvel type operators of the form
 \[
 \mathcal{A}:=\left(
\begin{array}{cc}
r^+ A^\chi e^+ + G^{{\chi}_\partial} & K^{\chi_\partial}\\
 T^{\chi_{\partial}}& S^{\chi_\partial}
\end{array} \right)
\colon
\begin{array}{ccc}
C^\infty(Y,E_1)&&C^\infty(X,E_2)\\
\oplus &\to&\oplus\\
C^\infty(\partial Y,F_1)&&C^\infty(\partial X,F_2),
\end{array}\]
acting between sections of vector bundles $E_1$ over $Y$, $E_2$ over $X$, $F_1$ over
$\partial Y$ and $F_2$ over $\partial X$. 
Here, $G^{\chi_\partial}$, $K^{\chi_\partial}$, $T^{\chi_\partial}$, $S^{\chi_\partial}$
are FIOs with Lagrangian submanifold defined by the graph of $\chi_{\partial}$ and, respectively,
a singular Green symbol $g$
of order $m$ and type $d$, a potential symbol $k$ of  order $m$, a trace symbol $t$ of  order $m$ and type $d$,  a symbol $s \in S^{m}(\R^{n-1}\times \R^{n-1})$.

The set of such operators $\mathcal{A}$ is denoted by $\mathscr{B}_{\chi}^{m, d}(X \times Y)$.
We then develop the local version of a calculus which is -- under the usual restrictions --  closed under composition,  that is 
\[
\mathscr{B}_{\chi_1}^{m_1, d_1}\circ \mathscr{B}_{\chi_2}^{m_2, d_2}\subseteq \mathscr{B}_{\chi_1 \circ \chi_2}^{m_1+m_2, d}, \quad d= \max\{m_2+d_1, d_2\}.
\]
We conclude the section with the proof of a Egorov type theorem for this class of operators, see Theorem \ref{th:ego}.
 
In Section \ref{sec:prin}, the principal symbols of operators in $\mathscr{B}^{m, d}_\chi\pt{X\times Y}$ are introduced. 
Similarly as in Boutet de Monvel's calculus, the interior principal symbol $\sigma(\mathcal{A})$ is defined as the principal symbol of  $A^\chi$, restricted to $T^*Y\setminus0$.
The technique of $\mathscr{L}(\mathscr{S}(\mathbb R_+))$-valued symbols then enables us to identify also a homogeneous operator-valued boundary principal symbol $\sigma_\partial(\mathcal{A})$. 
Ellipticity, defined as the invertibility of both, then allows the construction of a parametrix in the calculus.

In Section \ref{sec:manif}, we extend the above local calculus to compact manifolds with boundary. One result we obtain is the following: 
Whenever $\mathcal{A} \in \mathscr{B}_\chi^{0,0}(X \times Y)$ is invertible,
\begin{equation}
\label{eq:iso}
j\colon  \mathscr{B}^{m,d}(X) \to \mathscr{B}^{m,d}(Y) 
\colon
\mathcal{P} \mapsto \mathcal{A}^{-1}\circ \mathcal{P}\circ \mathcal{A}
\end{equation}
is defined for all $m$ and $d$, hence extends to an isomorphism between algebras of Boutet de Monvel operators preserving the order, in the spirit of \cite{DS76}.

In the last Section \ref{sec:index} we show how an index can be associated with an admissible symplectomorphism $\chi$. 

To this end we first reduce to the case of a one by one matrix. Following the approach of A. Weinstein in \cite{WE75},
the natural candidate for a Fredholm operator associated with $\chi$ is
\[
\mathcal{U}=\pt{
  r^+ U^\chi e^+}
\]
where $U^\chi$ is a FIO defined by $\chi$ with a principal symbol $s$ which is a unitary section of the associated Maslov bundle.
We then establish the Fredholm property of this operator by showing the invertibility of 
$\sigma(\mathcal A)$ and $\sigma_\partial(\mathcal A)$. At this point the analysis is  more subtle than in the case of closed 
manifolds. A priori, it is not clear why the boundary symbol should be invertible. 
In order to show this, we use a deformation of the phase function via a scaling of the normal 
variable. In the limit, we obtain an invertible operator. However, as the phase is, in general,  discontinuous at the zero section, this is not a continuous deformation on $L^2(\mathbb R_+)$.
Instead, we work on the weighted space $L^2(\mathbb R_+, (1+x^2)^{-1}dx)$, where Schur's lemma implies the desired norm continuity. 
 
It turns out that the index of $\mathcal{U}$ is independent of the choice of the section $s$
whenever the Maslov bundle is trivial or $\dim Y > 2$. 

For the case $X=Y$, examples of admissible symplectomorphisms can be constructed by deforming 
the identity by means of a Hamiltonian flow. Of course, the index of the associated operator $\mathcal{U}$
will then be zero. In this respect, the situation is similar to the case of closed manifolds, where explicit examples of symplectomorphisms with nonzero index are lacking.
\bigskip

\textbf{Acknowledgements.}
Thanks are due to L.~Fatibene, A.~Fino and R.~Melrose for fruitful discussions, and to C. Epstein for explaining part of his work to us. 
We also want to express our special gratitude 
to R. Nest, with whom we worked on the proof of the Fredholm property. 
The first author has been supported by the Gruppo  Nazionale per l'Analisi Matematica, la Probabilit\`a e le loro  Applicazioni (GNAMPA) of the Istituto Nazionale di Alta Matematica (INdAM) and by the DAAD.

%%%%%%%%%%%%%%%%%%%%%%%%%%%%%%%%%%%%%%%%%%%%%%%%%%%%%%%%%%%%%%%%%%%
%%%%%%%%%%%%%%%%%%%%%%%TRANSMISSION CONDITION AND ADMISSIBLE PHASE FUNCTIONS
%%%%%%%%%%%%%%%%%%%%%%%%%%%%%%%%%%%%%%%%%%%%
\section{Admissible Phase Functions}
\label{sec:trans}

The cotangent bundle $T^*Y$ of a manifold with boundary
 $\pt{Y, \partial Y}$ is  a symplectic manifold with boundary $T^*_{\partial Y}Y$. 
In this article we consider
%Let us consider 
two compact $n$-dimensional manifolds with boundary $\pt{Y, \partial Y}$ and  
$\pt{X, \partial X}$ and  a symplectomorphism 
\begin{eqnarray}\label{chi1}
\chi: T^*Y \setminus 0 \to T^*X \setminus 0
\end{eqnarray}
which is positively homogeneous of degree one in the fibers. 
We require that $\chi$ preserves  the boundary, that is
\begin{eqnarray}\label{chi2}
\chi\pt{ \partial T^*Y \setminus 0} = \partial T^*X\setminus 0.
\end{eqnarray}
The following lemma,  which is proven in \cite{ME81}, analyzes symplectomorphisms of this type. 
\begin{Lem}
  \label{lem:chidelta}
Under assumptions \eqref{chi1} and \eqref{chi2}, $\chi$  induces
 a symplectomorphism 
\begin{eqnarray}\label{chid}
\chi_{\partial}: T^*\partial Y \setminus 0 \to T^*\partial X\setminus 0,
\end{eqnarray}
  positively homogeneous of degree one in the fibers, such that the following diagram commutes:
  \begin{align*}
    \xymatrix{
	    &T^*_{\partial Y}Y \setminus N^*{\partial Y} \ar@{^(->}[d]^{i^*_Y}  \ar[r]^{\chi}  & T^*_{\partial X}X
	    \setminus N^*{\partial X}  \ar@{^(->}[d]^{i^*_X} \\
	    &T^*\partial Y \setminus 0 \ar[r]^{\chi_{\partial}}  & T^*\partial X \setminus 0.
	    }
  \end{align*}
\end{Lem}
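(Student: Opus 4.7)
The plan is to realize $\chi_\partial$ via symplectic reduction along the characteristic foliation of the coisotropic hypersurface $T^*_{\partial Y}Y$ of $T^*Y$. In boundary-adapted coordinates $(x', x_n, \xi', \xi_n)$ with $\partial Y = \{x_n = 0\}$, I would first verify that the pullback to $T^*_{\partial Y}Y$ of the canonical symplectic form $\omega = \sum_{j=1}^{n} d\xi_j\wedge dx_j$ has one-dimensional kernel spanned by $\partial_{\xi_n}$. Thus the leaves of the characteristic foliation are precisely the fibers $\{(x',0,\xi',t) : t\in\R\}$ of the natural projection $i_Y^*: T^*_{\partial Y}Y \to T^*\partial Y$, $(x',0,\xi',\xi_n)\mapsto (x',\xi')$, and the reduced symplectic form on the quotient coincides with the canonical one on $T^*\partial Y$; the analogous facts hold on the $X$-side.

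Since $\chi$ is a symplectomorphism preserving the hypersurface $T^*_{\partial Y}Y\setminus 0$, it preserves its symplectic orthogonal distribution and therefore maps characteristic leaves to characteristic leaves. Hence $\chi$ descends, via the projections $i_Y^*$ and $i_X^*$, to a smooth symplectic map between the reduced spaces, which will be the candidate for $\chi_\partial$. The delicate point is to show that $\chi$ actually sends $N^*\partial Y\setminus 0$ into $N^*\partial X\setminus 0$, so that the induced map is defined on all of $T^*\partial Y\setminus 0$ and not on some strictly smaller open set. For this I would exploit the homogeneity hypothesis: the Euler vector field $E=\sum_j \xi_j\partial_{\xi_j}$ is $\chi$-related to its analogue on $T^*X$, and a short local computation on $T^*_{\partial Y}Y$ shows that $E$ is tangent to the characteristic leaf through $(x',0,\xi',\xi_n)$ exactly when $\xi'=0$, namely at points of $N^*\partial Y$. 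This intrinsic characterization is manifestly preserved by $\chi$, so $\chi(N^*\partial Y\setminus 0)=N^*\partial X\setminus 0$.

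Given this, $\chi_\partial : T^*\partial Y\setminus 0 \to T^*\partial X\setminus 0$ is a diffeomorphism (its inverse arises from $\chi^{-1}$ by the same construction) and a symplectomorphism, because the canonical forms on $T^*\partial Y$ and $T^*\partial X$ agree with the reduced forms that $\chi$ intertwines. The positive one-homogeneity of $\chi_\partial$ in the fibers is inherited from that of $\chi$, since the $\R_+$-action on $T^*Y$ commutes with $i_Y^*$ and induces the canonical fiber scaling on $T^*\partial Y$. Commutativity of the diagram is then built into the definition of $\chi_\partial$ as the map induced by $\chi$ on the reduced space. I expect the main obstacle to be the preservation of $N^*\partial Y$ under $\chi$: this is exactly the step that requires the homogeneity assumption, without which the induced map would in general only be defined on a proper open subset of $T^*\partial Y\setminus 0$.
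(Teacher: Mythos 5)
Your argument is correct, and it is worth noting that the paper itself offers no proof of this lemma: it simply cites Melrose's \emph{Transformation of boundary problems} \cite{ME81}. Your symplectic-reduction proof is therefore a self-contained alternative. The computation that $\omega|_{T^*_{\partial Y}Y}$ has kernel spanned by $\partial_{\xi_n}$, so that the characteristic leaves are the fibers of $i_Y^*$ and the reduced form is the canonical form on $T^*\partial Y$, is standard and correct; since $\chi$ maps $T^*_{\partial Y}Y\setminus 0$ onto $T^*_{\partial X}X\setminus 0$ symplectically, it carries characteristic leaves to characteristic leaves, and the induced map on the reduced spaces is the desired $\chi_\partial$, with the diagram commuting by construction and symplecticity, bijectivity (via $\chi^{-1}$), smoothness (via local sections of $i_Y^*$, e.g.\ $\xi_n=0$, which is allowed off $N^*\partial Y$) and fiber homogeneity (since $i_Y^*$ intertwines the dilations) all following as you say. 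You also correctly identified the one genuinely delicate point, namely that $\chi$ must preserve $N^*\partial Y\setminus 0$ so that $\chi_\partial$ is defined on all of $T^*\partial Y\setminus 0$, and your Euler-field argument settles it: homogeneity of degree one makes the radial field $E$ $\chi$-related to its counterpart, and on the boundary hypersurface $E$ lies in the characteristic distribution exactly at conormal points, an intrinsic characterization preserved by $\chi$; applying the same to $\chi^{-1}$ gives equality and hence surjectivity of $\chi_\partial$. The only items to spell out in a final write-up are routine: connectedness of the leaves with $\xi'\neq0$ (so that $i_X^*\circ\chi$ is constant along them, giving well-definedness) and the fact that the image leaf projects to a single point of $T^*\partial X$. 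Compared with simply invoking \cite{ME81}, your route has the merit of making explicit exactly where the two hypotheses enter: boundary preservation gives the coisotropic reduction picture, homogeneity gives the preservation of $N^*\partial Y$, which is also consistent with how the paper later reads off the block structure \eqref{eq:jacob} of the Jacobian from this lemma.
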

\begin{rem}
  \label{rem:triv}
  In Lemma \ref{lem:chidelta} we have considered the induced symplectomorphism $\chi_{\partial}$
  outside the zero section. Actually, 
  since $\chi$ is smooth on $\partial T^*Y \setminus 0$, the induced symplectomorphism $\chi_{\partial}$
is also smooth at the zero section. Since $\chi_{\partial}$ is positively homogeneous of degree one in the fibers, 
  the smoothness at the zero section implies that $\chi_{\partial}$ is trivial in the fibers. That is, $\chi_{\partial}$ is the lift of a diffeomorphism 
  \begin{eqnarray}
b: \partial Y \to \partial X
\end{eqnarray}
of the boundaries  (see \cite{DAS01}). 
\end{rem}

The manifolds  $X$ and $Y$ embed into closed manifolds of the same dimension. 
Moser's trick, see \cite[Ch. 7]{DAS01}, then allows us to extend $\chi$ to a symplectomorphism
\begin{eqnarray}
\widetilde \chi: T^*\widetilde Y\setminus 0 \to T^*\widetilde X\setminus 0,
\end{eqnarray}
positively homogeneous of degree one in the fibers 
over neighborhoods $\widetilde X$ of $X$ and $\widetilde Y$ of $Y$ in these closed manifolds.

It will be important to understand the form of the Jacobian of $\chi$ in a collar neighborhood of the boundary. We write
\begin{align}
  \nonumber
  &\chi  (y', y_n, \eta', \eta_n)\\
  &\ \ =(x'^*(y', y_n, \eta', \eta_n), x_n^*(y', y_n, \eta', \eta_n), \xi'^*(y', y_n, \eta', \eta_n), \xi_n^*(y', y_n, \eta', \eta_n)).\label{eq:compsympl}
\end{align}
We suppose that the coordinates $(y', y_n, \eta', \eta_n)$, $(x'^*, x_n^*, \xi'^*, \xi_n^*)$ determine a collar neighborhood of the boundary,
$y_n, x_n^*$ being boundary defining functions. Since the boundary is preserved, $x^*_n(y', 0, \eta', \eta_n)=0$ for all $(y', \eta', \eta_n)$. 
Hence, $\partial_{y'}x_n^*$, $\partial_{\eta'}x_n^*$, $\partial_{\eta_n} x_n^*$ are identically zero at $y_n=0$. Moreover, 
Lemma \ref{lem:chidelta} implies that the restrictions $x'^*_\partial$ and $\xi'^*_\partial$ of
$x'^*$ and $\xi'^*$ to $y_n=0$ locally define the symplectomorphism $\chi_\partial$. 
These restrictions are then independent
of the conormal direction, that is $\partial_{\eta_n}x'^*$ and $\partial_{\eta_n}\xi'^*$ are identically zero at the boundary. 
Hence, we find that the Jacobian of $\chi$ at the boundary has the  form
\begin{align}
\label{eq:jacob}
 J(\chi)_{T_{\partial Y}^*Y}=\left(
\left.  \begin{array}{cccc}
    \partial_{y'}x'^* & \partial_{\eta'}x'^* & \partial_{y_n}x'^*& 0 \\
    \partial_{y'}\xi'^* & \partial_{\eta'}\xi'^* & \partial_{y_n}\xi'^*& 0 \\
    0 & 0 & \partial_{y_n}x_n^*& 0 \\
    \partial_{y'}\xi_n^* & \partial_{\eta'}\xi_n^* & \partial_{y_n}\xi_n^*& \partial_{\eta_n}\xi_n^* 
  \end{array} \right)\right|_{y_n=0}.
\end{align}
Moreover, since
\begin{align}
 \label{eq:simplbound}
 J_{\chi_\partial}=
 \left(
 \begin{array}{cc}
  \partial_{y'}x'^*_\partial & \partial_{\eta'}x'^*_\partial\\
  \partial_{y'}\xi'^*_\partial & \partial_{\eta'}\xi'^*_\partial 
 \end{array}
 \right)
\end{align}
is a symplectic matrix, it has determinant $1$.
Clearly also $J(\chi)_{T_{\partial Y}^*Y}$ has determinant equal to $1$, 
because $\chi$ is a symplectomorphism.
This implies that $\partial_{y_n}x_n^*|_{y_n=0} \cdot \partial_{\eta_n}\xi_n^*|_{y_n=0}=1$. In particular, $\partial_{y_n}x_n^*|_{y_n=0}$ and $\partial_{\eta_n}\xi_n^*|_{y_n=0}$ 
can never vanish. 
As the boundary is compact, they are bounded away from zero. 
We even see that  $\partial_{y_n}x_n^*|_{y_n=0}>0$, since 
$x_n^*$ and $y_n$ are boundary defining functions.

We recall a well-known property of Lagrangian subspaces, which extends to the case of manifolds with boundary.
\begin{Prop}
  \label{prop:loclag}
  Let $\Lambda \subset T^*Z\setminus 0 $ be a conic Lagrangian submanifold. Then, for each $\lambda_0=(z_0,\theta_0) 
  \in \Lambda$, there exists a neighborhood $U$ of $z_0$ and a phase function $\phi$ defined in a conic neighborhood
  $U \times \Gamma\subseteq U \times \R^N$ -- $N$  large enough -- such that $\phi$ parametrizes $\Lambda$ in a conic neighborhood
  of $\lambda$. That is
 \begin{align*}
    C_{\phi}=\{(z, \theta) \mid \phi'_\theta(z, \theta)=0\} &\to T^*Z\\
    (z, \theta) &\mapsto (z, \phi'_z(z, \theta))
 \end{align*}
 induces a diffeomorphism in $U\times\Gamma$.

If $\Lambda$ is locally defined by the graph of a symplectomorphism, 
%  one can choose a phase function of a special form. We can suppose to have 
we have a splitting of the variable $z$ as $z=(x,y)$, and  
$\Lambda \subseteq T^*X \times T^*Y$. 
In this case, we can choose a phase function of the particular form
  $\phi(x, y, \theta)=\psi(x, \theta)- y \cdot \theta$, with $\phi \in C^\infty(\Omega_{x_0} \times \Omega_{y_0} \times
  \Gamma)$, where $\Omega_{x_0}$ and $\Omega_{y_0}$ are neighborhoods of $x_0$ and $y_0$, respectively, $\Gamma$ is a cone in $\R^n\setminus 0$,
  $2n$ is the dimension of $\Lambda$.
\end{Prop}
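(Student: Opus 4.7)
The first statement is H\"ormander's classical parametrization theorem for conic Lagrangian submanifolds (cf.\ \cite{DAS01}). The outline: near $\lambda_0$, the Lagrangian condition together with a dimension count produces a subset $I\subset\{1,\dots,\dim Z\}$ such that $\Lambda$ projects diffeomorphically onto the mixed set of coordinates $\{z_i:i\in I\}\cup\{\zeta_j:j\notin I\}$. In these coordinates $\Lambda$ is the graph of a closed (by the Lagrangian condition), hence exact (by Poincar\'e), one-form derived from a generating function $S$; from $S$ one builds $\phi$ with $N=\dim Z-|I|$ phase variables. Positive homogeneity of $\Lambda$ in the fibres translates, via Euler's identity, into positive homogeneity of $\phi$ of degree one in $\theta$.

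For the second statement, write $\Lambda=\Lambda_\chi=\{(x,\xi,y,-\eta):(x,\xi)=\chi(y,\eta)\}$ and $\chi=(X,\Xi)$ in local coordinates. The requested form $\phi(x,y,\theta)=\psi(x,\theta)-y\cdot\theta$ corresponds precisely to a mixed-type generating function $\psi(x,\eta)$ for $\chi$, i.e., $y=\partial_\eta\psi$ and $\xi=\partial_x\psi$ on the graph. Once $\psi$ is available the rest is straightforward: non-degeneracy of $\phi$ follows because the differentials $d\phi'_{\theta_i}=\partial_x\partial_{\theta_i}\psi\,dx+\partial_\theta\partial_{\theta_i}\psi\,d\theta-dy_i$ are linearly independent thanks to the uncancelled $-dy_i$ terms, and the map $(x,y,\theta)\mapsto(x,\partial_x\psi(x,\theta),y,-\theta)$ restricted to $C_\phi=\{y=\partial_\theta\psi\}$ parametrizes $\Lambda_\chi$ by the very definition of $\psi$. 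Homogeneity of $\psi$ in $\theta$ of degree one follows from homogeneity of $\chi$ in the fibres.

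The main obstacle is the existence of $\psi$ near the chosen point, which is equivalent to the projection $(x,\xi,y,-\eta)\mapsto(x,\eta)$ from $\Lambda_\chi$ being a local diffeomorphism there, i.e., to the invertibility of $A:=\partial_y X(y_0,\eta_0)$. When this holds, $\psi$ is obtained as the primitive of the closed one-form $\Xi\,dx+y\,d\eta$ pulled back to $(x,\eta)$-space via the local inverse $y=Y(x,\eta)$ of $X(\cdot,\eta)$; closedness follows from $\chi$ being symplectic. When $A$ is singular at the base point, the plan is to perform a local change of coordinates $y'=f(y)$ on $Y$ with $f(y_0)=y_0$ and $f'(y_0)=I$; a direct computation shows that the induced symplectic transformation on $T^*Y$ has Jacobian $\begin{pmatrix}I&0\\K&I\end{pmatrix}$ at $(y_0,\eta_0)$, where $K_{ij}=-\sum_k\partial_i\partial_j f_k(y_0)\,\eta_{0,k}$ is symmetric and can be prescribed to be any symmetric matrix through the free choice of the second derivatives of $f$ at $y_0$ (using $\eta_0\neq 0$). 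In the new coordinates $A$ is replaced by $A+BK$ with $B=\partial_\eta X$. The symplectic relation $D^TA-B^TC=I$ gives $(Bv)^T(Cv)=-|v|^2$ for every $v\in\ker A$, whence $Bv\neq 0$; taking an orthonormal basis $\{v_i\}$ of $\ker A$ the vectors $\{Bv_i\}$ are linearly independent and satisfy $\mathrm{range}(A)\cap\mathrm{span}\{Bv_i\}=\{0\}$, so together they span $\R^n$. Choosing $K=\sum_i\lambda_i v_iv_i^T$ with $\lambda_i>0$ large enough then makes $A+BK$ invertible, reducing matters to the previous case.
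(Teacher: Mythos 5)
Your proof is essentially correct, but note that the paper itself gives no proof of this proposition: it is recalled as a classical fact from the theory of Lagrangian distributions, and the only closely related argument actually carried out in the paper is the proof of Proposition \ref{prop:global}. There, following \cite{LSV94}, the phases are built from $(y^*(x,\xi)-y)\cdot\eta^*(x,\xi)$ and $(x-x^*(y,\eta))\cdot\xi^*(y,\eta)$, and the nondegeneracy conditions $\det\partial_\eta\xi^*\neq0$, $\det\partial_\xi\eta^*\neq0$ come for free from the boundary-preserving structure of the Jacobian \eqref{eq:jacob}, so no adjustment of coordinates is needed. Your route is the standard general one: a generating function $\psi(x,\eta)$ obtained as a homogeneous primitive of the closed one-form $\xi\,dx+y\,d\eta$ once the projection of the graph to the $(x,\eta)$-variables is a local diffeomorphism, together with a quadratic change of the $y$-coordinates to make $A=\partial_y x^*$ invertible when it is not. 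This is legitimate, since the proposition leaves the local charts on $X$ and $Y$ free; but it is worth keeping in mind that in the paper's boundary application the coordinates are constrained (boundary defining functions), which is exactly why Proposition \ref{prop:global} replaces the coordinate-change step by the structural nondegeneracy near $\partial\Lambda$.

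One step you assert without justification is that $\mathrm{range}(A)\cap B(\ker A)=\{0\}$: your identity $(Bv)^{T}(Cv)=-|v|^2$ for $v\in\ker A$ gives injectivity of $B$ on $\ker A$, but not transversality to $\mathrm{range}(A)$. The missing piece follows from the remaining symplectic relations. If $Au=Bv$ with $Av=0$, then from $A^{T}D-C^{T}B=I$ (the transpose of your relation) one gets $A^{T}Dv=v+C^{T}Bv=v+C^{T}Au=v+A^{T}Cu$, using the symmetry of $A^{T}C$; hence $A^{T}(Dv-Cu)=v$, so $v\in\mathrm{range}(A^{T})\cap\ker A=\{0\}$, and the two subspaces are indeed complementary. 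With this, your choice of $K$ works, and in fact no largeness of the $\lambda_i$ is required: for any $K$ that vanishes on $(\ker A)^{\perp}$ and is invertible on $\ker A$, one has $(A+BK)(u+v)=Au+BKv$ with $u\in(\ker A)^{\perp}$, $v\in\ker A$, and the two summands lie in the complementary subspaces $\mathrm{range}(A)$ and $B(\ker A)$, so $A+BK$ is invertible. Finally, your first paragraph is only a sketch of H\"ormander's parametrization theorem, which is acceptable here since the paper also treats that part as known.
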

%%%%%%%%%%%%%%%%%%%%%%%%%%%%%%%%%%%%%%%

\begin{rem}
  \label{rem:psipartial}
We can apply Proposition \ref{prop:loclag} also to the symplectomorphism $\chi_\partial$
in order to obtain a phase function $\phi_\partial(x', y', \theta')=\psi_\partial (x', \theta') - y'\cdot \theta'$ which represents $\chi_\partial$.
  Since $\chi_\partial$ is the lift of a diffeomorphism, the phase function $\psi_\partial (x',\theta')$ is smooth at $\theta'=0$ and therefore linear in $\theta'$.
\end{rem}

We will not recall here the notion of Maslov bundle, see \cite{HO03} for its precise description.

\begin{Lem}
  \label{lem:mas}
Under the above assumptions  the Maslov bundle of 
  \begin{align}\label{Lambda}
    \Lambda=\textnormal{Graph}(\chi)'= \{(x, \xi), (y, -\eta)\mid \chi(y, \eta)=(x, \xi)\} \subseteq T^*X\setminus 0 \times T^*Y \setminus 0
  \end{align}
  is trivial in a collar neighborhood of $\partial \Lambda= (\partial T^*X \times \partial T^*Y) \cap \Lambda$.
\end{Lem}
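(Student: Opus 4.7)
The plan is to reduce the question to showing the Maslov bundle is trivial on $\partial\Lambda$ itself, and then to exploit the block-triangular form of $J(\chi)|_{y_n=0}$ recorded in \eqref{eq:jacob} together with the linear structure of $\psi_\partial$ noted in Remark \ref{rem:psipartial}. As a first step, I would invoke the collar neighborhood theorem for the manifold-with-boundary $\Lambda$: a sufficiently small tubular neighborhood $U$ of $\partial\Lambda$ is diffeomorphic to $\partial\Lambda\times[0,\epsilon)$ and deformation retracts onto $\partial\Lambda$. Since the Maslov bundle is a complex line bundle on $\Lambda$, its restriction to $U$ is trivial if and only if its restriction to $\partial\Lambda$ is trivial.

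Next I would examine the Gauss map $\partial\Lambda\to\mathcal{L}(2n)$ sending $\lambda\mapsto T_\lambda\Lambda$. The ambient symplectic space $T(T^*X\times T^*Y)$ along $\partial\Lambda$ splits symplectically as a tangential $4(n-1)$-dimensional subspace (tangent to $T^*\partial X\times T^*\partial Y$) and a normal $4$-dimensional subspace (spanned by $\partial_{x_n},\partial_{\xi_n},\partial_{y_n},\partial_{\eta_n}$). By the block-triangular structure of $J(\chi)|_{y_n=0}$ in \eqref{eq:jacob}, the tangent plane $T_\lambda\Lambda$ respects this splitting: it decomposes into the $(2n-2)$-dimensional tangent plane to $\textnormal{Graph}(\chi_\partial)'$ in the tangential symplectic subspace and a $2$-dimensional Lagrangian in the normal subspace, namely the graph of the positive diagonal symplectic matrix $\operatorname{diag}(\partial_{y_n}x_n^*,\partial_{\eta_n}\xi_n^*)\in\mathrm{Sp}(2,\mathbb R)$.

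Under this decomposition the Maslov bundle restricted to $\partial\Lambda$ becomes the tensor product of the pullbacks of the Maslov bundles from the two Lagrangian Grassmannian factors. The tangential factor is trivial because $\chi_\partial$ is the lift of the diffeomorphism $b$, so $\textnormal{Graph}(\chi_\partial)'$ is the conormal bundle of the graph of $b$ in $\partial X\times\partial Y$ and admits the globally defined, fiber-linear phase $\psi_\partial(x',\theta')-y'\cdot\theta'$ of Remark \ref{rem:psipartial}. The normal factor is trivial because the graphs of positive diagonal matrices $\operatorname{diag}(a,1/a)$ with $a>0$ form a contractible subset of $\mathcal{L}(2)$, so the normal Gauss map is null-homotopic. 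Combining these two trivializations yields a nowhere-vanishing section of the Maslov bundle on $\partial\Lambda$, which by the retraction argument extends to the required collar. The main obstacle is verifying this compatibility with H\"ormander's definition of the Maslov bundle via the local phase functions from Proposition \ref{prop:loclag}: one must check that the trivializations coming from such phase functions agree with the ``tangential'' and ``normal'' ones above, which ultimately comes down to computing Hessian signatures using \eqref{eq:jacob}.
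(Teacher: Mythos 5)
Your reduction to $\partial\Lambda$ via the collar retraction is fine, but the central geometric claim on which everything afterwards rests is false: the Jacobian \eqref{eq:jacob} is only block \emph{triangular} with respect to the splitting into tangential variables $(y',\eta')$ and normal variables $(y_n,\eta_n)$, not block diagonal. The entries $\partial_{y_n}x'^*$, $\partial_{y_n}\xi'^*$, $\partial_{y'}\xi_n^*$, $\partial_{\eta'}\xi_n^*$ need not vanish at $y_n=0$, so a tangential input vector acquires a nonzero $\delta\xi_n$-component and a normal input vector acquires nonzero $\delta x',\delta\xi'$-components. Consequently $T_\lambda\Lambda$ is in general \emph{not} the direct sum of a Lagrangian plane in the tangential symplectic subspace and one in the normal subspace; equivalently, $T_\lambda\Lambda\cap W_{\mathrm{nor}}$ is generically only one-dimensional. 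The paper's own example \eqref{eq:simple} in Remark \ref{rem:contro} already exhibits this: there $\partial_{y'}\xi_n^*=f'(y')\eta_n$ and $\partial_{y_n}\xi'^*=f'(y')\eta_n/f(y')$ are nonzero at the boundary. Since the Gauss map does not land in the set of split Lagrangians, the asserted tensor-product decomposition of the Maslov bundle over $\partial\Lambda$ does not follow, and the triviality of the two ``factors'' (which is correct in itself) proves nothing about $\Lambda$. A smaller inaccuracy: even the normal-normal block is triangular, with the extra entry $\partial_{y_n}\xi_n^*$, not the diagonal matrix you wrote, though this alone would be harmless.

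Moreover, the step you defer as ``the main obstacle'' -- the comparison with H\"ormander's definition via Hessian signatures of local phase functions -- is in fact essentially the entire proof in the paper, and it is much shorter than the machinery you set up. One takes a local phase $\phi(x,y,\theta)=\psi(x,\theta)-y\cdot\theta$ as in Proposition \ref{prop:loclag}; by Remark \ref{rem:psipartial} the boundary phase $\psi_\partial$ is linear in $\theta'$, so $\phi''_{\theta'\theta'}$ vanishes at boundary points, and since $\partial_{\theta_n}\phi$ is constant at the boundary (the boundary is preserved), also the $\theta_n$-second derivatives vanish there. Hence $\mathrm{sgn}\,\phi''_{\theta\theta}=0$ along $\partial\Lambda$, the canonical trivializing factors $e^{\frac{i}{4}\mathrm{sgn}\,\phi''_{\theta}}$ are identically $1$ near each boundary point, and these local sections patch to a nonvanishing section over a collar by compactness of the boundary. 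If you want to salvage your homotopy-theoretic route, you would need an argument (not supplied) that the non-split family $T_\lambda\Lambda$ can be deformed through Lagrangians, compatibly with the vertical subspaces, to the split family -- which is extra work precisely where your write-up currently asserts the splitting for free.
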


\begin{proof}
  Let $\lambda_0=(x_0,y_0,\xi_0,\eta_0) \in \Lambda$, $U^\Lambda_{\lambda_0}$ a conic neighborhood of 
  $\lambda_0$ and
  $\phi(x,y,\theta)$ $=\psi(x,\theta)-y\cdot\theta$ a phase function representing $\Lambda$ on
  $U^\Lambda_{\lambda_0}$. 
  Then the Maslov bundle is trivialized on $U^\Lambda_{\lambda_0}$
  by $e^{\frac{i}{4} \textnormal{sgn} \;\phi_{\theta}''}$. 
  By Remark \ref{rem:psipartial}, $\chi_\partial$ is linear. This implies that $\phi''_{\theta'}$
  vanishes identically at the boundary 
  for each covector $\theta'$ tangent to the boundary. Moreover, $\partial_{\theta_n}\phi$
  is constant at the boundary, hence $ \partial^2_{\theta_n}\phi$   vanishes identically at the boundary. 
  Therefore, $e^{\frac{i}{4} \textnormal{sgn} \;\phi''_\theta}=1$ identically in a small neighborhood of $\lambda_0$. 
  As this holds for all $\lambda_0 \in \partial \Lambda$, the result follows from the compactness of the
  boundary. 
\end{proof}

In general it is not possible to find a global phase function defining the whole Lagrangian 
submanifold. In fact, this is impossible whenever the Maslov bundle is not trivial, 
see \cite{LSV94} for the precise statement. 
In our setting, the triviality of the Maslov bundle implies the following:

%%%%%%%%%%%%%%%%%%%%GLOBAL PHASE FUNCTION
\begin{Prop}
  \label{prop:global}
For every $x_0 \in \partial X$ there exist neighborhoods  $U_{x_0}$ and $U_{y_0}$,
of $x_0$ and $y_0=b^{-1}(x_0)$, respectively,
  such that $\chi(T^* U_{y_0}) \subseteq T^*U_{x_0}$. Moreover
  it is possible to define phase functions
  \begin{equation}
    \label{eq:condfase}
    \begin{split}
      &\phi_L(x,y, \eta)= \psi_L(x, \eta) - y \cdot \eta\\
      &\phi_R(x,y, \xi)= x\cdot \xi - \psi_R(y, \xi)
    \end{split}
  \end{equation}
which parametrize $\chi$ in $\Lambda \cap\pt{ T^* U_{x_0} \times T^*U_{y_0}}$, where
$\Lambda$ is given by \eqref{Lambda}.
\end{Prop}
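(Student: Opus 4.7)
The plan is to leverage the special structure of $\chi$ at the boundary---encoded in \eqref{eq:jacob} and in Remark \ref{rem:triv}---to show that both $(x,\eta)$ and $(y,\xi)$ give global coordinate systems on $\Lambda$ in a conic neighborhood of $\partial\Lambda$. The phases $\psi_L$ and $\psi_R$ are then obtained as primitives of appropriate closed $1$-forms on $\Lambda$, with global single-valuedness ensured by Lemma \ref{lem:mas}.

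First I choose collar coordinates $(y',y_n,\eta',\eta_n)$ near $y_0=b^{-1}(x_0)$ and $(x',x_n,\xi',\xi_n)$ near $x_0$ as in \eqref{eq:compsympl}, and pick $U_{y_0}$ small enough so that $\chi(T^*U_{y_0})\subseteq T^*U_{x_0}$ for a suitable $U_{x_0}$. For $(x,\eta)$ to be valid coordinates on $\Lambda$ it suffices that the map $(y,\eta)\mapsto(x^*(y,\eta),\eta)$ be a local diffeomorphism, i.e.\ that $\det(\partial_y x^*)\ne 0$. From \eqref{eq:jacob}, at $y_n=0$ the matrix $\partial_y x^*$ is block triangular with diagonal blocks $\partial_{y'}x'^*_\partial$ and $\partial_{y_n}x_n^*$. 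By Remark \ref{rem:triv}, $x'^*_\partial(y',\eta')=b(y')$ does not depend on $\eta'$, so the first block is the Jacobian of the diffeomorphism $b$ and is invertible; the second is strictly positive by the discussion following \eqref{eq:jacob}. A symmetric argument, using that by Remark \ref{rem:triv} the map $\eta'\mapsto\xi'^*_\partial(y',\eta')$ is linear and invertible, gives $\det(\partial_\eta\xi^*)|_{y_n=0}\ne 0$ as well. By continuity, and since the $1$-homogeneity of $\chi$ makes both determinant conditions scale-invariant, these nondegeneracies persist on a conic neighborhood of $\partial\Lambda$. Shrinking $U_{y_0}$ if needed, $(x,\eta)$ and $(y,\xi)$ are each global coordinate systems on $\Lambda\cap(T^*U_{x_0}\times T^*U_{y_0})$.

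It then remains to construct $\psi_L$ with $\partial_x\psi_L=\xi$, $\partial_\eta\psi_L=y$ on $\Lambda$, and similarly $\psi_R$ with $\partial_\xi\psi_R=x$, $\partial_y\psi_R=\eta$. Since $\Lambda$ is Lagrangian for $d\xi\wedge dx-d\eta\wedge dy$, the $1$-forms $\xi\,dx+y\,d\eta$ and $x\,d\xi+\eta\,dy$ are closed on $\Lambda$, and the primitives $\psi_L$, $\psi_R$ can be defined as path integrals from a fixed base point in the chosen piece of $\Lambda$. Setting $\phi_L(x,y,\eta):=\psi_L(x,\eta)-y\cdot\eta$ and $\phi_R(x,y,\xi):=x\cdot\xi-\psi_R(y,\xi)$, a direct computation of the critical sets $\{\partial_\eta\phi_L=0\}$ and $\{\partial_\xi\phi_R=0\}$ recovers $\Lambda$ via the coordinate property established above.

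The main obstacle is the global single-valuedness of $\psi_L$ and $\psi_R$: even though $(x,\eta)$ and $(y,\xi)$ are global coordinates on the relevant piece of $\Lambda$, this piece need not be simply connected (e.g.\ for $n=2$ the cotangent fibers $\R^n\setminus 0$ contribute to $\pi_1$), so the periods of the closed $1$-forms over nontrivial loops could in principle obstruct the construction. This is exactly what Lemma \ref{lem:mas} prevents: triviality of the Maslov bundle on a collar of $\partial\Lambda$ is the correct topological condition to guarantee that those periods vanish on a sufficiently small conic neighborhood of $\partial\Lambda$, completing the argument.
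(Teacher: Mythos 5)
Your first step is sound and coincides with the paper's: the block structure of \eqref{eq:jacob}, together with Remark \ref{rem:triv} ($x'^*_\partial=b(y')$ and $\eta'\mapsto\xi'^*_\partial$ linear and invertible) and $\partial_{y_n}x_n^*,\partial_{\eta_n}\xi_n^*\neq0$, gives exactly the nondegeneracy conditions \eqref{eq:nondegxi}, \eqref{eq:nondegeta} on a conic neighborhood, i.e.\ that $(x,\eta)$ and $(y,\xi)$ are coordinates on $\Lambda$ there. The genuine gap is in your last step. The single-valuedness of the primitives is not something Lemma \ref{lem:mas} can deliver: the Maslov class and the period (Liouville) class of the restricted one-forms are independent classes in $H^1(\Lambda)$, so triviality of the Maslov bundle near $\partial\Lambda$ says nothing about the periods of $\xi\,dx+y\,d\eta$ or $x\,d\xi+\eta\,dy$. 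As written, in the only case where you concede a possible obstruction ($n=2$) you remove it by citing a lemma that does not address it, so the proof has a hole precisely where you locate the "main obstacle".

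What actually kills the periods is homogeneity, with no topological input: since $\chi$ is positively homogeneous of degree one it preserves the canonical one-form (equivalently, $\Lambda$ is a conic Lagrangian, so the Liouville form of $T^*(X\times Y)$ vanishes on it), whence $\xi\,dx=\eta\,dy$ on $\Lambda$ and therefore $\xi\,dx+y\,d\eta=d(y\cdot\eta)$, $x\,d\xi+\eta\,dy=d(x\cdot\xi)$ there. The primitives are thus explicit and automatically positively homogeneous of degree one -- a point your path-integral construction also leaves open, since an arbitrary integration constant would destroy the homogeneity required of a phase function. This is exactly how the paper proceeds (following \cite{LSV94}): using the inverse function theorem, justified by the nondegeneracy you already established, one sets $\phi_L(x,y,\eta)=\pt{y^*(x,\tilde\xi(x,\eta))-y}\cdot\eta$ and $\phi_R(x,y,\xi)=\pt{x-x^*(y,\tilde\eta(y,\xi))}\cdot\xi$, and the identities $\xi^*\cdot x^*_{\eta_k}=0$, $\xi^*\cdot x^*_{y_k}=\eta_k$ (and their analogues for $\chi^{-1}$), which encode preservation of the canonical one-form, show these are nondegenerate phase functions parametrizing $\Lambda\cap\pt{T^*U_{x_0}\times T^*U_{y_0}}$. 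With that replacement your argument closes; Lemma \ref{lem:mas} is not needed in this proof.
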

\begin{proof}
We write
  \begin{align*}
    &\chi: 
     (y, \eta)\mapsto (x^*(y, \eta), \xi^*(y, \eta))
    \intertext{and}
    &\chi^{-1}:     (x, \xi)\mapsto (y^*(x, \xi), \eta^*(x, \xi)).
  \end{align*}
  Since the symplectomorphism preserves the boundary it is possible to find  neighborhoods
  $U_{x_0}, U_{y_0}$ such that $\chi(T^* U_{y_0}) \subseteq T^*U_{x_0}$.
Here we write $(x, \xi)=(x', x_n, \xi', \xi_n)$, $(y, \eta)=(y',y_n, \eta', \eta_n)$ with boundary defining functions $x_n$ and $y_n$. 
In view of the considerations around \eqref{eq:jacob}, we can suppose - possibly restricting $U_{y_0}$ - that 
  \begin{equation}\label{eq:nondegxi}
    \det\pt{\partial_{\eta}\xi^*(y,\eta)}\neq 0 \text{ on $T^*U_{y_0}$},
  \end{equation}
  and  
  \begin{equation}
    \label{eq:nondegeta}
    \det \pt{ \partial_{\xi} \eta^*(x, \xi)}\neq 0 \text{ on $T^*U_{x_0}$}.
  \end{equation}
  Following the idea of \cite{LSV94} we  introduce
   \begin{align*}
    \widetilde{\psi}_L (x, y, \xi)&=(y^*(x, \xi)- y)\cdot \eta^*(x, \xi)\\
    \widetilde{\psi}_R (x, y, \eta)&=(x- x^*(y, \eta))\cdot \xi^*(y, \eta).
   \end{align*} 
  Since $\chi$ and $\chi^{-1}$ preserve the canonical $1$-form we have that
  \begin{align*}
    &\xi^* \cdot x^*_{\eta_k}=0, \qquad \eta^*\cdot y^*_{\xi_k}=0,\\
    & \xi^* \cdot x^*_{y_k}=\eta_k, \qquad \eta^*\cdot y^*_{x_k}=\xi_k. 
  \end{align*}
  The above relations together with the non degeneracy conditions \eqref{eq:nondegxi}, \eqref{eq:nondegeta} imply
  that $\widetilde{\psi}_L$ and $\widetilde{\psi}_R $ are phase functions representing 
  $\Lambda \cap \pt{ T^* U_{x_0} \times T^*U_{y_0}}$.
  In order to have the phase function as in \eqref{eq:condfase},
  we use the inverse mapping theorem. In fact, by \eqref{eq:nondegeta} and \eqref{eq:nondegxi} it is possible to invert 
  $\xi^*(y, \eta)$ and $\eta^*(x, \xi)$ in $T^*U_{y_0}$ and $T^*U_{x_0}$ respectively. We denote by
  $\tilde{\eta}(y, \xi)$ and $\tilde{\xi}(x, \eta)$ the inverse functions of $\xi^*(y, \eta)$ and $\eta^*(x, \xi)$, respectively.
  We then set
  \begin{align*}
      \phi_L\pt{x,y, \eta}&= (y^*(x, \tilde{\xi}(x, \eta))- y)\cdot \eta\\
      \phi_R\pt{x,y, \xi}&= (x- x^*(y, \tilde{\eta}(y, \xi)))\cdot \xi 
  \end{align*}
  and we obtain the assertion.
\end{proof}
%%%%%%%%%%%%%%%%%%%%%%%%%%%%%%%%%%%%%%%%%%%%%%%%%

In order to define a suitable calculus for FIOs on manifolds with boundary, we need to introduce
the transmission
condition, see, e.g., \cite{BU71,GR87,GH90,RS85,SC01}. Consider the function spaces:
\begin{align*}
  H^+= \ptg{\mathscr{F}(e^+u)\mid u \in  \Si(\R_+)} \quad \mbox{ and } \quad H^-_0=\ptg{\mathscr{F}(e^-u)\mid u \in \Si(\R_-)},
\end{align*}
where $\Si(\R_\pm)= r^\pm \Si(\R)$ is the restriction of Schwartz  functions on $\R$ to the right (left) half line,
and $e^\pm$ is the extension by zero of a function defined on $\R_\pm$.
It is easy to prove that the functions in $H^+$ and $H^-_0$ decay to first order at infinity.
 Moreover, we denote by  $H'$ the set of all polynomials in one variable. Then we  define
\begin{align*}
  H= H^+ \oplus H^-_0 \oplus H'.
\end{align*}
\begin{Def}
  \label{Def:trans}
  Let $a \in S^m(\R^n \times \R^n \times \R^n)$. Then
  $a$ satisfies  the transmission condition  at $x_n=y_n=0$ provided that, for all 
  $k,l$,
  \[
    \partial^k_{y_n} \partial_{x_n}^l a(x', 0, y', 0, \xi', \langle \xi' \rangle \xi_n) \in S^m(\R^{n-1} \times 
    \R^{n-1} \times \R^{n-1}) \hat{\otimes}_{\pi} H_{\xi_n}.
  \]
  We denote by $S^m_{\tr}(\R^{n} \times \R^n \times \R^n)$
  the subset of symbols satisfying the transmission condition.
\end{Def}
\begin{comment}
\begin{Propr}
\label{Propr:eqtrans}
For $a \in S^m(\R^n \times \R^n \R^N)$ the following are equivalent
\begin{itemize}
\item[i)] $a$ has the transmission property at $x_n=0, y_n=0$.-
\item[ii)] For all $\alpha \in \N$ we have
\[
\begin{split}
\partial_{x_n}^\alpha a(x', 0, y', 0, \xi', \langle \xi' \rangle \xi_n)&= \sum_{j=0}^m b_{j, \alpha}(x', y', \xi')\xi_n^j +\\
& \sum_{k=-\infty}^{\infty} a_{\alpha, k} (x', y', \xi') \frac{(1- i \xi_n)^k}{(1+ i \xi_n)^{k+1}}
\end{split}
\]
where $b_j \in S^{m}(\R^{n-1}\times \R^{n-1}\times \R^{N-1})$ and $\{a_{\alpha, k}\}$ is a rapidly decreasing sequence in $S^m(\R^{n-1}\times \R^{n-1}\times \R^{N-1})$ w.r.t. $k$.
\end{itemize}
\end{Propr}
\end{comment}

For symbols positively homogeneous of degree $m$ in $\xi$ for large $|\xi|$, 
Definition \ref{Def:trans} is equivalent to 
\begin{equation}
  \label{eq:hom}
  \partial_{x_n}^k \partial_{y_n}^l\partial_{\xi'}^\alpha \partial_{x'}^\beta a\pt{x', 0, y', 0, 0,1}=
  (-1)^{m-|\alpha|} \partial_{x_n}^k \partial_{y_n}^l\partial_{\xi'}^\alpha \partial_{x'}^\beta a\pt{x', 0, y', 0,0, -1}
\end{equation}
for all $k, l\in \N$, $\alpha, \beta  \in \N^{n-1}$. The above condition is often called symmetry condition.
The proof of the equivalence can be found in \cite{RS85}. 

\begin{Def}[Admissible symplectomorphism] 
 \label{rem:adm} 
We call a symplectomorphism $\chi$ as above admissible, if 
  all the components of $\chi$ locally satisfy the transmission condition at the boundary. A phase function that represents an admissible 
  symplectomorphism will be called admissible. 
\end{Def}
\begin{rem}
Definition \ref{rem:adm} has an invariant meaning, because a change of coordinates 
in the cotangent bundle, induced by a change of coordinates in the base manifold,
is linear with respect to the fibers.
  Hence, if the transmission condition is satisfied in one local chart then it is satisfied also in any other.
  \end{rem}
%%%%%%%%%%%%%%%%%%%%%%%%%%%%%%%%%%%%%%%%%%%%%%%%%%%%%%%%%%%%%%%%%%%%%%%%%%%%%%%%%%%%%%%%%
\begin{ex}
 The easiest way to construct an admissible symplectomorphism is to consider $X=Y$
 and a Hamiltonian flow generated by a function $f \in C^{\infty}(T^*X \setminus 0)$ 
 which is positively homogeneous of degree $1$ in the fibers,
 satisfies the transmission conditions with respect to $\partial X$ and 
has vanishing  normal derivative  at $T^*_{\partial X}X \setminus 0$. 
\end{ex}
%%%%%%%%%%%%%%%%%%%%%%%%%%%%%%%%%%%%%%%%%%%%%%%%%%%%%%%%%%%%%%%%%%%%%%%%%%%%%%%%%%%%%%%
%%%%%%%%%%%%%%%%%%%%%%%%%%%%SECTION OSCILLATORY INTEGRALS%%%%%%%%%%%%%%%%%%%%%%%%%%%%%%%%%%%%%%%%%%
%%%%%%%%%%%%%%%%%%%%%%%%%%%%%%%%%%%%%%%%%%%%%%%%%%%%%%%%%%%%%%%%%%%%%%%%%%%%%%%%%%%%%%%%%%%%%%%%%%
\section{Oscillatory Integrals}
\label{sec:oscilint}
In this section we will analyze the continuity properties of oscillatory integrals 
arising from FIOs associated with Lagrangian submanifolds obtained from admissible symplectomorphisms  as in Definition \ref{rem:adm}.
We will use the concept of operator-valued pseudodifferential operators acting on 
weighted Sobolev spaces over $\R_+$; see the 
Appendix for basic definitions and results. 

Let us consider $\widetilde{A} \in I^m_{comp} ( \widetilde{X} \times\widetilde{Y}, \widetilde{\Lambda})$, where $\widetilde{\Lambda}= 
\textnormal{Graph}(\widetilde{\chi})'$.  
The definition of FIOs implies that for all $( x_0, y_0, \xi_0, \eta_0)=
\lambda_0 \in \widetilde{\Lambda}$, the operator is  microlocally given by a kernel of the type
\begin{align}
  \label{eq:oscil}
  \int e^{i \phi(x',x_n,y', y_n, \xi',\xi_n )}a(x', x_n, y', y_n,\xi', \xi_n) \dbar \xi' \dbar \xi_n,
\end{align}
up to smooth kernels. We will focus on the situation where boundary points are involved, so 
we suppose that, at $(x_0, y_0) \in \widetilde{X} \times \widetilde{Y}$, the local coordinates 
$(x_1, \ldots, x_n, y_1, \ldots,$ $y_n)=(x',x_n,y',y_n)$ are chosen so that $x_n$, $y_n$ are boundary
defining functions. We also identify the chart domains on $\widetilde{X}$ and $\widetilde{Y}$ 
with the corresponding open subsets
$\Omega_x,\Omega_y\subset\R^n$. As the  Lagrangian submanifold $\widetilde{\Lambda}$ is defined by the
graph of a  symplectomorphism,  we can always assume that 
\begin{align*}
  \phi \pt{x', x_n, y', y_n, \xi', \xi_n}= \psi\pt{x', x_n, \xi', \xi_n}- i y'\cdot \xi'- i y_n \cdot \xi_n. 
\end{align*}
The phase function $\phi$ is (initially) defined in an open conic neighborhood 
$\Gamma$ in $\Omega_x \times \Omega_y \times \pt{\R^n \setminus \{0\}}$, and
the symbol $a(x, y,\xi)$ has support contained in $\Gamma$. We set
\[
	\Omega^\partial_x=\Omega_x\cap\{x_n=0\},\quad
	\Omega^\partial_y=\Omega_y\cap\{y_n=0\},	
\]
and
\begin{align}
\label{eq:omega+}
	\Omega^+_x=\Omega_x\cap\{x_n\ge0\},\quad
	\Omega^+_y=\Omega_y\cap\{y_n\ge0\}.	
\end{align}
We also recall that, since the FIO is associated with a symplectomorphism, 
we can rely on representations both
by left and right quantization, see \cite[Ch. 25]{HO04}. 

For convenience, we will proceed under the following technical assumptions.
\begin{Ass}
	\label{TechnHyp:1}
	\begin{itemize}
	\item %It is natural to assume that t
	The amplitude $a$ satisfies the transmission condition w.r.t. 
	$x_n=0$, $y_n=0$. As $\chi$ is admissible, this is preserved under 
	changes of coordinates.
	\item  $\psi$ is defined on  $\R^n \times( \R^n \setminus \{0\})$.
	In fact, since $a$ vanishes outside $ \Gamma$, we can choose any \emph{good} 
	extension for $\psi$; see \cite{BCS14} for the extension that we will use below.
	%, in view of the next hypotheses on the symbol.
	%
	\item Since the kernel \eqref{eq:oscil} represents an operator on a compact manifold,
	$a$ can be assumed 
%	to be compactly supported with respect to $x$ and $y$ and 
	to vanish unless  $|x_n|$ and $|y_n|$ are small, or $x^\prime$, $y^\prime$ 
	lie outside suitable compact subsets of $\R^{n-1}$.
	Moreover, since the kernels \eqref{eq:oscil} are given modulo smoothing operators,
	it is no restriction to assume also that $a$ vanishes for $\xi$ in a neighborhood of 
	the origin. Otherwise, we can insert a $0$-excision function in the amplitude, which changes \eqref{eq:oscil} by a smooth kernel.
\end{itemize}
\end{Ass}
	 
Assumptions \ref{TechnHyp:1} allow us to focus on
oscillatory integrals of the type
\begin{align}
  \label{eq:osciint}
  \int e^{i\psi \pt{x', x_n, \xi', \xi_n} - iy' \cdot \xi'- i y_n \cdot \xi_n }
  a\pt{x',x_n, y', y_n,\xi', \xi_n} \dbar \xi' \dbar\xi_n,
\end{align}
with $\psi$ and $a$ as above.

\medskip

We will next analyze the action in the normal direction
of an operator with kernel as in \eqref{eq:osciint}. Before, however, we introduce a class of functions which will be useful in the sequel.
\begin{Def}
  \label{def:BS}
  A function $a \in C^{\infty}(\R^{n-1}_{x'}  \times \R_{x_n} \times \R^{n-1}_{\xi'}\times \R_{\xi_n})$ belongs to the set
  $BS^{m} \pt{\R^{n-1}, \R^{n-1}; S^{l}\pt{\R}}$ 
  if, for all $\alpha, \beta \in \N^{n-1}$,
  \begin{align*}
    (x_n, \xi_n)\mapsto \partial_{\xi'}^\alpha \partial_{x'}^\beta a \pt{x', \frac{x_n}{ \giap{\xi'} }, \xi', \xi_n \giap{ \xi'} }  \in S^{l}(\R \times \R)
  \end{align*}
  and each seminorm can be estimated uniformly by $\langle \xi' \rangle^{m-|\alpha|}$. That is, for all  $\gamma, \delta \in \N$
  %and compact $K \subseteq \R$ 
  there exists
  a constant $C_{\gamma, \delta}$ such that
  \begin{align*}
    \abs{
    \partial_{\xi_n}^{\gamma}\partial_{x_n}^{\delta} \ptq{\partial_{\xi'}^\alpha \partial_{x'}^\beta a\pt{x', \frac{x_n}{\giap{\xi'}}, \xi', \xi_n \giap{\xi'} }
    }}
    \leq C_{\gamma, \delta} 
    \giap{ \xi_n}^{l-|\gamma|} \giap{\xi'}^{m-|\alpha|}, \quad x, \xi \in \R^{n}.
  \end{align*}
\end{Def}
%%%%%%%%%%%%%%%%%%%%%%%%%%%%%%%%%%%%%%
Definition \ref{def:BS} implies that
$a \in BS^m \pt{ \R^{n-1}, \R^{n-1}, S^m\pt{\R}}$, if $a \in S^m(\R^n\times \R^n)$. 
This a consequence of the fact that
$\partial_{\xi'}^{\alpha}\partial_{x'}^{\beta}  a \in S^{m-|\alpha|} \pt{ \R^n\times \R^n}$
and the estimate
\begin{align*}
 %\label{vero}
 \abs{ 
 \partial_{\xi_n}^\gamma \partial_{x_n}^\delta\ptq{ a \pt{ x', \frac{x_n}{\giap{ \xi'}},
 \xi', \xi_n \giap{\xi'} }  }
 }
 \leq
 C \giap{ \xi'}^{m} \giap{\xi_n}^{m- |\gamma|}, \quad x,\xi \in \R^{n}.
\end{align*}
 
Moreover, it is clear that $BS$-spaces satisfy a multiplicative property, that is
\begin{align*} 
 %\nonumber
 BS^{m} \pt{\R^{n-1}, \R^{n-1}; S^l \pt{\R}} \cdot BS^{m'} \pt{\R^{n-1}, \R^{n-1}; S^{l'}\pt{\R}} &\\
 \label{eq:prodbs}
 \subseteq BS^{m+m'} &\pt{\R^{n-1}, \R^{n-1}, S^{l+l'}\pt{\R}}.
\end{align*}
The proof of the Lemma \ref{Lem:induc} and Theorem \ref{thm:genint}, below, can be found in \cite{BCS14}.
\begin{Lem}
  \label{Lem:induc}
  Let $a \in S^m(\R^n \times \R^n)$ and $\psi$ satisfy Assumptions \ref{TechnHyp:1}.
Then
 \begin{equation}
    \label{induc}
     \begin{aligned}
    &\partial_{\xi'}^{\alpha}\partial_{x'}^\beta \pt{e^{i \psi(x', x_n, \xi', \xi_n)-i \psi_{\partial} 
    (x', \xi')}a \pt{x', x_n,\xi', \xi_n}}\\ 
    &\hspace{2.5cm}= e^{i \psi(x', x_n, \xi', \xi_n)- i\psi_{\partial}(x', \xi')} \tilde{a} \pt{x', x_n, \xi', \xi_n},
  \end{aligned}
  \end{equation}
  where $\tilde{a}
  \in BS^{m-|\alpha|} \pt{\R^{n-1}, \R^{n-1}; S^{m+|\beta|}\pt{\R}}$.
\end{Lem}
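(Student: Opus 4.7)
The plan is to argue by induction on $N=|\alpha|+|\beta|$. The base case $N=0$ is precisely the observation immediately following Definition~\ref{def:BS}: every $a\in S^m(\R^n\times\R^n)$ lies in $BS^m(\R^{n-1},\R^{n-1};S^m(\R))$, so \eqref{induc} holds with $\tilde a=a$.

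For the inductive step, starting from \eqref{induc} with $\tilde a\in BS^{m-|\alpha|}(\R^{n-1},\R^{n-1};S^{m+|\beta|}(\R))$, I would apply one further derivative and expand by the Leibniz rule:
$$\partial_{\xi'_j}\bigl(e^{i(\psi-\psi_{\partial})}\tilde a\bigr)=e^{i(\psi-\psi_{\partial})}\bigl(\partial_{\xi'_j}\tilde a+i\,\partial_{\xi'_j}(\psi-\psi_{\partial})\,\tilde a\bigr),$$
and analogously for $\partial_{x'_j}$. The amplitude-derivative terms $\partial_{\xi'_j}\tilde a$ and $\partial_{x'_j}\tilde a$ land in the required target class directly from the $BS$-definition (a $\xi'$-derivative lowers the first-slot order by one; an $x'$-derivative preserves the orders and embeds harmlessly into the next-higher $S$-class). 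Thus the induction is reduced to controlling the phase-derivative terms.

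For these, the decisive geometric input is the boundary identity
$$\psi(x',0,\xi',\xi_n)=\psi_{\partial}(x',\xi'),$$
valid modulo an additive constant which we absorb once and for all. This follows from the fact that $\psi$ is a generating function for $\chi$ together with the boundary-preservation in \eqref{chi2}: at $x_n=0$ one has $y_n^{*}=\partial_{\xi_n}\psi(x',0,\xi',\xi_n)=0$, so $\psi(x',0,\cdot,\cdot)$ is independent of $\xi_n$ and is a generating function for $\chi_{\partial}$, which forces it to coincide with $\psi_{\partial}$. Taylor expansion in $x_n$ then yields
$$\psi-\psi_{\partial}=x_n\,\rho,\qquad \rho(x',x_n,\xi',\xi_n)=\int_0^1(\partial_{x_n}\psi)(x',tx_n,\xi',\xi_n)\,dt\in S^1(\R^n\times\R^n),$$
so that $\partial_{\xi'_j}(\psi-\psi_{\partial})=x_n\partial_{\xi'_j}\rho$ and $\partial_{x'_j}(\psi-\psi_{\partial})=x_n\partial_{x'_j}\rho$ each carry an explicit factor of $x_n$.

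Finally, I would feed this factorization into the Leibniz expansion and use the multiplicative property of the $BS$-classes recalled just before the lemma, together with the classical symbol estimates for $\rho$, $\partial_{\xi'_j}\rho\in S^0$ and $\partial_{x'_j}\rho\in S^1$. The key mechanism is that the rescaling $x_n\mapsto x_n/\langle\xi'\rangle$ built into Definition~\ref{def:BS} converts the prefactor $x_n$ into precisely the $\langle\xi'\rangle$-gain needed to place the new terms into $BS^{m-|\alpha|-1}(\R^{n-1},\R^{n-1};S^{m+|\beta|}(\R))$ and $BS^{m-|\alpha|}(\R^{n-1},\R^{n-1};S^{m+|\beta|+1}(\R))$, respectively. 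I expect the main obstacle to be the geometric step $\psi-\psi_{\partial}=x_n\rho$: once that factorization is in hand, the rest is routine bookkeeping inside the $BS$-calculus.
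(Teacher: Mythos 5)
Two preliminary remarks. First, this paper does not contain a proof of Lemma \ref{Lem:induc} at all: it explicitly defers both this lemma and Theorem \ref{thm:genint} to \cite{BCS14}, so your argument can only be judged against the statement and the way it is used later, not against an in-paper proof. Second, your skeleton is reasonable and surely in the spirit of the intended argument: induction on $|\alpha|+|\beta|$, the base case from the inclusion $S^m\subseteq BS^m$, the amplitude-derivative terms absorbed by the internal $\xi',x'$-derivatives already built into Definition \ref{def:BS}, and the factorization $\psi-\psi_\partial=x_n\rho$ with $\rho\in S^1$. (For the latter, no ``additive constant'' needs absorbing: $\psi_\partial$ is \emph{defined} by $\psi(x',0,\xi',\xi_n)=\psi_\partial(x',\xi')$, see Remark \ref{rem:phfprop} and \eqref{eq:psidelta}, so this step is immediate and is not where the difficulty lies.)

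The genuine gap is exactly the step you call routine bookkeeping: the claim that the rescaling $x_n\mapsto x_n/\langle\xi'\rangle$ converts the prefactor $x_n$ into a gain of $\langle\xi'\rangle^{-1}$. As a pointwise estimate this fails on the relevant region. By Assumptions \ref{TechnHyp:1} the amplitude is supported in $|x_n|\le\epsilon$, so after the rescaling its support reaches $|x_n|\sim\epsilon\langle\xi'\rangle$, where the factor $x_n/\langle\xi'\rangle$ is only $O(1)$, not $O(\langle\xi'\rangle^{-1})$. Consequently, for the $\xi'$-phase term $i\,x_n(\partial_{\xi'_j}\rho)\tilde a$ your bounds give seminorms $O(\langle\xi'\rangle^{m-|\alpha|}\langle\xi_n\rangle^{m+|\beta|})$, one full power of $\langle\xi'\rangle$ short of membership in $BS^{m-|\alpha|-1}$; the $x'$-phase term has the same shortfall, since there the same factor must also compensate $\partial_{x'_j}\rho\in S^1$ to avoid \emph{raising} the first-slot order. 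The order-lowering property of multiplication by $x_n$ is an operator-theoretic fact -- it is an operator-valued symbol of order $-1$ at the cost of one $x_n$-weight (see the Appendix and the way this is invoked in the proof of Theorem \ref{th:simbprin}) -- and not a pointwise estimate inside a class $S^{m+|\beta|}(\R)$ that requires boundedness in $x_n$. To close the argument you must either work with normal-direction classes that record growth/decay in $x_n$ (SG-type estimates in $(x_n,\xi_n)$, the setting of \cite{BCS14}) and then show these weights are harmless for the subsequent $\Si(\R_\pm)$-valued estimates, or restructure the induction so that the $x_n$-factors are traded at the operator level rather than pointwise. As written, the decisive quantitative mechanism is asserted, not proved, and the obstacle you flag (the factorization of the phase) is the easy part.
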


\begin{rem}
	\label{rem:phfprop}
	Since $\chi$ preserves the boundary,
	$\partial_{\xi_n} \psi(x', 0, \xi', \xi_n)$ is identically equal to zero,
hence $\psi(x', 0, \xi', \xi_n)$ is independent of $\xi_n$. We set
\begin{align}
  \label{eq:psidelta}
  \psi \pt{x', 0, \xi', \xi_n}=\psi_\partial\pt{x', \xi'} 
\end{align}
and notice that $\psi_\partial$  represents the symplectomorphism at the boundary $\chi_\partial$, described in Remark \ref{rem:triv}. 
\end{rem}

\begin{Thm}
  \label{thm:genint}
  Let $a \in S^m_{\textrm{tr}}\pt{\R^{n}\times \R^{n}}$ and $\psi$ be as in Assumptions \ref{TechnHyp:1}. Then, the operator
  \begin{align*}
    \op^\psi_{n}(a): u \mapsto \int e^{i \psi(x', x_n, \xi', \xi_n)-i \psi_\partial(x',\xi')} a(x', x_n, \xi', \xi_n) \hat{u}(\xi_n) \dbar \xi_n
  \end{align*}
  belongs to $S^m \pt{\R^{n-1}, \R^{n-1}; \Si(\R), \Si\pt{\R}}$.
\end{Thm}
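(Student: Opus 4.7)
The plan is to verify directly the operator-valued symbol estimates that define membership in $S^m(\R^{n-1}, \R^{n-1}; \Si(\R), \Si(\R))$. With the standard group action $\kappa_\lambda u(t) = \lambda^{1/2} u(\lambda t)$, one needs, for every continuous seminorm $\|\cdot\|$ on $\mathscr{L}(\Si(\R), \Si(\R))$ and every $\alpha, \beta \in \N^{n-1}$, an estimate
\[
\left\|\kappa_{\langle\xi'\rangle}^{-1}\, \partial_{\xi'}^\alpha \partial_{x'}^\beta \op^\psi_n(a)(x', \xi')\, \kappa_{\langle\xi'\rangle}\right\| \leq C_{\alpha\beta} \langle\xi'\rangle^{m-|\alpha|}.
\]
First I would invoke Lemma~\ref{Lem:induc}: up to the exponential prefactor $e^{i(\psi - \psi_\partial)}$, differentiation in $(x', \xi')$ leaves an integral of the same form, now with amplitude $\tilde a \in BS^{m-|\alpha|}(\R^{n-1}, \R^{n-1}; S^{m+|\beta|}(\R))$. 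It is therefore enough to bound $\kappa_{\langle\xi'\rangle}^{-1} \op^\psi_n(\tilde a) \kappa_{\langle\xi'\rangle}$ in $\mathscr{L}(\Si(\R), \Si(\R))$, uniformly in $x'$, with $\langle\xi'\rangle^{m-|\alpha|}$ on each seminorm.

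Next I would compute the conjugation explicitly. Using $\widehat{\kappa_\lambda u}(\xi_n) = \lambda^{-1/2} \hat u(\xi_n/\lambda)$ and the change of variable $\xi_n = \langle\xi'\rangle \eta_n$ yields
\[
[\kappa_{\langle\xi'\rangle}^{-1} \op^\psi_n(\tilde a) \kappa_{\langle\xi'\rangle} u](x_n) = \int e^{i\Phi(x', x_n, \xi', \eta_n)}\, \tilde a\!\left(x', \tfrac{x_n}{\langle\xi'\rangle}, \xi', \langle\xi'\rangle \eta_n\right) \hat u(\eta_n)\, \dbar\eta_n,
\]
where $\Phi := \psi(x', x_n/\langle\xi'\rangle, \xi', \langle\xi'\rangle \eta_n) - \psi_\partial(x', \xi')$. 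By the very definition of the $BS$-class, the rescaled amplitude lies in $S^{m+|\beta|}(\R_{x_n} \times \R_{\eta_n})$ with all seminorms dominated by $\langle\xi'\rangle^{m-|\alpha|}$, uniformly in $(x', \xi')$. For the phase, Remark~\ref{rem:phfprop} combined with a Taylor expansion in $x_n$ gives $\psi - \psi_\partial = x_n \tilde\psi$, where $\tilde\psi$ is positively homogeneous of degree one in $\xi$ for $|\xi|$ large, the only relevant regime thanks to the $\xi$-excision in Assumptions~\ref{TechnHyp:1}. Factoring $\langle\xi'\rangle$ out of the last slot by homogeneity delivers
\[
\Phi(x', x_n, \xi', \eta_n) = x_n \, \tilde\psi\!\left(x', \tfrac{x_n}{\langle\xi'\rangle}, \tfrac{\xi'}{\langle\xi'\rangle}, \eta_n\right),
\]
a phase smooth in $(x_n, \eta_n)$ with all derivatives uniformly bounded in $(x', \xi')$, since $\xi'/\langle\xi'\rangle$ stays in a compact set and $x_n/\langle\xi'\rangle \to 0$.

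The problem is thereby reduced to bounding, in $\mathscr{L}(\Si(\R), \Si(\R))$, a family of one-dimensional oscillatory integral operators parametrized by $(x', \xi')$, with uniformly smooth phases and amplitudes in $S^{m+|\beta|}(\R \times \R)$ of uniform seminorm size of order $\langle\xi'\rangle^{m-|\alpha|}$. Schwartz-to-Schwartz continuity of each such operator is classical: weights $x_n^N$ are absorbed by integration by parts in $\eta_n$, exploiting that $\partial_{\eta_n}\Phi$ carries the factor $x_n$, while $\partial_{x_n}^N$ is dealt with by differentiating under the integral, the resulting polynomial weights in $\eta_n$ being swallowed by the rapid decay of $\hat u$.

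The hard part will be to ensure that all these estimates are genuinely uniform in $(x', \xi')$ with precisely the claimed growth $\langle\xi'\rangle^{m-|\alpha|}$, with no spurious positive powers of $\langle\xi'\rangle$ sneaking in during the repeated integrations by parts. This is exactly what the $BS$-framework and the conjugation by $\kappa_{\langle\xi'\rangle}$ were designed for: together with the transmission condition on $a$ and Lemma~\ref{Lem:induc}, they guarantee that each derivative of the rescaled amplitude and of the rescaled phase has its $\xi'$-dependence captured exactly by the $BS$-seminorms, yielding the stated bound.
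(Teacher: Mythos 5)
Your strategy is essentially the intended one; note that the paper does not prove Theorem \ref{thm:genint} itself but refers to \cite{BCS14}, remarking afterwards that the argument rests on the calculus of SG Fourier integral operators. Your conjugation by $\kappa_{\langle\xi'\rangle}$, the use of Lemma \ref{Lem:induc} together with the $BS$-classes to convert every $(x',\xi')$-derivative into an amplitude whose rescaling is a one-dimensional symbol of order $m+|\beta|$ with seminorms $O(\langle\xi'\rangle^{m-|\alpha|})$, and the Taylor-plus-homogeneity rewriting of the phase (using Remark \ref{rem:phfprop}) are exactly the mechanism these classes were designed for; what you do differently is to re-prove the uniform $\Si(\R)\to\Si(\R)$ continuity of the resulting family of one-dimensional oscillatory integral operators by hand, where \cite{BCS14} quotes the SG calculus \cite{CO99b,CO99} -- legitimate and more self-contained. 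Two points should, however, be made explicit. First, absorbing the weights $x_n^N$ by integration by parts in $\eta_n$ uses not only that $\partial_{\eta_n}\Phi$ carries a factor $x_n$, but also that the cofactor $\partial_{\eta_n}\partial_{x_n}\psi$ is bounded away from zero for $x_n$ near $0$ (the only region where the amplitude lives); this holds for admissible phases -- it is the positivity of $\partial_{\eta_n}\xi_n^*$ at the boundary coming from \eqref{eq:jacob}--\eqref{eq:simplbound}, the same fact exploited in Section \ref{sec:index} -- but it must be invoked, since ``carries a factor $x_n$'' alone does not suffice. Second, the rescaled phase does not have ``all derivatives uniformly bounded'': $\partial_{x_n}\Phi$ grows like $\langle\eta_n\rangle$ and $\partial_{\eta_n}\Phi$ like $\langle x_n\rangle$; what your argument actually needs, and what is true, are uniform SG-type estimates $|\partial_{x_n}^j\partial_{\eta_n}^k\Phi|\lesssim \langle x_n\rangle^{1-j}\langle\eta_n\rangle^{1-k}$ in $(x',\xi')$ on the support of the rescaled amplitude, where the homogeneity factorization is valid because the $\xi$-excision of Assumptions \ref{TechnHyp:1} keeps $(\xi'/\langle\xi'\rangle,\eta_n)$ away from the origin there. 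With these two refinements your sketch is a correct, somewhat more elementary substitute for the citation of the SG theory.
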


\begin{rem}
 Theorem \ref{thm:genint} is also valid for $\pt{ \op^\psi_{n}(a) }^t $, hence 
 $\op^{\psi}_{x_n}(a)$ can be extended to an operator-valued symbol in $S^m \pt{\R^{n-1}, \R^{n-1}; \Si'(\R), \Si'\pt{\R}}$.
As   $\iota:L^2(\R) \to \Si'(\R)$ and $e^+:L^2(\R_+)\to L^2(\R)$ can both be interpreted 
 as  operator-valued symbols of order $0$, we also have
 \begin{align}
  \label{eq:contl2b}
  \op^\psi_{n}(a) e^+ \in S^m \pt{\R^{n-1}, \R^{n-1}; L^2(\R_+), \Si'\pt{\R}}.
 \end{align}
 Moreover, since the proof of Theorem \ref{thm:genint} is based on the theory of SG FIOs, 
 see \cite{CO99b,CO99}, it is also possible to prove that for every $(x', \xi')$
 \begin{align}
  \label{eq:contsob}
  \op^\psi_{n}(a) e^+ \in \mathcal{L}(L^2(\R_+), H^{-m}(\R)).
 \end{align}
In general, however, we can not replace $\Si'(\R_+)$ by $H^{-m}(\R_+)$ in 
  \eqref{eq:contl2b}, see Remark \ref{rem:contro}.
\end{rem}
%%%%%%%%%%%%%%%%%%%%%
%%%%%%%%%%%%%%%%%%%%%
%%%%%%%%%%%%%%%%%%%
%%%%%%%%%%%%%%%%%%

The next step is to consider the action of an oscillatory integral as in \eqref{eq:osciint}
on derivatives of Dirac distributions at the origin.  

\begin{Thm}
  \label{teodirac}
  Let $a \in S^m_{\textrm{tr}} \pt{\R^{n}\times \R^{n}}$ and $\psi$ be as in Assumptions \ref{TechnHyp:1}. Then
  \begin{eqnarray*}
    k_j(x', \xi')&=& r^+ \op^{\psi}_n(a) \delta^{(j)}_0\\
    &=&r^+\int e^{i\psi(x', x_n, \xi', \xi_n)- i \psi_{\partial}(x', \xi')}a(x',x_n, \xi', \xi_n) \hat{\delta}^{(j)}_{0}(\xi_n) \dbar\xi_n 
  \end{eqnarray*}
  defines an operator-valued symbol in $S^{m+ \frac{1}{2}+j} \pt{\R^{n-1}, \R^{n-1}; \C, \Si\pt{\R_+}}$. Here,
  $\delta^{(j)}_0$ is the $j$-th derivative of the Dirac distribution at $0$.
\end{Thm}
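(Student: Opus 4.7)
The plan is to reduce to an oscillatory integral with a still-transmission-type amplitude, verify pointwise $\Si(\R_+)$ regularity in $x_n$ for each $(x',\xi')$, and then establish the operator-valued symbolic estimates in $(x',\xi')$. Since $\widehat{\delta_0^{(j)}}(\xi_n) = (i\xi_n)^j$, I rewrite
\[
k_j(x',x_n,\xi') = r^+\int e^{i(\psi(x',x_n,\xi',\xi_n)-\psi_\partial(x',\xi'))}\, b_j(x',x_n,\xi',\xi_n)\,\dbar\xi_n,
\]
with $b_j:=(i\xi_n)^j\,a$. Multiplication by polynomials in $\xi_n$ preserves $S^m_\tr$ (since $H'\cdot H\subset H$), so $b_j \in S^{m+j}_\tr(\R^n\times\R^n)$. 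The task therefore splits into (i) pointwise $\Si(\R_+)$-regularity and (ii) symbol estimates.

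For the decay, Remark~\ref{rem:phfprop} forces $\partial_{\xi_n}\psi(x',0,\xi',\xi_n)\equiv 0$, while the Jacobian form \eqref{eq:jacob} ensures non-degeneracy of $\partial_{x_n}\partial_{\xi_n}\psi|_{x_n=0}$. Consequently $\partial_{\xi_n}\psi = x_n\,g$ with $g$ positively homogeneous of degree zero in $\xi$ and $|g|\ge c_0>0$ on the conic support of $a$. Repeated integration by parts via the identity $Le^{i(\psi-\psi_\partial)}=e^{i(\psi-\psi_\partial)}$ for $L = (ix_n g)^{-1}\partial_{\xi_n}$ produces, after $N$ steps, an integrand bounded by $C\,x_n^{-N}\langle\xi\rangle^{m+j-N}$, whence
\[
|k_j(x',x_n,\xi')|\lesssim \langle\xi'\rangle^{m+j+1}(1+x_n\langle\xi'\rangle)^{-N}
\]
for $N$ large enough (for $x_n\langle\xi'\rangle\lesssim 1$ one uses the transmission condition on $b_j$ at $x_n=0$ to get a uniform bound). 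The same argument, applied after $\partial_{x_n}^\ell$ (each derivative contributing at most one factor $\partial_{x_n}\psi$ of order one in $\xi$), gives analogous estimates for $\partial_{x_n}^\ell k_j$, with the order raised by~$\ell$.

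For the symbolic estimates, with the standard group action $\kappa_\lambda f(x_n) = \lambda^{1/2}f(\lambda x_n)$ on $\Si(\R_+)$, we must bound
\[
\|\kappa_{\langle\xi'\rangle}^{-1}(\partial_{\xi'}^\alpha\partial_{x'}^\beta k_j)(x',\cdot,\xi')\|_{\Si(\R_+)} \lesssim \langle\xi'\rangle^{m+\frac12+j-|\alpha|}.
\]
Lemma~\ref{Lem:induc} applied to $b_j$ gives $\partial_{\xi'}^\alpha\partial_{x'}^\beta(e^{i(\psi-\psi_\partial)}b_j) = e^{i(\psi-\psi_\partial)}\tilde b_j$ with $\tilde b_j\in BS^{m+j-|\alpha|}(\R^{n-1},\R^{n-1};S^{m+j+|\beta|}(\R))$. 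The integration-by-parts argument of the previous step applies to this new amplitude and yields
\[
|\partial_{\xi'}^\alpha\partial_{x'}^\beta\partial_{x_n}^\ell k_j(x',x_n,\xi')| \lesssim \langle\xi'\rangle^{m+j+1+\ell-|\alpha|}\,(1+x_n\langle\xi'\rangle)^{-N}.
\]
Rewriting via $t=x_n\langle\xi'\rangle$ (which is the rescaling performed by $\kappa_{\langle\xi'\rangle}^{-1}$) and using $\partial_t^\ell\bigl[f(t/\langle\xi'\rangle)\bigr] = \langle\xi'\rangle^{-\ell}f^{(\ell)}(t/\langle\xi'\rangle)$, the extra factor $\langle\xi'\rangle^{-1/2-\ell}$ from the group action absorbs the $\ell$-loss and produces the required order $m+\tfrac12+j-|\alpha|$. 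The critical $+\tfrac12$ arises precisely from the combination of the $\langle\xi'\rangle$-Jacobian of $\xi_n\mapsto\langle\xi'\rangle\zeta$ and the $\langle\xi'\rangle^{-1/2}$ of $\kappa_{\langle\xi'\rangle}^{-1}$.

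The main obstacle is the joint decay $(1+x_n\langle\xi'\rangle)^{-N}$ needed for the Schwartz seminorms of $\kappa_{\langle\xi'\rangle}^{-1}k_j$, not merely decay in $x_n$ alone. In the flat Boutet de Monvel case $\psi=x\cdot\xi$ this follows directly from the decomposition $H = H^+\oplus H^-_0\oplus H'$ of the Fourier transform at the boundary. For a general admissible phase, the nonlinear coupling of $x_n$ and $\xi_n$ in $\psi$ obstructs a direct use of that decomposition and forces one to combine it with the integration-by-parts argument above, relying crucially on the quantitative non-vanishing of $\partial_{x_n}\partial_{\xi_n}\psi|_{x_n=0}$ provided by \eqref{eq:jacob}. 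The admissibility of $\chi$ is essential throughout, as it ensures that the transmission property persists under every differentiation and rescaling encountered, so that the boundary analysis via $H = H^+\oplus H^-_0\oplus H'$ can be applied stably to the amplitudes $\tilde b_j$ produced by Lemma~\ref{Lem:induc}.
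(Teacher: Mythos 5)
Your reduction to $b_j=(i\xi_n)^j a\in S^{m+j}_{\tr}$ and the integration by parts via $\partial_{\xi_n}\psi=x_n g$, $|g|\gtrsim 1$, do give the correct bound $\giap{\xi'}^{m+j+1-|\alpha|}(1+x_n\giap{\xi'})^{-N}$ in the region $x_n\giap{\xi'}\gtrsim 1$, and the bookkeeping with $\kappa_{\giap{\xi'}^{-1}}$ there is consistent with the order $m+j+\frac12$. The genuine gap is exactly the region $0<x_n\lesssim\giap{\xi'}^{-1}$, which you flag as ``the main obstacle'' but do not actually resolve. There the integration by parts is useless, the integral $\int e^{i(\psi-\psi_\partial)}b_j\,\dbar\xi_n$ with $b_j$ of order $m+j\ge 0$ in $\xi_n$ is only an oscillatory integral, and its distributional value genuinely contains terms $c_k(x',\xi')\delta_0^{(k)}(x_n)$ coming from the polynomial component $H'$ of the transmission decomposition; a ``uniform bound from the transmission condition at $x_n=0$'' is therefore not available for the untruncated integral, and your argument never exhibits the mechanism by which $r^+$ removes these singular terms, nor how the $H^+\oplus H^-_0\oplus H'$ decomposition can be applied at all given the nonlinear coupling of $x_n$ and $\xi_n$ in the phase. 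Saying that one must ``combine'' the decomposition with the integration by parts is precisely the step that needs a proof, and it is also where the factor $\giap{\xi'}^{\frac12}$ and the $\Si(\R_+)$-seminorms up to $x_n=0^+$ must be produced.

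The paper closes this gap by a device absent from your proposal: it passes to the \emph{right} quantization. Testing against $\delta_0$ (or $\delta_0^{(j)}$, via $\hat\delta_0^{(j)}=(i\xi_n)^j\hat\delta_0$) then evaluates the right symbol at $y_n=0$, where the phase correction $r^{-1}=\psi^{-1}-\psi^{-1}_\partial$ vanishes identically, so the troublesome oscillatory factor disappears exactly. At that point the transmission decomposition of $a_R(y',0,\xi',\xi_n\giap{\xi'})$ applies verbatim: the polynomial part produces $\delta_0^{(k)}$-terms which are annihilated by $r^+$ (this is \eqref{eq:dirac45}--\eqref{eq:rpiu}), and the $H^+\oplus H^-_0$ part produces $\giap{\xi'}^{\frac12}\sum_l\lambda_l b_l(x',\xi')\,r^+h_l(\giap{\xi'}x_n)$ with $\{h_l\}$ a null sequence in $\Si(\R_+)\oplus\Si(\R_-)$, which is manifestly an element of $S^{m+\frac12+j}\pt{\R^{n-1},\R^{n-1};\C,\Si(\R_+)}$; one then returns to the left quantization and treats $(x',\xi')$- and $x_n$-derivatives via Lemma \ref{Lem:induc}. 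To repair your argument you would need to supply an equivalent mechanism for the small-$x_n$ region; as written, the central estimate of the theorem is asserted rather than proved.
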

\begin{proof}
 We start by considering the operator $\op^\psi(a)$ acting on smooth functions defined on the whole of $\R^n$.
  \[
    \begin{split}
    \op^\psi(a):\; C_c^{\infty}&(\R^n) \to C^\infty(\R^n)\\
      u &\mapsto \int e^{i \psi(x',x_n,\xi',\xi_n)}a(x',x_n, \xi', \xi_n) \hat{u}(\xi',\xi_n)\, \dbar \xi' \dbar\xi_n\\
      &=\int e^{i \psi_\partial(x',\xi')} \int e^{i r(x',x_n, \xi', \xi_n)}a(x',x_n, \xi', \xi_n) \hat{u}(\xi',\xi_n)\, \dbar \xi' \dbar \xi_n,
    \end{split}
  \]
  where $r(x',x_n, \xi', \xi_n)= \psi(x', x_n, \xi', \xi_n)- \psi_\partial(x', \xi')$.
%  $\psi_{\partial}$ represents $\chi_{\partial}$. 
 First let $j=0$. Going over to the right quantization, we find
  \begin{eqnarray}    
    %\phantom{=}&
\lefteqn{\hspace*{0cm}\op^\psi(a)(\phi \otimes \delta_0) (x_n)} \nonumber\\
    \nonumber
    &=& \int e^{i \psi_{\partial} (x',\xi')}   \int e^{i \psi(x', x_n, \xi', \xi_n)- \psi_{\partial}(x', \xi')}
    a(x',x_n, \xi', \xi_n) \hat{\delta}_{0}(\xi_n)\, \dbar\xi_n \hat{\phi} (\xi')\dbar \xi'\\
    \nonumber
    &=&\int e^{i x' \cdot \xi' + i x_n \cdot \xi_n - i \psi^{-1}(y',y_n, \xi', \xi_n) } a_R(y',y_n, \xi', \xi_n) 
    \phi(y')\otimes \delta_0(y_n)\, dy' dy_n \dbar \xi' \dbar \xi_n\\
    \nonumber
    &=&\int e^{i x' \cdot \xi' - i \psi^{-1}_{\partial}(y',\xi')} \int e^{i x_n \cdot \xi_n -
    i r^{-1} (y',y_n, \xi', \xi_n)} a_R(y',y_n, \xi', \xi_n)\cdot \\
    \label{eq:oppsi}
    && \hspace{3.8cm}\cdot\phi(y')\otimes \delta_0(y_n)\, dy' dy_n \dbar \xi' \dbar \xi_n,
  \end{eqnarray}
  where the equality is modulo operators with smooth kernel. In \eqref{eq:oppsi},
  \begin{align*}
    r^{-1}(y',y_n, \xi', \xi_n) = \psi^{-1}(y',y_n, \xi', \xi_n)- \psi^{-1}_{\partial}(y',\xi'),
  \end{align*}
  $\psi^{-1}$ is the phase function representing the symplectomorphism $\chi^{-1}$,
  and $\psi_\partial^{-1}= \psi^{-1}|_{y_n=0}$.
  Now, we focus on the action in the normal direction, namely, the expression
  \begin{align*}
    B \pt{y', \xi', x_n}\pt{\delta_0}= \int e^{i x_n \cdot\xi_n } \int
    e^{- i r^{-1}(y', y_n, \xi', \xi_n)}a_R \pt{y',y_n,\xi',\xi_n} \delta_0(y_n)\, dy_n \dbar\xi_n.
  \end{align*}
  The symbol $a_R$ satisfies the transmission condition, so we can write
  \[
    a_R \pt{y', 0, \xi', \xi_n \langle \xi' \rangle}=
    \sum_{k=0}^m s_k^R \pt{y', \xi'}\xi_n^k \giap{ \xi'}^k+ \sum_{l=0}^\infty \lambda_l b_l^R \pt{y', \xi'}
    \hat{h}_l(\xi_n),
  \]
  where $s^R_k \in S^{m-k} \pt{\R^{n-1}\times \R^{n-1}}$, $\ptg{\lambda_l}_{l \in \N} \in l^1$, $\ptg{b^R_l}_{l \in \N} \subset S^m \pt{\R^{n-1}\times \R^{n-1}}$
  is a null sequence, $\ptg{h_l}_{l \in \N}$ a null sequence in $ \Si(\R_{+}) \oplus \Si(\R_{-})$.
  By the definition of operators on distributions, we have for all $u \in C_c^\infty(\R)$
  \begin{eqnarray}
 \lefteqn{\langle \kappa_{\giap{\xi'}^{-1} }
    B \delta_0, u \rangle= \langle \delta_0, B^t  \pt{\kappa_{\giap{\xi'}} u\ }\rangle} \nonumber\\
    &=&\langle \delta_0, \giap{ \xi'}^{\frac{1}{2}} \int e^{- i r^{-1}(y', x_n, \xi', \xi_n)+ i y_n \cdot \xi_n} 
    a_R(y', x_n, \xi', \xi_n) u(\giap{\xi'} y_n)dy_n \dbar \xi_n\rangle  \nonumber\\ 
    &=& \giap{\xi'}^{\frac{1}{2}} \int a_R \pt{y',0, \xi', \xi_n \giap{ \xi'}} 
    \hat{u}(- \xi_n)\dbar \xi_n   \nonumber\\
    &=& \giap{\xi'}^{\frac{1}{2}} \sum_{k=0}^m  s^R_{k} (y', \xi') \int \xi_n^k  \giap{ \xi'}^k \hat{u} (-\xi_n) \dbar \xi_n \label{eq:ar0}\\
    &&\qquad \qquad +\giap{\xi'}^{\frac{1}{2}}
    \sum_{l=0}^\infty \lambda_l b^R_l \pt{y',\xi'}  \int \hat{h}_l(\xi_n) \hat{u}(-\xi_n) \dbar \xi_n.
    \nonumber
  \end{eqnarray}
  Using the properties of the Fourier transform,
  \begin{align}
    \nonumber
    \langle \kappa_{\langle \xi' \rangle^{-1} } B \delta_0, u \rangle=&\langle \xi' \rangle^{\frac{1}{2}}
    \sum_{k=0}^m s^R_{k}(y', \xi') \langle \xi'\rangle^k
    (-1)^k i^k \giap{\delta_0^{(k)},u}  \\ \label{eq:dirac45}
    &+\giap{\xi'}^{\frac{1}{2}}\sum_{l=0}^\infty \lambda_l b^R_l(y',\xi') \int {h}_l(x_n) {u}(x_n) dx_n.
  \end{align}
  Applying the restriction operator $r^+$, all terms that depend on $\delta_0^{(k)}$ vanish, so we get
  \begin{equation}
    \label{eq:rpiu}
    \kappa_{\giap{\xi'}^{-1} } r^+ \pt{B  \delta_0}(y',x_n, \xi') = \giap{\xi'}^{\frac{1}{2}}\sum_{l=0}^\infty \lambda_l b_l^R(y',\xi') r^+h_l(x_n).
  \end{equation}
 Derivatives w.r.t.\ $(x', \xi')$ can be treated in the same way. Hence  $(r^+ B \delta_0) \left(y', \xi'\right) \in S^{m+ \frac{1}{2}}(\R^{n-1}, \R^{n-1}; \C, \Si \pt{\R_+})$. 
  Inserting \eqref{eq:rpiu} 
  into \eqref{eq:oppsi}, we obtain 
  \begin{multline*}
    r^+ \op^\psi(a) (\phi \otimes \delta_0)(x_n)=
     \int e^{i x'\cdot \xi' - i \psi_\partial^{-1}(y',\xi')}(r^+ B\delta_0) \pt{y', \xi'} \phi(y')dy' \dbar \xi'\\
    =  \langle \xi' \rangle \sum_{l=0}^\infty r^+ h_l( \langle \xi' \rangle x_n)  \int e^{i x'\cdot \xi' - i \psi_\partial^{-1}(y',\xi')} 
    \lambda_l b^R_l(y',\xi') 
    \phi(y')dy' \dbar \xi'.
  \end{multline*}
Switching back to the left quantization, we obtain, modulo smoothing operators, the symbol-kernel
  \[
    \kappa_{\giap{\xi'}} r^+ \op_n^\psi(a)\delta_0= \giap{\xi'}^{\frac{1}{2}} \sum_{l=0}^\infty 
    \lambda_l b_l (x',\xi') r^+h_l (x_n),
  \]
  with a suitable null sequence $\{b_l\}\subset   S^m (\R^{n-1}\times \R^{n-1})$.
  This implies the assertion for $j=0$.
  The proof for $j>0$ is similar. In fact, it is enough to notice that
  \begin{align}
    \label{eq:derdirac}
    \hat\delta_0^{(j)}(\xi_n)=  (i\xi_n)^j \hat\delta_0,
  \end{align}
 so we can follow the same steps, but with a symbol of order $m+j$. 
 
 Finally, we have to take into account all the seminorms of $\Si(\R_+)$, hence to consider derivatives with respect to the
 $x_n$-variable. Lemma \ref{Lem:induc} implies that this step can be obtained from the previous one using a different symbol,
 which still satisfies Assumptions \ref{TechnHyp:1}, since the phase is admissible. 
\end{proof}
%%%%%%%%%%%%
\begin{rem}
  \label{dirac-}
  With the same notation as in Theorem \ref{teodirac}, we obtain that
  \begin{align*}
    r^- \int e^{i \psi(x',x_n, \xi', \xi_n)- i \psi_\partial(x', \xi')}a(x', x_n, \xi', \xi_n) \hat{\delta}_0^{(j)} \dbar \xi_n
  \end{align*}
  is a symbol in $S^{m+ \frac{1}{2}+j} (\R^{n-1}, \R^{n-1}; \C, \Si\pt{\R_-})$.
\end{rem}

\begin{rem} \label{rem:diracreg}
The proof of Theorem \ref{teodirac}, in particular \eqref{eq:ar0}, shows more.
Suppose that the symbol $a$  vanishes at $x_n=0$ to order $\ge m_++1$, where 
$m_+=\max\{m,0\}$.
Then $a_R(x',0,\xi)$ has no polynomial part.  Hence 
\begin{align}
    \label{eq:equival}
    e^+r^+ \op^{\psi}(a) \delta_0 = \chi_{\R_+}\op^{\psi}(a)\delta_0,
  \end{align}
where $\chi_{\R_+}$ is the characteristic function of $\R_+$.

If $a$ vanishes even to order $\ge m_++N+1$ for some $N\in\N$, then Equation 
\eqref{eq:equival} also holds with $\delta_0$ replaced by $\delta_0^{(j)}$ for 
$j=0,\ldots, N$. 
 \end{rem}
\begin{rem}
  \label{rem:diractop}
We noted in the proof of Theorem \ref{teodirac} that  
$r^+\op_n^\psi(a)\delta^{(j)}_0= r^+\op_n^\psi(a \xi_n^j)\delta_0$, $j>0$.
If we then switch to the right quantization, we obtain, instead of the powers of $\xi_n$, 
derivatives of $a_R$ with respect to $y_n$ and factors of $y_n$-derivatives of $\psi^{-1}$.
 The top order term will involve only the $j$-th power of $\partial_{y_n}\psi^{-1}$,
 and no terms containing higher order derivatives of the phase.
 Finally, evaluating this at the boundary, we find an expression depending only on the right symbol at the boundary, multiplied by  $(\partial_{y_n}\psi^{-1}|_{y_n=0})^j$.  
This remark will be crucial in the definition of the principal symbol in Section \ref{sec:prin}, more precisely in Theorem \eqref{th:simbprin}.
\end{rem}

The action on the Dirac distributions at the origin is the key point to prove the continuity of 
the operator $r^+ \op_n^{\psi}(a) e^+$ 
in $\Si(\R_+)$. Before, we need  a technical lemma, whose proof we omit, since it is straightforward. 
\begin{Lem}
 \label{lem:omega1}
 Let $\zeta$ be an excision function of the origin, that is $0 \leq \zeta \leq 1$, $\zeta(t)=0$ 
 for $|t|\leq 1$ and
 $\zeta(t)=1$ for $|t|\geq 2$. 
 Then, the function $\xi_n\mapsto\zeta\left(\frac{\xi_n}{\langle \xi' \rangle}\right) \xi_n^{-l}$ belongs to $S^{-l}(\R\times\R)$.
\end{Lem}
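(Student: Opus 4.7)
The plan is to verify the symbol estimates for $f(\xi_n,\xi'):=\zeta(\xi_n/\langle\xi'\rangle)\,\xi_n^{-l}$ directly from Leibniz and the chain rule; no substantial obstacle arises, which is presumably why the authors call the statement straightforward.

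First I would observe that on $\supp f$ one has $|\xi_n|\geq\langle\xi'\rangle\geq 1$. In particular, $\xi_n^{-l}$ is smooth there, and since $|\xi_n|^{2}\geq\langle\xi'\rangle^{2}=1+|\xi'|^{2}$ one obtains $|\xi_n|\leq\langle(\xi_n,\xi')\rangle\leq\sqrt{2}\,|\xi_n|$. Consequently it suffices to prove the pointwise bound $|\partial_{\xi_n}^{\alpha}\partial_{\xi'}^{\beta}f(\xi_n,\xi')|\lesssim |\xi_n|^{-l-\alpha-|\beta|}$, because this is equivalent to the required $S^{-l}(\R\times\R)$ estimate on the set where $f$ is nonzero (and $f$ vanishes elsewhere).

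Next, I would estimate the two factors separately. For the power, $\partial_{\xi_n}^{\alpha}\xi_n^{-l}$ is a constant times $\xi_n^{-l-\alpha}$, hence $O(|\xi_n|^{-l-\alpha})$ wherever defined. For the cutoff, Fa\`a di Bruno applied to $\zeta\bigl(\xi_n/\langle\xi'\rangle\bigr)$ shows that every mixed derivative of positive order is supported in the annulus $\langle\xi'\rangle\leq|\xi_n|\leq 2\langle\xi'\rangle$, and that each $\partial_{\xi_n}$ contributes a factor $\langle\xi'\rangle^{-1}\lesssim|\xi_n|^{-1}$, while each $\partial_{\xi'_j}$ contributes a factor controlled by $|\xi_n|\,\langle\xi'\rangle^{-2}\lesssim|\xi_n|^{-1}$ on this annulus. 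Thus, every additional derivative of either type buys an extra factor of $|\xi_n|^{-1}$.

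Combining these via the Leibniz rule on the product $\zeta(\xi_n/\langle\xi'\rangle)\cdot\xi_n^{-l}$ yields $|\partial_{\xi_n}^{\alpha}\partial_{\xi'}^{\beta}f|\leq C_{\alpha,\beta}\,|\xi_n|^{-l-\alpha-|\beta|}$, and the preliminary observation that $|\xi_n|\sim\langle(\xi_n,\xi')\rangle$ on $\supp f$ promotes this to the $S^{-l}(\R\times\R)$ estimate. The only bookkeeping issue is the Fa\`a di Bruno expansion for the nested composition $\zeta(\xi_n/\langle\xi'\rangle)$, but this is routine; the essential point that drives the whole argument is that the cutoff forces $|\xi_n|\sim\langle\xi'\rangle\sim\langle(\xi_n,\xi')\rangle$ wherever $f$ is nonzero, so every lost power of $\langle\xi'\rangle^{-1}$ is as good as a power of $|\xi_n|^{-1}$.
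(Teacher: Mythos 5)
Your verification is correct and is essentially the routine computation the paper had in mind when it omitted the proof as straightforward: on the support the cutoff forces $\langle\xi'\rangle\le|\xi_n|$, so $|\xi_n|\sim\langle\xi_n\rangle$, and every derivative hitting $\zeta\pt{\xi_n/\langle\xi'\rangle}$ is supported in the annulus $\langle\xi'\rangle\le|\xi_n|\le2\langle\xi'\rangle$, where each lost power of $\langle\xi'\rangle$ is equivalent to a power of $|\xi_n|$. In fact you prove slightly more than the literal statement (joint decay under $\xi'$-derivatives as well as the $\xi_n$-estimates uniform in $\xi'$), which is harmless and is exactly what is exploited when the lemma is used in the proof of Theorem \ref{th:conts}.
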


\begin{Thm}
  \label{th:conts}
  Let $a$ and $\psi$ satisfy Assumptions \ref{TechnHyp:1}. Then
  \begin{align*}
    r^+ \op_n^\psi(a) e^+ : \:& \Si \pt{\R_+} \to \Si \pt{\R_+}\\
    & u\mapsto r^+ \iint e^{i \psi(x', x_n, \xi', \xi_n)- i \psi_{\partial}(x', \xi')- i y_n \cdot \xi_n} a(x',x_n, \xi', \xi_n) e^+ u (y_n)dy_n \dbar \xi_n 
  \end{align*}
  is an operator-valued symbol in $S^m \pt{\R^{n-1}, \R^{n-1}; \Si(\R_+), \Si\pt{\R_+}}$. 
  Corresponding results hold for $r^-\op^\psi_n(a) e^+$ and $r^+ \op^\psi_n(a)e^-$.
\end{Thm}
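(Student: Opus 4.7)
My plan reduces the claim to the already-established continuity on $\Si(\R)$ (Theorem \ref{thm:genint}) and the Dirac-distribution symbol estimate (Theorem \ref{teodirac}), with Lemma \ref{Lem:induc} handling the $(x',\xi')$-differentiations and Lemma \ref{lem:omega1} taming the low-frequency region in $\xi_n$. The starting point is that, by Lemma \ref{Lem:induc}, derivatives $\partial_{\xi'}^\alpha \partial_{x'}^\beta$ of the integrand produce oscillatory integrals of the same shape but with new amplitudes in $BS^{m-|\alpha|}(\R^{n-1},\R^{n-1};S^{m+|\beta|}(\R))$, so it suffices to prove uniform-in-$(x',\xi')$ bounds on the $\kappa_{\langle\xi'\rangle}$-twisted operator norm of $r^+\op^\psi_n(\tilde a) e^+$ between seminorms of $\Si(\R_+)$.

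The next step is a Fourier-side decomposition of $\widehat{e^+u}$. With the excision function $\zeta_0(\xi_n)=\zeta(\xi_n/\langle\xi'\rangle)$ from Lemma \ref{lem:omega1}, split $a = a(1-\zeta_0) + a\zeta_0$. For the piece $a(1-\zeta_0)$, the $\xi_n$-integration is confined to $|\xi_n|\lesssim\langle\xi'\rangle$, and the $BS$-structure of Definition \ref{def:BS} yields the desired $S^m$ estimate by a direct computation after the change of variables $\xi_n\to\langle\xi'\rangle\xi_n$. For $a\zeta_0$, I would iterate the identity $(i\xi_n)\widehat{e^+u}=\widehat{e^+u'}+u(0)$ to write
\[
\zeta_0(\xi_n)\widehat{e^+u}(\xi_n)=\frac{\zeta_0(\xi_n)}{(i\xi_n)^N}\widehat{e^+u^{(N)}}(\xi_n)+\sum_{j=0}^{N-1}\frac{\zeta_0(\xi_n)}{(i\xi_n)^{j+1}}u^{(j)}(0),
\]
with $N$ chosen large depending on $m$. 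The leading remainder, inserted into the oscillatory integral, has amplitude of $\xi_n$-order $m-N$ by Lemma \ref{lem:omega1}, so for $N$ large enough the integral converges absolutely and produces a contribution in $\Si(\R_+)$ with the required estimates, using that $u\mapsto u^{(N)}$ is continuous on $\Si(\R_+)$.

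The boundary terms $u^{(j)}(0)\cdot r^+\int e^{i\psi-i\psi_\partial}a\cdot\zeta_0(\xi_n)/(i\xi_n)^{j+1}\dbar\xi_n$ are exactly the situation addressed by the proof of Theorem \ref{teodirac}: the amplitude $a\zeta_0/(i\xi_n)^{j+1}$ inherits the transmission property (up to a parity correction) from $a$ and has $\xi_n$-order $m-j-1$, so the same analysis yields a function in $\Si(\R_+)$ with operator-valued symbol norm of order $m-j-\tfrac{1}{2}$. Composing with the trace map $u\mapsto u^{(j)}(0)$, which is a standard operator-valued symbol of order $j+\tfrac{1}{2}$ from $\Si(\R_+)$ to $\C$, yields a contribution of order exactly $m$. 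Finally, the statements for $r^-\op_n^\psi(a)e^+$ and $r^+\op_n^\psi(a)e^-$ follow by the symmetric variant (Remark \ref{dirac-}) combined with the same argument.

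The main obstacle is the order bookkeeping: ensuring that each boundary contribution comes out to exactly $m$ (neither more nor less) requires careful use of the $j+1$ order drop from $\zeta_0/(i\xi_n)^{j+1}$ combined with the $\tfrac{1}{2}$ shift from Theorem \ref{teodirac} and the $j+\tfrac{1}{2}$ order of the trace map. A second delicate point is verifying that the transmission property of $a$ is preserved under division by $\xi_n^{j+1}$ modulo terms that fit into the $H^-_0$ or smoothing parts, so that the proof of Theorem \ref{teodirac} truly applies; this rests on the symmetry formulation \eqref{eq:hom} of the transmission condition.
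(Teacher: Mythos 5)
Your proposal is correct and follows essentially the same route as the paper: the same low/high frequency splitting via the excision function of Lemma \ref{lem:omega1}, the same multiplication and division by powers of $i\xi_n$ turning $\widehat{e^+u}$ into derivatives of $u$ plus boundary traces handled by Theorem \ref{teodirac}, and the same use of Lemma \ref{Lem:induc} for the $(x',\xi')$-derivatives; your explicit bookkeeping ($m-j-\tfrac12$ from the Dirac term against $j+\tfrac12$ from the trace $\gamma_j$) matches the paper's implicit computation. The only cosmetic difference is that you distribute the powers of $\xi_n$ so that the boundary contributions appear as $u^{(j)}(0)$ times an operator applied to $\delta_0$, whereas the paper keeps derivatives $\delta_0^{(j)}$ of the Dirac distribution; these formulations are equivalent.
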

%%%%%%%%%%%%%%%%%%%PROOF	
\begin{proof}
  Choose $\zeta$ as in Lemma \ref{lem:omega1} and write
  \begin{align*}
    &r^+ \op^\psi_n(a)(e^+ u) =r^+A_1(e^+u)+ r^+A_2(e^+u),
  \end{align*}
  with 
  \begin{align*}
    &A_1= \op^\psi_n \pt{\zeta\pt{\frac{\xi_n}{\giap{\xi'}} }a }
    \text{ and }
    A_2=\op^\psi_n \pt{\pt{1-\zeta\pt{\frac{\xi_n}{\giap{\xi'}}} }a }.
  \end{align*}
%   with  an excision function of the origin $\zeta$. 
Lemma \ref{lem:omega1} implies that both $A_1$ and $A_2$ 
  satisfy the hypotheses of Theorem \ref{teodirac}. Moreover, the kernel of the operator $A_2$ is smooth with Schwartz decay, due to 
  the compact support of the symbol in the $\xi_n$ variable and in the $x_n$ variable.  Each $\Si(\R_+)$-seminorm of $r^+A_2 e^+u$ is 
  bounded by $\giap{\xi'}^m$, because the symbol belongs to $S^m \pt{\R^n\times \R^n}$. Therefore $r^+ A_2 e^+$ belongs to $S^m\pt{\R^{n-1}, \R^{n-1}; \Si(\R_+), \Si\pt{\R_+}}$.
  We can write
  \begin{equation}
    \label{eq:contsa1}
    r^+ A_1 e^+ u= \int e^{i \psi(x', x_n, \xi', \xi_n)- i \psi_{\partial}(x', \xi')} \zeta \pt{\frac{\xi_n}{\giap{\xi'}
    }}a(x',x_n, \xi', \xi_n)\frac{1}{(i \xi_n)^l} (i\xi_n)^l \widehat{e^+ u} (\xi_n) \dbar \xi_n.
  \end{equation}
  The properties of the Fourier transform assure that $i \xi_n \widehat{e^+u}(\xi_n)=
  \widehat{e^+\partial u}+u(0) \widehat{\delta_0}$.
  Theorem \ref{teodirac} implies that the part of the integral depending on derivatives of Dirac distributions satisfies the desired bound. 
  To analyze the other part and the derivatives with respect to $x', \xi', x_n$, we note that, by taking $l$ large
  enough, we are left with an integral operator in the normal direction, for which the $\sup$ norm of the kernel
  can be estimated in terms of the seminorms of $a$ times $\giap{\xi'}^m$. The details are left to the reader.
\end{proof}
%%%%%%%%%%%%%%%%%%%%%
\begin{rem}
  \label{rem:contro}
The proof of Theorem \ref{th:conts} indicates that  the order of the operator in the normal direction may increase if we differentiate w.r.t. the $x'$-variable.
In contrast to the usual Boutet de Monvel calculus, it is not true in general that
  \begin{align*}
    &r^+ \op_n^\psi(a) e^+ \in S^m \pt{\R^{n-1}, \R^{n-1}; H^{s} \pt{\R_+}, H^{s-m} \pt{\R_+}}. 
  \end{align*}
  This can be seen explicitly by means of the following 
  example. Define
  \begin{align}
    \nonumber
    A:& \;\Si(\R^n) \to \Si(\R^n)\\
    \label{eq:contro}
    &u \mapsto \iint e^{i[(x'-y')\cdot \eta'+ (f(x')x_n-y_n)\cdot \eta_n]} u(y', y_n)dy' dy_n \dbar \eta' \dbar\eta_n,
   \end{align}
  \noindent where $f$ is a strictly positive function. The phase function of the FIO $A$ 
  in \eqref{eq:contro} represents 
  a symplectomorphism
  $\chi$ of $T^*\overline{\R^n}_+=\overline{\R^{2n}_+}$ onto itself of the  form
  \begin{align}
    \nonumber
    \chi: &  \;\overline{\R^{2n}}_+ \to \overline{\R^{2n}}_+\\
    \label{eq:simple}
    & (y', y_n, \eta', \eta_n)\mapsto \left(y', f(y')^{-1}y_n, \eta'+ f'(y') y_n \frac{\eta_n}{f(y')}, f(y') \eta_n \right).
  \end{align}
  The symplectomorphism $\chi$ is admissible since it preserves the boundary:
  \[
    \chi(y', y_n, \eta', \eta_n)\in \partial \overline{\R^{n}}_+ \Leftrightarrow (y', y_n, \eta', \eta_n) \in \partial 
    \overline{\R^{n}}_+, \mbox{ that is }y_n=0.
  \]
  It is linear in the fibers, therefore all components have the transmission property.
  Looking at the action along the normal direction, we see that \eqref{eq:contro} cannot be extended to an
  operator-valued symbol in $S^{0}\pt{\R^{n-1}, \R^{n-1}; H^{s}\pt{\R}, H^{s}\pt{\R}}$. Indeed,
  \begin{align*}
    &  \kappa_{\giap{\eta'}^{-1}}\partial_{x'_j}\iint  e^{i (f(x')x_n-y_n)\cdot \eta_n}  
    \kappa_{\giap{\eta'}} u( y_n)  dy_n \dbar \eta_n\\
    &=\kappa_{\langle \eta'\rangle^{-1}} \left( \langle \eta'\rangle^{\frac{1}{2}} \iint e^{i (f(x')x_n-y_n)\cdot \eta_n} 
    i (\partial_{x'_j} f)(x')x_n \eta_n u(\langle \eta' \rangle y_n) dy_n \dbar \eta_n \right)\\
    &=\kappa_{\langle \eta'\rangle^{-1}} \left( \langle \eta'\rangle^{-\frac{1}{2}} \iint e^{ i(\langle \eta' \rangle
    f(x')x_n-z_n)\cdot \frac{\eta_n}{\langle \eta'\rangle}} 
    i (\partial_{x'_j} f)(x')x_n \eta_n u(z_n) dz_n \dbar \eta_n \right)\\
    &=\kappa_{\langle \eta'\rangle^{-1}} \left( \langle \eta'\rangle^{-\frac{1}{2}} \int e^{ i
    f(x')x_n\cdot \eta_n} 
    i (\partial_{x'_j} f)(x')x_n \eta_n \hat{u}\left(\frac{\eta_n}{\langle \eta' \rangle}\right) \dbar \eta_n \right)\\
    &=\kappa_{\langle \eta'\rangle^{-1}} \left( \langle \eta'\rangle^{\frac{1}{2}} \int e^{ i\langle \eta' \rangle
    f(x')x_n\cdot \theta_n} i (\partial_{x'_j} f)(x')x_n \theta_n \langle \eta' \rangle \hat{u}(\theta_n) \dbar \theta_n \right)\\
    &=\kappa_{\langle \eta'\rangle^{-1}} \left( \langle \eta'\rangle^{\frac{3}{2}} \int e^{ i\langle \eta' \rangle
    f(x')x_n\cdot \theta_n}  (\partial_{x'_j} f)(x')x_n
    \widehat{\partial_{x_n} u}(\theta_n) \dbar \theta_n \right)\\
    &=\kappa_{\langle \eta'\rangle^{-1}} \left(\langle \eta'\rangle^{\frac{3}{2}}   (\partial_{x'_j} f)(x')
    x_n \pt{\partial_{x_n}u}(\langle \eta' \rangle f(x')x_n)\right)\\
    &=  \pt{\partial_{x'_j}f}(x') x_n \pt{\partial_{x_n} u}(f(x')x_n).
  \end{align*}
  Hence we lose one derivative and the operator-valued symbol does not send $H^s(\R)$
  into $H^s(\R)$.
\end{rem}
%%%%%%%%%%%%%%%%%%%%%%%%%%%%%%%%%%%%
\noindent Now, we recall a technical lemma, proven in \cite{RS85}, p. 122.
\begin{Lem}
  \label{lem:div}
  Let $a \in S^m(\R^n\times \R^n)$ be a symbol with the transmission property.
  Then there exists a symbol $a_1 \in S^m(\R^n\times \R^n)$ having the transmission property for all hyperplanes 
  $x_n=\epsilon$, $\epsilon \geq 0$, such that
  \[
    \partial_{x_n}^k \pt{a \pt{x', x_n, \xi', \xi_n}-a_1 \pt{x', x_n, \xi', \xi_n}}|_{x_n=0}=0
  \]
  for all $k \in \N$ and for all $x', \xi', \xi_n$.
  The symbol $a_1$ has the following expression
  \begin{equation}
    \label{eq:ser3}
    a_1(x', x_n, \xi', \xi_n)= \sum_{j=0}^\infty \frac{x_n^j}{j!}\partial_{x_n}^j a(x',0, \xi', \xi_n) \phi(t_j x_n),
  \end{equation}
  where $\phi$ is a cut-off function at the origin and $\{t_j\}$ is a sequence in $\R_+$ such that the series in \eqref{eq:ser3}
  converges in $S^m\pt{\R^n\times\R^n}$.
\end{Lem}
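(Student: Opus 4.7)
The plan is a Borel-type summation, with the formula \eqref{eq:ser3} essentially prescribed by the statement. I would set $b_j(x',\xi) := \partial_{x_n}^j a(x',0,\xi)$, an element of $S^m(\R^{n-1}\times\R^n)$ since $a \in S^m(\R^n\times\R^n)$, choose $\phi \in C_c^\infty(\R)$ with $\phi \equiv 1$ near $0$ and supported in $[-2,2]$, and select scalars $t_j \to \infty$ fast enough that
\[
a_1(x',x_n,\xi',\xi_n) := \sum_{j=0}^\infty \frac{x_n^j}{j!}\, b_j(x',\xi',\xi_n)\, \phi(t_j x_n)
\]
converges in $S^m(\R^n\times\R^n)$.

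First I would carry out the standard diagonal Borel selection. On the support of $\phi(t_j\,\cdot)$ one has $|x_n|\le 2/t_j$, and differentiating $\frac{x_n^j}{j!}\phi(t_j x_n)$ up to a fixed order $\ell$ produces only factors controlled by $t_j^{-(j-\ell)}$ once $j\ge\ell$. Thus, for each seminorm of $S^m(\R^n\times\R^n)$ the $j$-th term can be made smaller than $2^{-j}$ by taking $t_j$ large; a diagonal choice against the countable family of seminorms yields the desired $\{t_j\}$. The matching of Taylor coefficients, $\partial_{x_n}^k a_1|_{x_n=0} = b_k = \partial_{x_n}^k a|_{x_n=0}$, is immediate, since $\phi(t_j x_n) \equiv 1$ near $x_n=0$ and only $j=k$ contributes to the $k$-th derivative at the origin.

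Next I would verify the transmission property at every hyperplane $x_n=\epsilon$, $\epsilon \ge 0$. By Leibniz,
\[
\partial_{x_n}^l a_1(x',\epsilon,\xi',\csip\xi_n) = \sum_{j=0}^\infty b_j(x',\xi',\csip\xi_n)\, c^l_j(\epsilon),
\]
where each $c^l_j(\epsilon)$ is a scalar depending on $\epsilon$, $t_j$ and $l$ but \emph{not} on $\xi_n$. The key observation is that for any fixed $\epsilon>0$ this is a \emph{finite} sum: as soon as $t_j\epsilon\ge 2$, the cutoff $\phi$ and all its derivatives vanish at $t_j\epsilon$; at $\epsilon=0$, only the term $j=l$ survives. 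Since $a$ has the transmission property at $x_n=0$, each $b_j(x',\xi',\csip\xi_n)$ lies in $S^m(\R^{n-1}\times\R^{n-1})\,\hat{\otimes}_{\pi}\, H_{\xi_n}$; multiplication by a $\xi_n$-independent scalar preserves this space, and finite sums do too. Hence $a_1$ has the transmission property at $x_n=\epsilon$.

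The step I expect to be most delicate is the bookkeeping in the diagonal Borel selection, because the same sequence $\{t_j\}$ must simultaneously control all the $S^m$-seminorms intervening in the symbol class definition. Since the seminorm family and the index $j$ are both countable, a standard double-indexed diagonal argument resolves this. The rest of the argument is then essentially algebraic, relying only on the fact that transmission at $x_n=0$ is passed through scalar linear combinations and finite sums.
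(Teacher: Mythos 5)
Your proof is correct. The paper gives no argument for this lemma (it simply refers to Rempel--Schulze, p.~122), and your Borel-type summation — the diagonal choice of $t_j$ against the countable seminorm family, the Taylor-coefficient matching at $x_n=0$, and the observation that at each fixed $\epsilon\ge 0$ only finitely many terms of \eqref{eq:ser3} contribute, so the transmission condition is inherited from $\partial_{x_n}^j a(x',0,\xi',\langle\xi'\rangle\xi_n)$ through $\xi_n$-independent scalar multiples and finite sums — is exactly the standard argument that the prescribed formula \eqref{eq:ser3} encodes.
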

\begin{Prop}
  \label{prop:div}
  Let $a$ and $\psi$ satisfy Assumptions \ref{TechnHyp:1}. Then, it is possible to write
  \[
    r^+ \op_n^\psi(a) e^+= r^+ \op_n^\psi(a_d)e^+  + r^+ \op_n^\psi(a_0)e^+,
  \]
  with $a_0$ such that 
  \begin{equation}
    \label{eq:taglio}
    \pt{\op_n^{\psi}(a_0) e^+}u \in L^2(\R), \quad  u \in \Si(\R_+),
  \end{equation}
  and $a_d \in S^m_{\tr}(\R^n\times \R^n)$ is a polynomial in  $\xi_n$.
\end{Prop}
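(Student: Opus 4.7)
The plan is to combine Lemma \ref{lem:div} with the transmission decomposition in the normal direction. First, apply Lemma \ref{lem:div} to split $a = a_1 + a_2$, where $a_1 \in S^m(\R^n\times\R^n)$ has the transmission property at every hyperplane $x_n=\epsilon\geq 0$, while $\partial_{x_n}^k a_2|_{x_n=0}=0$ for every $k\in\N$. Since $a_2$ vanishes to infinite order at $x_n=0$, the mechanism behind Remark \ref{rem:diracreg} (together with the argument in Theorem \ref{th:conts} applied to the tails of $a_2$) shows that $\op_n^\psi(a_2)e^+$ maps $\Si(\R_+)$ continuously into $\Si(\R)\subset L^2(\R)$, so $a_2$ may be absorbed into the eventual $a_0$.

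For the main contribution $a_1$, fix $(x',x_n,\xi')$ and exploit the fact that $a_1$ has the transmission property at the hyperplane $x_n$. With $N=\max\{0,\lceil m\rceil\}$, write
\[
   a_1\pt{x', x_n, \xi', \giap{\xi'}\xi_n}
   = \sum_{k=0}^{N} s_k\pt{x', x_n, \xi'}\giap{\xi'}^k \xi_n^k
     + b\pt{x', x_n, \xi', \giap{\xi'}\xi_n},
\]
with $s_k \in S^{m-k}(\R^{n-1}\times\R^{n-1})$ smoothly depending on $x_n$, and with $b(x',x_n,\xi',\cdot) \in S^m(\R^{n-1}\times\R^{n-1})\hat{\otimes}_{\pi}(H^+\oplus H^-_0)_{\xi_n}$. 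Smooth $x_n$-dependence is inherited from the explicit formula \eqref{eq:ser3}. Setting
\[
   a_d\pt{x', x_n, \xi', \xi_n} = \sum_{k=0}^{N} s_k\pt{x', x_n, \xi'}\xi_n^k,
\]
we obtain a polynomial in $\xi_n$ belonging to $S^m_{\tr}(\R^n\times\R^n)$, and we define $a_0 = a-a_d = a_2 + (a_1-a_d)$.

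It remains to verify that $\op_n^\psi(a_0)e^+u \in L^2(\R)$ for $u\in\Si(\R_+)$. The $a_2$-piece is handled by the first paragraph. For $a_1-a_d$, the key observation is that, after rescaling $\xi_n\mapsto\giap{\xi'}\xi_n$, the symbol lies in $S^m\hat{\otimes}_\pi(H^+\oplus H^-_0)$; the inverse Fourier transform in $\xi_n$ of any element of $H^+\oplus H^-_0$ belongs to $\Si(\R_+)\oplus\Si(\R_-)\subset L^2(\R)$, with norms bounded polynomially in $(x',\xi')$. Composing with the phase factor $e^{i\psi-i\psi_\partial}$, which by Theorem \ref{thm:genint} and the underlying SG FIO theory (\cite{CO99b,CO99}) defines an $L^2$-continuous FIO in the normal direction, yields the claim via Schur's lemma.

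The main obstacle is the last step: transferring the hyperplane-wise decomposition of $a_1-a_d$ into a global $L^2$ estimate on $\R_{x_n}$ rather than on $\R_+$. This is overcome by the uniformity and smoothness in $x_n$ of the transmission decomposition provided by \eqref{eq:ser3}, combined with the $L^2$-continuity of FIOs associated with admissible phase functions.
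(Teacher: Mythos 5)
Your proposal follows essentially the same route as the paper's proof: Lemma \ref{lem:div} splits off a piece vanishing to infinite order at $x_n=0$, the transmission expansion built on \eqref{eq:ser3} isolates the polynomial part $a_d$, the remainder is of order zero in the $\xi_n$-variable and is handled by the $L^2$-continuity \eqref{eq:contsob} of the normal-direction FIO, and the infinite-order-vanishing piece is treated with the scheme of Theorem \ref{th:conts} together with Remark \ref{rem:diracreg}, exactly as in the paper. The only (harmless) overstatement is the claim that $\op_n^\psi(a_2)e^+$ maps $\Si(\R_+)$ into $\Si(\R)$: the cited mechanism yields, and the proposition only requires, membership in $L^2(\R)$.
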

%%%%%%%%%%%%%%%%%%%%%%%%%%%%%%%%%%%%%
\begin{proof}
  The proof follows from Remark \ref{rem:diracreg} and an  observation in the proof of 
  Theorem \ref{th:conts}.
  Choose $a, a_1$ as in Lemma \ref{lem:div} and set $b= a-a_ 1$. In view of
  the transmission property of $a$ we can write
  \[
    \partial_{x_n}^ja(x', 0, \xi', \xi_n) =\sum_{k=0}^{m}a_{k,j}(x', \xi') \xi_n^k+ 
    \sum_{k=0}^{\infty} \lambda_{k, j} b_{k,j}(x', \xi') h_{k,j}\left(\frac{\xi_n} 
    {\langle \xi' \rangle}\right),
  \]
  with $a_{k,j} \in S^{m-k}(\R^{n-1}, \R^{n-1})$, $b_{k,j} \in S^{m}(\R^{n-1}\times \R^{n-1})$, $h_{k,j} \in H^+\oplus H^-_0$,
  $(\lambda_{k,j})_k \in l^1$,
  and
  \begin{align*}
   &b_{k,j}, h_{k,j}\rightarrow 0, \quad k\to \infty.
  \end{align*}

  Then, we set
  \begin{align*}
    a_d(x', x_n, \xi', \xi_n)=& \sum_{j=0}^\infty \sum_{k=0}^{m} \frac{x_n^j}{j!} a_{k,j}(x', \xi') \xi_n^k \phi(t_j x_n),\\
    a_0^1(x', x_n, \xi', \xi_n)=& \sum_{j=0}^\infty \sum_{k=0}^\infty \frac{x_n^j}{j!} \lambda_{k,j} b_{k,j}(x', \xi') h_{k,j} \left(\frac{\xi_n} 
    {\langle \xi' \rangle}\right)\phi(t_j x_n),\\
    a_0=& a_0^1+b.
  \end{align*}
  By construction, $a= a_d+ a_0$. Notice  that $a_0^1$ 
is a symbol of order zero w.r.t. the $\xi_n$-variable. We then conclude from  
\eqref{eq:contsob} with $m=0$ that $\op_n^\psi(a_0^1)e^+u$ belongs to $L^2(\R)$.
  To analyze $\op_n^\psi(b)e^+u$ we use the scheme in Theorem \ref{th:conts}. 
We split the operator into two parts: one smoothing and the other with a symbol vanishing to infinite order at $\xi_n=0$.
  For the smoothing part, \eqref{eq:taglio} holds. 
In the  other part, we divide and multiply by $\xi_n^l$ as in \eqref{eq:contsa1}, 
with $l$ arbitrary. 
We obtain a symbol which is of order $m-l$ in the $\xi_n$ variable and derivatives of Dirac's delta  up to the order $l-1$. To handle these terms, we use Remark \ref{rem:diracreg}, valid for symbols vanishing to infinite order 
  at the origin. Namely, 
  \begin{equation}
    \op^{\psi}_n\pt{b(x', x_n, \xi', \xi_n)\zeta\pt{\frac{\xi_n}{\giap{\xi'}}}  (i \xi_n)^{-l}} \delta^{(j)}_0 \in L^2(\R), \quad  j\in \N,
  \end{equation}
   concluding the proof.
\end{proof}
%%%%%%%%%%%%%%%%%%%%%%%%%%%%%
\begin{rem}
  \label{rem:tras}
  We have proven that $r^+ \op^\psi_n(a)e^+$ 
  is a continuous operator from $\Si(\R_+)$ to itself, so it is possible to define the transposed operator 
  \[ 
    \pt{r^+ \op^\psi_n(a) e^+}^t.
  \] 
  It is important to stress that, in general,
  \begin{equation}
    \label{eq:tras}
    \pt{r^+\op^\psi_n(a)e^+}^t u \not= 
    r^+ \pt{\op^\psi_n(a)}^te^+  \, u, \quad u \in \Si(\R_+).
  \end{equation}
  A simple counterexample is the operator $\pt{r^+ \partial e^+}^t$, since
  \[
    \pt{r^+ \partial_{x_n} e^+}^t u= 
    -r^+\partial_{x_n} e^+ u- 
    u(0) \delta_0.
  \]
  Nevertheless, if $a_0$ is as in Proposition \ref{prop:div} and therefore \eqref{eq:taglio} is fulfilled, 
  then equality holds in \eqref{eq:tras}. Indeed, for $u,f\in\Si(\R_+)$,
  \begin{align}
    \nonumber	
    \langle ( r^+ \op^\psi_n(a_0) e^+ )^t u, f\rangle&=
    \langle u, r^+ \op^\psi_n(a_0) e^+ f\rangle= \langle e^+ u,  \op^\psi_n(a_0) e^+ f\rangle\\
   \label{eq:trasposto}
    &=\langle \op^\psi_n(a_0)^t e^+ u, e^+f\rangle=\langle r^+ (\op_n^\psi a_0)^t e^+ u, f\rangle.
  \end{align} 
Theorem \ref{th:conts} then implies that 
  $r^+ \op^\psi_n \pt{a_0}e^+\in S^m \pt{\R^{n-1}, \R^{n-1}; \Si'\pt{\R_+}, \Si'\pt{\R_+}}$.  
  Since $C_c^{\infty}(\R_+)$ is dense in $\Si'(\R_+)$, 
  we can also define the action on $\Si'(\R_+)$ by 
  \[
    (r^+ \op_n^\psi(a_0) e^+) u=
    \lim_{k\to \infty} 
    r^+ \op^\psi_n(a_0) e^+u_k, \quad u_k \to u \mbox{ in } \Si'(\R_+). 
  \]
 
\end{rem}
%%%%%%%%%%%%%%%%%%%%%%%%%%%%%%%%%%%%%%%%%%%%%%%%%%%%%
\begin{Lem}
  \label{limdelta}
  Let $u \in \Si'(\R)$ be smooth on $\R_+$. Then the following statements are equivalent:
  \begin{itemize}
    \item[i)] For all $j \in \N$
    \[
      \lim_{x\to 0^+}\partial^j u(x)=c^j,\quad c^j \in \C.
    \]
    In particular the function $r^+u$ can be extended  smoothly up to zero.
    \item[ii)]
    For all $j \in \N$ and for all sequences $\ptg{\psi_m^j}_{m\in \N} \subseteq C_c^{\infty} \pt{\R_+}$ such that
    \begin{equation}
      \label{diracj}
      \psi_m^j \to (-1)^{j}\delta_0^{(j)} \quad \mbox{in } \Si'(\R),
    \end{equation}
    we have
    \[
      \lim_{m\to \infty}\langle u, \psi^j_m\rangle=c^j, \quad c^j \in \C.
    \]
    There is a trivial continuous inclusion $i: C_c^\infty(\R_+) \to C_c^\infty(\R)$ given by the extension by zero, so 
    the limit \eqref{diracj} is well defined.
  \end{itemize}
\end{Lem}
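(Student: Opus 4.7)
The plan is to prove the two implications separately: $(i)\Rightarrow(ii)$ by testing $u$ against a smooth extension of $r^+u$ localized near $0$, and $(ii)\Rightarrow(i)$ by first reducing to the case $j=0$ via distributional differentiation and then proving existence of the one-sided limit by a contradiction with concentrated bump sequences.

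For $(i)\Rightarrow(ii)$, I would fix a smooth extension $W\in C^\infty(\R)$ of $r^+u$, a cutoff $\chi\in C_c^\infty(\R)$ identically $1$ on a neighborhood $U$ of $0$, and set $\tilde u=\chi W\in C_c^\infty(\R)\subset\Si(\R)$. On $(0,\infty)\cap U$ the difference $u-\tilde u$ vanishes as a distribution, so its support avoids $U$. Decomposing $\psi_m^j=\chi\psi_m^j+(1-\chi)\psi_m^j$, the main piece $\chi\psi_m^j$ is supported where $u$ and $\tilde u$ agree, and $\chi\psi_m^j\to(-1)^j\chi\delta_0^{(j)}=(-1)^j\delta_0^{(j)}$ in $\Si'(\R)$; testing against the Schwartz function $\tilde u$ gives $\langle u,\chi\psi_m^j\rangle=\langle\tilde u,\chi\psi_m^j\rangle\to(-1)^j\delta_0^{(j)}(\tilde u)=\tilde u^{(j)}(0)=c^j$. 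The complementary term vanishes in the limit because $(1-\chi)\psi_m^j\to 0$ in $\Si'(\R)$, which follows from $(1-\chi)\delta_0^{(j)}=0$, combined with a direct analysis of $\langle u,(1-\chi)\psi_m^j\rangle$ using the temperedness of $u$.

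For $(ii)\Rightarrow(i)$, I would first reduce to $j=0$. Given any $\phi_m\in C_c^\infty(\R_+)$ with $\phi_m\to\delta_0$ in $\Si'(\R)$, the sequence $(-1)^j\partial^j\phi_m$ still lies in $C_c^\infty(\R_+)$ and converges to $(-1)^j\delta_0^{(j)}$, so hypothesis $(ii)$ applied to it gives $\langle\partial^j u,\phi_m\rangle=(-1)^j\langle u,\partial^j\phi_m\rangle\to c^j$. This is exactly the $j=0$ case of $(ii)$ for the distribution $\partial^j u\in\Si'(\R)$, smooth on $\R_+$, with constant $c^j$. Hence it suffices to prove: if $v\in\Si'(\R)$ is smooth on $\R_+$ and $\langle v,\phi_m\rangle\to c$ for every sequence $\phi_m\in C_c^\infty(\R_+)$ converging to $\delta_0$ in $\Si'(\R)$, then $\lim_{x\to 0^+}v(x)=c$. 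I would prove this by contradiction: if the one-sided limit fails to exist or differs from $c$, extract $x_m\to 0^+$ with $v(x_m)\to a\neq c$ (allowing $\pm\infty$), fix a nonnegative $\rho\in C_c^\infty((1,2))$ with $\int\rho=1$, and set $\psi_m(x)=N_m\rho(N_m(x-x_m))$, choosing $N_m$ so large that $\supp\psi_m\subset(x_m/2,3x_m/2)$ and the oscillation of $v$ there is below $1/m$. The substitution $y=N_m(x-x_m)$ gives $\langle v,\psi_m\rangle=\int\rho(y)\,v(x_m+y/N_m)\,dy\to a$, while the same substitution applied to an arbitrary Schwartz test function shows $\psi_m\to\delta_0$ in $\Si'(\R)$. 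This contradicts the hypothesis, which forces the limit to equal $c$. Once all derivative limits $c^j$ of $r^+u$ exist, a Taylor-with-remainder argument upgrades this to the smooth extension claimed in $(i)$.

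The main obstacle is the bump construction in the last step: one must verify convergence of $\psi_m$ to $\delta_0$ in the full Schwartz topology $\Si'(\R)$, not merely in $\mathcal{D}'(\R)$, and handle the case where $v(x_m)$ is unbounded. Both points are handled by choosing the concentration scale $N_m$ fast enough with respect to $x_m$ and $|v(x_m)|$, exploiting continuity of $v$ on each compact subinterval of $\R_+$.
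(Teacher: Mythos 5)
Your treatment of (ii)$\Rightarrow$(i) is sound and is exactly the route the paper's hint suggests (the paper leaves the proof to the reader and proposes a contradiction argument for this direction): the reduction to $j=0$ via $\langle \partial^j u,\phi_m\rangle=\langle u,(-1)^j\partial^j\phi_m\rangle$ is legitimate because differentiation is weak-$*$ continuous and $(-1)^j\partial^j\phi_m$ stays in $C_c^\infty(\R_+)$, and your rescaled-bump construction (with $\rho\ge0$, $\int\rho=1$, supports inside $(x_m,2x_m)$, oscillation control by continuity of $v$ at $x_m$, and the trivial verification of $\psi_m\to\delta_0$ in $\Si'(\R)$ because the supports stay in a fixed bounded set) does produce the required contradiction, also in the unbounded case.

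The genuine gap is in (i)$\Rightarrow$(ii), at the tail term $\langle u,(1-\chi)\psi_m^j\rangle$. From $\psi_m^j\to(-1)^j\delta_0^{(j)}$ in $\Si'(\R)$ you may conclude $(1-\chi)\psi_m^j\to0$ in $\Si'(\R)$, but this is weak-$*$ convergence, i.e.\ convergence of the pairings with \emph{Schwartz functions}; it gives no control on the pairing with the fixed tempered distribution $u$, and no appeal to ``temperedness of $u$'' can produce it (one would need convergence of $(1-\chi)\psi_m^j$ in $\Si(\R)$, or $u\in\Si(\R)$). In fact this step cannot be repaired under the literal hypotheses: take $u=e^+\sin$ (which satisfies (i)) and $\psi_m=\phi_m+\rho(\cdot-m)$ with $\phi_m\in C_c^\infty(\R_+)$, $\phi_m\to\delta_0$ in $\Si'(\R)$, and $\rho\in C_c^\infty((1,2))$ nonnegative with $\int\rho=1$; then $\psi_m\in C_c^\infty(\R_+)$ and $\psi_m\to\delta_0$ in $\Si'(\R)$, but $\langle u,\psi_m\rangle-\langle u,\phi_m\rangle=\sin(m)\int\cos(y)\rho(y)\,dy+\cos(m)\int\sin(y)\rho(y)\,dy$ does not tend to $0$, so $\langle u,\psi_m\rangle$ does not converge to $c^0$. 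The implication therefore needs the additional (implicit) requirement that the supports of the $\psi_m^j$ lie in a fixed compact set -- which is the only situation in which the lemma is invoked in the paper (in the proof of Theorem \ref{teosmen} the sequences may be chosen with supports shrinking to the origin). Under that restriction your argument closes at once: choose $\chi\equiv1$ on a neighborhood of that compact set, so the tail term vanishes identically, and your main-term computation $\langle u,\chi\psi_m^j\rangle=\langle \tilde u,\psi_m^j\rangle\to\tilde u^{(j)}(0)=c^j$ (a weak-$*$ convergent sequence tested against the fixed Schwartz function $\tilde u=\chi W$) finishes the proof; without some such restriction, the tail term is not merely unproved but genuinely uncontrollable.
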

%%%%%%%%%%%%%%%%%%%%
The proof is left to the reader. The implication \emph{i)} $\Rightarrow$ \emph{ii)} is almost trivial.
For the converse one can argue by contradiction.

\begin{Thm}
  \label{teosmen}
  Let $\psi$ and $a$ satisfy Assumptions \ref{TechnHyp:1}. By Proposition \ref{prop:div} we 
  can write $a= a_d+ a_0$, where $\op_n^\psi \pt{a_0}e^+$ maps $\Si \pt{\R_+}$ to $L^2 \pt{\R}$. Hence
  $ \pt{e^+r^+-1}\op^\psi_n \pt{a_0}e^+= - e^- r^-\op_n^\psi \pt{a_0}e^+$. Moreover, $r^- \op^\psi_n \pt{a_0}e^+$
  extends to an operator
  \begin{equation}
    \label{eq:teosmeneq}
    r^- \op^\psi_n \pt{a_0}e^+: \Si'\pt{\R_+} \to \Si\pt{\R_-}
  \end{equation}
  and defines a symbol in $S^{m} \pt{\R^{n-1}, \R^{n-1}; \Si' \pt{\R_{+}}, \Si \pt{\R_{-}}}$.
 \end{Thm}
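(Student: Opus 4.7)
The plan is to adapt the splitting from the proof of Theorem \ref{th:conts}, using the decomposition $a_0 = a_0^1 + b$ from Proposition \ref{prop:div}, where $a_0^1$ has order $0$ in $\xi_n$ and $b = a - a_1$ vanishes to infinite order at $x_n = 0$. Introducing the excision function $\zeta(\xi_n/\langle\xi'\rangle)$ of Lemma \ref{lem:omega1}, I would write $r^-\op^\psi_n(a_0)e^+ = r^-A_1 + r^-A_2$, where $A_2 = \op^\psi_n((1-\zeta(\xi_n/\langle\xi'\rangle))a_0)$ has symbol supported in a neighborhood of $\xi_n=0$ and $A_1 = \op^\psi_n(\zeta(\xi_n/\langle\xi'\rangle)a_0)$ has symbol supported away. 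The $A_2$-contribution has a kernel which is smooth and rapidly decaying in $x_n$, with $\Si(\R_-)$-seminorms directly controlled by $\langle\xi'\rangle^m$, giving a symbol of the required order.

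For $A_1$, I would use the identity $(i\xi_n)^l \widehat{e^+u} = \widehat{e^+\partial^l u} + \sum_{j=0}^{l-1} u^{(j)}(0) \widehat{\delta_0^{(l-1-j)}}$ for $l$ arbitrarily large, dividing and multiplying by $(i\xi_n)^l$ as in \eqref{eq:contsa1}. Restricted to $\R_-$, the Dirac contributions coming from the $b$-part vanish identically by Remark \ref{rem:diracreg} (since $b$ vanishes to infinite order at $x_n=0$, so the same holds for $b\zeta(i\xi_n)^{-l}$), while those from the $a_0^1$-part are controlled by Remark \ref{dirac-}, with the $(i\xi_n)^{-l}$ factors bringing the effective order arbitrarily low. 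The regular term $r^-\op^\psi_n(a_0\zeta(i\xi_n)^{-l})(e^+\partial^l u)$ has a symbol of order $m - l$ in $\xi_n$; taking $l$ arbitrarily large yields arbitrarily fast decay and smoothness in $x_n \in \R_-$. Derivatives with respect to $(x', \xi')$ are absorbed into the symbol class via Lemma \ref{Lem:induc}, showing that $r^-\op^\psi_n(a_0)e^+ \in S^m(\R^{n-1},\R^{n-1};\Si(\R_+),\Si(\R_-))$.

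The extension to $\Si'(\R_+)$ follows because the above analysis actually establishes more: the integral kernel $K(x',x_n,y',y_n;\xi')$ of $r^-\op^\psi_n(a_0)e^+$, viewed as a function of $(x_n,y_n) \in \R_- \times \R_+$, is Schwartz in the $y_n$ variable with values in $\Si(\R_-)_{x_n}$, uniformly with symbol bounds in $(x',\xi')$. Indeed, the same integration-by-parts scheme, carried out on the transpose using Remark \ref{rem:tras} and \eqref{eq:trasposto}, gives the symmetric Schwartz behavior in $y_n$. Hence the operator is realized on $u \in \Si'(\R_+)$ by the duality pairing $u \mapsto \langle u(\cdot), K(\cdot,x_n)\rangle_{y_n}$, with image in $\Si(\R_-)$, and the operator-valued symbol estimates are preserved.

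The main obstacle is to simultaneously control the Dirac contributions from $a_0^1$ and the regular part under $(x',\xi')$-differentiation while keeping the output Schwartz on $\R_-$; Lemma \ref{Lem:induc} is crucial for absorbing into the symbol orders the growth in $\langle\xi'\rangle$ produced by differentiating the phase, so that letting $l$ be arbitrarily large still yields the desired rapid decay. The vanishing on $\R_-$ of the Dirac contributions from $b$, via Remark \ref{rem:diracreg}, is what eliminates the potentially obstructing terms and closes the argument.
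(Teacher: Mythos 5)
There is a genuine gap, and it concerns precisely the point that gives Theorem \ref{teosmen} its content: the passage from $\Si(\R_+)$-input to $\Si'(\R_+)$-input. Your main computation (the split $a_0=a_0^1+b$, the excision function, and the identity $(i\xi_n)^l\widehat{e^+u}=\widehat{e^+\partial^l u}+\sum_j u^{(j)}(0)\widehat{\delta_0^{(l-1-j)}}$) expresses the output in terms of $\partial^l u$ and the traces $u^{(j)}(0)$. These quantities are controlled only by \emph{strong} norms of $u$, never by $\|u\|_{H^{-s_1,-s_2}_0}$; so this argument can at best yield $r^-\op^\psi_n(a_0)e^+\in S^m(\R^{n-1},\R^{n-1};\Si(\R_+),\Si(\R_-))$, which is already contained in Theorem \ref{th:conts} and is not what is claimed. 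The required statement is that for \emph{every} $s_1,s_2$ the $\Si(\R_-)$-seminorms of $\kappa_{\giap{\xi'}^{-1}}r^-\op^\psi_n(a_0)e^+\kappa_{\giap{\xi'}}u$ are bounded by $\giap{\xi'}^m\|u\|_{H^{-s_1,-s_2}_0}$, and your third paragraph handles this by asserting that the kernel is Schwartz in $y_n$ with values in $\Si(\R_-)$, ``uniformly with symbol bounds'' -- but that assertion is essentially equivalent to the theorem itself and is not derived; saying it follows from ``the same integration-by-parts scheme on the transpose'' leaves out the quantitative ingredient. The paper closes exactly this gap by a duality argument: it tests the boundary values $\lim_{x_n\to0^-}\partial_{x_n}^k(\cdots)$ against delta sequences in $C_c^\infty(\R_-)$ (Lemma \ref{limdelta}), uses Remark \ref{rem:tras} (valid for $a_0$ because of \eqref{eq:taglio}) to pass to the transposed operator, and then applies Theorem \ref{teodirac} to $r^+\op^\psi_n(a_0)^t\delta_0^{(k)}$, whose membership in $S^{m+\frac12+k}(\R^{n-1},\R^{n-1};\C,\Si(\R_+))$ gives, via the pairing of $H^{s_1,s_2}(\R_+)$ with $H^{-s_1,-s_2}_0$, the bound by arbitrarily weak norms of $u$; derivatives in $(x',\xi')$ are then handled by Lemma \ref{Lem:induc} in the $BS$ classes.

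A secondary, but symptomatic, error: you claim that the Dirac contributions coming from the $b$-part ``vanish identically'' on $\R_-$ by Remark \ref{rem:diracreg}. That remark only says that, when the symbol vanishes to high order at $x_n=0$, the distributional part supported at the origin is absent, so that $e^+r^+\op^{\psi}(a)\delta_0^{(j)}=\chi_{\R_+}\op^{\psi}(a)\delta_0^{(j)}$; it does not make $r^-\op^{\psi}_n(b\,\zeta\,(i\xi_n)^{-l})\delta_0^{(j)}$ zero. Those terms are in general nonzero smooth functions on $\R_-$; they are controlled (as $\Si(\R_-)$-valued symbols) by Remark \ref{dirac-}, not eliminated. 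This does not by itself break your first two paragraphs, but it again produces only strong-norm control of the input, so the central obstacle described above remains.
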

%%%%%%%%%%%%%%%%%%%%%%%%%%%%%%%
\noindent Notice that $e^+$ in \eqref{eq:teosmeneq} is not defined on $\Si'(\R_+)$. 
The operator $r^- \op^\psi_n(a_0)e^+$ is defined as 
 the dual of $r^+ \op^\psi_n(a_0)^t e^-\,: \Si(\R_-) \to \Si(\R_+)$.
 
%%%%%%%%%%%%%%%%%%%%%%%%%%%%%%%%%%%%%%%%%%%%%%%%%%%%%%%%%
\begin{proof}
  We want to prove that for all  $s_1, s_2$, for all $\delta, \gamma$,  we have
  \[
    p_{\delta,\gamma} \pt{\kappa_{\giap{\xi'}^{-1}} r^- \op^\psi_n(a_0) e^+\kappa_{\giap{\xi'}} u} 
    \prec \giap{\xi'}^{m} \|u\|_{H^{-s_1, -s_2}_0}, 
    \quad u \in C_c^\infty \pt{\R_+},
  \]
  $\ptg{p_{\delta,\gamma}}$ being the seminorms of $\Si(\R_-)$.
  By Assumptions \ref{TechnHyp:1}, $a$ vanishes for $|x_n|>\epsilon$,
  $\epsilon$ small. As $\partial_{\xi_n}\psi(x', x_n, \xi)\neq 0$
  if $x_n \neq 0$,
  the phase function has no critical points on $\supp a_0$ outside $\ptg{x_n=0}$. 
  An integration by parts argument shows that $\singsupp \op_n^\psi(a_0) u \subseteq \ptg{x_n=0}$.
  Hence, we only need to consider the behavior as $x_n \to 0^-$.
  From Theorem \ref{th:conts} and Proposition \ref{prop:div}, we recall that the following maps are
  continuous:
  \begin{equation}
    \label{contSi}
    r^- \op_n^\psi(a_0) e^+: \Si(\R_{+}) \to \Si(\R_{-})
  \end{equation} 
  and
  \begin{equation}
    \label{contSii}
    r^-\op^\psi_n(a_0) e^+ : \Si'(\R_{+}) \to \Si'(\R_{-}).
  \end{equation}
  In order to prove that $r^- \op^\psi_n (a_0)e^+$ belongs to
  $S^m \pt{ \R^{n-1}, \R^{n-1}; \Si'\pt{\R_+},\Si \pt{\R_-}}$, we have to analyze
  \begin{equation}
    \label{semnorm}
    \lim_{x_n \to 0^-} \partial^{k}_{x_n} \kappa_{\giap{\xi'}^{-1}} \pt{ \partial_{x'}^\beta \partial_{\xi'}^\alpha 
    r^-\op^\psi_n(a_0) e^+ \kappa_{\giap{\xi'}} u}(x_n) .
  \end{equation}
  We start with the case $|\alpha|=|\beta|=0$. 
  By definition,
  we have to prove that, for all $s_1, s_2, k$ we have
  \[
    \abs{\lim_{x_n \to 0^-}  \partial^{k}_{x_n} \kappa_{\giap{\xi'}^{-1}}
    \pt{  r^-\op^\psi_n(a_0) e^+ \kappa_{\giap{\xi'}} u}(x_n) }
    \prec   \giap{\xi'}^{m} \|u\|_{H^{-s_1, -s_2}_0(\R)},\quad  u \in C_c^\infty(\R_+).
  \]
  Using the idea of Lemma \ref{limdelta}, we consider 
  \[
    \langle \kappa_{\giap{\xi'}^{-1}}  r^-\op^\psi_m(a_0) e^+ \kappa_{\giap{\xi'}}u, (-1)^k \partial_{x_n}^k \psi_l \rangle, 
  \]
  where $ \ptg{\psi_l}_{l \in \N} \subseteq C_c^{\infty}\pt{\R_{-}}$  is a sequence such that
  \begin{equation}
    \label{condir}
    \psi_{l} \to \delta_0, \quad \mbox{in } \Si'(\R).
  \end{equation}
  Notice that \eqref{condir} implies that $\kappa_{\giap{\xi'}} \psi_l $ converges to 
  $\giap{\xi'}^{-\frac{1}{2}} \delta_0$. 
  By Remark \ref{rem:tras} and Proposition \ref{prop:div} we have
  \begin{align*}
    \langle\kappa_{\giap{\xi'}^{-1}} 
    r^-\op^\psi_n(a_0) \kappa_{\giap{\xi'}} e^+ u,  \psi_l \rangle
    =\langle   u , \kappa_{\giap{\xi'}^{-1}} r^+\pt{
    \op^\psi_n(a_0)^t  \kappa_{\giap{\xi'}}} e^-\psi_l \rangle.
  \end{align*}
  By \eqref{contSii} and \eqref{condir} we get
  \begin{align*}
    \lim_{l \to \infty} \langle  u, \kappa_{\giap{\xi'}^{-1}}
    r^+ \op^\psi_n(a_0)^t \kappa_{\giap{\xi'}} e^-\psi_l  
     \rangle
    =\langle \kappa_{\giap{\xi'}^{-1}}
    u, r^+\op^\psi_n(a_0)^t 
    \kappa_{\giap{\xi'}} \delta_0\rangle.
  \end{align*}
  By Theorem \ref{teodirac} we know that 
  \[
    r^+\pt{\op_n^\psi(a_0)^t} \delta_0\in S^{m+\frac{1}{2}} \pt{\R^{n-1}, \R^{n-1}; \C, \Si\pt{\R_+}},
  \]
  so, finally, since $\Si(\R_-)= \proj_s H^\s(\R_-),$
  \begin{align*}
    &\hspace{-.3cm}\lim_{l\to \infty}  \abs{\langle  
    u ,  \kappa_{\giap{\xi'}^{-1}} r^+
    \op^\psi_n(a_0)^t \kappa_{\giap{\xi'}} e^-\psi_l\rangle}\\
    &\leq  \| \kappa_{\langle\xi'\rangle^{-1}} r^+
    \op_n^\psi(a_0)^t
    \kappa_{\langle\xi'\rangle}\delta_0\|_{H^{s_1, s_2}(\R_+)} \|u\|_{H^{-s_1, -s_2}_0(\overline \R_-)}
%     &\leq C p_{\gamma, \delta}(\kappa_{\langle\xi'\rangle^{-1}} r^+\op^\psi_n(a_0)^t
%     \kappa_{\langle\xi'\rangle}\delta_0 ) \|u\|_{H^{-s_1, -s_2}}\\
     \leq C \langle\xi'\rangle^{m} \|u\|_{H^{-s_1, -s_2}_0(\overline \R_-)}.
  \end{align*}
  For the derivatives w.r.t. $\xi'$ and $x'$ of orders $\alpha$ and $\beta$, we use 
  Lemma \ref{Lem:induc} and a slight variation of Theorem \ref{teodirac} in the setting of symbols belonging to the class
  $BS^{m-|\alpha|}\pt{\R^{n-1}, \R^{n-1};S^{m+|\beta|} (\R)}$.
 \end{proof}

\begin{Thm}
  \label{th:contsobolev}
  Let $a$ and $\psi$ satisfy Assumptions \ref{TechnHyp:1}. Then
  \[
    r^+\op^\psi (a) e^+: H^{s}(\R^n_+)\to H^{s-m}(\R^n_+), \quad s>-\frac{1}{2},
  \]
  continuously.
\end{Thm}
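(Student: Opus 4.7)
The plan is to factorize the operator as a tangential FIO with operator-valued amplitude and exploit the symbol splitting from Proposition \ref{prop:div}. First I would write $\psi = \psi_\partial + (\psi - \psi_\partial)$ to identify
\[
r^+ \op^\psi(a)e^+ = \op^{\phi_\partial}(p), \quad p := r^+\op^\psi_n(a)e^+, \quad \phi_\partial(x', y', \xi') := \psi_\partial(x', \xi') - y'\cdot\xi',
\]
so that the full operator becomes a tangential FIO with phase representing the closed-manifold symplectomorphism $\chi_\partial$ and operator-valued amplitude $p \in S^m(\R^{n-1}, \R^{n-1}; \Si(\R_+), \Si(\R_+))$ by Theorem \ref{th:conts}. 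Then I would split $a = a_d + a_0$ via Proposition \ref{prop:div}, and $p = p_d + p_0$ correspondingly.

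For the range $-1/2 < s < 1/2$, where the extension-by-zero $e^+: H^s(\R^n_+) \to H^s(\R^n)$ is continuous by classical Sobolev theory, the conclusion follows immediately from the threefold composition $r^+ \circ \op^\psi(a) \circ e^+$ using the standard SG--FIO continuity $\op^\psi(a): H^s(\R^n) \to H^{s-m}(\R^n)$ (as cited in the excerpt; see \cite{CO99, CO99b}) and the continuous restriction $r^+: H^{s-m}(\R^n) \to H^{s-m}(\R^n_+)$. This supplies the base case.

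For $s \geq 1/2$, where $e^+$ no longer maps $H^s(\R^n_+)$ continuously into $H^s(\R^n)$, I would argue inductively using the splitting. For the regular part, $p_0 = r^+\op^\psi_n(a_0)e^+$ satisfies the refined operator-valued estimates of Theorem \ref{th:conts} together with their distributional counterpart from Theorem \ref{teosmen} (viewing the adjoint through Remark \ref{rem:tras}); combining these with the Sobolev continuity of the tangential FIO $\op^{\phi_\partial}$ associated with the closed-manifold symplectomorphism $\chi_\partial$ yields $r^+\op^\psi(a_0)e^+ : H^s(\R^n_+) \to H^{s-m}(\R^n_+)$. For the polynomial part $a_d = \sum_{k} c_k(x, \xi')\xi_n^k$ with $c_k$ of order $m-k$ satisfying the transmission condition, write $\op^\psi(a_d) = \sum_{k} \op^\psi(c_k)(-i\partial_{x_n})^k$ modulo FIOs of lower order in the $a_0$-class; since $(-i\partial_{x_n})^k: H^s(\R^n_+) \to H^{s-k}(\R^n_+)$ is continuous and each $\op^\psi(c_k)$ is an FIO of lower order whose truncation again has the transmission property, iterating the procedure reduces to an $s' \in (-1/2, 1/2)$ case already handled. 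The main obstacle is precisely this failure of $e^+$ on $H^s(\R^n_+)$ for $s \geq 1/2$, explicitly warned about in Remark \ref{rem:contro}, which forbids a direct appeal to an operator-valued symbol property in $S^m(\R^{n-1}, \R^{n-1}; H^s(\R_+), H^{s-m}(\R_+))$; the transmission condition is what ultimately bridges the gap, ensuring the spurious boundary contributions produced by $e^+$ on rough data are absorbed into the smoothing part $a_0$ rather than spoiling continuity.
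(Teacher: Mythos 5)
Your base case $-\tfrac12<s<\tfrac12$ agrees with the paper's treatment of $s\le 0$, but the step $s\ge\tfrac12$ has a genuine gap, and it sits exactly where the real difficulty is. For the part $a_0$ you claim that the operator-valued estimates of Theorem \ref{th:conts} (values in $\mathscr{L}(\Si(\R_+),\Si(\R_+))$) and Theorem \ref{teosmen} (values in $\mathscr{L}(\Si'(\R_+),\Si(\R_-))$), combined with the Sobolev continuity of the tangential FIO with phase $\psi_\partial$, yield $H^{s}(\R^n_+)\to H^{s-m}(\R^n_+)$. This does not follow: to run the wedge-space argument with input $H^s(\R^n_+)=\mathscr{W}^s(\R^{n-1};H^s(\R_+))$ you would need fiberwise estimates in $S^m(\R^{n-1},\R^{n-1};H^s(\R_+),H^{s-m}(\R_+))$, and Remark \ref{rem:contro} shows precisely that these fail. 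Moreover, the failure is caused by $x'$-derivatives hitting the factor $e^{i(\psi-\psi_\partial)}$, which raises the order in the normal direction (Lemma \ref{Lem:induc}); this affects the regular part $a_0$ just as much as $a_d$, so it is not a boundary contribution that the transmission condition absorbs. What your estimates do give is continuity $\mathscr{W}^s(\R^{n-1};\Si(\R_+))\to\mathscr{W}^{s-m}(\R^{n-1};\Si(\R_+))$, and $\mathscr{W}^s(\R^{n-1};\Si(\R_+))$ is strictly smaller than $H^s(\R^n_+)$, so the input space is wrong. In the polynomial part you also drop the commutator of $D_{x_n}^k$ with $e^+$: one has $D_{x_n}^k e^+u=e^+D_{x_n}^ku+\sum_{l=0}^{k-1}c_{l}\,u^{(l)}(0)\,\delta_0^{(k-l-1)}$, and these delta layers must be handled through Theorem \ref{teodirac} and trace operators; in addition your iteration can push $s'=s-k$ below $-\tfrac12$ (e.g.\ $s=1$, $k=2$), so one may only peel as many normal derivatives as $s$ permits.

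The paper's proof avoids all of this by an order reduction, which is the idea missing from your proposal. After interpolation one takes $s\in\N$ and writes $r^+\op^\psi(a)e^+=\bigl(r^+\op^\psi(a)e^+\Lambda_+^{-s}\bigr)\Lambda_+^{s}$ with $\Lambda_+^{\pm s}=r^+\Lambda^{\pm s}e^+$ Boutet de Monvel order reductions, so that it suffices to map $L^2(\R^n_+)\to H^{s-m}(\R^n_+)$. Then $r^+\op^\psi(a)e^+\Lambda_+^{-s}=r^+\op^\psi(a)\Lambda^{-s}e^+-r^+\op^\psi(a)(e^+r^+-1)\Lambda^{-s}_+e^+$: the first term is, modulo smoothing, a truncated FIO of order $m-s$ acting on $L^2$, where $e^+$ is harmless; in the second, $(e^+r^+-1)=-e^-r^-$ (the differential part of Proposition \ref{prop:div} is absent because $\Lambda^{-s}_+$ has negative order), $r^-\Lambda^{-s}_+e^+$ is a symbol in $S^{-s}$ with values in $\mathscr{L}(\Si'(\R_+),\Si(\R_-))$ and $r^+\op^\psi_n(a)e^-$ one of order $m$ with values in $\mathscr{L}(\Si(\R_-),\Si(\R_+))$, so the correction is a boundary operator-valued FIO of order $m-s$ mapping $\mathscr{W}^{0}(\R^{n-1};L^2(\R_+))=L^2(\R^n_+)$ into $\mathscr{W}^{s-m}(\R^{n-1};\Si(\R_+))\subset H^{s-m}(\R^n_+)$. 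This reduction to $L^2$-based wedge spaces is exactly what circumvents the obstruction of Remark \ref{rem:contro}; without it, or an equivalent device (e.g.\ replacing $e^+$ by a bounded extension operator and estimating $r^+\op^\psi_n(a)e^-r^-$ separately), your argument for $s\ge\tfrac12$ does not close.
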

%%%%%%%%%%%%%%%%%PROOF	
\begin{proof}
  For $s\leq 0$ the result follows from the continuity of $e^+\colon H^s(\R^n_+)$ $\to H^s(\R^n)$, $r^+\colon H^s(\R^n)\to H^s(\R^n_+) $ and the continuity properties of FIOs with homogeneous phase.
  It remains to consider the case $s>0$. 
  Using interpolation we may assume $s \in \N$. We write
  \[
    r^+\op^\psi(a) e^+=  r^+ \op^\psi(a) e^+ \circ \Lambda_+^{-s} \circ \Lambda_+^{s},
  \]
  where $\Lambda_+^s= r^+ \Lambda^s e^+$ is a truncated   pseudodifferential operator in the sense of Boutet de Monvel
  such that 
  $\Lambda_+^s: H^s(\R^n_+) \cong L^2(\R^n_+)$,  and $\Lambda_+^{-s}$ is the inverse of $\Lambda_+^s$. 
  So, we only need to prove that $r^+ \op^\psi(a) e^+ \circ \Lambda_+^{-s}: L^2(\R_+^n)\to H^{s-m}(\R^n_+)$ 
  is continuous. We observe that
  \begin{equation}
    \label{conts}
    r^+\op^\psi(a) e^+ \circ r^+ \Lambda^{-s}_+ e^+= r^+ \op^\psi(a) \circ \Lambda^{-s} e^+- 
    r^+ \op^\psi(a)\pt{e^+r^+-1}\Lambda^{-s}_+e^+.
  \end{equation}
  The operator $\op^\psi(a) \circ \Lambda^{-s}$, by the properties of FIOs is, modulo operators with smoothing kernel,
  a FIO of order $m-s$ with phase $\psi$.
  Thus, $r^+ \op^\psi(a) \circ \Lambda^{-s} e^+: L^2(\R^n_+) \to H^{s-m}(\R^n_+)$ is continuous,
  since $e^+$ is continuous on $L^2$. Now, we have to analyze
  the second term of \eqref{conts}. We treat it as a  FIO defined on the boundary 
  with operator-valued symbol.  Notice that $\Lambda^{-s}_+$ is of negative order, 
  and the differential part of the decomposition in Proposition \ref{prop:div} vanishes, so
  \[
    r^+ \op_n^\psi(a) (e^+ r^+ -1) \Lambda^{-s} e^+ u= - r^+ \op^\psi_n(a) e^- r^- \Lambda^{-s} e^+ u, \quad u 
    \in C_c^\infty(\R_+).
  \]
  According to the general theory of
  Boutet de Monvel's calculus, $r^- \Lambda_+^{-s} e^+$ extends to a symbol in
  $S^{-s} \pt{ \R^{n-1}, \R^{n-1}; \Si'\pt{\R_+}, \Si\pt{\R_-}}$; by Theorem \ref{th:conts}, we know 
  that 
  \[
    r^+ \op_n^\psi(a)e^- \in S^{m} \pt{\R^{n-1}, \R^{-1}; \Si \pt{\R_-}, \Si\pt{\R_+}}.
  \] 
  So, $r^+ \op_n^\psi (a) e^- 
  r^- \Lambda^{-s}_+e^-$ is a symbol in $S^{m-s}$ $(\R^{n-1} $ $, \R^{n-1}; \Si'(\R_+), \Si(\R_+))$. 
  We can therefore write
  $r^+ \op^\psi_n(a) (e^+ r^+-1)\Lambda^{-s}_+e^+$ as an operator-valued FIO 
  defined on the boundary with phase function $\psi_{\partial}$ and an amplitude 
  belonging to $S^{m-s}(\R^{n-1}, \R^{n-1};$ $\Si'(\R_+), \Si(\R_+))$. The continuity of
  operator-valued pseudodifferential operators on wedge Sobolev spaces implies that
  \[
    \xymatrix{
    & L^2(\R^n_+)  \ar@{^(->}[r]  & \mathscr{W}^{0}( \R^{n-1}; L^2(\R_+))\ar[d]^{r^+ \op^\psi(e^+r^+-1)\Lambda^{-s}r^+} \\
    & H^{s-m}(\R^n_+)  & \ar@{^(->}[l] \mathscr{W}^{s-m}(\R^{n-1}; \Si(\R_+)) , }
  \]
  where $\mathscr{W}^s(\R^{n-1}; E)$ denotes the wedge Sobolev space of order $s$ with values in 
  the topological vector space $E$, see the Appendix \ref{sec:append}.
 \end{proof}

%%%%%%%%%%%%%%%%%%%%%%%%%%%%%%%%%%%%%%%%%%%%%%%%%%%%%%%%%%%%%%%%%%%%%%%%%%%%%%%%%%%%%%%%%%%%%%
%%%%%%%%%%%%%%%%%%%%%%%%%%%%%%%%%%%SECTION FIO ON THE HALF SPACE%%%%%%%%%%%%%%%%%%%%%%%%%%%%%%
%%%%%%%%%%%%%%%%%%%%%%%%%%%%%%%%%%%%%%%%%%%%%%%%%%%%%%%%%%%%%%%%%%%%%%%%%%%%%%%%%%%%%%%%%%%%%%
\section{Fourier Integral Operators of Boutet de Monvel Type}
\label{sec:bound}
We recall the definition of three symbol classes in the Boutet de Monvel calculus.
  \begin{itemize}
    \item [i)] A \emph{potential symbol} of order $m$ is an element of
    \[
      S^{m}(\R^{n-1}, \R^{n-1}; \C, \Si(\R_+))= \proj_\mathbf{s}S^m(\R^{n-1}, \R^{n-1}; \C, H^\mathbf{s}(\R_+)).
    \]
    \item[ii)] A \emph{trace symbol} of order $m$ and type zero is an element of the set
    \[
      S^m(\R^{n-1}, \R^{n-1}; \Si'(\R_+), \C)=\proj_\mathbf{s}S^m(\R^{n-1}, \R^{n-1}; H^\mathbf{s}_0
      (\overline{\R}_+), \C);
    \]
    it also  defines  a symbol 
    in $S^{m}(\R^{n-1}, \R^{n-1};H^{s_1, s_2}(\R_+), \C)$, if $s_1>-\frac{1}{2}$. 
    
    A \emph{trace symbol} of type $d$ is a sum of the form
    \[
      t=\sum_{j=0}^d t_j \partial_+^j, \quad t_j \in S^{m-j}(\R^{n-1}, \R^{n-1}; \Si'(\R_+), \C),
    \]
    where $t$ is in $S^{m}(\R^{n-1}, \R^{n-1}; H^{s_1, s_2}(\R_+), \C)$ 
    and $\partial_+$ is the derivative in the normal direction,
\begin{align*}
 \partial_+= r^+ \partial_{x_n}e^+
 \in S^{1} (\R^{n-1}, \R^{n-1}; H^{\s}\pt{\R_+}, H^{\s-(1,0)}\pt{\R_+}).
 \end{align*}
   \item[iii)] A \emph{singular Green symbol} of order $m$ and type zero is an element of
    \begin{align*}
      &S^m(\R^{n-1}, \R^{n-1}; \Si'(\R), \Si(\R_+))= \\
      &\limproj_\mathbf{s} S^m(\R^{n-1}, \R^{n-1}; H_0^{-\mathbf{s}}
      (\overline{\R}_+),H^\mathbf{s}(\R_+) );
    \end{align*}
    this also is a symbol in $S^{m}(\R^{n-1}, \R^{n-1}; H^{s_1, s_2}(\R_+), \Si(\R_+))$, provided $s_1>-\frac{1}{2}$.
    A \emph{singular Green symbol} of  order $m$ and type $d$ is a sum of the  form
    \[
      g=\sum_{j=0}^d g_j \partial_+^j, \quad g_j \in S^{m-j}(\R^{n-1}, \R^{n-1}; \Si'(\R_+), \Si(\R_+)).
    \]
    Obviously, $g$ then is in $S^m(\R^{n-1}, \R^{n-1}; H^{s_1, s_2}(\R_+), \Si(\R_+))$, $s_1> d- \frac{1}{2}$.
  \end{itemize}

\begin{rem}
  \label{rem:dirac}
  The trace operator $\gamma_j$ is a trace symbol of order $j+\frac{1}{2}$ and type $j+1$,
  see {\rm \cite{SC01}}.
\end{rem}

\noindent
In Definition \ref{def:FIOBdM}, below,  we employ the notation introduced in 
\eqref{eq:omega+}.

\begin{Def}
  \label{def:FIOBdM}
  Let $\chi$ be an admissible symplectomorphism,
  and let $\chi_{\partial}$ be the induced symplectomorphism at the boundary.
  For $m \in \R$ and $d\leq \max\ptg{m, 0}$ we define
  \[
    \mathcal{A}:=\left(
    \begin{array}{cc}
    r^+ A^\chi e^+ + G^{{\chi}_\partial} & K^{\chi_\partial}\\
    T^{\chi_{\partial}}& S^{\chi_\partial}
    \end{array} \right),
  \]
  and we write $\mathcal{A} \in \mathscr{B}^{m,d}_\chi \pt{\Omega_x^+\times\Omega_y^+}$, if
   $A^\chi \in I^m_{comp}(\Omega_x\times\Omega_y, \widetilde{\Lambda})$,
  the symbol of $A^\chi$ satisfies the transmission condition, % w.r.t. the boundary $\{y_n=0\}$. 
   and $G^{\chi_\partial}$, $K^{\chi_\partial}$, $T^{\chi_\partial}$ and $S^{\chi_\partial}$ are FIOs defined on the boundary,
  with Lagrangian submanifolds defined by $\chi_\partial$. Their respective symbols are
  \begin{enumerate}[\rm (i)]
    \item a singular Green symbol of order $m$ and type $d$;
    \item a potential symbol of order $m$;
    \item a trace symbol of order $m$ and type $d$;
    \item a usual pseudodifferential symbol of order $m$.
  \end{enumerate}
\end{Def}
%%%%%%%%%%%%%%%%%%%%%%%REMARK	
\begin{rem}
  \label{FIOop}
  In the above definition, we consider FIOs with operator-valued symbol. 
By Remark \ref{rem:triv}, 
$\chi_\partial$ is the lift to the cotangent bundle of a diffeomorphism, locally
  represented as $b: \Omega^\partial_y\to\Omega^\partial_x$. 
  One can extend $b$ to 
   \begin{align}
    \label{eq:estb}
    \tilde{b}\colon \Omega^\partial_y\times[0,1) \to \Omega^\partial_x\times[0,1)\colon(y', y_n)\mapsto (b(y'), y_n).
  \end{align}
Then $K^{\chi_\partial}$, $T^{\chi_\partial}$, $G^{\chi_\partial}$ can be considered as  the pullbacks  of pseudodifferential operators with operator-valued symbol
by the diffeomorphism $\tilde{b}$. 
For example, $G^{\chi_\partial}u= \tilde{b}^* G_1 u = G_2 (\tilde{b}^*u) $, 
  where $G_1$, $G_2$ are usual singular Green operators,  and the equality holds modulo  operators with smooth kernels. 
Similarly for $K^{\chi_\partial}$ and $T^{\chi_\partial}$.
\end{rem}

Here we think of functions with compact support as extended by zero to $\R^n$.
If the symplectomorphism $\chi$ is the identity, and therefore $\Omega_x^+=\Omega_y^+$, the class
$\mathscr{B}^{m,d}_{\chi}(\Omega^+_x\times\Omega^+_x)$ of FIOs
coincides with the class $\mathscr{B}^{m,d}(\Omega^+_x)$ 
of operators of order $m$ and class $d$ in 
the Boutet de Monvel calculus, see \cite{BU71,GR96,RS85,SC01}.
 As a consequence of Theorem \ref{th:contsobolev} and of the Sobolev continuity of FIOs defined through 
 operator-valued symbols, we obtain the following theorem:
%%%%%%%%%%%%%%%%%%%%%%%%%%%%%%%%THEOREM
\begin{Thm}\label{th:localcont}
  Every $\mathcal{A} \in \mathscr{B}_{\chi}^{m,d}(\Omega^+_x\times\Omega^+_y)$ induces a continuous operator
  \[
    \mathcal{A}: H^{s}_\mathrm{comp}(\Omega_y^+) \oplus H^s_\mathrm{comp}(\Omega_y^\partial) 
    \to H^{s-m}_\mathrm{loc}(\Omega_x^+) \oplus H^{s-m}_\mathrm{loc}(\Omega_x^\partial),
  \]
  provided $s> d- \frac{1}{2}$.
\end{Thm}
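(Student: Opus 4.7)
The plan is to verify continuity of each of the four matrix entries of $\mathcal{A}$ separately; continuity on the direct sum then follows immediately. The (1,1)-entry decomposes as $r^+ A^\chi e^+ + G^{\chi_\partial}$, the off-diagonal entries $K^{\chi_\partial}, T^{\chi_\partial}$ connect interior with boundary Sobolev spaces, and the (2,2)-entry $S^{\chi_\partial}$ stays on the boundary, so the four cases can be handled independently.

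For the FIO summand $r^+ A^\chi e^+$ in the (1,1)-slot, Theorem \ref{th:contsobolev} directly supplies the desired continuity $H^s_{\mathrm{comp}}(\Omega^+_y) \to H^{s-m}_{\mathrm{loc}}(\Omega^+_x)$ for $s > -\tfrac{1}{2}$; the stronger restriction $s > d - \tfrac{1}{2}$ in the statement is dictated by the type-$d$ components of $G^{\chi_\partial}$ and $T^{\chi_\partial}$, not by the FIO. For the three operator-valued summands $G^{\chi_\partial}$, $K^{\chi_\partial}$, $T^{\chi_\partial}$, I would invoke Remark \ref{FIOop}: modulo smoothing, each is of the form $\tilde{b}^* P$ (or $P\circ \tilde{b}^*$), where $P$ is an ordinary Boutet de Monvel operator-valued pseudodifferential operator of the corresponding kind (singular Green, potential, or trace), and $\tilde{b}$ is the diffeomorphism \eqref{eq:estb} extending the boundary diffeomorphism $b$. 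Since $\tilde{b}^*$ and $b^*$ preserve the relevant Sobolev classes on collar neighborhoods, the mapping properties reduce to the classical ones: $G\colon H^s\to H^{s-m}$ and $T\colon H^s\to H^{s-m}$ for $s > d - \tfrac{1}{2}$, and $K\colon H^s\to H^{s-m}$ without restriction on $s$. The (2,2)-entry $S^{\chi_\partial}$ is a FIO on the boundary associated with $\chi_\partial$, which by Remark \ref{rem:triv} is the lift of $b$; it therefore coincides modulo smoothing with $b^*$ composed with a classical $\Psi$DO of order $m$, and its Sobolev continuity is standard.

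The heart of the argument lies entirely upstream, in Theorem \ref{th:contsobolev}; once that is granted, the remaining steps are essentially bookkeeping. The only mild technical point is to check that the decomposition of Proposition \ref{prop:div} and the wedge-Sobolev-space framework recalled in the Appendix correctly accommodate the simultaneous presence of the FIO phase $\psi_\partial$ and of the type-$d$ vertical derivatives $\partial_+^j$ buried in $G^{\chi_\partial}$ and $T^{\chi_\partial}$. Since $\chi_\partial$ is the lift of a diffeomorphism, the phase at the boundary is essentially linear in $\xi'$ and is independent of the normal variable, so it commutes with vertical differentiation at the symbolic level and no genuine new difficulty arises beyond the oscillatory integral estimates already established in Section \ref{sec:oscilint}.
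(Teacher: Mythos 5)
Your proposal is correct and follows essentially the same route as the paper: the paper's own argument consists of invoking Theorem \ref{th:contsobolev} for the truncated FIO part together with the wedge-Sobolev continuity of the boundary FIOs given by operator-valued symbols (via Remark \ref{FIOop} and the standard mapping properties of singular Green, potential, trace and pseudodifferential symbols of type $d$), which is exactly the entrywise reduction you carry out. Your attribution of the restriction $s>d-\tfrac{1}{2}$ to the type-$d$ components rather than to the FIO summand is also consistent with the paper.
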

Now, we analyze the composition of FIOs of Boutet de Monvel type. Recall that we assume  the involved symbols
to have compact support w.r.t. the space variable.
\begin{Thm}
  \label{th:comp}
  Let $\mathcal{B}\in \mathscr{B}_{\chi'}^{m_\mathcal{B}, d_\mathcal{B}} (\Omega^+_x\times\Omega^+_y)$ and
  $\mathcal{A}\in \mathscr{B}^{m_\mathcal{A}, d_\mathcal{A}}_{\chi}(\Omega^+_y\times\Omega^+_z)$
  be FIOs
  of Boutet de Monvel type associated with the symplectomorphisms $\chi$, $\chi'$. Then $ \mathcal{B} \mathcal{A} $ is
  a FIO of Boutet de Monvel type  of order $m=m_{\mathcal{B}}+m_{\mathcal{A}}$ and type 
  $d=\max\ptg{(m_{\mathcal{A}}+d_{\mathcal{B}}), d_{\mathcal{A}}}$
  defined by the symplectomorphism $\chi' \circ \chi$, that is 
  $\mathcal{B}  \mathcal{A}
  \in \mathscr{B}^{m, d}_{\chi' \circ \chi }(\Omega^+_x\times\Omega^+_z)$.
\end{Thm}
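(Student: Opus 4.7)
The proof proceeds by expanding the matrix product $\mathcal{B}\mathcal{A}$ entry by entry and showing that each resulting component lies in the corresponding block of $\mathscr{B}^{m,d}_{\chi'\circ\chi}$. The boundary--boundary compositions $K^{\chi'_\partial}T^{\chi_\partial}$, $T^{\chi'_\partial}K^{\chi_\partial}$, $S^{\chi'_\partial}S^{\chi_\partial}$, $G^{\chi'_\partial}K^{\chi_\partial}$, $K^{\chi'_\partial}S^{\chi_\partial}$, together with the mixed entries $G^{\chi'_\partial}\circ r^+ A^\chi e^+$ and $r^+B^{\chi'}e^+\circ G^{\chi_\partial}$, are all products of boundary FIOs with operator-valued amplitudes. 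By Remark \ref{FIOop} they are, modulo smoothing, pullbacks of pseudodifferential operators with operator-valued symbols on the boundary; the standard operator-valued symbolic calculus then yields boundary FIOs associated with $\chi'_\partial\circ\chi_\partial = (\chi'\circ\chi)_\partial$, of the expected order $m_\mathcal{A}+m_\mathcal{B}$ and with the usual Boutet de Monvel behaviour for the type.

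The genuinely new step concerns the term $r^+B^{\chi'}e^+\cdot r^+A^\chi e^+$ in the top-left entry. We use the splitting
\[
r^+B^{\chi'}e^+\cdot r^+A^\chi e^+ \;=\; r^+B^{\chi'}A^\chi e^+ \;-\; r^+B^{\chi'}e^- r^- A^\chi e^+.
\]
For the first summand apply the classical homogeneous FIO composition theorem: $B^{\chi'}\circ A^\chi$ is, modulo smoothing, a FIO of order $m_\mathcal{A}+m_\mathcal{B}$ associated with $\textnormal{Graph}(\chi'\circ\chi)'$. The crucial point is that its amplitude, obtained via stationary phase, again satisfies the transmission condition at the composed boundary. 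This follows from the admissibility of $\chi$ and $\chi'$, combined with the multiplicative behaviour of the symmetry relation \eqref{eq:hom} and the block form \eqref{eq:jacob} of the Jacobian at $y_n=0$, which guarantees that $\chi'\circ\chi$ again preserves the boundary and that all of its components satisfy the transmission condition.

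The second summand is identified with a singular Green operator. Apply Proposition \ref{prop:div} to split the local amplitude of $A^\chi$ as $a=a_d+a_0$. Theorem \ref{teosmen} gives $r^-\op^\psi_n(a_0)e^+\in S^{m_\mathcal{A}}(\R^{n-1},\R^{n-1};\Si'(\R_+),\Si(\R_-))$, while the analogue of Theorem \ref{th:conts} yields $r^+\op^{\psi'}_n(b)e^-\in S^{m_\mathcal{B}}(\R^{n-1},\R^{n-1};\Si(\R_-),\Si(\R_+))$ for the symbol $b$ of $B^{\chi'}$. Composing these operator-valued symbols along $\chi'_\partial\circ\chi_\partial$ produces a singular Green symbol of order $m_\mathcal{A}+m_\mathcal{B}$ and type $0$. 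The polynomial part $r^+B^{\chi'}e^- r^-\op^\psi_n(a_d)e^+$ is analysed by distributional differentiation: $\partial_{x_n}^j e^+u$ contributes derivatives of $\delta_0$ at the boundary, and by Theorem \ref{teodirac} each $r^+B^{\chi'}\delta_0^{(k)}$ is a potential symbol, so this piece is a finite sum of potential-$\circ$-trace compositions, hence again a singular Green symbol, of type at most $m_\mathcal{A}+d_\mathcal{B}$ once combined with $G^{\chi'_\partial}$. Together with the intrinsic type $d_\mathcal{A}$ of $\mathcal{A}$ we obtain the claimed overall type $\max\{m_\mathcal{A}+d_\mathcal{B},\,d_\mathcal{A}\}$. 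The main obstacle is the preservation of the transmission condition under the classical FIO composition near the boundary; this is where the explicit structure \eqref{eq:jacob} of admissible symplectomorphisms is decisive.
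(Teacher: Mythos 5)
Your overall route is the same as the paper's: split the corner term into $r^+B^{\chi'}A^\chi e^+$ plus a boundary correction, decompose $a=a_d+a_0$ by Proposition \ref{prop:div}, and use Theorems \ref{teosmen}, \ref{th:conts} and \ref{teodirac} to identify the correction as a singular Green term associated with $\chi'_\partial\circ\chi_\partial$. However, your displayed identity
\begin{equation*}
r^+B^{\chi'}e^+\,r^+A^\chi e^+ \;=\; r^+B^{\chi'}A^\chi e^+ \;-\; r^+B^{\chi'}e^- r^- A^\chi e^+
\end{equation*}
is only valid when $A^\chi e^+u$ is a regular distribution. The exact identity involves $(e^+r^+-1)$, and on the polynomial part $a_d$ the formula \eqref{eq:derep2} produces terms $u^{(l)}(0)\,\op^\psi_n(\xi_n^{j-l-1})\delta_0$, on which $e^+r^+-1\neq -e^-r^-$ (for instance $(e^+r^+-1)\delta_0=-\delta_0$ while $e^-r^-\delta_0=0$). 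These delta terms are precisely what generate, via \eqref{eq:epdir}, Theorem \ref{teodirac} and the trace operators $\gamma_l$, the positive-type Green contributions; you do analyse them afterwards, but that contradicts your own formula, in which $r^-$ has already annihilated them. The bookkeeping has to be carried out with $(e^+r^+-1)$, as in the paper.

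The second weak point is that you dismiss $G^{\chi'_\partial}_{\mathcal B}\pt{r^+A^\chi e^+}$ and $T^{\chi'_\partial}_{\mathcal B}\pt{r^+A^\chi e^+}$ as routine operator-valued compositions. They are the delicate ones: $r^+A^\chi e^+$ is an operator-valued symbol on $\Si(\R_+)$ but not on the Sobolev scale (Remark \ref{rem:contro}), and Green and trace symbols of type $d_{\mathcal B}>0$ contain $\partial_+^j$. The paper handles this by writing $r^+\op^\psi_n(a_d)e^+=\sum_j\tilde a_j\partial_+^j+\sum_l k_l\gamma_l$, extending $r^+\op^\psi_n(a_0)e^+$ to $\Si'(\R_+)$, and reducing $d_{\mathcal B}>0$ to $d_{\mathcal B}=0$ by commuting $\partial_+$ through the FIO, which raises the order from $m_{\mathcal A}$ to $m_{\mathcal A}+d_{\mathcal B}$; these two compositions, not the corner correction (which has type $(m_{\mathcal A})_+$), are the source of the type $(m_{\mathcal A}+d_{\mathcal B})_+$ in the final count, so some version of this argument is indispensable. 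Finally, your claim that the composed interior amplitude again satisfies the transmission condition is asserted rather than proved; the paper simply invokes the general composition theorem for FIOs at this point, so this is not a divergence from its proof, but \eqref{eq:hom} and \eqref{eq:jacob} by themselves do not constitute a verification.
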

%%%%%%%%%%%%%%%%%%%PROOF	
\begin{proof}
  By definition we can write
  \[
    \mathcal{B} \mathcal{A}= \left(\begin{array}{cc}
    r^+ B^{\chi'}e^+ + G_{\mathcal{B}}^{\chi'_\partial} & K_{\mathcal{B}}^{\chi'_\partial}\\
    T_{\mathcal{B}}^{\chi'_\partial}& S_{\mathcal{B}}^{\chi'_\partial}
    \end{array} \right) 
    \circ
    \left( \begin{array}{cc}
    r^+ A^\chi e^+ + G_{\mathcal{A}}^{\chi_\partial} & K_{\mathcal{A}}^{\chi_\partial}\\
    T_{\mathcal{A}}^{\chi_{\partial}}& S_{\mathcal{A}}^{\chi_\partial}
    \end{array} \right).
  \]
  We start with the composition of the elements in the upper left corner. We can write
  \begin{align*}
    & r^+ B^{\chi'} e^+ r^+ A^\chi e^+= r^+ B^{\chi'} A^\chi e^+ + r^+ B^{\chi'} (e^+r^+ -1)A^\chi e^+.
  \end{align*}
  By the general theory of FIOs, $B^{\chi'}  A^\chi$ is a FIO of order $m_{\mathcal{B}}+m_{\mathcal{A}}$ with 
  canonical transformation  $\chi' \circ \chi$.
  We prove next that the operator $r^+ B^{\chi'}(e^+r^+ -1) A^\chi e^+$ is a FIO on the boundary,
  associated with the canonical transformation $\chi'_{\partial} \circ \chi_\partial$ and  with a Green 
  symbol of order $m_{\mathcal{B}}+ m_{\mathcal{A}}$ and type $(m_{\mathcal{A}})_+= \max\{m_{\mathcal{A}}, 0\}$. As both
  the symplectomorphisms are admissible, the boundary is preserved. 
  $B^{\chi'}$ and  $A^\chi$ 
  are operator-valued FIO  at the boundary with 
  symbols $b$ and $a$ satisfying Assumptions \ref{TechnHyp:1} and
  Lagrangian submanifold induced by $\chi'_\partial$ and 
  $\chi_\partial$, respectively. So, we 
   consider the composition
  \[
    r^+ \op^{\psi'}(b) (e^+ r^+-1) \op^\psi(a)e^+,
  \]
  $\psi'$, $\psi$ being general phase functions associated with $\chi'$ and $\chi$, respectively.
  We start by studying the composition in the normal direction.
  We decompose the symbol
  $a= a_d+ a_0$ as in Proposition \ref{prop:div}.
  First, we analyze the differential part
  \begin{equation}
    \label{eq:deriv}
    r^+ \op^{\psi'}_n(b) \pt{e^+ r^+ -1} \op^\psi_n\pt{ \sum_{j=1}^{m_{\mathcal{A}}} a_j \pt{x', x_n,\xi'}  \xi_n^j} e^+ u,
  \end{equation}
  where $a_j(x',x_n, \xi') \in S^{m_{\mathcal{A}} -j}(\R^{n}\times \R^{n-1})$. 
  Since, on $\Si(\R_+)$
  \begin{equation}
    \label{eq:derep1}
    \xi_n \widehat{e^+ u(y_n)}(\xi_n)= -i \widehat{e^+ \partial_{y_n}u}(\xi_n)-i u(0)\widehat{\delta_0},
  \end{equation}
  induction shows
  \begin{equation}
    \label{eq:derep2}
    \xi_n^j \widehat{e^+u(y_n)}= (-i)^{j} \left(\widehat{e^+ \partial_{x_n}^j u}+ \sum_{l=0}^{j-1} u^{(l)}(0)
    \widehat{\delta}_0^{(j-l-1)}\right).
  \end{equation}
  Now
  \begin{eqnarray}
  \nonumber
\lefteqn{\op^\psi_n\pt{ \sum_{j=1}^{m_{\mathcal{A}}} (-i)^ja_j(x', x_n, \xi') 
   \sum_{l=0}^{j-1} u^{(l)}(0) \widehat{\delta}_0^{\pt{j-l-1}}}}\\
   \label{eq:diracgreen}
   &=&\sum_{j=1}^{m_{\mathcal{A}}}(-i)^j a_j(x', x_n, \xi') 
   \sum_{l=0}^{j-1} u^{(l)}(0) \op_n^{\psi}\pt{\xi^{j-l-1}}(\delta_0).
  \end{eqnarray}
   Following the scheme of the proof of Theorem \ref{teodirac}, one obtains 
  \begin{align}
  \label{eq:epdir}
   \pt{e^+ r^+ -1} \op_n^\psi \pt{\xi_n^{j-l-1}}\pt{\delta_0}= \sum_{k=0}^{j-l-1} d_k(x', \xi')\delta_0^{(k)}
   +c_{j,l}(x', x_n, \xi')
  \end{align}
  where $d_k \in S^{j-l-1-k}(\R^{n-1} \times \R^{n-1})$
  and $c_{j,l}\in S^{j-l- \frac{1}{2}} \pt{\R^{n-1}, \R^{n-1};\C, \Si\pt{\R_-}}$. 
  Observing that $\gamma_{l}: u \mapsto u^{(l)}(0)$ is a trace symbol of order $l+\frac{1}{2}$
  and type $l$, in view of \eqref{eq:epdir} and Theorem \ref{teodirac}, we obtain 
  that $r^+ \op^{\psi'}_n(b) \pt{e^+ r^+ -1}$ applied
  to \eqref{eq:diracgreen} is a Green symbol of order $m_{\mathcal{A}}$ and type $\pt{m_{\mathcal{A}}}_+$.
  Next, we analyze 
  \begin{equation}
    \label{eq:deltak}
    \begin{aligned}
    &\hspace{-8mm}r^+\op^{\psi'}_n(b) (e^+ r^+ -1) \sum_{j=1}^{m_{\mathcal{A}}} \op_n^\psi\pt{ 
    a_j(x',x_n,  \xi') \widehat{e^+\partial^j_{y_n} u}(\xi_n) }\\
    &\hspace{8mm}=r^+\op^{\psi'}_n(b) (e^+ r^+ -1)\sum_{j=1}^{m_{\mathcal{A}}} a_j\op^\psi_n(1)e^+  \partial_+^j u.  
  \end{aligned}
  \end{equation}
  Recall that $e^+ r^+-1= - e^- r^-$ on regular distributions. 
  By Theorem \ref{teosmen}  we find that $ r^- a_j\op^\psi_n(1)e^+$ is a symbol
  in $S^{m_{\mathcal{A}}-j}$ $ \pt{\R^{n-1}, \R^{n-1};  \Si' \pt{\R_+}, \Si \pt{\R_-}}$. Moreover, by Theorem \ref{th:conts} 
  we know that $r^+ \op^{\psi'}_n(b) e^- \in S^{m_{\mathcal{B}}}$ $ \pt{\R^{n-1},\R^{n-1}; \Si \pt{\R_-}, \Si \pt{\R_+}}$.
  Hence, the symbol in \eqref{eq:deltak} is a Green symbol of order $m_{\mathcal{B}}+ m_{\mathcal{A}}$  
  and type $(m_{\mathcal{A}})_+$. 
  \noindent In view of  the decomposition $a= a_d+a_0$, we now  have  to consider 
   \begin{align}  \label{eq:bzero}
     &r^+ \op^{\psi'}_n(b) (e^+r^+ -1) \op^\psi_n(a_0) e^+.
   \end{align}
   Theorem \ref{teosmen} implies that $ r^- \op^\psi_n(a_0)e^+ 
  \in S^{m_{\mathcal{A}}}$ $\pt{\R^{n-1}, \R^{n-1}; \Si' \pt{\R_+}, \Si\pt{\R_-}}$. Observing 
  that $r^+ \op^{\psi'}_n(b)e^-$ is an element of
  $S^{m_{\mathcal{B}}}$ $ \pt{\R^{n-1}, \R^{n-1}; \Si\pt{\R_-}, \Si \pt{\R_+}}$, we see that 
  \eqref{eq:bzero} defines a Green symbol of order $m_{\mathcal{B}}+ m_{\mathcal{A}}$ and type zero. 

  The other compositions can be analyzed
  in a similar way. We omit most of the details. They are all FIOs at the boundary with a Lagrangian distribution defined 
  by $\chi'_\partial \circ \chi_\partial$.
  \begin{comment}

We can write the following diagram
\[
\xymatrix{
\mathcal{W}^s(\R^{n-1}, \R^{n-1}, \Si'(\R_+)) \ar[r]^{A_2^{\phi}e^+}_{\star} &\mathcal{W}^{s-m_2}(\R^{n-1}, \R^{n-1}; \Si'(\R))\ar[d]^{e^+r^+-1}_{\star \star}\\
\mathcal{W}^{s-m_1-m_2}(\R^{n-1}, \R^{n-1}; \Si(\R_+))&\ar[l]^{r^+A^{\psi} }_{\star \star \star} \mathcal{W}^{s-m_2}(\R^{n-1}, \R^{n-1}; \Si'(\R_-))
}
\]
where we have applied Lemma \ref{lemmasob} in $(\star)$, then Lemma \ref{ex} in $(\star \star)$  and finally Theorem \ref{teosmen} in $(\star \star \star)$.
\end{comment}
  \begin{enumerate}
    \item $\pt{r^+ B^{\chi'}e^+}  G_{\mathcal{A}}^{\chi_{\partial}}$ has a
    singular Green symbol of order $m_{\mathcal{B}}+m_{\mathcal{A}}$ and type $d_{\mathcal{A}}$.
    \item 
    $G^{\chi'_\partial}_{\mathcal{B}} \pt{ r^+ A^\chi e^+}$ has a singular Green symbol of order $m_{\mathcal{B}}+m_{\mathcal{A}}$ and of type
    $(m_{\mathcal{A}}+ d_{\mathcal{B}})_+=\max\{m_{\mathcal{A}}+d_{\mathcal{B}},0\}$.
   \item $G^{\chi'_\partial}_{\mathcal{B}}  G_{\mathcal{A}}^\chi$ has a Green symbol of 
    order $m_{\mathcal{B}}+m_{\mathcal{A}}$ and type $d_{\mathcal{A}}$. 
   \item $\pt{r^+   B^{\chi'}e^+}  K_{\mathcal{A}}^{\chi_\partial}$ has  a potential symbol of order $m_{\mathcal{B}}+m_{\mathcal{A}}$.
   \item $G^{\chi'_\partial}_{\mathcal{B}} K_{\mathcal{A}}^{\chi_\partial}$ has  a potential symbol of order $m_{\mathcal{B}}+m_{\mathcal{A}}$.
   \item $K^{\chi'_\partial}_{\mathcal{B}}  T_{\mathcal{A}}^{\chi_\partial}$ has a Green symbol of
    order $m_{\mathcal{B}}+m_{\mathcal{A}}$ and type $d_{\mathcal{A}}$.
   \item $K^{\chi'_\partial}_{\mathcal{B}}  S_{\mathcal{A}}^{\chi_\partial}$ has a potential symbol 
    of order $m_{\mathcal{B}}+m_{\mathcal{A}}$.
    \item $T^{\chi'_\partial}_{\mathcal{B}} r^+ A^\chi e^+$ has a
    trace symbol of order $m_{\mathcal{B}}+m_{\mathcal{A}}$ and type $(m_{\mathcal{A}}+d_{\mathcal{B}})_+$. 
    \item $T^{\chi'_\partial}_{\mathcal{B}}  G_{\mathcal{A}}^{\chi_\partial} $ has 
    a trace symbol of order $m_{\mathcal{B}}+ m_{\mathcal{A}}$ and type $d_{\mathcal{A}}$. 
    \item $S^{\chi'_\partial}_{\mathcal{B}}  T_{\mathcal{A}}^{\chi_\partial}$ has a trace
    symbol of order $m_{\mathcal{B}}+ m_{\mathcal{A}}$ and type $d_{\mathcal{A}}$.
    \item $T^{\chi'_\partial}_{\mathcal{B}} K_{\mathcal{A}}^{\chi_\partial}$ has  a symbol in $S^{m_{\mathcal{B}}+m_{\mathcal{A}}}(\R^{n-1}\times \R^{n-1})$.
    \item $S^{\chi'_\partial}_{\mathcal{B}}  S_{\mathcal{A}}^{\chi_{\partial}}$ has
    a symbol in $S^{m_{\mathcal{B}}+m_{\mathcal{A}}}(\R^{n-1}\times \R^{n-1})$.
  \end{enumerate}
  The composition in 1) follows from Remark \ref{FIOop} and the composition laws of operator-valued pseudodifferential operators.
  The  compositions in  3), 4), 5), 6), 7), 9), 10), 11), 12)
  can be treated similarly, exploiting the properties of 
  operator-valued symbols.
  The compositions in 2) and 8)
  are slightly more delicate. Let us analyze the composition in 2).  We first suppose $d_{\mathcal{B}}=0$. 
  By Proposition \ref{prop:div}, $a$ can be written in the form $a=a_d+a_0$. 
  Using \eqref{eq:derep1} and \eqref{eq:derep2}, we can write
  \begin{align}
    \nonumber
    &\hspace{-6mm}\pt{r^+ \op^\psi_n(a_d) e^+} u= \\
    \nonumber
    &\hspace{6mm}\sum_{j=1}^{m_{\mathcal{A}}}(-i)^j  r^+ a_{j}(x', x_n, \xi') 
    \pt{\op^\psi_n(1)e^+\partial_{y_n}^j u +\sum_{l=0}^{j-1} \op^\psi_n (1) \pt{\delta_0^{(j-l-1)} }\gamma_l(u) }.
  \end{align}
  By Theorem \ref{teodirac}, Remark \ref{rem:tras} and the properties of trace operators, the
  sum in $j,l$ can be written as
  \[
    r^+ \op^{\psi}_n(a_d) e^+= \sum_{j=1}^{m_{\mathcal{A}}} \tilde{a}_j(x', \xi') \partial_+^j+  
    \sum_{l=0}^{j-1} k_{l}(x', \xi') \gamma_l(u),
  \]
  $\tilde{a}_j \in S^{m_{\mathcal{A}}-j}\pt{\R^{n-1}, \R^{n-1}, \Si'\pt{\R_+}, \Si'\pt{\R_+}}$,
  $k_l \in S^{m_\mathcal{A} -l -\frac{1}{2}}\pt{\R^{n-1}, \R^{n-1}, \C, \Si\pt{\R_+}}$.
  By the properties of trace operators and the definition of Green symbols of type zero, we then obtain that 
  \(
    g_{\mathcal{B}} \pt{r^+ \op^\psi_n(a_d) e^+}
  \)
  is a Green symbol of order $m_{\mathcal{B}}+ m_{\mathcal{A}}$ and type $(m_{\mathcal{A}})_+$.
  To prove the same result for $a_0$, we notice that $r^+ \op^\psi_n(a_0) e^+$
  extends to a symbol in $S^{m_{\mathcal{A}}} \pt{\R^{n-1}, \R^{n-1}; \Si'\pt{\R_+}, \Si'\pt{\R_+}}$, and
  the assertion follows from the definition of Green symbols.
  If $d_{\mathcal{B}}\not=0$, we see that
  \begin{align*}
   & \hspace{-8mm}\partial_+ \int e^{i \psi(x',x_n,\xi',\xi_n)- i \psi_{\partial}(x',\xi')}a(x',x_n,\xi', \xi_n) \hat{u}(\xi_n) 
    \dbar\xi_n\\
    &\hspace{8mm}=r^+\int e^{ i \psi(x',x_n,\xi',\xi_n)- i \psi_{\partial}(x',\xi')} \tilde{a}(x',x_n,\xi', \xi_n) 
    \hat{u}(\xi_n)\dbar\xi_n,
  \end{align*}
  where $\tilde{a}= \partial_{x_n}a+ \pt{i \partial_{x_n}\psi} a $, which implies 
  $\tilde{a}\in S^{m_{\mathcal{A}}+1}(\R^n\times \R^n)$. 
  Iteratively we can reduce to the case $d_{\mathcal{B}}=0$, 
  raising the order from $m_{\mathcal{A}}$ to $m_{\mathcal{A}}+d_{\mathcal{B}}$. 
  To handle the composition 8), we proceed similarly.
 \end{proof}
We will now show a Egorov type theorem for operators in 
$\mathscr{B}^{m, d}_\chi \pt{\Omega_x^+\times\Omega_y^+}$. In analogy with
the usual calculus of FIOs on closed manifolds, we analyze the adjoint of the operator $\mathcal{A}$.
\begin{Thm}
  \label{th:adj}
  The formal adjoint $\mathcal{A}^*$ of an operator 
  $\mathcal{A} \in  \mathscr{B}^{m, 0}(\Omega_x^+\times\Omega_y^+)$, $m\leq0$,
  is a FIO of Boutet de Monvel type, namely 
  $\mathcal{A}^*\in \mathscr{B}_{\chi^{-1}}^{m, 0}\pt{\Omega_y^+\times\Omega_x^+}$. 
  Moreover, 
  \begin{equation}
    \label{eq:adjointb}
    \mathcal{A}^*=\left(
    \begin{array}{cc}
      r^+	\pt{A^{\chi}}^* e^+ + \pt{G^{\chi_\partial}}^* & \pt{T^{\chi_\partial}}^*\\
      \pt{K^{\chi_{\partial}}}^*& \pt{S^{\chi_\partial}}^*
    \end{array} \right),
  \end{equation}
  where $\pt{A^\chi}^*$ is the formal adjoint of $A^\chi$, and
  the Lagrangian submanifold is defined by the graph of $\chi^{-1}$. The operators
  $\pt{ G^{\chi_\partial} }^*$, $\pt{K^{\chi_\partial}}^*$, $\pt{T^{\chi_{\partial}}}^*$, $\pt{S^{\chi_\partial}}^*$
  appearing in \eqref{eq:adjointb} are the adjoints of $G^{\chi_\partial}$, $K^{\chi_\partial}$, $T^{\chi_\partial}$,
  $S^{\chi_\partial}$, respectively, that is, they are FIOs with Lagragian
  submanifold   given by the graph of $\chi_{\partial}^{-1}$.
 \end{Thm}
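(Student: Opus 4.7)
The plan is to establish the four block entries of $\mathcal{A}^*$ separately, since formally
\[
\mathcal{A}^*=\begin{pmatrix} (r^+ A^\chi e^+)^* + (G^{\chi_\partial})^* & (T^{\chi_\partial})^* \\ (K^{\chi_\partial})^* & (S^{\chi_\partial})^* \end{pmatrix}.
\]
The off-diagonal and lower-right entries are handled by standard adjoint properties of operator-valued FIOs on the boundary. Viewing $K^{\chi_\partial}$, $T^{\chi_\partial}$, $S^{\chi_\partial}$ as pullbacks under $\tilde b$ of Boutet de Monvel type boundary operators (as in Remark \ref{FIOop}), their adjoints are again FIOs at the boundary whose Lagrangian submanifold is the graph of $\chi_\partial^{-1}$; potential and trace symbols swap (trace of type $0$ $\leftrightarrow$ potential), and a pseudodifferential symbol of order $m$ stays of the same type and order. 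Similarly, $(G^{\chi_\partial})^*$ is again a singular Green operator of order $m$ and type $0$ associated with $\chi_\partial^{-1}$, by the well-known adjoint formula for type-$0$ singular Green symbols.

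The key point is the identification of $(r^+ A^\chi e^+)^*$. As Remark \ref{rem:tras} warns, in general $(r^+ \op_n^\psi(a) e^+)^t \ne r^+ (\op_n^\psi(a))^t e^+$: the discrepancy is exactly the boundary contribution produced by the polynomial-in-$\xi_n$ part $a_d$ of the decomposition in Proposition \ref{prop:div}. My first step is to exploit the hypothesis $m\leq 0$: in the explicit expression
\[
a_d(x',x_n,\xi',\xi_n)=\sum_{j=0}^{\infty}\sum_{k=0}^{m}\frac{x_n^j}{j!}\,a_{k,j}(x',\xi')\xi_n^k\phi(t_j x_n)
\]
the inner sum is either empty ($m<0$) or consists only of the $k=0$ term (multiplication), which produces no distributional boundary contribution when one transposes. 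Hence $a=a_0$ modulo a multiplication symbol, and the identity \eqref{eq:trasposto} of Remark \ref{rem:tras} applies verbatim, yielding $(r^+ \op_n^\psi(a) e^+)^* = r^+ (\op_n^\psi(a))^* e^+$ (with the remaining multiplication part being self-adjoint).

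Next I would invoke the classical theory: $(A^\chi)^*$ is a FIO of order $m$ associated with the Lagrangian submanifold $\mathrm{Graph}(\chi^{-1})'$, and, up to stationary phase corrections, its amplitude is the complex conjugate of that of $A^\chi$. Since $\chi$ is admissible, $\chi^{-1}$ is admissible as well (the transmission condition is preserved under taking the inverse of a boundary-preserving, fiberwise homogeneous symplectomorphism, as can be read off the Jacobian form \eqref{eq:jacob} together with the symmetry criterion \eqref{eq:hom}), and the transmission condition is preserved under complex conjugation. Thus the amplitude of $(A^\chi)^*$ again satisfies the transmission condition, and we obtain $(r^+ A^\chi e^+)^* = r^+ (A^\chi)^* e^+$ modulo a singular Green operator of order $m$ and type $0$ produced by the usual passage from the transpose of an amplitude on $\R^n$ to one adapted to the half-space.

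The main obstacle I anticipate is precisely this last step: verifying that the boundary remainder obtained by comparing $r^+(A^\chi)^* e^+$ with the \emph{true} Boutet de Monvel adjoint fits into the singular Green class of the prescribed order and type, and that the admissibility passes cleanly to $\chi^{-1}$. Both reduce to local calculations with operator-valued symbols in the spirit of Theorems \ref{teodirac}, \ref{th:conts}, \ref{teosmen}, and once these are in place, assembling the four blocks yields $\mathcal{A}^* \in \mathscr{B}^{m,0}_{\chi^{-1}}(\Omega_y^+\times\Omega_x^+)$ in the form \eqref{eq:adjointb}.
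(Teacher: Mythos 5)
Your proposal is correct in substance and uses the hypothesis $m\leq 0$ for exactly the right purpose, but it takes a more laborious route than the paper and leaves the key identity slightly hedged. The paper disposes of the upper-left corner in three lines: since $m\leq 0$, $A^\chi$ is bounded on $L^2(\R^n)$, and since $(e^+)^*=r^+$ with respect to the $L^2$ pairings, one gets
$\pt{r^+A^\chi e^+u,v}_{L^2(\R^n_+)}=\pt{u,r^+\pt{A^\chi}^*e^+v}_{L^2(\R^n_+)}$
directly, i.e.\ the \emph{exact} equality $(r^+A^\chi e^+)^*=r^+(A^\chi)^*e^+$ with no remainder; the boundary blocks are then handled exactly as you do, via Remark \ref{FIOop} and the standard adjoint rules for Green, potential, trace and pseudodifferential symbols of type $0$. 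Your detour through the decomposition $a=a_d+a_0$ of Proposition \ref{prop:div} and the transposition identity \eqref{eq:trasposto} of Remark \ref{rem:tras} is legitimate (for $m\leq 0$ the polynomial part indeed degenerates to a multiplication, whose truncated adjoint produces no boundary terms), but it is needed only at the level of the normal action, whereas the global $L^2$-duality argument covers the full operator at once. In particular, your closing hedge that $(r^+A^\chi e^+)^*=r^+(A^\chi)^*e^+$ only ``modulo a singular Green operator of order $m$ and type $0$'' is both unnecessary and, as stated, weaker than the formula \eqref{eq:adjointb} you are asked to prove: the duality argument shows that this remainder is zero, so the ``main obstacle'' you anticipate dissolves. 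Your remarks on the admissibility of $\chi^{-1}$ and the transmission condition for the adjoint amplitude are reasonable additions (the paper treats these as implicit in its framework), though ``the remaining multiplication part being self-adjoint'' should read that its truncated adjoint is again a truncated multiplication (by the conjugate coefficient), which is all that is needed.
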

%%%%%%%%%%%%%%%%%%%%%%%%%%%%%%%PROOF	
\begin{proof}
  Since $m\leq 0$, $A^\chi $ is continuous from $L^2(\R^n)$ to itself. Moreover, 
  $e^+:L^2(\R^n_+) \to L^2(\R^n)$ is continuous and its adjoint is $r^+$.
  So we can write
  \begin{align*}
    \pt{r^+ A^\chi e^+ u, v}_{L^2(\R^n_+)}&=\pt{A^\chi e^+u, e^+v}_{L^2(\R^n)}\\
    &=\pt{e^+u,(A^\chi }^* e^+v )_{L^2(\R^n)}=\pt{u, r^+\pt{A^\chi }^* e^+v}_{L^2(\R^n_+)}.
  \end{align*}
  For the other components of $\mathcal{A}^*$, we apply Remark \ref{FIOop} and recall that
  the adjoint of a Green operator of order $m$ and type $0$ is a Green operator of the same order and type, the adjoint of
  a potential operator of order $m$ is a trace operator of order $m$ and type $0$ and the adjoint of a trace operator
  of order $m$ and type $0$ is a potential operator of order $m$, see \cite{SC01}.
\end{proof}
%%%%%%%%%%%%%%%%%%%%%%%%%%%%DEFINITION	
\begin{Def}
  For every $m\in \Z$  we define the operator
  \[
    [\Lambda^m_+]:=
    \left(
    \begin{array}{cc}
      r^+	 \Lambda^m e^+  & 0\\
      0 & \op(\langle \xi'\rangle^{m})
    \end{array} 
    \right)
  \]
  where $r^+ \Lambda^m e^+: H^m(\R^n_+) \to L^2(\R^n_+)$ is an isomorphism in Boutet de Monvel's calculus. The operator $[\Lambda^m_+]$
  is an element of $\mathscr{B}^{m,0}(\overline{\R^n}_+)$ and it is invertible.
\end{Def}
%%%%%%%%%%%%%%%%%%%%%%%
\begin{Thm}
  \label{th:ego}
  Let $\mathcal{A} \in \mathscr{B}^{m,d}_{\chi}(\Omega_x^+\times\Omega_y^+)$, $d\leq m_+=\max\ptg{m, 0}$, $m \in \Z$, be a FIO of Boutet de Monvel type. 
  Then,
  \begin{itemize}
    \item[i)] If $m\leq0$ and $d=0$, $\mathcal{A}  \mathcal{A}^*$ is a an element of $\mathscr{B}^{2m, 0}
		(\Omega_x^+)$;
    \item[ii)] If $m>0$ and $\omega\in C_c^\infty(\Omega_y)$,
       $(\mathcal{A} \omega [\Lambda^{-m}_+]) (\mathcal{A}\omega [\Lambda^{-m}_+])^*$ 
    is an element of
    $\mathscr{B}^{0,0}(\Omega_x^+)$.
  \end{itemize}
\end{Thm}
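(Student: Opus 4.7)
The proof will be a direct application of the composition theorem (Theorem \ref{th:comp}) and the adjoint theorem (Theorem \ref{th:adj}), with the role of the cut--off $\omega$ and the order reduction $[\Lambda^{-m}_+]$ being to bring the analysis into the regime where those two results can be applied without restriction.

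For part i), since $m\le 0$ and $d=0$, Theorem \ref{th:adj} applies directly to $\mathcal{A}$ and yields $\mathcal{A}^*\in \mathscr{B}^{m,0}_{\chi^{-1}}(\Omega_y^+\times\Omega_x^+)$. Theorem \ref{th:comp} then gives $\mathcal{A}\mathcal{A}^*\in \mathscr{B}^{2m,d'}_{\chi\circ\chi^{-1}}(\Omega_x^+\times\Omega_x^+)$, where the type is
\[
d'=\max\{m+0,\,0\}=0,
\]
using $m\le 0$. Since $\chi\circ\chi^{-1}=\id$, this class is, by the remark following Definition \ref{def:FIOBdM}, exactly the standard Boutet de Monvel class $\mathscr{B}^{2m,0}(\Omega_x^+)$.

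For part ii), the obstacle is that Theorem \ref{th:adj} requires order $\le 0$, whereas here $m>0$. The trick is to form the order--reduced operator $\mathcal{A}\,\omega\,[\Lambda^{-m}_+]$. The factor $\omega[\Lambda^{-m}_+]$ is a standard Boutet de Monvel operator of order $-m$ and type $0$ with compactly supported symbol (the cut--off $\omega$ serves exactly to ensure the compact support required for Theorem \ref{th:comp}), so
\[
\omega[\Lambda^{-m}_+]\in \mathscr{B}^{-m,0}_{\id}(\Omega_y^+).
\]
Applying Theorem \ref{th:comp} to $\mathcal{A}\circ(\omega[\Lambda^{-m}_+])$ yields a FIO of Boutet de Monvel type associated with $\chi\circ\id=\chi$, of order $m+(-m)=0$ and of type
\[
\max\{-m+d,\,0\}=0,
\]
since $d\le m_+=m$. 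Thus $\mathcal{A}\,\omega\,[\Lambda^{-m}_+]\in \mathscr{B}^{0,0}_\chi$.

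Having reduced the order to zero, Theorem \ref{th:adj} applies, giving
\[
(\mathcal{A}\,\omega\,[\Lambda^{-m}_+])^*\in \mathscr{B}^{0,0}_{\chi^{-1}},
\]
and a final application of Theorem \ref{th:comp} yields that the product $(\mathcal{A}\,\omega\,[\Lambda^{-m}_+])(\mathcal{A}\,\omega\,[\Lambda^{-m}_+])^*$ lies in $\mathscr{B}^{0,0}_{\chi\circ\chi^{-1}}=\mathscr{B}^{0,0}(\Omega_x^+)$, as desired. No nontrivial calculation with oscillatory integrals is needed, since all such analysis has been absorbed into Theorems \ref{th:comp} and \ref{th:adj}; the only verification worth making explicit is the bookkeeping of orders and types at each step, and in particular the use of the hypothesis $d\le m_+$ to conclude that the type collapses to $0$.
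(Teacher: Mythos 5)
Your proposal is correct and follows essentially the same route as the paper: the paper's proof of part i) invokes Theorem \ref{th:adj} together with the closed-manifold Egorov theorem (which is exactly what your appeal to Theorem \ref{th:comp} with $\chi\circ\chi^{-1}=\id$ and the remark after Definition \ref{def:FIOBdM} encapsulates), and its proof of part ii) likewise reduces to part i) after noting $\mathcal{A}\,\omega\,[\Lambda^{-m}_+]\in\mathscr{B}^{0,0}_{\chi}$ by Theorem \ref{th:comp}. Your explicit bookkeeping of orders and types, including the use of $d\le m_+$ and $m\le 0$ to make the type collapse to $0$, matches what the paper leaves implicit.
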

%%%%%%%%%%%%%%%%%%%%%%%%%%%%%5PROOF	
\begin{proof}
  The proof of part \emph{i)} essentially follows from Theorem \ref{th:adj} and from Egorov's theorem for FIOs
  on closed manifolds.
  The second part follows from the first, noticing that
  $\mathcal{A} \omega [\Lambda^{-m}_+]$  belongs to $\mathscr{B}_{\chi}^{0, 0}
  (\Omega_x^+\times\overline{\R^n}_+)$ by
  Theorem \ref{th:comp}.
\end{proof}
In general, one cannot expect a Egorov type theorem for FIOs of Boutet de Monvel type of all orders and types.
In fact, the formal adjoint $\mathcal{P}^*$ of an operator  
$\mathcal{P}\in\mathscr{B}^{m,d}(\overline{\R^n}_+)$, with
$m>0$ or $d>0$, in general is not even a Boutet de Monvel operator. 
However, by means of Theorems \ref{th:comp} and \ref{th:ego}, it  is possible to
prove that
\begin{Thm}\label{th:egohs}
  If
  $\mathcal{P} \in \mathscr{B}^{m', d'}(\Omega_y^+)$ and
  $\mathcal{A} \in \mathscr{B}^{m,0}_{\chi}(\Omega_x^+\times\Omega_y^+)$, $m\leq0$, then 
  $ \mathcal{A}  \mathcal{P} \mathcal{A}^*$
  belongs to $\mathscr{B}^{m', d'}(\Omega_x^+)$.
\end{Thm}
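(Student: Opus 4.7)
The plan is a direct conjugation argument based on the adjoint and composition theorems together with the Egorov-type cancellation of Theorem \ref{th:ego}. Since $\mathcal{A} \in \mathscr{B}^{m,0}_\chi(\Omega_x^+ \times \Omega_y^+)$ with $m \leq 0$, Theorem \ref{th:adj} yields $\mathcal{A}^* \in \mathscr{B}^{m,0}_{\chi^{-1}}(\Omega_y^+ \times \Omega_x^+)$. Regarding $\mathcal{P} \in \mathscr{B}^{m', d'}(\Omega_y^+)$ as a Boutet de Monvel FIO associated with the identity symplectomorphism on $T^*\Omega_y^+\setminus 0$, Theorem \ref{th:comp} applied twice produces a Boutet de Monvel FIO whose associated symplectomorphism is $\chi \circ \mathrm{id} \circ \chi^{-1} = \mathrm{id}$. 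Hence $\mathcal{A} \mathcal{P} \mathcal{A}^*$ lies in the ordinary Boutet de Monvel calculus on $\Omega_x^+$, and its total order is $2m + m' \leq m'$, yielding the order bound by the standard embedding $\mathscr{B}^{m'', \cdot} \subset \mathscr{B}^{m', \cdot}$ for $m'' \leq m'$.

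The subtler point is the type. A coarse application of Theorem \ref{th:comp} would only give type $\max\{m + m',\, m + d',\, 0\}$, which is not automatically $\leq d'$. To sharpen this, I would decompose $\mathcal{P} = \sum_{j=0}^{d'} P_j \partial_+^j$ into components $P_j$ of order $m' - j$ and type zero, and examine each piece $\mathcal{A} P_j \partial_+^j \mathcal{A}^*$ separately. The factor $\partial_+^j \mathcal{A}^*$ can be handled as in Remark \ref{rem:diractop} and the composition proof of Theorem \ref{th:comp}: the normal derivatives are absorbed into the right symbol of $\mathcal{A}^*$, producing an operator of order $m + j$ whose interior part is a FIO associated with $\chi^{-1}$ and whose boundary-trace contributions have type at most $(m+j)_+$. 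The interior pairing of $\mathcal{A} P_j$ with the modified $\mathcal{A}^*$ then falls within the Egorov framework of Theorem \ref{th:ego}: the two FIOs are associated with $\chi$ and $\chi^{-1}$ and sandwich the type-zero pseudodifferential operator $P_j$, so no additional trace-type contribution is generated. Taking the maximum over $j = 0, \ldots, d'$ yields type at most $(m + d')_+ \leq d'$, as required.

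The principal obstacle is precisely this refined type analysis. The coarse composition formula is too loose, and the sharp estimate relies on combining the explicit symbolic structure of $\mathcal{A}^*$ near the boundary, developed in the proof of Theorem \ref{th:comp}, with the Egorov-type cancellation $\mathcal{A}\mathcal{A}^* \in \mathscr{B}^{2m, 0}$ from Theorem \ref{th:ego}. Once these two ingredients are reconciled term by term in the decomposition of $\mathcal{P}$, the containment $\mathcal{A}\mathcal{P}\mathcal{A}^* \in \mathscr{B}^{m', d'}(\Omega_x^+)$ follows.
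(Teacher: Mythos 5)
Your first paragraph is, in substance, all that the paper's own one-line justification (``by Theorems \ref{th:comp} and \ref{th:ego}'') actually delivers: the symplectomorphism composes to the identity, the order is $2m+m'\le m'$, and the type coming out of Theorem \ref{th:comp} is $\max\{d',(m'+m)_+\}$ -- and you rightly note this is not $\le d'$ in general. The gap is in your proposed sharpening. Its pivotal assertion -- that because each $P_j$ has type zero, the pairing of $\mathcal{A}$ with $\partial_+^j\mathcal{A}^*$ around it ``falls within the Egorov framework of Theorem \ref{th:ego}'' so that ``no additional trace-type contribution is generated'' -- is precisely what needs proof, and it is not justified by anything you cite. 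Theorem \ref{th:ego} concerns only the adjacent product $\mathcal{A}\mathcal{A}^*$; once a factor of positive order sits between $\mathcal{A}$ and $\mathcal{A}^*$, the regrouping $r^+A^\chi e^+\, r^+Ce^+ = r^+A^\chi Ce^+ + r^+A^\chi(e^+r^+-1)Ce^+$ produces, exactly as in the proof of Theorem \ref{th:comp}, Green and trace terms of type $(\operatorname{ord}C)_+$, and here the right-hand factor $C=P_j\,\partial_+^j\mathcal{A}^*$ has order $m'+m$, not $m+j$. So your term-by-term estimate gives $\max\{(m'+m)_+,(m+j)_+\}$, i.e.\ nothing beyond the coarse bound, and the concluding inequality ``type $\le (m+d')_+\le d'$'' does not follow.

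Moreover these type-$(m'+m)_+$ contributions are genuinely present, so the gap cannot be closed by rearranging the bookkeeping: take $\chi=\mathrm{id}$, $\mathcal{A}=r^+Pe^+$ with $P$ an order-zero pseudodifferential operator with the transmission property whose symbol has nonzero limit as $\xi_n\to\pm\infty$ (e.g.\ $1+(1+i\xi_n)^{-1}$), and $\mathcal{P}=r^+D_{x_n}e^+\in\mathscr{B}^{1,0}$. One checks $r^+D_{x_n}e^+\,r^+P^*e^+=r^+(D_{x_n}P^*)e^+$ exactly, so $\mathcal{A}\mathcal{P}\mathcal{A}^*-r^+(PD_{x_n}P^*)e^+=r^+P(e^+r^+-1)(D_{x_n}P^*)e^+$; since $P^*e^+u$ jumps at $x_n=0$ by a nonzero multiple of $\gamma_0(u)$, this remainder contains a nonzero multiple of $\gamma_0(u)\,r^+P\delta_0$ plus a type-zero Green term. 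The potential-times-trace term has type $1>d'=0$ and cannot be absorbed into a truncated operator plus a type-zero Green operator (it does not act on $H^s(\R_+)$ for $|s|<1/2$). Hence the refined type bound is not reachable along the route you propose -- nor, as far as the toolkit of Theorems \ref{th:comp} and \ref{th:ego} goes, at all without an extra hypothesis such as $m+m'\le d'$ (for instance $d'=m'_+$). What your argument does establish is $\mathcal{A}\mathcal{P}\mathcal{A}^*\in\mathscr{B}^{m',\max\{d',(m'+m)_+\}}(\Omega_x^+)$, and you should either prove the stronger statement by a genuinely new mechanism or record it with this corrected type.
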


%%%%%%%%%%%%%%%%%%%%%%%%%%%%%%%%%%%%%%%%%%%%%%%%%%%%%%%%%%%%%%%%%%%%%%%%%%%%%%%%%%%%%%%%%%%%%%%%%%%%%%%%%%%%%%%%%
%%%%%%%%%%%%%%%%%%%%%%%%%%%%%%%%%%%%%%%%%%%%SECTION PRINCIPAL SYMBOL%%%%%%%%%%%%%%%%%%%%%%%%%%%%%%%%%%%%%%%%%%%%%
%%%%%%%%%%%%%%%%%%%%%%%%%%%%%%%%%%%%%%%%%%%%%%%%%%%%%%%%%%%%%%%%%%%%%%%%%%%%%%%%%%%%%%%%%%%%%%%%%%%%%%%%%%%%%%%%%
\section{Principal Symbols}
\label{sec:prin}
We next  define the boundary principal symbol $\sigma_\partial(\mathcal A)$ 
of a FIO of Boutet de Monvel type
\[
  \mathcal{A}:=
  \left(
  \begin{array}{cc}
      r^+	A^\chi e^+ + G^{\chi_\partial} & K^{\chi_\partial}\\
      T^{\chi_{\partial}}& S^{\chi_\partial}
  \end{array} 
  \right) 
   \in \mathscr{B}^{m,d}_\chi(\Omega_x\times\Omega_y).
\]
We have shown that
$\mathcal{A}$ can be seen as an operator-valued FIO defined on the boundary with a symbol 
belonging to $ S^m \pt{\R^{n-1}, \R^{n-1}; \Si \pt{\R_+} \oplus \C,\Si \pt{\R_+} \oplus \C}$. We will 
now switch to {\em classical} operator-valued symbols, see the Appendix, and show that in this setting we can define 
\begin{equation}
  \label{eq:prsplit}
	\sigma_\partial(\mathcal{A})=
  \sigma_\partial\left(
  \begin{array}{cc}
      r^+	A^\chi e^+  & 0
      \\
      0 & 0
  \end{array} 
  \right) 
  +
  \sigma_\partial\left(
  \begin{array}{cc}
      G^{\chi_\partial} & K^{\chi_\partial}\\
      T^{\chi_{\partial}}& S^{\chi_\partial}
  \end{array} 
  \right).
\end{equation}
In view of the fact that $G^{\chi_\partial}={\tilde b}^* G_1$, 
where $\tilde b$ is the diffeomorphism in \eqref{eq:estb} and $G_1$ is a usual Green operator, 
we already have a suitable principal symbol for $G^{\chi_\partial}$, 
namely the pullback of the principal symbol of $G_1$ under $\chi_\partial$.
Similar arguments apply to $K^{\chi_\partial}, T^{\chi_\partial}, S^{\chi_\partial}$.
Hence we can focus on finding a principal boundary symbol $r^+A^\chi_\partial e^+$ for
$r^+ A^\chi e^+$.

The natural candidate for the boundary principal  symbol of $r^+A^\chi e^+ $ is
\begin{multline}
  \label{eq:etan}
  r^+A^\chi_\partial e^+  \pt{x', \eta'}: u \mapsto
  r^+ \int e^{i x_n \partial_{x_n}\psi(x', 0, \eta', \eta_n)- i y_n \eta_n} 
  a_m(x', 0, \eta', \eta_n) e^+\pt{u} dy_n \dbar \eta_n, \\\quad \pt{x', \eta'} 
  \in \Omega^\partial_x\times T^*_{(b^{-1}(x'),0)}{\Omega_y}\setminus0.
\end{multline}
For simplicity this is written for $x'\in\Omega_x^\partial$ and $\eta'\in T^*_{(b^{-1}(x'),0)}\Omega_y\setminus0$.
To have a consistent definition of principal symbol, we have to express $\eta'$ in terms of $(x',\xi')$.
The symbol $a_m$ is a section of the Maslov bundle. 
Since, in our case, this bundle is trivial near the boundary, it is defined as a section 
of the cotangent bundle restricted to the boundary. Notice that the operator-valued symbol 
in \eqref{eq:etan} is homogeneous of degree $m$ in the sense of Definition \ref{opvaluedsymbols}
when $a_m$ is the homogeneous principal part
of $a\in S^m_\mathrm{cl}(\R^n\times\R^n)\cap S^{m}_\tr (\R^{n} \times \R^{n})$.
\begin{Lem}
 The operator-valued function $r^+ A^\chi_\partial e^+$, defined in \eqref{eq:etan},
 belongs to the symbol class $S^m(\R^{n-1}, \R^{n-1}; \Si(\R_+),\Si(\R_+))$.
\end{Lem}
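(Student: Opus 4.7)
The strategy is to identify $r^+A^\chi_\partial e^+$ as a particular instance of the truncated operator analyzed in Theorem \ref{th:conts}, applied to a \emph{model} phase and symbol obtained from $(\psi,a)$ by freezing the dependence in the normal coordinate. Concretely, I would set
\[
\tilde\psi(x',x_n,\eta',\eta_n) := \psi_\partial(x',\eta') + x_n\cdot \partial_{x_n}\psi(x',0,\eta',\eta_n),
\qquad
\tilde a(x',x_n,\eta',\eta_n) := a_m(x',0,\eta',\eta_n).
\]
By Remark \ref{rem:phfprop}, $\psi(x',0,\eta',\eta_n) = \psi_\partial(x',\eta')$, hence \eqref{eq:etan} can be written as
\[
r^+A^\chi_\partial e^+(x',\eta')\,u = r^+\!\!\iint e^{i\tilde\psi(x',x_n,\eta',\eta_n)-i\psi_\partial(x',\eta')-iy_n\eta_n}\tilde a(x',x_n,\eta',\eta_n)\,e^+u(y_n)\,dy_n\dbar\eta_n,
\]
so that $r^+A^\chi_\partial e^+ = r^+\op^{\tilde\psi}_n(\tilde a)e^+$ in the notation of Theorem \ref{th:conts}.

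The first step is to verify that the pair $(\tilde\psi,\tilde a)$ satisfies Assumptions \ref{TechnHyp:1}. For the symbol $\tilde a$: it is homogeneous of degree $m$ in $(\eta',\eta_n)$ (being the principal part of $a$, evaluated at $x_n=0$), and since $a \in S^m_\mathrm{tr}$, the homogeneous transmission condition \eqref{eq:hom} holds for $\tilde a$ at $x_n=0$; trivially it also holds at every $x_n$, as $\tilde a$ does not depend on $x_n$. As in Assumptions \ref{TechnHyp:1}, a $0$-excision in $\eta$ and a compactly supported cutoff $\varphi(x_n)$, $\varphi\equiv 1$ near $0$, may be inserted at the cost of smoothing contributions which do not affect the class of the resulting operator-valued symbol.

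The second step is to verify that $\tilde\psi$ is admissible in the sense of Definition \ref{rem:adm}. Its components are $\psi_\partial$, which represents $\chi_\partial$ and is linear in $\eta'$ by Remark \ref{rem:psipartial}, and $\partial_{x_n}\psi(x',0,\eta',\eta_n)$, which, by construction, represents the $\xi_n$-component of $\chi$ at the boundary. Since $\chi$ is admissible, both functions satisfy the transmission condition; $\tilde\psi$ is positively homogeneous of degree one in the fibers and, being affine in $x_n$, automatically boundary-preserving. It therefore satisfies the same structural properties as the original $\psi$ that are used in the proof of Theorem \ref{th:conts}.

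With $(\tilde\psi,\tilde a)$ satisfying Assumptions \ref{TechnHyp:1}, Theorem \ref{th:conts} directly yields
\[
r^+\op^{\tilde\psi}_n(\tilde a)e^+ \in S^m\bigl(\R^{n-1},\R^{n-1};\Si(\R_+),\Si(\R_+)\bigr),
\]
which is exactly the claim. The homogeneity of order $m$ in $\eta'$ follows from the homogeneity of $a_m$ together with the definition of classical operator-valued symbols recalled in the Appendix, together with the compatibility of the twisted rescaling $\kappa_\lambda$ with the change of phase variable $\eta_n\mapsto\langle\eta'\rangle\eta_n$. The main obstacle is the verification step for $\tilde\psi$: one must make sure that the truncation to the first-order Taylor expansion in $x_n$ preserves admissibility and fits into the hypotheses of Theorem \ref{th:conts}, but this is guaranteed once $\chi$ is admissible since both $\psi_\partial$ and $\partial_{x_n}\psi|_{x_n=0}$ inherit the transmission condition from the components of $\chi$.
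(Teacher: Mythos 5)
Your reduction of \eqref{eq:etan} to Theorem \ref{th:conts} for the frozen pair $(\tilde\psi,\tilde a)$ matches, in substance, what the paper does for the part of the operator supported near $x_n=0$: the paper splits the operator by a cutoff in $x_n$ and, for the compactly supported piece, repeats the arguments of Theorems \ref{teodirac} and \ref{th:conts}. The genuine gap is your treatment of the remaining piece. You insert $\varphi(x_n)$ and dismiss the error as a ``smoothing contribution which does not affect the class,'' but that is not justified: unlike the $0$-excision in $\eta$, a cutoff in $x_n$ inside a Fourier integral (rather than pseudodifferential) amplitude does not leave a negligible remainder. The phase $x_n\partial_{x_n}\psi(x',0,\eta',\eta_n)-y_n\eta_n$ is stationary along $y_n=x_n\,\partial_{\eta_n}\partial_{x_n}\psi(x',0,\eta',\eta_n)$, a set which reaches arbitrarily large $x_n$; in the model case $\partial_{x_n}\psi(x',0,\eta',\eta_n)=c(x')\eta_n$ the discarded piece is $u\mapsto(1-\varphi(x_n))\,u(c(x')x_n)$, which is certainly not a smoothing operator. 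What is true, and what the paper proves, is that this piece nevertheless defines an element of $S^m(\R^{n-1},\R^{n-1};\Si(\R_+),\Si(\R_+))$: after conjugation with $\kappa_{\giap{\eta'}}$ the cutoff $1-\varphi$ is supported where $x_n\gtrsim\giap{\eta'}$, and there one gains the required decay by integration by parts in $\eta_n$ away from the stationary set, combined with the rapid decay of $u\in\Si(\R_+)$ near it. This non-stationary phase argument is precisely the second half of the paper's proof and cannot be skipped.

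A smaller point: to quote Theorem \ref{th:conts} as a black box for $(\tilde\psi,\tilde a\varphi)$ you must also check that the linearized phase $\tilde\psi$ is admissible in the sense of Assumptions \ref{TechnHyp:1}, i.e.\ that on the support of the amplitude it is non-degenerate and parametrizes a homogeneous, boundary-preserving symplectomorphism with the transmission property; homogeneity and the transmission condition of $\psi_\partial$ and of $\partial_{x_n}\psi|_{x_n=0}$ alone do not give this. Since $\partial^2_{x\eta}\tilde\psi$ agrees with $\partial^2_{x\eta}\psi$ at $x_n=0$, non-degeneracy persists for small $x_n$, so with the cutoff in place this is easily repaired; the paper's formulation (repeating the arguments of Theorems \ref{teodirac} and \ref{th:conts} rather than invoking them verbatim) sidesteps the issue, which is also how the analogous operator is handled in the proof of Theorem \ref{th:simbprin}.
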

\begin{proof}
 One can divide the operator in two parts: one with compact support in $x_n$
 and one with symbol vanishing to infinite order at $x_n=0$.
 For the part with compact support we repeat the arguments in the proofs of Theorems \ref{teodirac} and \ref{th:conts}.
 Using essentially integration by parts, one proves that also the second operator belongs to
 $S^m(\R^{n-1}, \R^{n-1}; \Si(\R_+),\Si(\R_+))$.
\end{proof}
In view of Proposition \ref{prop:global} and Assumptions \ref{TechnHyp:1} we shall write
in the sequel $A^\chi_n=\op^\psi_n(a)$.
\begin{Thm}
  \label{th:simbprin}
  Let $a$ and $\psi$ satisfy the Assumptions \ref{TechnHyp:1}. Moreover, assume that $a$ is a classical symbol.
  Then, $r^+A_\partial^\chi e^+$, defined in \eqref{eq:etan},
  satisfies
  \[
    r^+A_n^\chi e^+ - r^+ A_\partial^\chi e^+ \in S^{m-1}\pt{\R^{n-1}, \R^{n-1}; \Si\pt{\R_+}, \Si\pt{\R_+}}.
  \]
\end{Thm}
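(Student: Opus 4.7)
My plan is to write the difference $r^+A_n^\chi e^+ - r^+A_\partial^\chi e^+$ as a sum of three oscillatory integrals, each of which visibly gains one order of $\giap{\xi'}^{-1}$ after the $\kappa_{\giap{\xi'}^{-1}}$-rescaling used to test operator-valued symbol seminorms. The three pieces arise by two Taylor expansions in the normal variable $x_n$ around $x_n=0$: one on the classical symbol $a$, one on the phase $\psi$.

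Concretely, I would write $a = a_m + a'$ with $a' \in S^{m-1}_\tr(\R^n\times\R^n)$, and expand
\[
    a_m(x',x_n,\xi',\xi_n) = a_m(x',0,\xi',\xi_n) + x_n\,\tilde a(x',x_n,\xi',\xi_n)
\]
with $\tilde a \in S^m$. Using Remark \ref{rem:phfprop} and the integral form of the Taylor remainder,
\[
    \psi(x',x_n,\xi',\xi_n) - \psi_\partial(x',\xi') = x_n\,\partial_{x_n}\psi(x',0,\xi',\xi_n) + x_n^2\,\psi_2(x',x_n,\xi',\xi_n),
\]
where $\psi_2$ is positively homogeneous of degree $1$ in $(\xi',\xi_n)$. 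Factoring $e^{i(\psi-\psi_\partial)} = e^{ix_n\partial_{x_n}\psi(x',0,\xi',\xi_n)}\bigl(1+(e^{ix_n^2\psi_2}-1)\bigr)$ and gathering terms, $r^+A_n^\chi e^+ - r^+A_\partial^\chi e^+$ becomes a sum of three oscillatory integrals that share the outer phase $e^{ix_n\partial_{x_n}\psi(x',0,\xi',\xi_n)}$ and whose respective amplitudes are (i) $a'$, (ii) $x_n\tilde a$, and (iii) $(e^{ix_n^2\psi_2}-1)\,a$.

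For (i), Theorem \ref{th:conts} applies directly and places the operator in $S^{m-1}(\R^{n-1},\R^{n-1};\Si(\R_+),\Si(\R_+))$. For (ii), the explicit factor $x_n$ commutes with $\kappa_{\giap{\xi'}^{-1}}$ producing a scale factor $\giap{\xi'}^{-1}$, and the same theorem again yields $S^{m-1}$. The hard part will be (iii): rewriting
\[
    e^{ix_n^2\psi_2}-1 = ix_n^2\psi_2\int_0^1 e^{itx_n^2\psi_2}\,dt,
\]
the prefactor $x_n^2$ donates two powers of $\giap{\xi'}^{-1}$ under the $\kappa$-rescaling while $\psi_2$ raises the $\giap{\xi'}$-order by one, for a net gain of one order. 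What requires care is to check that $e^{itx_n^2\psi_2}a$ lies in the class $BS^m(\R^{n-1},\R^{n-1};S^m(\R))$ of Definition \ref{def:BS} uniformly in $t\in[0,1]$; each derivative in $(x',\xi')$ produces additional factors of $\psi_2$ and of $x_n$ whose growth must be balanced, which is essentially a parameter-dependent variant of Lemma \ref{Lem:induc}. Once those uniform bounds are in place, integration in $t$ together with Theorem \ref{th:conts} closes the argument.
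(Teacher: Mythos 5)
Your reductions for the amplitude are the same as the paper's: writing $a=a_m+a'$ with $a'\in S^{m-1}$, extracting $a_m(x',x_n,\xi)-a_m(x',0,\xi)=x_n\tilde a$, and using that multiplication by $x_n$ is an operator-valued symbol of order $-1$; likewise your Taylor remainder $\psi-\psi_\partial=x_n\partial_{x_n}\psi(x',0,\xi)+x_n^2\psi_2$ with the "$x_n^2$ gains two orders, $\psi_2$ costs one" bookkeeping is exactly the paper's argument \emph{for part of the phase-correction term}. (A small remark: since you factor out the linearized phase, your pieces (i) and (ii) are no longer literally covered by Theorem \ref{th:conts}, which is stated for an admissible phase $\psi$; either keep the full phase on these pieces, as the paper does, or invoke the unnumbered lemma preceding Theorem \ref{th:simbprin}, whose proof reruns Theorems \ref{teodirac} and \ref{th:conts} for the boundary-linearized phase.)

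The genuine gap is in your piece (iii), and it is precisely the point where the paper's proof does something different. After the excision in $\xi_n$ and the division by $\xi_n^l$ that make the normal integrals converge (the mechanism of Theorem \ref{th:conts}), the operator applied to $e^+u$ produces terms in which it acts on $\delta_0^{(j)}$, weighted by traces $u^{(l)}(0)$; these boundary terms are each of order exactly $m$, and their analysis rests on Theorem \ref{teodirac}, whose essential hypothesis is the transmission condition of Definition \ref{Def:trans} for the amplitude (it is what guarantees the polynomial-plus-$H^+\oplus H^-_0$ expansion of the right symbol at $y_n=0$). Your $t$-dependent amplitude $\psi_2\,e^{itx_n^2\psi_2}a$ does not satisfy that condition in the required fixed-order sense: its normal derivatives raise the $\xi$-order step by step, and what would be needed is a graded, $t$-uniform substitute for Theorems \ref{teodirac} and \ref{th:conts} for the linearized phase -- this is not "a parameter-dependent variant of Lemma \ref{Lem:induc}", which only controls symbol seminorms and says nothing about the boundary expansion that governs the Dirac contributions. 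The paper avoids this entirely: for the delta terms it does \emph{not} use the $x_n^2$-smallness at all, but shows instead that the top-order contributions of $r^+A_n^\chi e^+$ and $r^+A_\partial^\chi e^+$ on $\delta_0^{(j)}$ coincide, by Remark \ref{rem:diractop} (both involve only the right symbol at the boundary times the same power of $\partial_{y_n}\psi^{-1}|_{y_n=0}$), so the difference drops an order by cancellation; the $x_n^2\psi_2$ Taylor argument is used only for the remaining parts, where the symbol may be assumed to have arbitrarily high decay in $\eta_n$. As written, your plan leaves the boundary terms uncontrolled, and controlling them is the heart of the theorem rather than a technical check.
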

%%%%%%%%%%%%%%%%%%%%%%%%%%PROOF
\begin{proof}
  Let $a^0_m(x', \eta)= a_m(x', 0, \eta)$ and $A^\chi_{n,0}= \op^\psi_n(a^0_m)$.
  We claim that
  \begin{equation}
    \label{eq:ridbordo}
    r^+ A_n^\chi e^+ - r^+ A_{n,0}^\chi e^+ \in S^{m-1} \pt{\R^{n-1}, \R^{n-1}; \Si\pt{\R_+}, \Si\pt{\R_+}}.
  \end{equation}
  In the above equation, we consider 
  the phase function $\psi$ extended to $\R^n$ such that the extension coincides, outside a compact subset of 
  $\R^n\times\R^n$, with the standard pseudodifferential phase $x\cdot \eta$, see
  \cite{BCS14} for details. We claim that
  \begin{align}
    \label{eq:opn0}
    r^+ A_{n,0}^\chi e^+ \in S^m(\R^{n-1}, \R^{n-1}; \Si(\R_+), \Si(\R_+)).
  \end{align}
  To see this  we split $r^+ A_{n,0}^\chi e^+$ into two parts. 
  The first one has a symbol vanishing near $\ptg{x_n=0}$ and a
  standard pseudodifferential phase, thus belongs to 
  $S^m(\R^{n-1}, \R^{n-1}; \Si(\R_+), \Si(\R_+))$. The same is true for
  the second  whose symbol is compactly supported w.r.t. the 
  $x_n$-variable, so that we can apply Theorem \ref{th:conts}. 
  Next we note that $a-a_m \in S^{m-1}\pt{\R^{n}\times\R^{n}}$ and write  
  \[
	  a_m(x', x_n, \eta', \eta_n)- a_m(x', 0, \eta', \eta_n)= x_n b(x', x_n, \eta', \eta_n),
  \]
 where $b \in S^{m}(\R^n\times\R^n)$.
 Since the multiplication by $x_n$ is an element in $S^{-1}(\R^{n-1}, \R^{n-1};$ $\Si(\R_+), \Si(\R_+))$,
 it is enough to check that
  \[
    r^+A_{n,0}^\chi e^+ - r^+ A_\partial^\chi e^+ \in S^{m-1}(\R^{n-1}, \R^{n-1}; \Si(\R_+), \Si(\R_+)).
  \]
  Explicitly, we have to consider the seminorms of
  \begin{equation}
    \label{eq:diff}
    \kappa_{\giap{\eta'}^{-1} } r^+\pt{ A_{n,0}^\chi -  A_\partial^\chi } e^+ \kappa_{\giap{\eta'} }.
  \end{equation}
  Evaluating the semigroup actions, we obtain
  \begin{align}
    \nonumber
    &\kappa_{\giap{\eta'}^{-1} } r^+\pt{ A_{n,0}^\chi -  A_\partial^\chi } e^+ \kappa_{\giap{\eta'} }u(x)=\\
    \nonumber
    &r^+ \int e^{i \psi\left(x', \frac{x_n}{\langle \eta'\rangle} , \eta', \eta_n \langle \eta' \rangle \right) - i \psi_{\partial}(x', \eta')} 
    a_m(x', 0, \eta', \eta_n \langle \eta' \rangle)  \widehat{e^+ u}(\eta_n)\dbar \eta_n - \\
    \nonumber
    &r^+ \int e^{i \frac{x_n}{\langle \eta' \rangle} \partial_{x_n}\psi(x', 0, \eta', \eta_n \langle \eta' \rangle )}
    a_m(x', 0, \eta', \eta_n \langle \eta' \rangle) \widehat{e^+ u}(\eta_n) \dbar \eta_n\\
    \nonumber
    =& \int e^{i\frac{x_n}{\langle \eta' \rangle} \partial_{x_n}\psi(x', 0, \eta', \eta_n \langle \eta' \rangle )} 
    \left(   e^{i \psi\left(x', \frac{x_n}{\langle \eta'\rangle} , \eta', \eta_n \langle \eta' \rangle \right) - i \psi_{\partial}(x', \eta') -
    i \frac{x_n}{\langle \eta' \rangle} \partial_{x_n}\psi(x', 0, \eta', \eta_n \langle \eta' \rangle )} -1\right)\\
    \label{eq:risprinc}
    & \hspace{5mm}\cdot a_m(x', 0, \eta', \eta_n \langle \eta' \rangle) \widehat{e^+ u}(\eta_n) \dbar \eta_n.
  \end{align}
  Arguing now as in the proof of Theorem \ref{th:conts}, we can use an excision function and split
  \eqref{eq:risprinc} in 
  \begin{itemize}
   \item[i)] a part which is smoothing w.r.t. the $\eta_n$-variable;
   \item [ii)] one which involves operators  of order $m-j-1$ acting on derivatives of order $j$ of 
   the Dirac distribution;
   \item[iii)] a part with   an arbitrary decay rate w.r.t. the $\eta_n$-variable.
  \end{itemize}
For term \emph{ii)}  we observe that the top order terms of the operators arising from  
$r^+A^\chi_{n,0}e^+$ and $r^+A^\chi_n e^+$,
    applied to $\delta_0^{(j)}$, both agree. This is a consequence of Remark \ref{rem:diractop}, 
    since they both involve the same power of the $y_n$-derivative of 
    the phase function associated with $\chi^{-1}$,
    and the expression is then evaluated at $y_n=0$.

  For the terms \emph{i)}  and \emph{iii)} we notice that the Taylor expansion at $x_n=0$ of
  \begin{align*}
    &e^{i \psi\left(x', \frac{x_n}{\langle \eta'\rangle} , \eta', \eta_n \langle \eta' \rangle \right) - i \psi_{\partial}(x', \eta') -
    i \frac{x_n}{\langle \eta' \rangle} \partial_{x_n}\psi(x', 0, \eta', \eta_n \langle \eta' \rangle )} -1
  \end{align*}
  is simply given by
  \[
    x_n^2 b(x', x_n, \eta', \eta_n\giap{\eta'}),
  \]
  where $b$ is a symbol order one. As the multiplication by $x_n^2$
  is an operator-valued symbol of order $-2$, we gain one order of decay in $\eta'$. To check all the other seminorms, 
  we take advantage of the fact that the symbol can be 
  assumed to have arbitrarily high decay w.r.t.  $\eta_n$.
\end{proof}
The above formulation of the principal symbol is invariant since, with the  notation in
\eqref{eq:compsympl}, \eqref{eq:simplbound}, and the fact that $\chi_\partial$ is the
lift of a diffeomorphism of the boundary,
\[
 \partial_{x_n}\psi(x', 0, \eta', \eta_n)= (\xi_n)^*(y', 0, \eta', \eta_n), 
 \quad  x'=x'^*(y', 0,\eta', \eta_n)=x'^*_\partial(y', \eta')= x'^*_\partial(y').
\]
So we can write

\begin{Def}\label{def:prinbdsym}
 Let $A^\chi$ be a FIO with phase function and symbol satisfying Assumptions \ref{TechnHyp:1}, and let its principal symbol
 be denoted by $\sigma(A)$. Then, its boundary principal symbol is defined, on $T^*\Omega_x^\partial$, as
 \begin{equation}
    \label{eq:xin}
 \begin{aligned} 
    r^+ &A_\partial^\chi  e^+ (x',\xi' ):u\mapsto
    \\
    &r^+ \int e^{i x_n (\xi_n)^*( x', 0, (\xi'^*_\partial)^{-1}(\xi'), \eta_n)- i \eta_n y_n  } 
    \sigma(A)(x', 0, (\xi'^*_\partial)^{-1}(\xi'), \eta_n)
    e^+( u) dy_n \dbar\eta_n.
 \end{aligned}
 \end{equation}
This is the homogeneous principal symbol in the sense of Definition \ref{opvaluedsymbols} in the Appendix.
\end{Def}

\begin{Def}
  Let $\mathcal{A}$, $\chi$, $a$, $G^{\chi_\partial}$, $T^{\chi_\partial}$, $S^{\chi_\partial}$ 
  be as in Definition \ref{def:FIOBdM}. We define, on $T^*\Omega_x^\partial$,
  \begin{multline}
    \sigma_{\partial}(\mathcal{A})(x', \xi')=\left(
    \begin{array}{cc}
      r^+A^\chi_{\partial}  e^+ + \sigma_\partial(G^{\chi_\partial}) 
                & \sigma_\partial(K^{\chi_\partial})\\
      \sigma_\partial(T^{\chi_\partial})& \sigma_\partial(S^{\chi_\partial})
    \end{array} 
    \right) \\
    \in S^{m} \pt{\R^{n-1}, \R^{n-1}; \Si \pt{\R_+} \oplus \C,\Si \pt{\R_+} \oplus \C },
  \end{multline}
  where $\sigma_\partial(G^{\chi_\partial}), 
  \sigma_\partial(K^{\chi_\partial}), \sigma_\partial(T^{\chi_\partial}), 
  \sigma_\partial(S^{\chi_\partial})$ 
  are the homogeneous principal symbols of the corresponding operators.
  The symbol $\sigma_\partial(\mathcal{A})$ is called the boundary principal symbol of $\mathcal{A}$.
\end{Def}
%%%%%%%%%%%%%%%PROPOSITION	
\begin{Prop}
  \label{prop:comp}
  If $\mathcal{A} \in \mathscr{B}_{\chi_1}^{m_1, d_1} \pt{\Omega_x\times\Omega_z}$ and 
  $\mathcal{B} \in \mathscr{B}_{\chi_2}^{m_2, d_2}\pt{\Omega_z\times\Omega_y}$
  then $\sigma_\partial(\mathcal{A \mathcal{B}})= \sigma_\partial (\mathcal{A}) \,\sigma_\partial(\mathcal{B})$.
\end{Prop}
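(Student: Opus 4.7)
The plan is to verify the claim entry-by-entry, starting from the matrix decomposition of $\mathcal{A}\mathcal{B}$ provided in the proof of Theorem \ref{th:comp}. The multiplicativity of the interior principal symbol is classical H\"ormander FIO calculus, so the burden is entirely on the boundary principal symbol $\sigma_\partial$. By Remark \ref{FIOop}, the off-diagonal entries and the lower-right entry of $\mathcal A$ and $\mathcal B$ are, modulo smoothing operators, pullbacks under the diffeomorphism $\tilde b$ of ordinary operator-valued pseudodifferential operators on $\R^{n-1}$; for these the identity $\sigma_\partial(P\circ Q)=\sigma_\partial(P)\circ\sigma_\partial(Q)$ is the standard composition rule for classical operator-valued symbols recalled in the Appendix. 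This disposes of every entry except the contribution of the composition $r^+ A^{\chi_1}e^+\circ r^+ B^{\chi_2}e^+$ in the upper-left corner.

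For the latter, I would use the decomposition
\[
  r^+ A^{\chi_1}e^+ \, r^+ B^{\chi_2}e^+ \;=\; r^+ (A^{\chi_1} B^{\chi_2})\,e^+ \;-\; r^+ A^{\chi_1}(e^+r^+ - 1)B^{\chi_2}e^+
\]
introduced in the proof of Theorem \ref{th:comp}. The first summand is the truncation of a genuine FIO associated with $\chi_1\circ\chi_2$ whose principal symbol is $\sigma(A^{\chi_1})\cdot\sigma(B^{\chi_2})$ (the Maslov contributions multiply trivially near the boundary by Lemma \ref{lem:mas}); hence Theorem \ref{th:simbprin} together with Definition \ref{def:prinbdsym}, applied to the composite phase and the chain rule at $x_n=0$, identifies its boundary principal symbol with the operator-valued composition $r^+ A^{\chi_1}_\partial e^+ \circ r^+ B^{\chi_2}_\partial e^+$ modulo $S^{m_1+m_2-1}$. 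For the second, leap-over summand I would replace $e^+r^+-1$ by $-e^-r^-$ on regular distributions and invoke Theorems \ref{teodirac} and \ref{teosmen} to identify its boundary principal symbol with $-r^+ A^{\chi_1}_\partial e^-\circ r^- B^{\chi_2}_\partial e^+$. This is precisely the Green-type correction obtained when one composes $r^+ A^{\chi_1}_\partial e^+$ and $r^+ B^{\chi_2}_\partial e^+$ inside the operator-valued calculus on $\Si(\R_+)$.

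The main obstacle I anticipate is the careful matching of all Green- and trace-type contributions at the principal symbol level. One has to check that the terms generated by the differential part $a_d$ of the decomposition in Proposition \ref{prop:div}, which contribute via the traces $\gamma_l$ appearing in \eqref{eq:derep2} and \eqref{eq:epdir}, assemble, modulo symbols of order $m_1+m_2-1$, into precisely the trace--potential and potential--trace contributions produced by multiplying out the off-diagonal entries of $\sigma_\partial(\mathcal A)$ and $\sigma_\partial(\mathcal B)$. Once this bookkeeping is carried out, collecting the four entries of the resulting $2\times 2$ matrix yields exactly $\sigma_\partial(\mathcal A)\,\sigma_\partial(\mathcal B)$, and the proposition follows.
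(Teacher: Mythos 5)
Your reduction to the upper-left corner, with the remaining entries handled by the standard operator-valued symbol calculus and Remark \ref{FIOop}, agrees with the paper. The gap is in your treatment of the FIO part. You claim that the boundary principal symbol of the first summand $r^+\pt{A^{\chi_1}B^{\chi_2}}e^+$ is the composition $\pt{r^+A^{\chi_1}_\partial e^+}\pt{r^+B^{\chi_2}_\partial e^+}$ modulo $S^{m_1+m_2-1}$. That is false: by Definition \ref{def:prinbdsym} its boundary symbol is the \emph{truncated composite model operator} $r^+\pt{A^{\chi_1}_\partial B^{\chi_2}_\partial}e^+$, and this differs from $\pt{r^+A^{\chi_1}_\partial e^+}\pt{r^+B^{\chi_2}_\partial e^+}$ by the singular Green term $r^+A^{\chi_1}_\partial e^-\,r^-B^{\chi_2}_\partial e^+$, which in general has the full order $m_1+m_2$ (already for $\chi_1=\chi_2=\mathrm{id}$ this is the classical leftover term of the Boutet de Monvel calculus, not a lower-order correction). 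Taken literally, your two attributions sum to $\pt{r^+A^{\chi_1}_\partial e^+}\pt{r^+B^{\chi_2}_\partial e^+}-r^+A^{\chi_1}_\partial e^-\,r^-B^{\chi_2}_\partial e^+$, which is off by a top-order Green symbol. The correct bookkeeping is the reverse: the leftover summand $r^+A^{\chi_1}\pt{e^+r^+-1}B^{\chi_2}e^+$ contributes exactly $-r^+A^{\chi_1}_\partial e^-\,r^-B^{\chi_2}_\partial e^+$, which converts $r^+\pt{A^{\chi_1}_\partial B^{\chi_2}_\partial}e^+$ into the product of the truncated boundary symbols. Relatedly, the matching you anticipate in your last paragraph is misdirected: the trace terms $\gamma_l$ generated by the differential part via \eqref{eq:derep2} and \eqref{eq:epdir} are internal to the principal symbol of this leftover Green operator, whereas the trace--potential products coming from the off-diagonal entries of $\sigma_\partial(\mathcal{A})$ and $\sigma_\partial(\mathcal{B})$ are matched by the genuine operator compositions $K_1^{(\chi_1)_\partial}T_2^{(\chi_2)_\partial}$ and their analogues, which you have already disposed of.

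Note also that the paper does not pass through the decomposition of Theorem \ref{th:comp} at all. It applies Theorem \ref{th:simbprin} to each factor separately, $r^+A^{\chi_1}_ne^+-r^+A^{\chi_1}_\partial e^+\in S^{m_1-1}\pt{\R^{n-1},\R^{n-1};\Si(\R_+),\Si(\R_+)}$ and likewise for $B^{\chi_2}$, and then composes the operator-valued amplitudes in the normal direction, obtaining
\begin{equation*}
  r^+A^{\chi_1}_ne^+\,r^+B^{\chi_2}_ne^+=r^+A^{\chi_1}_\partial e^+\,r^+B^{\chi_2}_\partial e^+ + r,
  \qquad r\in S^{m_1+m_2-1}\pt{\R^{n-1},\R^{n-1};\Si(\R_+),\Si(\R_+)},
\end{equation*}
which already is the desired statement for the upper-left corner. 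If you wish to keep your route via $r^+\pt{A^{\chi_1}B^{\chi_2}}e^+$ plus the leftover term, you must first correct the two boundary-symbol identifications as above and only then recombine; as written, the step fails.
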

%%%%%%%%%%%%%%%%%%PROOF
\begin{proof}
  Let us write
  \[
    \mathcal{A}
    =\left(
    \begin{array}{cc}
      r^+A^{\chi_1} e^+ + G_1^{(\chi_1)_\partial} & K^{(\chi_1)_\partial}_1\\
      T^{(\chi_1)_\partial}_1& S^{(\chi_1)_\partial}_1
    \end{array} 
    \right)
  \]
  and
  \[
    \mathcal{B}
    =\left(
    \begin{array}{cc}
      r^+B^{\chi_2}  e^+ + G_2^{(\chi_2)_\partial} & K_2^{(\chi_2)_\partial}\\
      T_2^{(\chi_2)_\partial}& S_2^{(\chi_2)_\partial}
    \end{array} 
    \right).
  \]
  
  It is clear that one has to consider only the FIO part; the Green, potential, trace and pseudodifferential 
  terms in the lower right corner are simple compositions.
  The definition of boundary principal symbols implies that
  \begin{align*}
    &r^+A^{\chi_1}_\partial e^+- r^+A^{\chi_1}_n e^+  \in S^{m_1-1}(\R^{n-1}, \R^{n-1}; \Si(\R_+), \Si(\R_+))\\
    &r^+ B^{\chi_2}_\partial e^+ -r^+B^{\chi_2}_n e^+ \in S^{m_2-1}(\R^{n-1}, \R^{n-1}; \Si(\R_+), \Si(\R_+)).
  \end{align*}
  Therefore, we can write
  \[
    r^+ A^{\chi_1}_n e^+  r^+ B^{\chi_2}_n e^+= r^+A^{\chi_1}_\partial e^+  r^+ B^{\chi_2}_\partial e^+  + r 
  \]
  where $ r \in S^{m_1+m_2-1}(\R^{n-1}, \R^{n-1}; \Si(\R_+), \Si(\R_+))$. 
\end{proof}
%%%%%%%%%%%%%%%%REMARK	
Theorem \ref{th:adj} describes precisely $\mathcal{A}^*$ for
$\mathcal{A} \in \mathscr{B}_{\chi}^{0,0}\pt{\Omega_x\times\Omega_y}$. 
One can then easily prove the next proposition. 

\begin{Prop}
  \label{prop:adj}
  For any $\mathcal{A}\in\mathscr{B}^{0,0}_{\chi}(\Omega_x\times\Omega_y)$ we have  
  $\sigma_\partial \pt{\mathcal{A}^*}= \ptq{\sigma_\partial(\mathcal{A})}^*$.
\end{Prop}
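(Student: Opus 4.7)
The plan is to reduce the claim to a componentwise check, using the explicit form of the adjoint provided by Theorem \ref{th:adj}:
\[
 \mathcal{A}^*=\begin{pmatrix} r^+(A^\chi)^*e^+ + (G^{\chi_\partial})^* & (T^{\chi_\partial})^* \\ (K^{\chi_\partial})^* & (S^{\chi_\partial})^* \end{pmatrix}\in \mathscr{B}^{0,0}_{\chi^{-1}}(\Omega_y\times\Omega_x).
\]
Since $\sigma_\partial$ is additive on the $(1,1)$-entry by the splitting \eqref{eq:prsplit} and the assignment $\mathcal{A}\mapsto\sigma_\partial(\mathcal{A})$ is entry-wise on the remaining three corners, it suffices to verify the identity separately for each of the four blocks, and then observe that the block-transposition in \eqref{eq:adjointb} matches the operator adjoint on $\Si(\R_+)\oplus\C$.

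For the purely boundary contributions $(G^{\chi_\partial})^*$, $(K^{\chi_\partial})^*$, $(T^{\chi_\partial})^*$ and $(S^{\chi_\partial})^*$, I would invoke Remark \ref{FIOop}: each of these operators is, modulo smoothing, the pullback under the boundary diffeomorphism $\tilde b$ of a standard Boutet de Monvel singular Green, potential, trace, or pseudodifferential operator. The relation $\sigma_\partial(P^*)=[\sigma_\partial(P)]^*$ is well-known within the classical Boutet de Monvel calculus (see \cite{SC01}), and the pullback is compatible with both adjoints and principal symbols since $\tilde b$ is an $L^2$-unitary-like change of variables up to a smooth Jacobian factor, which is absorbed into the subprincipal level.

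The only genuinely new point is the identity $\sigma_\partial\big((A^\chi)^*\big)=\big[r^+ A^\chi_\partial e^+\big]^*$ on the $(1,1)$-entry. I would argue as follows. By Theorem \ref{th:simbprin}, $\sigma_\partial(A^\chi)$ is represented by formula \eqref{eq:xin} in terms of $\sigma(A)$, the normal derivative of the phase $\psi$ at $x_n=0$, and the boundary symplectomorphism $\chi_\partial=(b,\,db^{-t})$. Standard FIO calculus gives $(A^\chi)^*\in I^0_{\mathrm{comp}}(\Omega_y\times\Omega_x,\mathrm{Graph}(\chi^{-1})')$ with principal symbol obtained from $\overline{\sigma(A)}$ by the canonical identification of the Maslov line bundles along $\chi$ and $\chi^{-1}$; the associated phase function may be taken to be $\psi^{-1}$ as in the proof of Theorem \ref{teodirac}. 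Inserting this into Definition \ref{def:prinbdsym} for $(A^\chi)^*$ and using the identity $\partial_{y_n}\psi^{-1}\big|_{y_n=0}=\big(\partial_{x_n}\psi\big|_{x_n=0}\big)^{-1}$ (which follows by differentiating $\chi\circ\chi^{-1}=\mathrm{id}$ at the boundary, and was already used in Remark \ref{rem:diractop}), one obtains precisely the kernel of the $L^2(\R_+)$-adjoint of the operator \eqref{eq:xin}.

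The main obstacle, and the place where I would be most careful, is the bookkeeping at the boundary: one needs to verify that the change of base-point $(x',\xi')\leftrightarrow(y',\eta')$ under $\chi_\partial^{-1}$, the replacement of $\sigma(A)$ by $\overline{\sigma(A)}$, and the Maslov factor coming from the adjoint all conspire so that the integral representation of $\sigma_\partial((A^\chi)^*)$ coincides with the formal adjoint of \eqref{eq:xin}. Once this normal-direction identity is established, assembling the four blocks completes the proof.
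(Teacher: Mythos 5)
Your plan is correct and coincides with the paper's (essentially unwritten) argument: the paper simply observes that the statement follows readily from Theorem \ref{th:adj}, i.e.\ exactly the block-by-block reduction you carry out, with the classical Boutet de Monvel relation $\sigma_\partial(P^*)=[\sigma_\partial(P)]^*$ for the pulled-back boundary terms and a direct computation with the phase $\psi^{-1}$ (and the inverse of the map $\eta_n\mapsto\partial_{x_n}\psi(x',0,\eta',\eta_n)$) for the truncated FIO corner. One small caveat: the Jacobian of $\tilde b$ appearing when you adjoin the pullback is a principal-level factor, compensated by the half-density (and Maslov) conventions built into the principal symbols, rather than being ``absorbed into the subprincipal level.''
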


\begin{Prop}
  \label{prop:sim}
  Let  $\mathcal{A}$
  be an operator belonging to $\mathscr{B}^{0,0}_{\chi}(\Omega_x\times\Omega_y)$ whose 
  boundary principal symbol $\sigma_{\partial}(\mathcal{A})$ is invertible on $T^*\Omega_x^\partial$.
  Then $[\sigma_\partial(\mathcal{A})]^{-1}$ is a boundary principal symbol of an operator
  belonging to $\mathscr{B}^{0,0}_{\chi^{-1}}(\Omega_y\times\Omega_x)$.
\end{Prop}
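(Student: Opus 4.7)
The plan is to construct $\mathcal{B}\in\mathscr{B}^{0,0}_{\chi^{-1}}$ with $\sigma_\partial(\mathcal{B})=[\sigma_\partial(\mathcal{A})]^{-1}$ by combining the formal adjoint of $\mathcal{A}$ with a parametrix in the classical Boutet de Monvel calculus. First, by Theorem \ref{th:adj}, the adjoint $\mathcal{A}^*$ belongs to $\mathscr{B}^{0,0}_{\chi^{-1}}(\Omega_y^+\times\Omega_x^+)$, and by Proposition \ref{prop:adj} its boundary principal symbol is $\sigma_\partial(\mathcal{A}^*)=[\sigma_\partial(\mathcal{A})]^*$.

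Next, I would form the composition $\mathcal{D}:=\mathcal{A}^*\mathcal{A}$. Since $\chi^{-1}\circ\chi=\mathrm{id}$, Theorem \ref{th:comp} places $\mathcal{D}$ in the classical Boutet de Monvel calculus $\mathscr{B}^{0,0}(\Omega_y^+)$, and Proposition \ref{prop:comp} yields $\sigma_\partial(\mathcal{D})=[\sigma_\partial(\mathcal{A})]^*\sigma_\partial(\mathcal{A})$, which is positive and invertible by the hypothesis on $\mathcal{A}$. The classical Boutet de Monvel ellipticity theory then produces an operator $\mathcal{C}\in\mathscr{B}^{0,0}(\Omega_y^+)$ with $\sigma_\partial(\mathcal{C})=[\sigma_\partial(\mathcal{D})]^{-1}$.

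Setting $\mathcal{B}:=\mathcal{C}\,\mathcal{A}^*$, Theorem \ref{th:comp} gives $\mathcal{B}\in\mathscr{B}^{0,0}_{\chi^{-1}}(\Omega_y^+\times\Omega_x^+)$, and Proposition \ref{prop:comp} combined with the formula for $\sigma_\partial(\mathcal{A}^*)$ yields
\[
\sigma_\partial(\mathcal{B})=\sigma_\partial(\mathcal{C})\,\sigma_\partial(\mathcal{A}^*)=\bigl([\sigma_\partial(\mathcal{A})]^*\sigma_\partial(\mathcal{A})\bigr)^{-1}[\sigma_\partial(\mathcal{A})]^*=[\sigma_\partial(\mathcal{A})]^{-1},
\]
as required.

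The main obstacle lies in justifying the classical parametrix step: to invoke it I must know that the interior principal symbol of $\mathcal{D}$, namely $|\sigma(A^\chi)|^2$, is non-vanishing on the relevant cotangent fibers over the boundary, so that the inverse of $\sigma_\partial(\mathcal{D})$ really has the Green-plus-truncated-pseudodifferential structure of a boundary symbol in $\mathscr{B}^{0,0}$. This should follow from the invertibility of $\sigma_\partial(\mathcal{A})$ itself, since a zero of $\sigma(A^\chi)$ at a non-zero boundary covector would obstruct Fredholmness of the leading Fourier integral block $r^+A^\chi_\partial e^+$ on $\Si(\R_+)$, and the Green, trace, potential and pseudodifferential entries contribute only compactly in that component. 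Once this interior non-vanishing is verified, the symbol-level inversion in the classical Boutet de Monvel calculus is standard.
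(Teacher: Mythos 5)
Your proposal is correct and follows essentially the same route as the paper's proof: pass to $\mathcal{A}^*$ via Theorem \ref{th:adj} and Proposition \ref{prop:adj}, compose to land in the ordinary Boutet de Monvel calculus, invert the resulting boundary symbol there, and compose back with $\sigma_\partial(\mathcal{A}^*)$ using Proposition \ref{prop:comp} and Theorem \ref{th:comp}. The only (immaterial) difference is that you work with $\mathcal{A}^*\mathcal{A}$ on $\Omega_y$ and set $\mathcal{B}=\mathcal{C}\mathcal{A}^*$, whereas the paper uses $\mathcal{A}\mathcal{A}^*$ on $\Omega_x$ and $\mathcal{A}^*\mathcal{P}'$; your closing remark on why the classical inverse boundary symbol has the required structure is exactly the point the paper subsumes under ``the general theory of Boutet de Monvel's calculus.''
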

%%%%%%%%%%%%%%%%%%PROOF
\begin{proof}
  Proposition \ref{prop:adj} implies that $\sigma_\partial(\mathcal{A}^*)= [\sigma_\partial(\mathcal{A})]^*$.
  From Theorem \ref{th:ego} we know that $\mathcal{\mathcal{A}}  \mathcal{A}^*=\mathcal{P}$ is a usual Boutet de Monvel  operator.
  Proposition \ref{prop:comp} shows that $\sigma_\partial( \mathcal{A}  \mathcal{A}^*)= \sigma_\partial(\mathcal{A}) \,
   \sigma_\partial(\mathcal{A}^*)$
  is invertible. Hence, by the general theory of Boutet de Monvel's calculus,  
  $\ptq{\sigma_\partial \pt{ \mathcal{A}  \mathcal{A}^*}}^{-1}$
  is the boundary principal symbol of a Boutet de Monvel operator $\mathcal{P}'$. The inverse of $\sigma_\partial(\mathcal{A})$ can now be written
  as 
  \[
    [\sigma_\partial(\mathcal{A})]^{-1} = \sigma_\partial(\mathcal{A^*})  \,\sigma_\partial(\mathcal{P'})= 
    \sigma_\partial(\mathcal{A}^* \mathcal{P'} ).
  \]
  Hence $[\sigma_\partial(\mathcal{A})]^{-1}$ is the boundary principal symbol of $\mathcal{A}^*  \mathcal{P}'\in
  \mathscr{B}^{0,0}_{\chi^{-1}}\pt{\Omega_x\times\Omega_y}$, by Theorem \ref{th:comp}.
\end{proof}
\begin{rem}
  The proof of Proposition \ref{prop:sim} implies that the FIO part in the upper left corner of
  $\ptq{\sigma_\partial \pt{\mathcal{A}}}^{-1}$ is $r^+ B^{\chi^{-1}}_\partial e^+$, where $B^{\chi^{-1}}$ is
  a parametrix of $A^{\chi}$, hence a FIO associated with the symplectomorphism $\chi^{-1}$.
\end{rem}

\begin{Def}
  An operator $\mathcal{A} \in \mathscr{B}^{m,d}_{\chi}(\Omega_x\times\Omega_y)$, $d\leq m_+$,
  is called elliptic, if
  \begin{enumerate}[\rm(i)]
    \item The interior symbol $\sigma(\mathcal{A})$ is elliptic. That is, there exists a FIO 
    $B^{\chi^{-1}}$  of order $-m$  such that $A^\chi B^{\chi^{-1}} - \id$ and
    $B^{\chi^{-1}} A^{\chi} - \id$ are FIOs of order  $-1$ on $\Omega_x\times\Omega_x$ and
    $\Omega_y\times\Omega_y$ respectively.
    \item The boundary principal symbol
    \[
      \sigma_{\partial}(\mathcal{A})(x', \xi'): \Si(\R_+) \oplus \C \to \Si(\R_+) \oplus \C
    \]
    is an invertible operator for any $(x',\xi')\in T^*\Omega_x\setminus 0$.
  \end{enumerate}
\end{Def}

\begin{Thm}
\label{th:parametrixloc}
  If $\mathcal{A} \in \mathscr{B}^{m,d}_{\chi}(\Omega_x\times\Omega_y)$, 
  $d\leq m_+$, is elliptic, then there exists an operator
  $\mathcal{B} \in \mathscr{B}_{\chi^{-1}}^{-m, d_-}(\Omega_y\times\Omega_x)$, 
  $d_-= \max\{-m,0\}$,  such that 
  $\mathcal{A}  \mathcal{B} $ and 
  $\mathcal{B}  \mathcal{A}$ are both equal to the identity modulo lower order operators.  
\end{Thm}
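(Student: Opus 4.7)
The plan is to follow the classical parametrix construction of the Boutet de Monvel calculus, lifted to the present FIO setting, in three stages: build a first approximation $\mathcal{B}_0$ that inverts $\mathcal{A}$ at the principal-symbol level (both interior and boundary), improve it by a Neumann-type iteration, and finally check that the result serves simultaneously as a left and a right parametrix.

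For the interior part I would use the ellipticity of $\sigma(\mathcal{A})$: the standard FIO calculus furnishes $B^{\chi^{-1}} \in I^{-m}_{\mathrm{comp}}(\Omega_y \times \Omega_x)$, associated with $\chi^{-1}$, whose principal symbol is the pointwise inverse of $\sigma(A^\chi)$. The symmetry formulation \eqref{eq:hom} of the transmission condition is preserved under symbolic inversion, so $B^{\chi^{-1}}$ itself satisfies the transmission condition and qualifies as the FIO component of an element of $\mathscr{B}^{-m,d_-}_{\chi^{-1}}$. For the boundary part, I would first reduce to order zero by sandwiching $\mathcal{A}$ with the order reduction operators $[\Lambda^{m}_+]$, $[\Lambda^{-m}_+]$ and suitable scalar boundary reductions on the $S^{\chi_\partial}$-entry; the resulting operator lies in $\mathscr{B}^{0,0}_\chi$ and remains elliptic. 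Proposition \ref{prop:sim} then yields an element of $\mathscr{B}^{0,0}_{\chi^{-1}}$ whose boundary principal symbol is the inverse of $\sigma_\partial(\mathcal{A})$ (in the reduced variables). Undoing the reductions and tracking orders and types via Theorem \ref{th:comp} produces the sought $\mathcal{B}_0 \in \mathscr{B}^{-m,d_-}_{\chi^{-1}}(\Omega_y\times\Omega_x)$, whose interior and boundary principal symbols match the inverses of the corresponding symbols of $\mathcal{A}$, and whose FIO component coincides with $B^{\chi^{-1}}$ modulo lower order by uniqueness of the principal symbol.

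With $\mathcal{B}_0$ in hand, set $\mathcal{R}_r = \id - \mathcal{A}\mathcal{B}_0$. Proposition \ref{prop:comp} and the corresponding interior statement give $\sigma(\mathcal{R}_r)=0$ and $\sigma_\partial(\mathcal{R}_r)=0$, so $\mathcal{R}_r$ lies in $\mathscr{B}^{-1, d_r}$ over the identity symplectomorphism on $\Omega_x$, with type $d_r$ controlled by the composition rule in Theorem \ref{th:comp}. Since such $\mathcal{R}_r$ is a genuine Boutet de Monvel operator, the formal Neumann series
\[
  \mathcal{B} \sim \mathcal{B}_0 \circ \sum_{k\geq 0} \mathcal{R}_r^k
\]
can be summed asymptotically within the calculus, yielding a right parametrix with $\mathcal{A}\mathcal{B} - \id$ of order $-\infty$. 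An entirely parallel construction on the left produces $\mathcal{B}'$ with $\mathcal{B}'\mathcal{A} - \id$ smoothing, and the classical identity $\mathcal{B} \equiv \mathcal{B}'\mathcal{A}\mathcal{B} \equiv \mathcal{B}'$ modulo smoothing operators shows that a single $\mathcal{B}$ works on both sides.

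The main obstacle I anticipate is the bookkeeping of types throughout. Proposition \ref{prop:sim} is stated only for $(m,d)=(0,0)$, so one must check that the order reduction can be performed without losing ellipticity and while respecting the prescribed bound $d_- = \max\{-m,0\}$ on the type of the parametrix. Similarly, one must ensure that the types arising in the Neumann iteration do not grow without bound. Both issues are resolved by the composition rule $d = \max\{m_2+d_1, d_2\}$ of Theorem \ref{th:comp}: composing with the negative-order remainder $\mathcal{R}_r$ the type stabilizes after finitely many steps, and the reductions via $[\Lambda^{\pm m}_+]$ (which are of type $0$) shift orders without exceeding the allowed type. Once this is verified, the rest of the argument is a direct transcription of the standard Boutet de Monvel parametrix scheme.
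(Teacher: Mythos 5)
Your construction of $\mathcal{B}_0$ -- reduction to the case $m=d=0$ by order reductions, the interior FIO parametrix $B^{\chi^{-1}}$ furnished by ellipticity, Proposition \ref{prop:sim} to realize $[\sigma_\partial(\mathcal{A})]^{-1}$ as the boundary symbol of an element of $\mathscr{B}^{0,0}_{\chi^{-1}}$, and the composition rules of Theorem \ref{th:comp} to see that the remainder has order $-1$ -- is essentially the paper's proof, and that is all the statement requires, since it only asks for the identity modulo lower order operators. Your additional Neumann-series improvement and the left/right parametrix comparison are correct but not needed here; they work because the remainders are associated with the identity symplectomorphism, hence are ordinary Boutet de Monvel operators, where asymptotic summation and the type bookkeeping you describe (types stabilize under composition with negative-order remainders by $d=\max\{m_2+d_1,d_2\}$) are standard.
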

%%%%%%%%%%%%PROOF	
\begin{proof}
  Using an order reduction it is sufficient to consider the case $m=d=0$. 
  Since $\mathcal{A}$ is elliptic, there exists a parametrix
  $B^{\chi^{-1}}$  of $A^\chi$ such that $B^{\chi^{-1}}  A^\chi$ and $A^\chi  B^{\chi^{-1}}$ 
  are equal to the identity modulo lower order operators. Let 
  \[
    [\sigma_{\partial} (\mathcal{A})]^{-1} = 
    \left(
    \begin{array}{cc}
      r^+ B^{\chi^{-1}}_\partial e^+ + g' &  k'\\
      t' & s'
    \end{array}
    \right).
  \]
  We set
  \[
    \mathcal{B}= \left(
     \begin{array}{cc}
	r^+ B^{\chi^{-1}} e^+ + \op^{\chi^{-1}_\partial} (g') &   \op^{\chi^{-1}_\partial}(k')\\
	\op^{\chi^{-1}_\partial} \pt{t'} & \op^{\chi^{-1}_\partial} \pt{s'}
    \end{array}
    \right).
   \]
By construction, the composition $\mathcal{A}  \mathcal{B}$ is equal, modulo lower order terms, to the identity  outside a neighborhood of the boundary.
   
  The definition of the boundary principal symbol and Proposition \ref{prop:sim} imply
  that
  \begin{align}
    \nonumber
    &r^+ A^\chi_n e^+ - r^+ A^\chi_\partial e^+= r_1 \in S^{-1}(\R^{n-1}, \R^{n-1}; \Si(\R_+), \Si(\R_+))\\
    \label{eq:resto}
    & r^+ B^{\chi^{-1}}_ne^+ - r^+ B^{\chi^{-1}}_\partial e^+=r_2 \in  
      S^{-1}(\R^{n-1}, \R^{n-1}; \Si(\R_+), \Si(\R_+)).
  \end{align}
  Since the Maslov bundle is trivial for both $\chi$ and $\chi^{-1}$,
  $\mathcal{A}$ and $\mathcal{B}$ can be considered as operator-valued FIOs at the boundary, with
  a Lagrangian submanifold defined by the graphs of $\chi_\partial$ and $\chi^{-1}_\partial$ respectively. 
  Then we can write
  \[
    r^+ A^\chi_n e^+ r^+B^{\chi^{-1}}_n e^+= (r_1+ r^+A^\chi_\partial e^+)  (r_2+ r^+B^{\chi^{-1}}_\partial e^+).
  \]
  Since $r^+A^\chi_\partial e^+ $ and $ r^+B^{\chi^{-1}}_\partial e^+ $ are elements of 
  $ S^0(\R^{n-1}, \R^{n-1}; \Si(\R_+), \Si(\R_+))$ 
  %$ r^+B^{\chi^{-1}}_\partial e^+ \in S^{0} (\R^{n-1}, \R^{n-1};$ $\Si(\R_+), \Si(\R_+))$, 
the relations \eqref{eq:resto}
and the composition laws of Theorem \ref{th:comp} imply that $r^+ A^\chi_n e^+$ $r^+B^{\chi^{-1}}_n e^+$
  is equal to the identity up to a  symbol of order $-1$. 
Hence $\mathcal{A}\mathcal{B}=\id$ modulo lower order operators. The argument for $\mathcal{B}\mathcal{A}$ is analogous.
  \end{proof}

%%%%%%%%%%%%%%%%%%%%%%%%%%%%%%%%%%%%%%%%%%%%%%%%%%%%%%%%%%%%%%%%%%%%%%
%%%%%%%%%%%%%%%%%%%%%%%FIO ON MANIFOLDS%%%%%%%%%%%%%%%%%%%%%%%%%%%%%%%
%%%%%%%%%%%%%%%%%%%%%%%%%%%%%%%%%%%%%%%%%%%%%%%%%%%%%%%%%%%%%%%%%%%%%%	

\section{FIOs of Boutet de Monvel Type on Manifolds}
\label{sec:manif}
Let $X$ and $Y$ be two compact manifolds with boundary, and $\chi$ be an admissible 
symplectomorphism $\chi\colon T^* Y \setminus 0 \to T^* X \setminus 0$.
We recall that $\chi$ extends to a symplectomorphism 
$\tilde{\chi}\colon T^* \widetilde{Y} \setminus 0 \to T^* \widetilde{X} \setminus 0$
of open neighborhoods of $Y$ and $X$, respectively. We denote by $\widetilde{\Lambda}$ the
graph of $\tilde{\chi}$. As pointed out in Section \ref{sec:trans}, the restriction $\chi_\partial$ of
$\chi$ is the lift of a diffeomorphism $b\colon\partial Y\to\partial X$. As in \eqref{eq:estb}, we can
extend $b$ to a diffeomorphism of collar neighbourhoods $\tilde{b}\colon\partial Y\times[0,1)\to
\partial X\times[0,1)$ by setting $\tilde{b}(y',y_n)=(b(y'),y_n)$.
%.
We will now  define FIOs of Boutet de Monvel type
\begin{equation}
  \label{eq:opglob}
  \mathcal{A}
  =\left(
    \begin{array}{cc}
      r^+ A^\chi e^+ + G^{\chi_{\partial}} & K^{\chi_{\partial}} \\
      T^{\chi_\partial} & S^{\partial}
    \end{array}\right)
    \colon
  \begin{array}{c}
      C^\infty(Y, E_1)\\
      \oplus\\
      C^\infty(\partial Y, F_1)
  \end{array}
  \to 
  \begin{array}{c}
    C^\infty(X, E_2)\\
    \oplus\\
    C^\infty(\partial X, F_2)
  \end{array} ,
\end{equation}
acting between vector bundles over $X$, $Y$ and their boundaries.

\begin{Def}
  \label{def:glob}
  The linear operator $\mathcal{A}$ in  \eqref{eq:opglob} belongs to 
  $\mathscr{B}_\chi^{m,d}(X \times Y)$, if
  \begin{enumerate}[\rm (i)]
  \item
   $A^{\chi} $ is a FIO defined by $\widetilde{\Lambda}$ and its local symbols 
   %in each trivialization of the boundary   the symbol of $A^\chi$  is assumed to 
   satisfy the transmission condition;
   \item in a collar neighborhood of the boundary,
  $G^{\chi_{\partial}}$, $K^{\chi_\partial}$, $T^{\chi_\partial}$ and $S^{\chi_\partial}$
  are, modulo operators with smooth kernel, the pullbacks of standard singular Green, potential, trace, and pseudodifferential
  operators by  $\tilde{b}$. %, while having smooth kernels in the interior.
\end{enumerate}
\end{Def}

The following theorem is obvious from Theorem \ref{th:localcont}.
\begin{Thm}\label{th:Hscont}
Let $\mathcal{A} \in \mathscr{B}^{m,d}_\chi(X \times Y)$, $d\leq m_+$, and $s\in \R$ with 
$s>d-1/2$. Then  $\mathcal A$ extends to a bounded operator
 \begin{equation}
  \label{eq:opglobhs}
  \mathcal{A}
    \colon
  \begin{array}{c}
      H^s(Y, E_1)\\
      \oplus\\
      H^s(\partial Y, F_1)
  \end{array}
  \to 
  \begin{array}{c}
    H^s(X, E_2)\\
    \oplus\\
    H^s(\partial X, F_2).
  \end{array} 
\end{equation}
\end{Thm}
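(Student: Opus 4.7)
The plan is to reduce the global statement to the local continuity result in Theorem \ref{th:localcont} by means of a partition of unity, using the extension of the underlying symplectomorphism to a collar and the fact that $\chi_\partial$ comes from a diffeomorphism $b\colon\partial Y\to\partial X$. Since $X$ and $Y$ are compact, I would first fix finite atlases $\{(U_i,\kappa_i)\}$ on $Y$ and $\{(V_j,\lambda_j)\}$ on $X$, adapted to the boundary (so that boundary charts use the model $\Omega^+_y$, $\Omega^+_x$ of \eqref{eq:omega+}) and chosen fine enough so that, whenever $U_i$ meets $\partial Y$, the image $\tilde b(U_i\cap\partial Y\times[0,1))$ lies in a single boundary chart $V_{j(i)}$. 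Choose subordinate partitions of unity $\{\phi_i\}$ on $Y$ and $\{\psi_j\}$ on $X$, together with cutoffs $\tilde\phi_i$, $\tilde\psi_j$ with $\tilde\phi_i\equiv 1$ on $\supp\phi_i$, etc.

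Next I would decompose
\[
\mathcal{A}=\sum_{i,j}\Psi_j\,\mathcal{A}\,\Phi_i,
\]
where $\Phi_i=\mathrm{diag}(\phi_i,\phi_i|_{\partial Y})$ and $\Psi_j=\mathrm{diag}(\psi_j,\psi_j|_{\partial X})$. For each pair $(i,j)$, the summand has two possibilities. If $V_j$ is not in the image under $\tilde\chi$ of a neighbourhood of $\supp\phi_i$, then by the non-stationary phase / canonical relation argument used for ordinary FIOs, the FIO part $\psi_j\, r^+A^\chi e^+\, \phi_i$ has a smooth Schwartz kernel, and by Remark \ref{FIOop} the Green, trace, potential and pseudodifferential entries (being pullbacks under $\tilde b$ of standard Boutet de Monvel operators) also become smoothing after this cutoff, since $\tilde b$ pairs boundary charts compatibly with $\chi_\partial$. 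Smoothing entries are bounded between any Sobolev spaces in the diagram \eqref{eq:opglobhs}. In the remaining, finitely many, non-trivial pairs $(i,j)$, transporting via the charts $\kappa_i,\lambda_j$ lands the whole block in a local situation of the form considered in Section \ref{sec:bound}, with $\Omega^+_y=\kappa_i(U_i)$ and $\Omega^+_x=\lambda_j(V_j)$; the hypothesis $s>d-\tfrac12$ is exactly the one appearing in Theorem \ref{th:localcont}, which then yields continuity between the local $H^s$-spaces.

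Finally I would patch: the norms on $H^s(Y,E_1)\oplus H^s(\partial Y,F_1)$ and on the target can be defined by the usual atlas construction, so boundedness of each local block together with finiteness of the sum gives boundedness of $\mathcal{A}$. For vector bundles one chooses local trivializations of $E_1,E_2,F_1,F_2$ over the charts; since the transmission condition and the Boutet de Monvel symbol classes behave functorially under smooth changes of coordinates and bundle trivializations (as already recalled in Section \ref{sec:trans}), this introduces no additional difficulty.

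The only genuinely delicate point is justifying that the off-diagonal blocks $\psi_j\mathcal{A}\phi_i$, for pairs $(i,j)$ not linked by $\tilde\chi$, are actually smoothing; for the interior FIO term $r^+A^\chi e^+$ this is the standard wavefront/canonical relation argument applied to $\tilde{A}\in I^m_{comp}(\widetilde X\times\widetilde Y,\widetilde\Lambda)$, while for the boundary operators $G^{\chi_\partial}$, $K^{\chi_\partial}$, $T^{\chi_\partial}$, $S^{\chi_\partial}$ it follows from Remark \ref{FIOop} together with the pseudo-locality of the standard Boutet de Monvel operators they are pulled back from. With this in hand, the theorem reduces to a finite sum of local estimates supplied by Theorem \ref{th:localcont}.
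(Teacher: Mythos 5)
Your argument is exactly the route the paper intends: the paper simply declares the theorem ``obvious from Theorem \ref{th:localcont}'', i.e.\ reduction to the local continuity result via a finite atlas, partitions of unity, and disposal of the non-linked blocks by pseudolocality/smoothness of kernels away from the canonical relation. Your proposal just makes these standard localization details explicit, and they are correct, including the matching of the hypothesis $s>d-\tfrac12$ with Theorem \ref{th:localcont}.
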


\begin{Def}
  \label{def:prinman}
  Let $\mathcal{A}  \in \mathscr{B}^{m,d}_\chi(X \times Y)$. 
  We define  
  \begin{itemize}
    \item[\rm(i)] the interior principal symbol  $\sigma(\mathcal{A})$, as
    the principal symbol $\sigma(A^\chi)$ of $A^\chi$,
     restricted to the Lagrangian submanifold associated with the graph of $\chi$;
    \item[\rm(ii)] the boundary principal symbol $\sigma_\partial(\mathcal{A})$ 
    \begin{align*}
      &\sigma_{\partial}(\mathcal{A}) (x', \xi')= \left(
      \begin{array}{cc}
	r^+ A ^\chi_\partial(x',\xi') e^+ +
	\sigma_\partial(G^{\chi_\partial})(x', \xi') & \sigma_\partial(K^{\chi_\partial})(x', \xi') \\
	\sigma_\partial(T^{\chi_\partial})(x', \xi') & \sigma_\partial(S^{\chi_\partial})(x', \xi')
      \end{array}\right),
    \end{align*}
    for  $(x', \xi') \in T^*\partial X \setminus 0$. Here $\sigma_\partial(G^{\chi_\partial})$, 
    $\sigma_\partial(K^{\chi_\partial})$, $\sigma_\partial(T^{\chi_\partial})$, $\sigma_\partial(S^{\chi_\partial})$ are the principal symbols of the corresponding operators. We consider $\sigma_{\partial}(\mathcal{A}) (x', \xi')$ as an operator
    \[
    \sigma_{\partial}(\mathcal{A}) (x', \xi')\colon
      \begin{array}{c}
      \Si(\R_+, E_1^*)_{(y',\eta')}\\
      \oplus\\
      F^*_{1,{(y',\eta')}}
  \end{array}
  \to 
  \begin{array}{c}
      \Si(\R_+, E_2^*)_{(x',\xi')}\\
      \oplus\\
      F^*_{2,{(x',\xi')}}
  \end{array}
    \] 
    Here, $\chi_\partial(y',\eta')=(x',\xi')$, $E^*_1$ is the pullback of $E_1|_{\partial Y}$ to $T^*\partial Y$
    with $\R_+$ identified with the normal to $\partial Y$ at $y'$, $F^*_1$ is the pullback of $F_1$ to
    $T^*\partial Y$, and similarly for $E_2$ and $F_2$.
  \end{itemize}
\end{Def}
The boundary principal symbol defined above has an invariant meaning. Indeed, the principal symbol of $A^\chi$ is invariantly defined, and since the Maslov bundle can be trivialized
in a  neighborhood of the boundary, the operator-valued symbol \eqref{eq:etan} is defined on
$T^*\partial X \setminus 0$ as we have noted after \eqref{eq:etan}.
The other
symbols involved also have an invariant meaning since the corresponding operators are
 compositions of the  diffeomorphism $\tilde b$ %in \eqref{eq:estb} 
 and usual operator-valued pseudodifferential operators. 

 In view of Theorem \ref{th:comp}, Proposition \ref{prop:comp} and the well known properties
 of composition of FIOs associated
 with a canonical transformation we obtain the following theorem.
  
 \begin{Thm}
 \label{th:compman}
  Let $\mathcal{B}\in \mathscr{B}^{m_{\mathcal{B}},d_{\mathcal{B}}}_{\chi'} (X\times Y)$ and 
  $\mathcal{A}\in \mathscr{B}^{m_{\mathcal{A}},d_{\mathcal{A}}}_{\chi} (Y\times Z)$ then
  $\mathcal{B}\mathcal{A}$ belongs to 
  $\mathscr{B}^{m_{\mathcal{B}}+m_{\mathcal{A}},d}_{\chi' \circ \chi}(X\times Z)$,
  where $d=\max\ptg{(m_\mathcal{A}+d_\mathcal{B}), d_{\mathcal{A}}}$.
  Moreover, 
  \begin{align*}
 %  \sigma(\mathcal{B} \mathcal{A}) &= \sigma(\mathcal{B}) \sigma(\mathcal{B}),\\
   \sigma_\partial(\mathcal{B} \mathcal{A}) &= \sigma_\partial(\mathcal{B}) \sigma_\partial(\mathcal{B}).
  \end{align*}
 \end{Thm}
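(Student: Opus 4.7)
The plan is to reduce the global composition statement to its local counterparts, namely Theorem \ref{th:comp} and Proposition \ref{prop:comp}, by means of suitable partitions of unity and the standard composition theorem for FIOs on closed manifolds. First I would fix finite atlases of coordinate charts on $X$, $Y$, $Z$ adapted to the boundaries (i.e.\ with some charts covering $\partial X$, $\partial Y$, $\partial Z$ by collar neighbourhoods as in Section \ref{sec:trans}, and the remaining ones lying in the interior), together with subordinate partitions of unity $\{\varphi_i\}, \{\psi_j\}, \{\rho_k\}$. By inserting $\sum_j \psi_j=1$ between $\mathcal{B}$ and $\mathcal{A}$ and multiplying on the left and right by $\varphi_i$ and $\rho_k$, I can decompose $\mathcal{B}\mathcal{A}$ into a finite sum of compositions $\varphi_i \mathcal{B}\psi_j\cdot\psi_j\mathcal{A}\rho_k$, each compactly supported in a single coordinate chart.

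For those pieces whose supports avoid both $\partial X$ and $\partial Z$, only the FIO part $A^\chi$ is involved, and the classical composition theorem of Hörmander (see \cite[Ch. 25]{HO04}) applies, yielding a FIO of order $m_\mathcal{B}+m_\mathcal{A}$ with canonical relation the graph of $\tilde\chi'\circ\tilde\chi$. Since $\tilde\chi'\circ\tilde\chi$ extends $\chi'\circ\chi$ and the transmission condition is preserved under changes of coordinates (the remark after Definition \ref{rem:adm}), the resulting local symbols lie in $S^{m_\mathcal{B}+m_\mathcal{A}}_{\tr}$. For the pieces whose supports meet the boundary, I would apply Theorem \ref{th:comp} in the local coordinates of the chart pair, obtaining locally an element of $\mathscr{B}^{m_\mathcal{B}+m_\mathcal{A},d}_{\tilde\chi'\circ\tilde\chi}$ with $d=\max\{m_\mathcal{A}+d_\mathcal{B},d_\mathcal{A}\}$. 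The Green, trace, potential, and pseudodifferential components are treated separately using Remark \ref{FIOop}: since each is, modulo smoothing, a pullback of an ordinary Boutet de Monvel operator by $\tilde{b}$ or $\tilde{b}'$, their compositions are pullbacks by $\tilde{b}'\circ\tilde{b}$, which corresponds to $(\chi'\circ\chi)_\partial$ by virtue of Remark \ref{rem:triv} and functoriality of the lift.

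The cross terms arising from different choices of the inserted $\psi_j$ produce operators with essential support lying away from the canonical relation, hence they are smoothing and can be absorbed into the residual class; this exploits the proper support of the local symbols assumed throughout Section \ref{sec:oscilint}. Patching the local contributions back together yields a globally defined operator in $\mathscr{B}^{m_\mathcal{B}+m_\mathcal{A},d}_{\chi'\circ\chi}(X\times Z)$, as required.

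The formula $\sigma_\partial(\mathcal{B}\mathcal{A})=\sigma_\partial(\mathcal{B})\sigma_\partial(\mathcal{A})$ then follows by applying Proposition \ref{prop:comp} in each local chart touching the boundary, using that the boundary principal symbol is intrinsically defined (as pointed out after Definition \ref{def:prinman}) and that the off-boundary Green/trace/potential/$\psi$do parts compose according to the standard Boutet de Monvel calculus. The main obstacle I expect is book-keeping at the boundary: ensuring that the local identifications via $\tilde b$ and $\tilde b'$ are compatible with the operator-valued symbol description used in Theorem \ref{th:comp}, and that remainders of order $-1$ arising from $r^+A^\chi_n e^+ - r^+A^\chi_\partial e^+$ (as in \eqref{eq:resto}) do not contaminate the leading boundary symbol of the composition. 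This is handled exactly as in the proof of Theorem \ref{th:parametrixloc}, invoking the composition rules for operator-valued symbols together with the triviality of the Maslov bundle at the boundary established in Lemma \ref{lem:mas}.
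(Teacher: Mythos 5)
Your proposal is correct and follows essentially the same route as the paper, which deduces Theorem \ref{th:compman} directly from the local composition result (Theorem \ref{th:comp}), the local multiplicativity of the boundary symbol (Proposition \ref{prop:comp}) and the standard composition theorem for FIOs associated with canonical transformations; your partition-of-unity and cross-term bookkeeping merely fills in details the paper leaves implicit. Note also that the displayed formula in the statement contains a typo and should read $\sigma_\partial(\mathcal{B}\mathcal{A})=\sigma_\partial(\mathcal{B})\,\sigma_\partial(\mathcal{A})$, which is the version you proved.
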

\begin{rem}
In view of Definition \ref{def:prinman}, the principal interior symbol of the composition
$\mathcal{B}\mathcal{A}$ is given by the usual formula, see Hörmander \cite{HO04},
Theorem 25.2.3 and Section 25.3.  
The formula for the boundary symbol of the composition appears simpler. 
Note, however, that the symplectomorphism $\chi_\partial $ 
enters also in  Definition \ref{def:prinbdsym}. 
%the global symbolic structure of our calculus intrinsically involves the symplectomorphism 
%	$\chi$ and its restriction to the boundary $\chi_\partial$. 
%This is the reason why, to handle this aspect, we needed to take care of some subtle details in Section \ref{sec:prin}. 
\end{rem}

 \begin{Thm}
 \label{th:adjman}
  Let $\mathcal{A}\in \mathscr{B}^{m,0}_{\chi} (X\times Y)$ with $m\le0$. Then,
  $\mathcal{A}^*$ belongs to 
  $\mathscr{B}^{m,0}_{\chi^{-1}}(Y\times X)$, and 
  \begin{align*}
   \sigma_\partial( \mathcal{A}^*) &= \sigma_\partial(\mathcal{A})^*.
  \end{align*}
 \end{Thm}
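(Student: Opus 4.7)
The plan is to reduce Theorem \ref{th:adjman} to its local counterpart, Theorem \ref{th:adj}, and then verify that the local constructions glue and that the resulting symbol identity holds globally. I would start by choosing a finite atlas of $X$ and $Y$ adapted to the boundary, with a subordinate partition of unity $\{\varphi_i\}$ on $Y$ and $\{\psi_j\}$ on $X$, so that $\mathcal{A}$ decomposes as a finite sum of operators $\psi_j \mathcal{A} \varphi_i$. By compactness, only finitely many of these contribute near the boundary, and each such localized piece belongs to $\mathscr{B}^{m,0}_\chi(\Omega^+_{x_j}\times\Omega^+_{y_i})$ in the sense of Definition \ref{def:FIOBdM}.

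Next I would apply Theorem \ref{th:adj} to each localized piece to obtain
\[
(\psi_j \mathcal{A} \varphi_i)^* = \bar\varphi_i \mathcal{A}^* \bar\psi_j \in \mathscr{B}^{m,0}_{\chi^{-1}}(\Omega^+_{y_i}\times\Omega^+_{x_j}),
\]
with the off-diagonal entries interchanged as in \eqref{eq:adjointb}. The condition $m\le0$ and $d=0$ is needed precisely so that $A^\chi$ is $L^2$-continuous and the adjoint of the truncated FIO is again a truncated FIO, while the adjoints of Green, potential, trace and pseudodifferential operators of order $m$ and type $0$ stay within the calculus with the appropriate role swapped (potential $\leftrightarrow$ trace). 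The chart parts away from the boundary only involve honest FIOs, for which the adjoint statement is classical. Summing up the localizations and using that the class $\mathscr{B}^{m,0}_{\chi^{-1}}(Y\times X)$ is invariant under cut-offs and coordinate changes, I obtain $\mathcal{A}^* \in \mathscr{B}^{m,0}_{\chi^{-1}}(Y\times X)$.

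For the identity $\sigma_\partial(\mathcal{A}^*)=\sigma_\partial(\mathcal{A})^*$, I would argue locally using Definition \ref{def:prinbdsym} and the description of the boundary symbols of $G^{\chi_\partial}$, $K^{\chi_\partial}$, $T^{\chi_\partial}$, $S^{\chi_\partial}$ as principal symbols of pullbacks by $\tilde b$ of standard Boutet de Monvel operators. For the standard components the identity is classical. For the FIO part it suffices to show that the operator-valued principal symbol \eqref{eq:xin} of $(A^\chi)^*$, viewed as an operator on $\Si(\R_+)$, is the formal $L^2(\R_+)$-adjoint of \eqref{eq:xin} for $A^\chi$, with the base point $(x',\xi')$ replaced by its image under $\chi_\partial^{-1}$. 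This is a direct consequence of Remark \ref{rem:diractop} and the fact that the principal symbol of $(A^\chi)^*$ at $\chi^{-1}(x,\xi)$ equals the complex conjugate of the principal symbol of $A^\chi$ at $(x,\xi)$, together with the observation that the phase function representing $\chi^{-1}$ at the boundary is minus the phase of $\chi$ composed with the transposition.

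The main technical obstacle I anticipate is ensuring the correct matching of phase functions and Maslov factors when passing to the adjoint on the boundary: one has to verify that, when identifying $\sigma_\partial((A^\chi)^*)$ via Definition \ref{def:prinbdsym}, the normal-direction oscillatory integral is exactly the $L^2(\R_+)$-adjoint of the one representing $\sigma_\partial(A^\chi)$. Once this is settled, the identity for the full matrix follows by combining the FIO piece with the adjoint formulas for the three operator-valued pseudodifferential components, all of which are invariant under $\chi_\partial^{-1}$ by Lemma \ref{lem:mas} and Remark \ref{FIOop}.
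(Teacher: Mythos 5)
Your proposal is correct and follows essentially the route the paper intends: the paper states Theorem \ref{th:adjman} without a separate proof, as an immediate globalization of the local adjoint result (Theorem \ref{th:adj}) together with the local symbol identity (Proposition \ref{prop:adj}), exactly the partition-of-unity reduction you carry out. Your additional care with the matching of phases and base points under $\chi_\partial^{-1}$ for the boundary symbol is consistent with Definition \ref{def:prinbdsym} and does not deviate from the paper's argument.
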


 \begin{Def}
  An operator $\mathcal{A} \in \mathscr{B}^{m,d}_\chi(X \times Y)$, $d\leq m_+$, is elliptic if
  $\sigma\pt{\mathcal{A}}$ and $\sigma_\partial(\mathcal{A})$ are invertible.
  \end{Def}
\begin{Thm}
 Let $\mathcal{A} \in \mathscr{B}^{m,d}_\chi(X \times Y)$, $d\leq m_+$, be elliptic.  
 Then there exists a parametrix
 $\mathcal{B} \in \mathscr{B}^{-m,d_-}_{\chi^{-1}}(Y \times X)$, $d_-=\max\ptg{-m,0}$, 
 such that $\mathcal{A} \mathcal{B}$ and $\mathcal{B}\mathcal{A} $ are equal to the identity modulo operators with smooth integral kernels.
In particular, $\mathcal A$ in \eqref{eq:opglobhs}  is a Fredholm operator.
%s and have a well defined Fredholm index.
\end{Thm}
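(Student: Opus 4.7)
The plan is to globalize the local parametrix construction of Theorem \ref{th:parametrixloc} by a partition of unity argument, and then upgrade the resulting approximate parametrix to an exact one modulo smoothing via an asymptotic Neumann-series inside Boutet de Monvel's calculus on $X$ (respectively $Y$).

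First I would reduce to the case $m=0$, $d=0$. On the $Y$-side, one can compose with the order reduction $[\Lambda^m_+]$ and, since the type $d$ is absorbed in finitely many derivatives $\partial_+^j$, a reduction of type can be carried out as in the usual Boutet de Monvel calculus; Theorem \ref{th:compman} ensures that such reductions behave well w.r.t.\ the class $\mathscr{B}^{\bullet,\bullet}_\chi$. Hence from now on we may assume $\mathcal{A}\in\mathscr{B}^{0,0}_\chi(X\times Y)$ with $\sigma(\mathcal{A})$ and $\sigma_\partial(\mathcal{A})$ invertible.

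Next I would cover $X$ by finitely many coordinate charts $\{U_j\}$, take corresponding subsets $V_j\subset Y$ such that $\chi$ and $\chi_\partial$ match $T^*V_j$ with $T^*U_j$ (the existence of the global diffeomorphism $b\colon\partial Y\to \partial X$ and of the neighborhoods in Proposition \ref{prop:global} makes this possible near the boundary; in the interior the standard FIO patching applies). In each chart I apply Theorem \ref{th:parametrixloc} to obtain a local parametrix $\mathcal{B}_j\in\mathscr{B}^{0,0}_{\chi^{-1}}(V_j\times U_j)$. Using a partition of unity $\{\phi_j\}$ subordinated to $\{U_j\}$ and suitable cut-offs $\{\psi_j\}$ with $\psi_j\equiv 1$ on $\supp\phi_j$, I set
\[
\mathcal{B}_0=\sum_j \psi_j \,\mathcal{B}_j\,\phi_j \in \mathscr{B}^{0,0}_{\chi^{-1}}(Y\times X).
\]
By Theorem \ref{th:compman} and Proposition \ref{prop:comp}, the composition $\chi\circ\chi^{-1}=\id$ places $\mathcal{A}\mathcal{B}_0$ in the \emph{usual} Boutet de Monvel class $\mathscr{B}^{0,0}(X)$; moreover its interior and boundary principal symbols both equal the identity. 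Consequently
\[
\mathcal{A}\mathcal{B}_0 = \id - \mathcal{R}_R, \qquad \mathcal{R}_R\in\mathscr{B}^{-1,0}(X),
\]
and an analogous left-sided computation yields $\mathcal{B}_0'\mathcal{A}=\id-\mathcal{R}_L$ with $\mathcal{R}_L\in\mathscr{B}^{-1,0}(Y)$. Within the ordinary Boutet de Monvel calculus on $X$ one can then form the asymptotic sum $\mathcal{S}_R\sim\sum_{k\ge 0}\mathcal{R}_R^k\in\mathscr{B}^{0,0}(X)$, and likewise $\mathcal{S}_L$ on $Y$; applying Theorem \ref{th:compman} once more, $\mathcal{B}_R:=\mathcal{B}_0\mathcal{S}_R$ and $\mathcal{B}_L:=\mathcal{S}_L\mathcal{B}_0'$ both belong to $\mathscr{B}^{0,0}_{\chi^{-1}}(Y\times X)$ and satisfy $\mathcal{A}\mathcal{B}_R\equiv\id$, $\mathcal{B}_L\mathcal{A}\equiv\id$ modulo smoothing. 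A standard argument then shows $\mathcal{B}_R\equiv\mathcal{B}_L$ modulo smoothing, yielding the desired two-sided parametrix $\mathcal{B}$.

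For the Fredholm conclusion it then suffices to note that smoothing operators of the Boutet de Monvel type on the compact manifold $X$ (resp.\ $Y$) are given by smooth integral kernels on $X\times X$ together with smoothing operators on the boundary, hence are compact on each of the Sobolev spaces appearing in \eqref{eq:opglobhs} by Rellich's theorem; combined with Theorem \ref{th:Hscont} applied to $\mathcal{B}$, this gives that $\mathcal{A}$ has closed range, finite-dimensional kernel and cokernel. The main obstacle I anticipate is ensuring that the patching $\mathcal{B}_0$ really lies globally in $\mathscr{B}^{0,0}_{\chi^{-1}}$: one must check that the local pieces, which are a priori defined only over collar neighborhoods of $\partial Y\times\partial X$ and pairs of interior charts matched by $\chi$, combine coherently through the trivialization of the Maslov bundle near the boundary provided by Lemma \ref{lem:mas} and through the standard FIO calculus of H\"ormander in the interior; once this compatibility is verified the remaining steps are purely formal invocations of Theorems \ref{th:compman}, \ref{th:adjman} and the symbolic calculus already developed.
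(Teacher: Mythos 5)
Your argument is essentially correct, but it follows a genuinely different route from the paper. You globalize the local parametrix of Theorem \ref{th:parametrixloc} by a partition of unity, obtain $\mathcal{A}\mathcal{B}_0=\id-\mathcal{R}_R$ with $\mathcal{R}_R\in\mathscr{B}^{-1,0}(X)$, and then improve to a parametrix modulo smoothing by an asymptotic Neumann series inside the ordinary Boutet de Monvel calculus. The paper instead exploits the Egorov-type results: after the same order reduction to $m=d=0$, it forms $\mathcal{P}=\mathcal{A}\mathcal{A}^*$, which by Theorems \ref{th:compman} and \ref{th:adjman} is an \emph{ordinary} elliptic Boutet de Monvel operator on $X$; taking its classical parametrix $\mathcal{C}$ gives $\mathcal{A}\mathcal{A}^*\mathcal{C}=\id+\mathcal{K}$ with $\mathcal{K}$ smoothing, so $\mathcal{A}^*\mathcal{C}$ is a right parametrix (and symmetrically a left one), lying in $\mathscr{B}^{0,0}_{\chi^{-1}}(Y\times X)$ by the composition theorem. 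The paper's route is shorter: it delegates the entire ``improve modulo smoothing'' step to the established classical calculus and completely bypasses the points you yourself flag as delicate, namely that the patched $\mathcal{B}_0$ lies in the global class (sum of compactly supported local Lagrangian pieces plus boundary terms -- true, and no Maslov obstruction arises since no single global phase is needed), that microsupport arguments kill the terms involving $1-\psi_j$, and that vanishing of both the interior and the boundary principal symbol of $\mathcal{A}\mathcal{B}_0-\id$ yields membership in $\mathscr{B}^{-1,0}(X)$. Your construction buys a more explicit, local description of the parametrix symbol and avoids invoking the adjoint calculus (which the paper only has for $m\le0$, $d=0$, hence its initial reduction), at the cost of the extra bookkeeping just mentioned; if you carry out those verifications, which are standard, the proof goes through, and your Fredholm conclusion via compactness of smoothing remainders and Theorem \ref{th:Hscont} matches the paper's.
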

\begin{proof}
Using an order reduction we may assume $m=d=0$. 
By the composition properties of Theorem \ref{th:compman}, the operator 
$\mathcal{P}=\mathcal{A} \mathcal{A}^*$
is an elliptic Boutet de Monvel operator, hence it admits a parametrix
$\mathcal{C}$ such that 
\[
 \mathcal{A} \mathcal{A}^* \mathcal{C}= \mathcal{P}\mathcal{C}=\id +\mathcal K, \quad \mathcal K \mbox{ a smoothing operator}.
\]
Therefore, $\mathcal A^* \mathcal{C}$ is a right parametrix of $\mathcal{A}$. 
In the same way we obtain a left parametrix.
\end{proof}
From Theorem \ref{th:compman} we also obtain the following Egorov type theorem.
\begin{Thm}
\label{th:egomf}
  If
  $\mathcal{P} \in \mathscr{B}^{m', d'}(Y)$ and
  $\mathcal{A} \in \mathscr{B}^{m,0}_{\chi}(X\times Y)$, $m\leq0$, then 
  $ \mathcal{A}  \mathcal{P} \mathcal{A}^*$
  belongs to $\mathscr{B}^{m', d'}(X)$.
\end{Thm}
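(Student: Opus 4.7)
The plan is to reduce the claim to two successive applications of the composition result, Theorem \ref{th:compman}, after first taking the adjoint via Theorem \ref{th:adjman}. Since $m\le 0$, Theorem \ref{th:adjman} applies and yields $\mathcal{A}^*\in \mathscr{B}^{m,0}_{\chi^{-1}}(Y\times X)$. A usual Boutet de Monvel operator $\mathcal{P}\in \mathscr{B}^{m',d'}(Y)$ is a member of $\mathscr{B}^{m',d'}_{\id}(Y\times Y)$, so it is legitimate to feed it into the FIO composition theorem.

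The first composition is $\mathcal{P}\circ\mathcal{A}^*$. Theorem \ref{th:compman} gives
\[
\mathcal{P}\mathcal{A}^*\in \mathscr{B}^{m'+m,\,d_1}_{\id\circ\chi^{-1}}(Y\times X)
=\mathscr{B}^{m'+m,\,d_1}_{\chi^{-1}}(Y\times X),
\]
with type $d_1=\max\{m+d',\,0\}$. Because $m\le 0$ and $d'\ge 0$, we have $d_1\le d'$. The second composition is $\mathcal{A}\circ(\mathcal{P}\mathcal{A}^*)$. Using $\mathcal{A}\in\mathscr{B}^{m,0}_\chi(X\times Y)$ and applying Theorem \ref{th:compman} once more produces
\[
\mathcal{A}\mathcal{P}\mathcal{A}^*\in \mathscr{B}^{m'+2m,\,d_2}_{\chi\circ\chi^{-1}}(X\times X),
\qquad d_2=\max\{(m'+m)+0,\,d_1\},
\]
and the composition of symplectomorphisms is simply $\chi\circ\chi^{-1}=\id$, so the result lands in the ordinary Boutet de Monvel class $\mathscr{B}^{m'+2m,d_2}(X)$.

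It remains to observe the trivial inclusions $\mathscr{B}^{m'+2m,d_2}(X)\subseteq \mathscr{B}^{m',d'}(X)$. Since $m\le 0$, the order $m'+2m$ is $\le m'$, and the type satisfies $d_2=\max\{m'+m,\,m+d',\,0\}\le \max\{m',\,d',\,0\}$; combined with the standard fact that a Boutet de Monvel operator of order $\mu_1$ and type $\delta_1$ is also one of order $\mu_2\ge\mu_1$ and type $\delta_2\ge\delta_1$, this gives $\mathcal{A}\mathcal{P}\mathcal{A}^*\in\mathscr{B}^{m',d'}(X)$.

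There is no real obstacle here: the content was already fixed by the invariant composition and adjoint formulas on manifolds (Theorems \ref{th:compman} and \ref{th:adjman}), which in turn rest on the much more delicate local Egorov argument Theorem \ref{th:ego} and the truncation analysis of Section \ref{sec:oscilint}. The only mildly subtle point is bookkeeping: noting that the cancellation $\chi\circ\chi^{-1}=\id$ places the result back in the usual Boutet de Monvel algebra, and that the slight drop in order from $m'$ to $m'+2m$ (and in type from $d'$) is absorbed by the natural inclusions of the order-type filtration.
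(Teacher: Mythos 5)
Your overall route -- take the adjoint via Theorem \ref{th:adjman}, regard $\mathcal{P}$ as an element of $\mathscr{B}^{m',d'}_{\id}(Y\times Y)$, and apply the composition Theorem \ref{th:compman} twice, using $\chi\circ\chi^{-1}=\id$ to land in the ordinary Boutet de Monvel algebra -- is exactly the derivation the paper intends (the theorem is stated as a consequence of Theorem \ref{th:compman}), and your first composition is fine: $\mathcal{P}\mathcal{A}^*\in\mathscr{B}^{m'+m,d_1}_{\chi^{-1}}(Y\times X)$ with $d_1=\max\{m+d',0\}\le d'$. The gap is in the last step. The second application of Theorem \ref{th:compman} gives type $d_2=\max\{(m'+m)+0,\,d_1\}=\max\{m'+m,\,m+d',\,0\}$, and your passage from the bound $d_2\le\max\{m',d',0\}$ to membership in $\mathscr{B}^{m',d'}(X)$ is a non sequitur: the monotonicity you invoke lets you \emph{raise} the order and the type, never lower the type, so you would need $d_2\le d'$, not $d_2\le\max\{m',d',0\}$. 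That inequality can genuinely fail for the bound you have: take $m=0$ (allowed, since only $m\le0$ is assumed), $m'=2$, $d'=0$; then $d_2=2$, and your argument places $\mathcal{A}\mathcal{P}\mathcal{A}^*$ only in $\mathscr{B}^{2,2}(X)$, not in $\mathscr{B}^{2,0}(X)$ as the theorem asserts. The same defect persists for $d'-m'<m<0$. As written, your proposal proves $\mathcal{A}\mathcal{P}\mathcal{A}^*\in\mathscr{B}^{m'+2m,\,\max\{m'+m,\,m+d',\,0\}}(X)$, which is weaker than the statement.

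What is missing is an argument that the unique type-raising term in the second composition is harmless in this particular conjugation. In the proof of Theorem \ref{th:comp}, the type $(m_{\mathcal{A}})_+$ comes from the leftover $r^+A^{\chi}(e^+r^+-1)\op^{\psi}(a_d)e^+$ generated by the polynomial part in $\xi_n$ of the interior symbol of the right-hand factor, which -- via \eqref{eq:derep2} and \eqref{eq:diracgreen} -- produces the trace functionals $\gamma_l$, $l\le m'+m-1$, of the argument; unlike in the classical calculus, these contributions do not vanish, because the FIO phase is nonlocal. To obtain the asserted type $d'$ you must show that for the right factor $\mathcal{P}\mathcal{A}^*$ arising here these terms carry type at most $d'$, exploiting the specific structure of the conjugation (the rightmost factor is $\mathcal{A}^*$ of order $m\le 0$ and type $0$, and $\mathcal{A}\mathcal{A}^*\in\mathscr{B}^{2m,0}$ by Theorem \ref{th:ego}) rather than the blanket estimate of Theorem \ref{th:compman}; the crude bookkeeping alone cannot deliver the claimed preservation of the type.
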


%%%%%%%%%%%%%%%%%%%%%%%%%%%%%%%%%%%%%%%%%%%%%%%%%%%%%%%%%%%%%%%%%%%%%%%%%%%%%%%%%%%%%%%%%%%%%%%%%%%%%%%%%%%%%%
%%%%%%%%%%%%%%%%%%%%%%%%%%%%%%%INDEX OF ADMISSIBLE SYMPLECTOMORPHISM%%%%%%%%%%%%%%%%%%%%%%%%%%%%%%%%%%%%%%%%%%
%%%%%%%%%%%%%%%%%%%%%%%%%%%%%%%%%%%%%%%%%%%%%%%%%%%%%%%%%%%%%%%%%%%%%%%%%%%%%%%%%%%%%%%%%%%%%%%%%%%%%%%%%%%%%%

\section{Index of Admissible Symplectomorphisms}
\label{sec:index}
Following the idea of Weinstein \cite{WE75}, we will now associate a FIO of Boutet de Monvel 
type with an admissible  
symplectomorphism $\chi: T^*Y\setminus 0 \to T^*X \setminus 0$ and a unitary section of the Maslov bundle 
$u$. 
Choose on $Y$ and $X$ the trivial line bundle and on $\partial Y$ and $\partial X$ the zero bundle.
We consider the FIO of Boutet de Monvel type defined as
\begin{equation}
  \label{eq:index}
  \mathcal{U}^\chi=\left(
  \begin{array}{c}
    r^+ U^\chi e^+  
  \end{array}\right): C^{\infty}(Y) \to C^{\infty}(X),
\end{equation}
where $U^\chi$ has principal symbol $u$. 
\begin{Thm}
 \label{thm:finally}
 The operator $\mathcal{U}$, defined in \eqref{eq:index}, 
 is elliptic, that is the interior and the boundary symbol are both invertible.
\end{Thm}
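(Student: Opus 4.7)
The interior principal symbol of $\mathcal{U}^\chi$ is, by construction, the unitary section $u$ of the Maslov bundle, so it is pointwise invertible on $\Lambda$. The content of the theorem is therefore the invertibility of the boundary principal symbol
\[
\sigma_\partial(\mathcal{U}^\chi)(x',\xi') = r^+ U^\chi_\partial(x',\xi') e^+,
\]
as described in Definition \ref{def:prinbdsym}. Fix $(x',\xi')\in T^*\partial X\setminus 0$ and set $(y',\eta')=\chi_\partial^{-1}(x',\xi')$. After passing to normal form at the boundary, the operator to analyze acts on functions of one variable $x_n\in\R_+$, with phase determined by $\xi_n^*(x',0,\eta',\eta_n)$ and amplitude equal to $u$ evaluated on the boundary.

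The plan is to deform this one-dimensional operator through a one-parameter family $\{B_t\}_{t\in[0,1]}$ of boundary symbols ending at $B_1 = r^+U^\chi_\partial e^+$ and starting at an operator $B_0$ which is manifestly invertible. The deformation is obtained by scaling the normal variable: I replace $\eta_n$ by $t^{-1}\eta_n$ (equivalently, $x_n$ by $t x_n$) in the phase, so that the $\eta_n$-dependent part of the phase contracts to the leading linear piece $x_n\,\eta_n\cdot \partial_{\eta_n}\xi_n^*|_{\eta_n=0}$ plus a residual from the homogeneity of $\xi_n^*$. Using the facts collected around \eqref{eq:jacob}, in particular that $\partial_{y_n}x_n^*|_{y_n=0}$ and $\partial_{\eta_n}\xi_n^*|_{y_n=0}$ are strictly positive and reciprocal, the limiting operator $B_0$ is, up to a positive multiplicative factor times the unitary value $u(y',\eta')$, the composition of $r^+$, a rescaled Fourier multiplier with unitary symbol, and $e^+$, which is invertible (in fact a unitary multiple of the identity on $L^2(\R_+)$ modulo the rescaling).

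The main obstacle, as flagged in the introduction, is that this homotopy is \emph{not} continuous in operator norm on $L^2(\R_+)$: the phase $\xi_n^*$ is in general only positively homogeneous and may be discontinuous across $\eta_n=0$, so the corresponding kernel oscillations do not admit a uniform $L^2\to L^2$ bound. To circumvent this, I work instead on the weighted space $L^2(\R_+,(1+x^2)^{-1}dx)$. On this space, each $B_t$ has an integral kernel $K_t(x_n,y_n)$ obtained from stationary-phase / change-of-variable manipulations in $\eta_n$, and the weight $(1+x^2)^{-1}$ produces the decay needed to verify the Schur test
\[
\sup_{y_n}\int |K_t(x_n,y_n)|\,\frac{1+y_n^2}{1+x_n^2}\,dx_n < \infty,\qquad
\sup_{x_n}\int |K_t(x_n,y_n)|\,dy_n < \infty,
\]
with bounds uniform in $t\in[0,1]$ and Lipschitz in $t$. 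This yields norm continuity $t\mapsto B_t\in\mathscr{L}(L^2(\R_+,(1+x^2)^{-1}dx))$.

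Given this, the argument concludes in three steps. First, $B_0$ is invertible on $L^2(\R_+,(1+x^2)^{-1}dx)$ by explicit inspection (a unitary Fourier multiplier followed by truncation to the half-line, with unitary leading coefficient $u(y',\eta')$). Second, since the set of invertible operators is open in the norm topology and $t\mapsto B_t$ is norm continuous, invertibility propagates from $t=0$ to $t=1$, so $B_1$ is invertible on $L^2(\R_+,(1+x^2)^{-1}dx)$. Third, the smoothing-regularity statements from Theorems \ref{teodirac} and \ref{th:conts} (together with the mapping properties into $\mathscr{S}(\R_+)$) upgrade this $L^2_w$-invertibility to invertibility as a map $\mathscr{S}(\R_+)\to\mathscr{S}(\R_+)$, which is exactly what Definition \ref{def:prinbdsym} requires for the boundary principal symbol. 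Combined with the invertibility of $\sigma(\mathcal{U}^\chi)=u$, this gives the ellipticity of $\mathcal{U}^\chi$.
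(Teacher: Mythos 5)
Your overall strategy parallels the paper's (deform the boundary symbol by a scaling of the phase, work on the weighted space $L^2(\R_+,(1+x_n)^{-2}dx_n)$, use Schur's lemma, then upgrade to $\Si(\R_+)$), but the step you use to close the argument is not valid and a key idea is missing. You claim that, since the invertibles are open in the norm topology and $t\mapsto B_t$ is norm continuous, ``invertibility propagates from $t=0$ to $t=1$''. Openness plus continuity only shows that $\{t: B_t \mbox{ invertible}\}$ is open; it does not carry invertibility across the interval (consider $B_t=(1-t)\,\mathrm{Id}$). What actually makes the deformation sufficient is the homogeneity of the phase: conjugation with the dilation group, $\kappa_{\lambda^{-1}}\,\sigma_\partial(\mathcal{U}^\chi)(x',\eta')\,\kappa_\lambda$, replaces $\eta'$ by $\lambda^{-1}\eta'$, so the operators $B_t$, $t>0$, are all conjugate to one another (unitarily on $L^2(\R_+)$, and by the isomorphism $\kappa_t$ on the weighted space and on $\Si(\R_+)$). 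Hence invertibility for a single small $t>0$ -- which does follow from invertibility of $B_0$ together with norm continuity of the family \emph{at the one point} $t=0$ -- already gives invertibility of $B_1$; equivalently, the paper reduces at the outset to the case where $|\eta'|$ is as small as one likes. Without this observation your homotopy argument does not close, and no continuity on all of $[0,1]$ (let alone a Lipschitz bound) is needed or established.

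There are two further problems of execution. First, you propose to verify the Schur test directly on the kernel $K_t$ of $B_t=r^+A(t)e^+$, but this operator is a truncated FIO of order zero whose Schwartz kernel is not an integrable kernel (it carries a one-dimensional singularity), so no such Schur estimate is available. The paper instead proves continuity at $t=0$ of the products $\bigl(r^+A(t)e^+\bigr)\bigl(r^+A(t)e^+\bigr)^*$ and $\bigl(r^+A(t)e^+\bigr)^*\bigl(r^+A(t)e^+\bigr)$, writing $r^+A(t)e^+r^+A^*(t)e^+=r^+A(t)A^*(t)e^++r^+A(t)e^-\,r^-A^*(t)e^+$: the first summand is pseudodifferential with symbol converging as $t\to0^+$ (dominated convergence plus Schur for the low-frequency part), while the corner terms $r^+A(t)e^-$ and $r^-A^*(t)e^+$ tend to zero in norm by integration by parts, the transmission condition and Schur's lemma on the weighted space; invertibility of both products for small $t$ then yields bijectivity of $r^+A(t)e^+$. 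Second, the final passage from invertibility on the weighted $L^2$ space to invertibility on $\Si(\R_+)$ is not a consequence of the mapping properties in Theorems \ref{teodirac} and \ref{th:conts} alone: the paper writes the inverse as $\bigl((r^+A(t)e^+)^*r^+A(t)e^+\bigr)^{-1}(r^+A(t)e^+)^*$ and invokes the spectral invariance of Boutet de Monvel's calculus to conclude that it maps $\Si(\R_+)$ to itself.
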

\begin{proof}
  The interior symbol is invertible by construction. 
  So we have to show the invertibility of  the boundary symbol
 \begin{align}
  \nonumber
  \sigma_\partial \pt{\mathcal{U}^\chi}(x', \xi')  : \Si(\R_+) &\to \Si(\R_+)\\ 
  \label{eq:invs}
  u &\mapsto r^+ \int e^{i x_n \partial_{x_n} \psi (x', 0, (\xi'^*)^{-1}(\xi'), \eta_n)} \widehat{e^+ u}(\eta_n)\dbar \eta_n
 \end{align}
 for each $(x', \xi') \in T^*\partial X\setminus 0$.
  As we have noticed in Equation \eqref{eq:contsob}, $\sigma_\partial \pt{\mathcal{U}^\chi}(x', \xi')\in \Ldug+$. 
  In the sequel it will be easier to 
  consider the operator
  \begin{align}
    \nonumber
    \sigma_\partial \pt{\mathcal{U}^\chi}(x', \eta')  \colon  L^2(\R_+) &\to L^2(\R_+)\\ 
    \label{eq:inv}
    u &\mapsto r^+ \int e^{i x_n \partial_{x_n} \psi (x', 0, \eta', \eta_n)} \widehat{e^+ u}(\eta_n)\dbar \eta_n.
  \end{align}
In the following,  we will first show that $ \sigma_\partial \pt{\mathcal{U}^\chi}(x', \eta')$ is
invertible on a weighted $L^2$-space. From this we will infer the invertibility on $L^2(\R_+)$ 
and on $\Si(R_+)$.

  First notice that for each $\lambda \in \R$
  \begin{align*}
    \kappa_\lambda\colon L^2 (\R_+ ) \to L^2(\R_+)\colon u(x)\mapsto \lambda^{\frac{1}{2}} u(\lambda x)
  \end{align*}
  is a unitary.
  Hence, the invertibility of \eqref{eq:inv} is equivalent to that of
  \[
    \kappa_{\lambda^{-1}} \circ \sigma_\partial\pt{\mathcal{U}^\chi}(x', \eta') \circ \kappa_{\lambda}u(x_n)=  
    r^+ \int e^{i x_n \partial_{x_n} \psi (x', 0, \lambda^{-1}\eta' , \eta_n)} \widehat{e^+ u}(\eta_n)\dbar \eta_n.
  \]
  That is, we can always suppose that $|\eta'|$ is as small as necessary.
We next consider the larger space
  $L^2_{w\pm}:=L^2\pt{\R_\pm, w}$, $w(x)= (1+|x|)^{-2}$, which contains the constants and all bounded continuous functions. 
  We set  $L^2_\omega=L^2(\R, \omega)$.
For $0\leq t \leq 1$, let
  \begin{align*}
      A(t)&=A^\psi_n (t)(x', \eta'): u\mapsto
      \int e^{i x_n \partial_{x_n} \psi (x', 0, t \eta', \eta_n)} \widehat{ u}(\eta_n)\dbar \eta_n \\
      &=
      \kappa_{t}  \int e^{i x_n \partial_{x_n} \psi (x', 0,  \eta', \eta_n)} 
      \widehat{ \kappa_{t^{-1}} u}(\eta_n)\dbar \eta_n.
  \end{align*}
  In view of the fact that $\partial_{x_n}\psi(x', 0, 0, \eta_n)$ is homogeneous of degree $1$ in $\eta_n$,
  we have $\partial_{x_n}\psi(x', 0, 0, \eta_n)= c(x') \eta_n$ for a smooth function
  $c$. Moreover, $c(x')>0$, since $\partial_{\eta_n}\partial_{x_n} \psi(x', 0, 0, \eta_n)>0$, as noticed after
  \eqref{eq:simplbound}. We have
  \begin{eqnarray*}
      r^+A(0)e^+u(x_n)&=&r^+A^\psi_n (0)e^+u(x_n) = 
      r^+ \int e^{i x_n \partial_{x_n} \psi (x', 0, 0, \eta_n)} \widehat{e^+ u}(\eta_n)\dbar \eta_n\\
				       &=& r^+ \int e^{i x_n c(x') \eta_n} \widehat{e^+ u}(\eta_n)\dbar \eta_n= 
				       u(c(x')x_n).
  \end{eqnarray*}
  Being a dilation in the variable by a positive factor,  $r^+A(0)e^+$ is clearly invertible.

  Let us now consider on $L^2_{w+}$ the operator family $P(t)$, 
  $t\in [0,1]$,  given by
  \begin{align}
    \label{eq:contt}
    P (t)=r^+ A(t)e^+  \pt{r^+ A(t)e^+}^*=r^+ A(t)e^+ r^+ A(t)^*e^+. 
  \end{align}
  Since $P(0)$ is invertible on $L^2_{w+}$, invertibility of $P(t)$ for small $t$ will follow, 
  if we prove that $P(t)$ is continuous at $t=0$.
 In this case, $r^+A(t)e^+$ will be surjective on $L^2_{w+}$. 
  A similar argument for $ \pt{r^+ A^*(t) e^+} \pt{r^+ A(t) e^+}$ will imply injectivity for small $t$.
Hence, for $t$ close to zero,  $r^+A(t)e^+$ will be invertible on $L^2_{w+}$.

So our task is to prove the continuity of $P(t)$ in $t=0$. Notice that
  \begin{equation}
    \label{eq:contdiv}
    P(t)=r^+ A(t)e^+ r^+ A(t)^*e^+ = r^+ A(t) A(t)^*e^+ + r^+ A(t) e^- r^- A^*(t)r^+. 
  \end{equation}
The operator $A(t) A^*(t)$ can be written explicitly
  \[
    \pt{A(t) A^*(t)} u(x_n)= \int e^{i (x_n-y_n) \partial_{x_n}\psi(x', 0, t \eta', \eta_n)} u(y_n)dy_n \dbar \eta_n.
  \]
As observed after \eqref{eq:simplbound}, 
%$\partial_{x_n}\partial_{\eta_n}(x',0,\eta',\eta_n)$ is strictly positive, we know that
  $\partial_{x_n}\partial_{\eta_n} \psi(x', 0, t \eta', \eta_n)$ $\geq \delta>0$, hence
  it is possible to invert $\eta_n\mapsto\partial_{x_n}\psi\pt{x', 0, t \eta', \eta_n}$. 
  Letting $\xi_n=\partial_{x_n}\psi(x', 0, t \eta', \eta_n)$,
   $\eta_n= \eta_n(x', t \eta', \xi_n)$,
  we obtain the pseudodifferential operator
  \begin{equation}
    \label{eq:aas}
    \pt{A(t) A^*(t)} u(x_n)= 
    \int e^{i (x_n  - y_n) \xi_n} \partial_{x_n} \partial_{\eta_n} \psi( x', 0, t\eta', \eta_n(x', t \eta', \xi_n))^{-1}
    u(y_n) dy_n \dbar \xi_n.
  \end{equation}
  Notice that $\partial_{\eta_n}\partial_{x_n}\psi( x', 0, 0, \eta_n(x', 0, \xi_n))^{-1} = c(x')^{-1}$. Therefore
  \[
     A(0) A^*(0)u(x_n)=\int e^{i (x_n-y_n)\xi_n} c(x')^{-1} u(y_n)dy_n \dbar \xi_n= c(x')^{-1} u(x_n),
  \]
 so that $A(0)A^*(0)$ is invertible.
  For each fixed $t$, the operator in \eqref{eq:aas} is an $SG$-pseudodifferential operator of order $(0,0)$,  
  hence continuous on   $L^2_w $. 

We next study the limit as $t\to 0^+$. 
In view of the fact that $\partial_{x_n}\partial_{\eta_n}\psi$ is in general not continuously 
extendable to $\eta=0$, we choose a zero excision function $\zeta$  with 
$\zeta(\xi_n)=0$ for $|\xi_n|<1$ and $\zeta(\xi_n)=1$ for $|\xi|>2$ and  consider the two operators
  \begin{align*}
    \pt{P_1(t) u} (x_n)&= \int e^{i (x_n -y_n ) \xi_n} \partial_{x_n} \partial_{\eta_n} \psi( x', 0, t\eta', \eta_n(x', t \eta',
    \xi_n))^{-1} \zeta(\xi_n) u(y_n) dy_n \dbar \xi_n,\\
    \pt{P_2(t) u }(x_n)&=\int e^{i (x_n -y_n ) \xi_n} \partial_{x_n} \partial_{\eta_n} \psi( x', 0, t\eta', \eta_n(x', t \eta', \xi_n))^{-1}  (1 - \zeta(\xi_n))
    u(y_n) dy_n \dbar \xi_n .
  \end{align*}
  Then $P_1(t)$ is an $SG$-pseudodifferential operator of order $(0,0)$ 
  whose symbol has uniformly bounded seminorms w.r.t. to $t$. Furthermore, 
  \[
    \lim_{t \to 0+}\zeta(\xi_n) \pt{\partial_{x_n}\partial_{\eta_n}\psi(x', 0, t \eta', \eta_n(x', t \eta',\xi_n))^{-1}
    -  c(x')^{-1}}=0
  \]
  uniformly in the topology of $SG$-symbols on $\R$ and therefore
  \[
    \lim_{t \to 0} P_1(t)= P_1(0): u \mapsto \int e^{i (x_n- y_n)\xi_n} c(x')^{-1} \zeta(\xi_n) u(y_n) dy_n \dbar \xi_n
  \]
  in the norm topology of $\Ldwu\empty$.
  
  The kernel of the operator $P_2(t)$ is
  \[
    K_{P_2(t)} \pt{x_n, y_n}= \int e^{i (x_n- y_n)\xi_n} \partial_{x_n} \partial_{\eta_n} 
    \psi( x', 0, t\eta', \eta_n(x', t \eta', \xi_n))^{-1} (1 - \zeta(\xi_n)) 
    \dbar \xi_n.
  \]
  The function $\partial_{x_n} \partial_{\eta_n} \psi( x', 0, t\eta',
  \eta_n(x', t \eta', \xi_n))^{-1} $ is positively homogeneous
  of degree zero in $\pt{\eta', \eta_n}$, hence bounded. 
  As $t\mapsto 0^+$, Lebesgue's theorem of dominated convergence implies that
  \[
    \pt{1- \zeta\pt{\xi_n}} \pt{ \partial_{x_n}\partial_{\eta_n}\psi(x', 0, t \eta', \eta_n(x', t \eta', \xi_n)^{-1}-c(x'))^{-1} }    \to 0 \quad \mbox{in }\; L^1(\R_{\xi_n}),
  \]
  and hence
  \[
    \lim_{t \to 0} K_{P_2(t)}\pt{x_n, y_n}
   = \int e^{i (x_n- y_n)\xi_n}  c(x')^{-1} (1 - \zeta(\xi_n))  \dbar \xi_n
   = K_{P_2(0)}(x_n,y_n), \]
uniformly  on $\R_+\times \R_+$. 

Schur's Lemma then implies that $P_2(t)$ tends to $P_2(0)$ in the norm of
  $\mathscr L( L^2_{w+})$, since
  \begin{align*}
    &\sup_{x_n \in \R_+} \int_{\R_+} \abs{ K_{P_2(t)}(x_n,y_n)- K_{P_2(0)} (x_n,y_n)} \pt{ 1+|y_n| }^{-2} dy_n 
    \\
    &\hspace{20mm}\le\sup_{\R_+\times\R_+} \abs{K_{P_2(t)} (x_n,y_n)- K_{P_2(0)} (x_n,y_n)}\int_{\R_+}\pt{ 1+|y_n| }^{-2} dy_n,\\
    &\sup_{y_n \in \R_+} \int_{\R_+} \abs{ K_{P_2(t)} (x_n,y_n)- K_{P_2(0)}(x_n,y_n) } \pt{ 1+|x_n| }^{-2} dx_n 
    \\
    &\hspace{20mm}\le\sup_{\R_+\times\R_+} \abs{K_{P_2(t)} (x_n,y_n)- K_{P_2(0)}(x_n,y_n) }\int_{\R_+}\pt{ 1+|x_n| }^{-2} dx_n.
  \end{align*}
This implies the continuity of $t \mapsto A(t) A^*(t)$ at $t=0$ in $\mathscr{L}(L^2_\omega)$. 
   In view of the fact that extension by zero and restriction define bounded maps, we also see that     $t \mapsto r^+A(t) A^*(t)e^+$  is continuous at $t=0$ in $L^2_{w+}$.
  Hence, we have proven the continuity of the first operator in the right hand side of \eqref{eq:contdiv}. 
  
  Next consider $G(t)= r^+ A(t) e^- r^- A^*(t)r^+$.
In view of the fact that $r^+A(0)e^-=0$, we want to prove that $G(t)\to 0$ in $L^2_{w+}$. Notice that
  \begin{eqnarray}
    \nonumber
    \lefteqn{\norm{r^+ A(t) e^- r^- A^*(t)e^+ - r^+ A(0) e^- r^- A^*(0)e^+}_{\mathscr L(L^2_{w+}) }} \\
    \nonumber
    &= &\norm{r^+ A(t) e^-   \pt{r^- A^*(t)e^+ - r^- A^*(0)e^+}  + \pt{r^+ A(t) e^-  -r^+ A(0) e^-}
      r^- A^*(0)e^+ }_{\mathscr L(L^2_{w+})} \\
    \nonumber
    &\leq &
    \max \ptg {\sup_t\norm{r^+ A(t)e^-}_{\mathscr L(L^2_{w-},L^2_{w+})} , \norm{r^- A^*(0) e^+}_{\mathscr L(L^2_{w+},L^2_{w-})}}\\
    \label{eq:green1}
    && \cdot\pt{ \norm{ r^- (A^*(t)- A^*(0))e^+ }_{ 
    \mathscr L(L^2_{w+},L^2_{w-})} +
    \norm{r^+ (A(t)-A(0)) e^-}_{\mathscr L(L^2_{w-},L^2_{w+})} }. 
  \end{eqnarray}
  We start by considering $\norm{r^+(A(t) - A(0)) e^-}_{\mathscr L(L^2_{w-},L^2_{w+})}$, 
  that is, the operator
  \[
    u \mapsto r^+ \iint_{\R_+} \pt{e^{i x_n \partial_{x_n}\psi (x', 0, t \eta', \eta_n) - i y_n \eta_n} - e^{i x_n c(x') \eta_n- i y_n \eta_n }} e^- u(y_n)dy_n \dbar \eta_n. 
  \]
The associated kernel is
  \[
    k(t;x_n, y_n)= \int \pt{e^{i x_n \partial_{x_n}\psi (x', 0, t \eta', \eta_n) - i y_n \eta_n} - e^{i x_n c(x') \eta_n - i y_n \eta_n}}\dbar \eta_n, \quad x_n>0, y_n<0.
  \]
  For convenience, we invert the sign of the $y_n$-variable and consider
  \begin{align*}
    \tilde{k}(t;x_n, y_n)&= \int \pt{e^{i x_n \partial_{x_n}\psi(x', 0, t \eta', \eta_n) + i y_n \eta_n} - e^{i x_n c(x')\eta_n  + i y_n \eta_n} } \dbar \eta_n \\
    &= \int e^{i x_n c(x') \eta_n + i y_n \eta_n} \pt{ e^{i x_n \pt{\partial_{x_n}\psi(x', 0, t \eta', \eta_n) -  c(x') \eta_n}}-1}  \dbar \eta_n, \quad x_n>0, y_n>0.
  \end{align*}
Choose a zero excision function $\zeta$ as above
and let
  \begin{align}
    \label{eq:comp}
    &k_1(t;x_n, y_n)=  \int e^{i x_n c(x') \eta_n + i y_n \eta_n} \pt{ e^{i x_n \pt{\partial_{x_n}\psi(x', 0, t \eta', \eta_n) -  c(x') \eta_n}}-1} \zeta(\eta_n) \dbar \eta_n\\
    &k_2(t;x_n, y_n)= \int e^{i x_n c(x') \eta_n + i y_n \eta_n} \pt{ e^{i x_n \pt{\partial_{x_n}\psi(x', 0, t \eta', \eta_n) -  c(x') \eta_n}}-1}(1- \zeta(\eta_n)) \dbar \eta_n.
  \end{align}
  First we analyze $k_1$. For an application of  Schur's Lemma, on $L^2_{w+}$
  it will be enough to check that
\begin{eqnarray}
\sup_{x_n,y_n} |k_1(t;x_n, y_n)|\to 0\ \text{ as }\ t\to 0^+.\label{eq:suffSchur}
\end{eqnarray}
    We recall that the phase function $\partial_{x_n}\psi(x', 0, t\eta', \eta_n)$ satisfies the transmission condition, 
  so that
  \begin{align}
    \nonumber
    &\partial_{x_n}\psi (x', 0, t \eta', \eta_n) -  c(x') \eta_n= \sum_{|\alpha|= 1}\partial_{x_n}
    \partial_{\eta'}^{\alpha} \psi(x', 0, 0, \eta_n) (t \eta')^\alpha +r_1(x', t\eta', \eta_n)\\
    \label{cost}
    &=\sum_{|\alpha|= 1}\partial_{x_n}
    \partial_{\eta'}^{\alpha}\psi(x', 0, 0, 1) (t \eta')^\alpha  +
    \sum_{|\alpha|=2} (t\eta')^\alpha r_{1, \alpha} (x', \sigma(t)\eta', \eta_n), \quad \sigma(t)\in (0, t).
  \end{align} 
  Using integration by parts, we can write
  \begin{eqnarray}
\lefteqn{|k_1(t;x_n, y_n)|=
    \abs{  \int e^{i x_n c(x') \eta_n + i y_n \eta_n}  \pt{ e^{i x_n \pt{\partial_{x_n}\psi(x', 0, t \eta', \eta_n) -  c(x') \eta_n}}-1} 
    \zeta(\eta_n) \dbar \eta_n}\nonumber}   \\
    &\leq& \abs{\int e^{i x_n c(x') \eta_n + i y_n \eta_n} \frac{x_n \partial_{\eta_n}r_{1}\pt{x', t\eta', \eta_n}}{(x_n c(x')+ y_n)}
    e^{i x_n \pt{\partial_{x_n}\psi(x', 0, t \eta', \eta_n) -  c(x') \eta_n}} 
    \zeta(\eta_n)\dbar \eta_n}\nonumber\\
    \nonumber
    &&+ \abs{\int e^{i x_n c(x') \eta_n + i y_n \eta_n}\frac{1}{(x_n c(x')+ y_n)} \pt{ e^{i x_n \pt{\partial_{x_n}\psi(x', 0, t \eta', \eta_n) -  c(x') \eta_n}}-1} 
    \partial_{\eta_n}\zeta(\eta_n) \dbar \eta_n  }\\
    \nonumber
    &\leq& \frac{x_n}{x_n c(x')+ y_n} \int \abs{\partial_{\eta_n} r_1(x', t\eta', \eta_n) } \zeta(\eta_n)\dbar \eta_n\\
    \label{eq:intx}
    &&+\frac{1}{x_n c(x')+ y_n}\int \abs{\pt{ e^{i x_n \pt{\partial_{x_n}\psi(x', 0, t \eta', \eta_n) -  c(x') \eta_n}}-1}} |\partial_{\eta_n}\zeta(\eta_n)| \dbar \eta_n .
\end{eqnarray}
  Notice that $r_{1} (x', t\eta', \eta_n)$ is in general not continuous in $(x', 0,0)$. Nevertheless,
  as a function in $\eta_n$, it satisfies estimates as a symbol in $S^{-1}(\R\times \R)$ for
  $|\eta_n|>1$. Hence, $\zeta(\eta_n) \partial_{\eta_n}r_{1}(x', t \eta, \eta_n)$ is integrable
 and uniformly
  bounded for $t\in(0,1]$. Moreover, 
  \[
    \lim_{t\to 0} \zeta(\eta_n) (t\eta')^\alpha \partial_{\eta_n}r_{1, \alpha}(x', \sigma(t) \eta, \eta_n)=0, \quad |\alpha|=2.
  \]
  Notice also that 
  \[
    \frac{x_n}{c(x')x_n+y_n}\leq c(x')^{-1}, 
  \]
 is uniformly bounded for $x_n>0, y_n>0$.  In \eqref{eq:intx},  
 $\partial_{\eta_n}\zeta(\eta_n)$ is a function with compact support and, for $\eta\not=0$,   
%  Moreover, 
  %
  \begin{eqnarray*}
\lefteqn{\left|e^{i x_n \pt{\partial_{x_n}\psi(x', 0, t \eta', \eta_n) -  c(x') \eta_n}}-1\right|
	 \le
%	 2\left|\sin\left[x_n \pt{\partial_{x_n}\psi(x', 0, t \eta', \eta_n) -  c(x') \eta_n}\right]/2\right|
	 \left|x_n \pt{\partial_{x_n}\psi(x', 0, t \eta', \eta_n) -  c(x') \eta_n}\right|}
	 \\
	 &\le&
	 x_nt|\eta'|\left|\sum_{|\alpha'|=1}
	 \int_0^1\partial_{x_n}\partial^{\alpha'}_{\xi'}\psi(x',0,st\eta',\eta_n)\,ds\right|
	 \le M x_n t |\eta'|,
  \end{eqnarray*}
  since $\partial_{x_n}\partial^{\alpha'}_{\xi'}\psi(x',0,st\eta',\eta_n)$ is uniformly bounded for $|\alpha'|=1$,
  $\eta_n\not=0$, $(x',\eta')\in T^*\partial X\setminus0$, $t\in(0,1]$. 
Lebesgue's theorem on dominated convergence therefore implies that
\eqref{eq:suffSchur} holds and  thus the operator associated with $k_1$ tends to zero 
as $t\to 0^+$ by Schur's lemma. A similar, but simpler, argument holds for $k_2(t;x_n,y_n)$. 
 So we have proven that $r^+A(t)e^- \to 0$ in the norm topology. 
 
 A similar argument shows  $r^- A^*(t) e^+\to 0$. 
 Hence, it is possible to find a constant $C>0$ such that for small $\overline t>0$
  \begin{align*}
    & \norm{ r^+ A(t) e^- }_{\mathscr{L}\pt{L^2_{\omega_-},L^2_{\omega_+} } } \leq C,  \quad
    \norm{ r^- A^*(t) e^+ }_{\mathscr{L}\pt{L^2_{\omega_+},L^2_{\omega_-} } }\leq C, \quad 
    t \in [0, \bar{t}].
    \label{eq:const}
  \end{align*} 
  In view of the above argument and Equation \eqref{eq:green1}, we find
  \begin{align*}
   \lim_{t \to 0} \norm{G(t)}_{\mathscr{L}\pt{L^2_{\omega_+} } }&
   \leq C \pt{\lim_{t \to 0} \norm{r^- A(t) e^+}_{{\mathscr{L}\pt{L^2_{\omega_+},L^2_{\omega_-} } }}+ 
   \norm{r^+ A(t)e^-}_{{\mathscr{L}\pt{L^2_{\omega_-},L^2_{\omega_+} } }}}\\
   &=0.
  \end{align*}

  Therefore, we have proven that
  \[
    \lim_{t\to 0} r^+A(t)e^+  \pt{r^+ A(t)e^+}^*= r^+A(0) e^+  r^+ A^*(0)e^+= c(x')^{-1}\textnormal{Id}
  \]
  in the norm topology of $\mathscr L(L^2_{w+})$.
  It follows that $r^+A(t)e^+ \pt{r^+A(t)e^+ }^*$ is invertible for, say, $t<t_1$. 
This implies that $r^+A(t)e^+$ is surjective on $L^2_{w+}$.
We can apply  the same argument to $t\mapsto\pt{r^+ A(t) e^+}^*  r^+ A(t) e^+$ and 
find that it is also continuous at $t=0$ and hence invertible for small $t$, say $t<t_2$.
In particular, $ r^+ A(t) e^+$ is injective on $L^2_{w+}$.   
Hence $ r^+ A(t) e^+$ in invertible on $L^2_{w+}$ for $t<\min\{t_1,t_2\}$, and the same 
is true for  $\pt{r^+A(t)e^+ }^*$.

In particular, both $r^+A(t)e^+ $ and its $L^2(\R_+)$-adjoint $\pt{r^+A(t)e^+ }^*$  
map $L^2(\R_+)$ to itself and are injective. Therefore, $r^+A(t)e^+ $ is invertible on
$L^2(\R_+)$ for small $t$. 

Finally, consider $r^+A(t)e^+ $ for small $t$ on $\mathscr S(\R_+)$. It clearly is injective, 
since it is so on $L^2(\R_+)$. 
The inverse is $\pt {(r^+A(t)e^+)^*r^+A(t)e^+ }^{-1}(r^+A(t)e^+)^*$. 
As $(r^+A(t)e^+)^*r^+A(t)e^+$ is an operator of order and type zero in Boutet de Monvel's
calculus, so is its inverse by spectral invariance, cf.\ \cite{S99}. In particular, the inverse to $r^+A(t)e^+$ maps 
$\mathscr S(\R_+)$ to itself. Hence $r^+A(t)e^+$ also is surjective on $\mathscr S(\R_+)$
and therefore invertible. 
\end{proof}

In view of Theorem \ref{thm:finally}, we can define the index of an admissible symplectomorphism.
\begin{Def}
  Let $X$ and $Y$ be compact manifolds with boundary, and $\chi: T^*Y \setminus 0 \to T^* X\setminus 0$ an admissible symplectomorphism. Then,
  for each unitary section $u$ of the Maslov bundle associated with $\chi$, we define
  \[
    \textnormal{ind}(\chi, u)= \textnormal{ind} \mathcal{U^{\chi}}
  \]
  with $\mathcal{U^{\chi}}$ as in \eqref{eq:index}.
\end{Def}
As in the case of closed manifolds, the index $\textnormal{ind} (\chi, u)$ 
in general might depend on the chosen unitary section $u$. 
If the Maslov bundle is trivial, or the dimension of $X$ and $Y$ is at least three, 
then it turns out that $\textnormal{ind} (\chi, u)$ is independent of $u$. The argument is standard, and we shortly recall it for the sake of completeness. Consider two unitary sections $u_1$, $u_2$ and the corresponding operators $\mathcal{U}^\chi_{u_1}$ and $\mathcal{U}^\chi_{u_2}$. 
Theorem \ref{th:ego} implies that $\mathcal{U}^\chi_{u_1}(\mathcal{U}^\chi_{u_2})^*$ is an element of $\mathscr{B}^{0,0}(X)$, whose
symbol, modulo lower order terms, coincides with $u_1\overline{u_2}$. If the Maslov bundle is trivial, or the dimension of $X$ and $Y$ is at least three, so that the cospheres are simply connected,  there is a homotopy deforming $u_1\overline {u_2}$ to 1 and thus 
$\mathcal{U}^\chi_{u_1}(\mathcal{U}^\chi_{u_2})^*$ to the identity. Then,
\[
	0=\mathrm{ind}[\mathcal{U}^\chi_{u_1}(\mathcal{U}^\chi_{u_2})^*]
	=\mathrm{ind}\mathcal{U}^\chi_{u_1}+\mathrm{ind}(\mathcal{U}^\chi_{u_2})^*
	=\mathrm{ind}\mathcal{U}^\chi_{u_1}-\mathrm{ind}\mathcal{U}^\chi_{u_2},
\]
as claimed. In this case, we can actually define
\[
	\mathrm{ind}\chi:=\mathrm{ind}(\chi,u).
\]
%\input{Appendix}
% !TEX root =FIOBdM.tex
%%%%%%%%%%%%%%%%%APPENDIX
\section{Appendix: Operator-Valued Symbols}
\label{sec:append}
{\def\skp#1{\langle#1\rangle}
%\subsection{Sobolev spaces}
We denote by $H^s(\R^n)$, $s\in\R$, the usual Sobolev space on $\R^n$ and by $H^{s_1,s_2}(\R^n)$ 
the weighted space $\skp{x}^{-s_2}H^{s_1}(\R^n)$. We often write $\mathbf s =(s_1,s_2)$. 
Moreover, we let $H^{\mathbf s}(\R^n_+) =\{u|_{\R^n_+}: u\in H^{\mathbf s}(\R^n)\}$ and 
$H_0^{\mathbf s}(\overline\R^n_+) =
\{u\in H^{\mathbf s}(\R^n):\supp u \subseteq \overline \R^n_+\}$.

%\subsection{Operator-valued symbols}
Next we recall a few facts on operator-valued symbols. For details see \cite{SC01} or 
\cite{SC91}. Let $E$ and $F$ be Banach spaces with strongly continuous group actions
$\kappa^E$ and $\kappa^F$ of $\R_+$, that is, $\kappa^E:\mathbb R_+\to \mathscr L(E)$ 
is strongly continuous and 
$\kappa^E(\lambda_1\lambda_2) = \kappa^E(\lambda_1)\kappa^E(\lambda_2)$
for $\lambda_1, \lambda_2>0$. 
We will write  $\kappa^E_\lambda$ instead of $\kappa^E(\lambda)$ for $\lambda>0$.
The corresponding notation is used for $\kappa^F$. 

On the spaces $H^{\mathbf s}(\R^n), H^{\mathbf s}(\R^n_+)$ and $H^{\mathbf s}_0(\overline \R^n_+)$, $\mathbf s\in \R^2$, we always use the group action induced by the 
unitary action on $L^2(\R^n)$ given by
$$(\kappa_\lambda u)(x) =\lambda^{-n/2} u(\lambda x), \quad x\in\R^n.$$

\begin{Def}\label{opvaluedsymbols}{
A smooth family $a(y,\eta)$, $y,\eta\in \R^q$, of operators in $\mathscr L(E,F)$ is called an operator-valued symbol of order $m\in \R$, if for all multi-indices $\alpha, \beta$,
$$\|\kappa^F_{\skp{\eta}^{-1}}D^\alpha_\eta D^\beta_y a(y,\eta)\kappa^F_{\skp{\eta}}\|_{\mathscr L(E,F)}
=O(\skp\eta^{m-|\alpha|}).$$
We denote the space of all these symbols by $S^m(\mathbb{R}^q,\mathbb{R}^q;E,F)$.

The symbol $a$ is called homogeneous of degree $m$, provided that
\begin{eqnarray*}\label{homogeneous}
\kappa^F_{\lambda^{-1}}a(y,\lambda\eta) \kappa^E_\lambda =\lambda^m a(y,\eta),
\quad \lambda>0, \eta\not=0,
\end{eqnarray*}
and classical, if it has an asymptotic expansion into homogeneous terms.

%We call an element $a$ of $S^m(\mathbb{R}^q\times\mathbb{R}^q;E,F)$ classical, if it has an asymptotic expansion
%$a\sim\sum_{j=0}^\infty a_{m-j}$ into symbols $a_{m-j}$ of order $m-j$ which are
%homogeneous in $\eta$ for $|\eta|\ge 1$, i.e.\
%\eqref{homogeneous} holds for $\lambda,|\eta|\ge1$. We then write $S^m_{cl}(\ldots)$ instead of
%$S^m(\ldots)$. 
%
}\end{Def}

This concept includes the usual pseudodifferential operators, by choosing
$E=F=\C$ with the trivial group action.

The definition extends to more general spaces $E$ and $F$. 
Noting that 
$\mathscr S(\R_+) = \proj_{\mathbf s\in \R^2} H^{\mathbf s}(\R_+)$, and 
$\mathscr S'(\R_+) = \ind _{\mathbf s\in\R^2} H_0^{\mathbf s}(\overline\R_+)$
we let
\begin{eqnarray*}
S^m(\mathbb{R}^q,\mathbb{R}^q;\C,\mathscr S(\R_+))
&=& \proj_{\mathbf s\in\R^2} S^m(\mathbb{R}^q,\mathbb{R}^q;\C,H^{\mathbf s}(\R_+))\\
S^m(\mathbb{R}^q,\mathbb{R}^q;\mathscr S'(\R_+),\C)
&=& \proj_{\mathbf s\in\R^2} S^m(\mathbb{R}^q,\mathbb{R}^q;H^{\mathbf s}_0(\overline\R_+),\C)\\
S^m(\mathbb{R}^q,\mathbb{R}^q;\mathscr S'(\R_+),\mathscr S(\R_+))
&=& \proj_{\mathbf s\in\R^2} S^m(\mathbb{R}^q,\mathbb{R}^q;H^{\mathbf{-s}}_0(\overline\R_+),H^{\mathbf s}(\R_+))\\
S^m(\mathbb{R}^q,\mathbb{R}^q;\mathscr S(\R_+),\mathscr S(\R_+))
&=& \ind_{\mathbf t\in\R^2}\proj_{\mathbf s\in\R^2} 
S^m(\mathbb{R}^q,\mathbb{R}^q;H^{\mathbf{-t}}_0(\overline\R_+),H^{\mathbf s}(\R_+)).
\end{eqnarray*}

\begin{Lem}
Let $a\in S^m(\R^n\times \R^n)$. For fixed $(x',\xi')\in \R^{n-1}\times \R^{n-1}$ 
$$(x_n,\xi_n)\mapsto  a(x',x_n,\xi',\xi_n)$$
defines a symbol in $ S^m(\R\times \R)$. For $u\in \mathscr S(\R)$ we let 
$$\left(\op_n(a)(x',\xi')\right)u(x_n)  =\int e^{ix_n\xi_n}a(x',x_n,\xi',\xi_n)\hat u(\xi_n)\dbar\xi_n. 
$$
Then $\op_n(a)$ is an operator-valued symbol in 
$S^m(\R^{n-1}, \R^{n-1},H^{s_1,s_2}(\R_+), H^{s_1-m,s_2}(\R_+))$ 
for all $s_1,s_2\in \R$. 
\end{Lem}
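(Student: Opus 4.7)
The plan is to reduce the claim to the continuity of ordinary pseudodifferential operators on $\R$ by explicitly computing the conjugation by the group action, in the spirit of Lemma \ref{Lem:induc} and Theorem \ref{th:conts}. Since $\partial_{\xi'}^\alpha\partial_{x'}^\beta a\in S^{m-|\alpha|}(\R^n\times\R^n)$ and $D^\alpha_{\xi'}D^\beta_{x'}\op_n(a)=\op_n(\partial_{\xi'}^\alpha\partial_{x'}^\beta a)$, it is enough to prove, for every $m$ and every $a\in S^m(\R^n\times\R^n)$, the uniform norm bound
\[
\bigl\|\kappa_{\csip}^{-1}\op_n(a)(x',\xi')\kappa_{\csip}\bigr\|_{\mathscr{L}(H^{s_1,s_2}(\R_+),\,H^{s_1-m,s_2}(\R_+))}\le C\,\csip^{m}.
\]

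I would then compute the conjugation explicitly. Using $\kappa_\lambda u(y_n)=\lambda^{1/2}u(\lambda y_n)$ and hence $\widehat{\kappa_\lambda u}(\xi_n)=\lambda^{-1/2}\hat u(\xi_n/\lambda)$ — substituting $\theta_n=\xi_n/\csip$ in the defining oscillatory integral and then rescaling $x_n\mapsto x_n/\csip$ through $\kappa_{\csip}^{-1}$ — one obtains
\[
\kappa_{\csip}^{-1}\op_n(a)(x',\xi')\kappa_{\csip}u(x_n)
=\int e^{ix_n\theta_n}\,\tilde a_{\xi'}(x_n,\theta_n)\,\hat u(\theta_n)\,\dbar\theta_n,
\]
with $\tilde a_{\xi'}(x_n,\theta_n):=a(x',x_n/\csip,\xi',\csip\theta_n)$. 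The conjugated operator is thus a standard left-quantized pseudodifferential operator on $\R$ with amplitude $\tilde a_{\xi'}$.

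The final step rests on the observation recorded just after Definition \ref{def:BS}: the hypothesis $a\in S^m(\R^n\times\R^n)$, together with the identity $\giap{(\xi',\csip\theta_n)}=\csip\,\giap{\theta_n}$, shows that $\tilde a_{\xi'}$ belongs to $S^m(\R\times\R)$ with all seminorms uniformly bounded by $C\,\csip^{m}$; in the language of the appendix, $\tilde a_{\xi'}\in BS^m(\R^{n-1},\R^{n-1};S^m(\R))$. The classical continuity of pseudodifferential operators with such amplitudes on the Sobolev scale, $\op(\tilde a_{\xi'})\colon H^{s_1,s_2}(\R)\to H^{s_1-m,s_2}(\R)$ with operator norm controlled by finitely many symbol seminorms, then yields the required bound. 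Passing from $\R$ to $\R_+$ is handled by continuity of $e^+$ and $r^+$ in this scale. The main subtlety I anticipate lies in the weighted (second) index $s_2$ rather than in the regularity index $s_1$: controlling it uses that $x_n$-derivatives of $\tilde a_{\xi'}$ only improve the $\csip$-bound, so that conjugation by $\giap{x_n}^{s_2}$ stays within the same symbol class with the same uniform bound. This is standard in SG-type calculus and poses no real obstacle.
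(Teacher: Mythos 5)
Your argument is correct and coincides with the paper's own (very brief) proof: the paper likewise observes that $\kappa_{\langle\xi'\rangle}^{-1}\op_n(a)(x',\xi')\kappa_{\langle\xi'\rangle}$ is the pseudodifferential operator on $\R$ with symbol $a(x',x_n/\langle\xi'\rangle,\xi',\xi_n\langle\xi'\rangle)$ and then invokes the usual symbol estimates, i.e.\ exactly the $BS^m$-type bound and Sobolev continuity you spell out. Your reduction to the $\alpha=\beta=0$ case and the remarks on the weight index $s_2$ are just the details the paper leaves implicit.
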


%\begin{proof} 
As
$\kappa_{\skp{\xi'}}^{-1} \op_n(a) \kappa_{\skp{\xi'}}$ has the symbol 
$a(x',x_n/\skp{\xi'},\xi',\xi_n\skp{\xi'})$, the assertion follows from the usual symbol estimates. 
The same consideration shows

\begin{Lem}
For $s_1,s_2\in\R$ multiplication by $x_n$ defines an operator-valued symbol in 
$S^{-1}(\R^{n-1},\R^{n-1},H^{s_1,s_2}(\R), H^{s_1,s_2+1}(\R))$. \end{Lem}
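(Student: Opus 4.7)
\smallskip

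\noindent\textbf{Proof plan.} The first observation is that the operator in question, namely $a(y,\eta) := M_{x_n}$ (multiplication by $x_n$ acting on functions on $\R$), is completely independent of the cotangent variables $(y,\eta)\in\R^{n-1}\times\R^{n-1}$. Consequently $D^\alpha_\eta D^\beta_y a \equiv 0$ whenever $|\alpha|+|\beta|>0$, and the only estimate from Definition \ref{opvaluedsymbols} that remains to verify is the case $\alpha=\beta=0$, namely
\[
\|\kappa^{-1}_{\langle\eta\rangle}\,M_{x_n}\,\kappa_{\langle\eta\rangle}\|_{\mathscr L(H^{s_1,s_2}(\R),\,H^{s_1,s_2+1}(\R))} = O(\langle\eta\rangle^{-1}).
\]

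The next step is to carry out the conjugation by the group action, in the same spirit as the parenthetical computation that proves the preceding lemma. Using $(\kappa_\lambda u)(x_n) = \lambda^{1/2}u(\lambda x_n)$, a direct change of variable gives
\[
\bigl(\kappa^{-1}_\lambda \circ M_{x_n} \circ \kappa_\lambda u\bigr)(x_n) = \lambda^{-1}\, x_n\, u(x_n),
\]
so that $\kappa^{-1}_\lambda\circ M_{x_n}\circ\kappa_\lambda = \lambda^{-1}M_{x_n}$. Setting $\lambda=\langle\eta\rangle$, the desired estimate reduces to the $\eta$-independent claim that $M_{x_n}$ is bounded as an operator between the weighted Sobolev spaces in question, with a norm that I simply need to control by a constant.

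Finally, I verify this mapping property. Decompose $x_n = m(x_n)\cdot\langle x_n\rangle$ with $m(x_n):=x_n/\langle x_n\rangle$. The function $m$ is smooth on $\R$ and bounded together with all its derivatives, hence belongs to $S^0(\R)$, so multiplication by $m$ is continuous on $H^{s_1}(\R)$ for every $s_1\in\R$. On the other hand, multiplication by $\langle x_n\rangle$ shifts the weight index by one between the scales $H^{s_1,s_2}(\R) = \langle x_n\rangle^{-s_2}H^{s_1}(\R)$ by definition. Combining the two yields the continuity of $M_{x_n}$ between the required weighted Sobolev spaces, with an operator norm depending only on $s_1,s_2$. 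Inserting this back into the computation of the previous paragraph gives precisely the $\langle\eta\rangle^{-1}$-bound, establishing the symbol estimate.

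There is no serious obstacle here: the statement is essentially the operator-valued reformulation of the trivial fact that $x_n$ is a first-order multiplier on the $\langle x_n\rangle$-scale, together with the standard rescaling identity for the group action $\kappa_\lambda$. The only point worth stressing is the reduction to the single estimate at $\alpha=\beta=0$, made possible by the constancy of $a$ in $(y,\eta)$.
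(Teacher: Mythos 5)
Your argument is exactly the paper's: the text dismisses this lemma with ``the same consideration shows'', referring to the conjugation computation for the preceding lemma, and that consideration is precisely your reduction to $\alpha=\beta=0$ (the symbol is constant in $(y,\eta)$) together with the rescaling identity $\kappa_{\langle\eta\rangle}^{-1}M_{x_n}\kappa_{\langle\eta\rangle}=\langle\eta\rangle^{-1}M_{x_n}$, which produces the order $-1$.

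One caveat concerns your final step. With the convention fixed in the Appendix, $H^{s_1,s_2}(\R)=\langle x\rangle^{-s_2}H^{s_1}(\R)$, so larger $s_2$ means more decay, and multiplication by $x_n=\frac{x_n}{\langle x_n\rangle}\cdot\langle x_n\rangle$ is bounded from $H^{s_1,s_2}(\R)$ to $H^{s_1,s_2-1}(\R)$, not to $H^{s_1,s_2+1}(\R)$: one cannot gain a weight order by multiplying with a function of linear growth. Your sentence ``multiplication by $\langle x_n\rangle$ shifts the weight index by one'' is true, but you never check the direction of the shift, and read literally against the stated convention the target $H^{s_1,s_2+1}(\R)$ claimed in the lemma is unattainable. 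The mismatch sits in the statement itself (the exponent should be $s_2-1$, or equivalently the sign convention for the weight should be reversed), and it is immaterial for the way the lemma is used in the body of the paper, e.g.\ in Theorem \ref{th:simbprin}, where only the order $-1$ on the $\Si(\R_+)$-scale matters; still, a complete proof should either correct the index or note the discrepancy rather than conclude boundedness ``between the required weighted Sobolev spaces'' without verifying which space the image actually lands in.
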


%\begin{proof}
%This follows from 
%the fact that $\kappa_{\skp{\xi'}}^{-1} x_n \kappa_{\skp{\xi'}}$ has the symbol $x_n/\skp{\xi'}$. 
%\end{proof}

With  $a\in S^m(\R^q,\R^q;E, F)$ we associate the pseudodifferential operator $\op(a)$,
defined on the space $\mathscr S(\R^q,E)$ of rapidly decreasing $E$-valued functions by 
$$\op(a) u(y) =\int e^{iy\eta}a(y,\eta)\hat u(\eta)\, \dbar\eta.$$
It maps $\mathscr S(\R^q,E)$ continuously to $\mathscr S(\R^q,F)$. Here $\hat u$ is the vector-valued Fourier transform
$$\hat u(\eta) = \int e^{iy\eta} u(y) \, \dbar y.$$

\begin{Thm}Let $a_1\in S^{m_1}(\R^q,\R^q;F,G) $ and $a_2\in S^{m_2}(\R^q,\R^q;E,F)$
for Banach spaces $E$, $F$ and $G$. 
Then $\op(a_1)\op(a_2)=\op(a)$ for an operator-valued symbol $a\in S^{m_1+m_2}(\R^q,\R^q;E,G)$ with the asymptotic expansion 
$$a(y,\eta) \sim \sum_{\alpha}\frac1{\alpha!} \partial_\eta^\alpha a_1(y,\eta) D^\alpha_y a_2(y,\eta).$$ 
\end{Thm}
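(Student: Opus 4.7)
The plan is to follow the classical Kohn--Nirenberg scheme for symbols of compositions, adapted to the operator-valued setting by inserting the group actions at every step. First, I would write the composition explicitly: for $u\in\mathscr S(\R^q,E)$,
\[
\op(a_1)\op(a_2)u(y)
=\iiiint e^{i(y-z)\eta+i(z-w)\xi}\,a_1(y,\eta)\,a_2(z,\xi)\,u(w)\,dw\,\dbar\xi\,dz\,\dbar\eta,
\]
and after the substitutions $z\mapsto y+z$, $\eta\mapsto \xi+\eta$ identify the candidate symbol as the oscillatory integral
\[
a(y,\xi)=\mathrm{Os}\text{-}\!\iint e^{-iz\eta}\,a_1(y,\xi+\eta)\,a_2(y+z,\xi)\,dz\,\dbar\eta,
\]
where the product $a_1(\cdot)a_2(\cdot)$ is the composition $G\leftarrow F\leftarrow E$. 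The first technical step is to give meaning to this oscillatory integral in the Banach space $\mathscr L(E,G)$. I would do this by inserting a cutoff $\chi(\varepsilon z,\varepsilon\eta)$ with $\chi(0,0)=1$, integrating by parts using $(1-\Delta_\eta)^N e^{-iz\eta}=\skp{z}^{2N}e^{-iz\eta}$ and $(1-\Delta_z)^Ne^{-iz\eta}=\skp{\eta}^{2N}e^{-iz\eta}$, and sending $\varepsilon\to 0$. The symbol estimates for $a_1,a_2$, together with the multiplicativity $\|PQ\|_{\mathscr L(E,G)}\le\|P\|_{\mathscr L(F,G)}\|Q\|_{\mathscr L(E,F)}$, ensure that after enough integrations by parts the integrand is absolutely integrable in $\mathscr L(E,G)$ uniformly in $\varepsilon$.

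Next I would produce the asymptotic expansion by Taylor expanding $a_1(y,\xi+\eta)$ in $\eta$ at $0$:
\[
a_1(y,\xi+\eta)=\sum_{|\alpha|<N}\frac{\eta^\alpha}{\alpha!}\partial^\alpha_\xi a_1(y,\xi)
+\sum_{|\alpha|=N}\frac{N\eta^\alpha}{\alpha!}\int_0^1(1-t)^{N-1}\partial^\alpha_\xi a_1(y,\xi+t\eta)\,dt.
\]
Inserting this, using $\eta^\alpha e^{-iz\eta}=(-D_z)^\alpha e^{-iz\eta}$ and integrating by parts in $z$, the finite sum collapses to
\[
\sum_{|\alpha|<N}\frac{1}{\alpha!}\partial^\alpha_\xi a_1(y,\xi)\,D^\alpha_y a_2(y,\xi),
\]
via the Fourier inversion identity $\iint e^{-iz\eta}f(y+z)\,dz\,\dbar\eta=f(y)$, which on the operator level is justified once the oscillatory integral is under control.

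The verification that $a\in S^{m_1+m_2}(\R^q,\R^q;E,G)$ and that the remainder
\[
r_N(y,\xi)=a(y,\xi)-\sum_{|\alpha|<N}\tfrac{1}{\alpha!}\partial^\alpha_\xi a_1(y,\xi)\,D^\alpha_y a_2(y,\xi)
\]
lies in $S^{m_1+m_2-N}(\R^q,\R^q;E,G)$ is where the main work lies. The key identity is the sandwich
\[
\kappa^G_{\skp\xi^{-1}}\bigl(\partial^\alpha_\xi a_1(y,\xi)\,D^\alpha_y a_2(y,\xi)\bigr)\kappa^E_{\skp\xi}
=\bigl(\kappa^G_{\skp\xi^{-1}}\partial^\alpha_\xi a_1(y,\xi)\kappa^F_{\skp\xi}\bigr)\bigl(\kappa^F_{\skp\xi^{-1}}D^\alpha_y a_2(y,\xi)\kappa^E_{\skp\xi}\bigr),
\]
which together with the symbol bounds on $a_1$ and $a_2$ yields the estimate $O(\skp\xi^{m_1-|\alpha|+m_2})$, and the analogous factorisation after further derivatives in $(y,\xi)$. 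For the remainder one applies the same sandwich inside the integral representation of $r_N$, substitutes $\eta\mapsto\skp\xi\eta$, $z\mapsto z/\skp\xi$, and uses that for $t\in[0,1]$ the conjugated quantity $\kappa^G_{\skp\xi^{-1}}\partial^{\alpha}_\xi a_1(y,\xi+t\skp\xi\eta)\kappa^G_{\skp\xi}$ satisfies symbol estimates in $\eta$ uniformly in $t$, thanks again to the hypotheses on $a_1$; this is the Banach-space analogue of the standard scalar estimate but must be checked carefully because the group actions do not commute with multiplication by $\eta^\alpha$.

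The main obstacle, and the place where operator-valuedness really enters, is precisely the last step: controlling the $\kappa$-conjugated remainder in $\mathscr L(E,G)$ uniformly in the auxiliary integration variables $z,\eta,t$. Once this is done, the asymptotic summation $a\sim\sum_\alpha \tfrac{1}{\alpha!}\partial^\alpha_\eta a_1\,D^\alpha_y a_2$ in $S^{m_1+m_2}(\R^q,\R^q;E,G)$ follows by Borel summation in this symbol class (which goes through verbatim with the group actions, since seminorms are defined by norms in $\mathscr L(E,G)$), and the identity $\op(a)=\op(a_1)\op(a_2)$ on $\mathscr S(\R^q,E)$ follows from Fubini once the oscillatory integral has been regularised.
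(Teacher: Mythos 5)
Your proposal is correct and is essentially the proof the paper has in mind: the paper gives no details, stating only that ``the proof is as in the standard case,'' and your argument is precisely that standard Kohn--Nirenberg composition proof with the operator-valued adaptations (in particular the $\kappa$-sandwich identity and the uniform control of the conjugated remainder, using that $\|\kappa_\lambda\|$ grows at most polynomially in $\max(\lambda,\lambda^{-1})$) made explicit. One minor slip: in the conjugated remainder term the right-hand group action should be $\kappa^F_{\langle\xi\rangle}$, not $\kappa^G_{\langle\xi\rangle}$.
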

The proof is as in the standard case. 
 
\begin{Def}\label{wedge}
Let $s\in \R$ and $E$ be a Banach space.  
The wedge Sobolev  space $\mathscr W^s(\R^q,E)$ is the completion of $C_c^\infty(\R^q,E)$
with respect to the norm
$$
\|u\|^2_{\W^s(\R^q,E)} 
= \int \skp\eta^{2s}\|\kappa_{\skp\eta}^{-1}\hat u(\eta)\|^2_E\, \dbar\eta . 
$$
\end{Def}

The definition extends to projective limits as above. 
A direct computation shows the first identity in the following lemma; the second then is immediate.
\begin{Lem} 
 $\mathscr W^s(\R^{n-1}, H^s(\R))= H^s(\R^n)$ and 
$\mathscr W^s(\R^{n-1}, H^s(\R_+))= H^s(\R^n_+)$, $s\in \R$.
 \end{Lem}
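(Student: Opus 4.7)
The plan is to verify the first identity by an explicit Plancherel-plus-change-of-variables computation, and then derive the second identity from it using the quotient description of the half-line Sobolev spaces.

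For the first identity, I write $\hat{u}(\eta', x_n)$ for the partial Fourier transform of $u\in C_c^\infty(\R^n)$ in the tangential variable $x'$, and $\tilde{u}(\eta',\xi_n)$ for the full Fourier transform. The crucial point is how the group action interacts with $\mathscr{F}_{x_n}$: since $(\kappa_\lambda^{-1}v)(x_n) = \lambda^{-1/2}v(x_n/\lambda)$, a change of variables in the Fourier integral gives
\[
	\mathscr{F}_{x_n}(\kappa_{\langle\eta'\rangle}^{-1}\hat{u}(\eta',\cdot))(\xi_n)
	= \langle\eta'\rangle^{1/2}\tilde{u}(\eta',\langle\eta'\rangle\xi_n).
\]
Plugging this into the $H^s(\R)$-norm and substituting $\tau_n=\langle\eta'\rangle\xi_n$ yields
\[
	\|\kappa_{\langle\eta'\rangle}^{-1}\hat{u}(\eta',\cdot)\|^2_{H^s(\R)}
	= \int \langle\eta'\rangle^{-2s}(\langle\eta'\rangle^2+\tau_n^2)^s|\tilde{u}(\eta',\tau_n)|^2\,d\tau_n.
\]
Multiplying by $\langle\eta'\rangle^{2s}$ and integrating in $\eta'$, Fubini and the identity $1+|\eta'|^2+\tau_n^2 = \langle\eta'\rangle^2+\tau_n^2$ give
\[
	\|u\|^2_{\mathscr{W}^s(\R^{n-1},H^s(\R))}
	= \int\!\!\int (1+|\eta'|^2+\tau_n^2)^s|\tilde{u}(\eta',\tau_n)|^2\,d\eta'\,d\tau_n
	= \|u\|^2_{H^s(\R^n)},
\]
which is precisely the claim. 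The factor $\langle\eta'\rangle^{2s}$ in the definition of the wedge norm was chosen exactly to produce this cancellation.

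For the second identity, I use that both $H^s(\R^n_+)$ and $H^s(\R_+)$ are defined as quotient spaces (equivalently, restriction spaces) of their full-line counterparts. Specifically, $H^s(\R_+) = H^s(\R)/N$ with $N = \{v\in H^s(\R): \supp v\subseteq \overline{\R_-}\}$, and similarly for $H^s(\R^n_+)$. Because the dilation $\kappa_\lambda$ in the normal variable maps $N$ to itself for every $\lambda>0$, the group action descends to $H^s(\R_+)$ and is compatible with restriction $r^+$. Consequently, $\mathscr{W}^s(\R^{n-1},H^s(\R_+))$ is the quotient of $\mathscr{W}^s(\R^{n-1},H^s(\R))$ by $\mathscr{W}^s(\R^{n-1},N)$. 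Under the isomorphism from the first identity, this latter subspace is exactly the set of $u\in H^s(\R^n)$ supported in $\overline{\R^n_-}$, whose quotient is $H^s(\R^n_+)$. Hence the second identity follows at once.

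The computation is essentially bookkeeping, and no genuine obstacle arises; the only points requiring a little care are the commutation of $\mathscr{F}_{x_n}$ with $\kappa_\lambda^{-1}$ (which produces the correct Jacobian $\langle\eta'\rangle^{1/2}$ to neutralize the weight), and the verification that the quotient construction on the half line is compatible with the wedge construction, which is immediate since $r^+$ commutes with $\kappa_\lambda$ and with the partial Fourier transform in $x'$.
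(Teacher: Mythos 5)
Your proof is correct and takes essentially the same route as the paper, which merely records that the first identity follows by a direct computation (precisely the Plancherel/dilation calculation you carry out, with the factor $\langle\eta'\rangle^{1/2}$ from conjugating the normal Fourier transform by $\kappa_{\langle\eta'\rangle}^{-1}$ cancelling the weight) and that the second identity is then immediate from the restriction/quotient description of $H^s(\R_+)$ and $H^s(\R^n_+)$. The only step you assert rather than prove -- that the wedge space of the quotient is the quotient of the wedge spaces -- is standard and easily justified here: since $r^+$ commutes with $\kappa_\lambda$ and with the tangential Fourier transform, one gets one inequality fiberwise, and for the other one lifts $\hat f(\eta')$ by $\kappa_{\langle\eta'\rangle} T \kappa_{\langle\eta'\rangle}^{-1}$ with $T$ a fixed bounded (e.g.\ minimal-norm) extension operator $H^s(\R_+)\to H^s(\R)$, which is measurable in $\eta'$ and preserves the wedge norm.
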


A proof of the following statement can be found in \cite{Seiler99}.

\begin{Thm}Let $a\in S^m(\R^q,\R^q; E, F)$ for Hilbert spaces $E$ and $F$. Then 
$$\op(a) : \mathscr W^s(\R^q,E) \to \mathscr W^{s-m}(\R^q,F)$$
is bounded, $s\in \R$. 
\end{Thm}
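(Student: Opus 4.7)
The plan is to reduce the theorem to the $L^2$-boundedness of a pseudodifferential operator with operator-valued symbol of order zero, and to prove that statement by a Schur-type kernel argument.

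First, for any Hilbert space $E$ carrying the strongly continuous group action $\kappa^E$ and any $\sigma\in\R$, introduce the Fourier multiplier
\[
\Lambda^\sigma_E u \;=\; \mathscr F^{-1}\bigl[\langle\eta\rangle^\sigma\,(\kappa^E_{\langle\eta\rangle})^{-1}\hat u(\eta)\bigr].
\]
By the very definition of the wedge norm, $\Lambda^\sigma_E$ extends to an isometric isomorphism $\mathscr W^\sigma(\R^q,E)\to L^2(\R^q,E)$, and a direct check against Definition \ref{opvaluedsymbols} shows that its symbol belongs to $S^\sigma(\R^q,\R^q;E,E)$: the $\kappa$-conjugated derivative bounds follow from the one-parameter group identity $\kappa^E_\lambda\kappa^E_\mu=\kappa^E_{\lambda\mu}$ and $\partial_\eta\kappa^E_{\langle\eta\rangle}=O(\langle\eta\rangle^{-1})$ in the strong sense. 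The theorem thus reduces to proving that
\[
B\;:=\;\Lambda^{s-m}_F\circ\op(a)\circ(\Lambda^s_E)^{-1}\colon L^2(\R^q,E)\longrightarrow L^2(\R^q,F)
\]
is bounded. By the composition theorem for the operator-valued calculus stated just before the statement, $B=\op(b)$ for some $b\in S^0(\R^q,\R^q;E,F)$, so it suffices to prove $L^2$-continuity for symbols of order zero.

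For this, write the operator-valued Schwartz kernel
\[
K(y,y')\;=\;\int e^{i(y-y')\eta}\,b(y,\eta)\,\dbar\eta,
\]
interpreted as an oscillatory integral with values in $\mathscr L(E,F)$. Using the order-zero symbol estimate
$\|(\kappa^F_{\langle\eta\rangle})^{-1}\partial_\eta^\alpha\partial_y^\beta b(y,\eta)\,\kappa^E_{\langle\eta\rangle}\|_{\mathscr L(E,F)}\lesssim\langle\eta\rangle^{-|\alpha|}$
together with the standard polynomial bound $\|\kappa^E_\lambda\|_{\mathscr L(E)}+\|(\kappa^E_\lambda)^{-1}\|_{\mathscr L(E)}\lesssim\max(\lambda,\lambda^{-1})^{M}$, valid for any strongly continuous one-parameter group on a Banach space, repeated integration by parts in $\eta$ (against $(1-\Delta_\eta)^N$ with compensating $\langle y-y'\rangle^{-2N}$ factors) yields
\[
\|K(y,y')\|_{\mathscr L(E,F)}\;\leq\;C_N\,\langle y-y'\rangle^{-N},\qquad N\in\N.
\]
Schur's lemma in its Hilbert-space-valued integral-operator form then delivers the desired $L^2$-boundedness of $\op(b)$.

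The main technical obstacle is the reduction step: verifying that the multipliers $\Lambda^\sigma_E$ genuinely belong to the operator-valued calculus, and that their composition with $\op(a)$ produces a symbol of order zero. This requires careful bookkeeping of the $\kappa$-conjugations through $(y,\eta)$-derivatives; the crucial input is again the polynomial bound on $\lambda\mapsto\|\kappa^E_\lambda\|$, which controls the loss incurred when passing between $\kappa$-conjugated symbol estimates and estimates on the raw derivatives of $b(y,\eta)$. Once this is in place, the kernel estimate in the second step is essentially the classical argument for the Calder\'on--Vaillancourt theorem adapted to Hilbert-space-valued amplitudes.
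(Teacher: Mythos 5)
The paper itself does not prove this theorem (it is quoted from \cite{Seiler99}), so I compare your argument with the standard proof. Your reduction step has a genuine gap: the multipliers $\Lambda^\sigma_E$ are indeed isometries $\W^\sigma(\R^q,E)\to L^2(\R^q,E)$, but they are \emph{not} pseudodifferential operators of the operator-valued calculus, so the composition theorem cannot be invoked to produce $b\in S^0(\R^q,\R^q;E,F)$ with $B=\op(b)$. The map $\eta\mapsto\kappa^E_{\langle\eta\rangle}$ is in general only strongly continuous -- for the dilation group on $L^2(\R_+)$, which is the case relevant in this paper, it is not even norm continuous -- so $\eta\mapsto\langle\eta\rangle^\sigma\bigl(\kappa^E_{\langle\eta\rangle}\bigr)^{-1}$ is not a smooth $\mathscr L(E)$-valued family, and the assertion ``$\partial_\eta\kappa^E_{\langle\eta\rangle}=O(\langle\eta\rangle^{-1})$ in the strong sense'' is not correct: the strong derivative is $(\partial_\eta\log\langle\eta\rangle)\,\kappa^E_{\langle\eta\rangle}A$ with $A$ the unbounded generator, it exists only on $D(A)$ and gives no $\mathscr L(E)$-bound. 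Moreover, if both copies of $E$ carry the action $\kappa^E$, already the zero-order conjugated estimate fails, because $\kappa^E_{\langle\eta\rangle^{-1}}\bigl(\langle\eta\rangle^\sigma\kappa^E_{\langle\eta\rangle^{-1}}\bigr)\kappa^E_{\langle\eta\rangle}=\langle\eta\rangle^\sigma\kappa^E_{\langle\eta\rangle^{-1}}$ has norm of size $\langle\eta\rangle^\sigma\|\kappa^E_{\langle\eta\rangle^{-1}}\|$, which in general grows by an extra power $\langle\eta\rangle^{M}$; if instead you put the trivial action on the $L^2$-side, the zero-order bound holds but the $\eta$-derivatives still do not exist in norm. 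Even if you form the conjugated amplitude $\langle\eta\rangle^{s-m}(\kappa^F_{\langle\eta\rangle})^{-1}a(y,\eta)\langle\eta\rangle^{-s}\kappa^E_{\langle\eta\rangle}$ directly, all you control are its raw bounds and those of its $y$-derivatives; it has no norm regularity in $\eta$ at all, so no Calder\'on--Vaillancourt type theorem applies to it.

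The second step is also unsound as written: for an order-zero symbol the Schwartz kernel is singular at the diagonal (the symbol is not integrable in $\eta$), so a bound $\|K(y,y')\|\le C_N\langle y-y'\rangle^{-N}$ cannot hold uniformly up to $y=y'$, and Schur's lemma applied to $K(y,y')$ does not yield $L^2$-boundedness -- this is precisely why the scalar Calder\'on--Vaillancourt theorem is not a corollary of Schur's lemma; in addition, converting conjugated into raw estimates costs factors $\|\kappa^E_{\langle\eta\rangle}\|\,\|(\kappa^F_{\langle\eta\rangle})^{-1}\|\lesssim\langle\eta\rangle^{M}$. The actual proof (cf.\ \cite{Seiler99}) keeps the group action inside the estimate and works on the Fourier side: for $y$-independent $a$ the statement follows at once from Plancherel and the definition of the $\W^s$-norms; in general one writes $\widehat{\op(a)u}(\eta)=\int\hat a(\eta-\eta',\eta')\hat u(\eta')\,\dbar\eta'$ with $\hat a$ the Fourier transform in $y$ (after reducing to suitable behaviour in $y$), inserts $\kappa_{\langle\eta'\rangle}\kappa_{\langle\eta'\rangle}^{-1}$ in front of $\hat u(\eta')$, and estimates
\[
\bigl\|\kappa^{-1}_{\langle\eta\rangle}\hat a(\eta-\eta',\eta')\,\kappa_{\langle\eta'\rangle}\bigr\|_{\mathscr L(E,F)}
\le C_N\,\langle\eta-\eta'\rangle^{-N+M}\,\langle\eta'\rangle^{m},
\]
using the symbol estimates (the $y$-derivatives give the decay in $\eta-\eta'$) together with the Peetre-type bound $\|\kappa^{-1}_{\langle\eta\rangle}\kappa_{\langle\eta'\rangle}\|=\|\kappa_{\langle\eta'\rangle/\langle\eta\rangle}\|\le C\langle\eta-\eta'\rangle^{M}$; a Schur estimate in the frequency variables, with the weights $\langle\eta\rangle^{s-m}$ and $\langle\eta'\rangle^{s}$, then gives $\|\op(a)u\|_{\W^{s-m}}\lesssim\|u\|_{\W^{s}}$. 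Note that this argument requires no $\eta$-regularity beyond the symbol estimates and, crucially, no order reductions inside the calculus -- which is exactly where your proposal breaks down.
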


As $\mathscr W^0(\R^{n-1},L^2(\R_+))=L^2(\R^n_+)$, we can also consider the formal
adjoint $\op(a)^*$ of $\op(a)$ with respect to the $L^2(\R^n)$ scalar product, 
when $a\in S^m(\R^{n-1},\R^{n-1}; E,F)$ 
and $E$, $F$ are in the scales $H^s(\R_+)$ or $H_0^s(\overline\R_+)$, $s\in \R$. 
It is easy to see that  $\op(a)^*=\op(a^{(*)})$  for an
$a^{(*)}\in S^m(\R^{n-1},\R^{n-1};F',E')$ with the asymptotic expansion
$$a^{(*)}(y,\eta) \sim\sum_{\alpha}\frac1{\alpha!}\partial^\alpha_\eta D_y^\alpha a(y,\eta)^*.$$ 
Here $a(y,\eta)^*\in \mathscr L(F',E')$ is the adjoint of $a(y,\eta)$. 

}
\def\cprime{$'$} \def\cprime{$'$} \def\cprime{$'$}


\begin{thebibliography}{10}

\bibitem{BCS14}
U.~Battisti, S.~Coriasco, and E.~Schrohe.
\newblock On a {C}lass of {F}ourier {I}ntegral {O}perators on {M}anifolds with
  {B}oundary.
\newblock arXiv:1406.0636.

\bibitem{BCS2}
U.~Battisti, S.~Coriasco, and E.~Schrohe.
\newblock On the {T}ransmission {P}roperty for {F}ourier {I}ntegral {O}perators
  of {B}outet de {M}onvel {T}ype.
\newblock In preparation.

\bibitem{BU71}
L.~Boutet~de Monvel.
\newblock Boundary problems for pseudo-differential operators.
\newblock {\em Acta Math.}, 126(1-2):11--51, 1971.

\bibitem{BMG81}
L.~Boutet~de Monvel and V.~Guillemin.
\newblock {\em The spectral theory of {T}oeplitz operators}, volume~99 of {\em
  Annals of Mathematics Studies}.
\newblock Princeton University Press, Princeton, NJ, 1981.

\bibitem{DAS01}
A.~Cannas~da Silva.
\newblock {\em Lectures on symplectic geometry}, volume 1764 of {\em Lecture
  Notes in Mathematics}.
\newblock Springer-Verlag, Berlin, 2001.

\bibitem{CH11}
H.~Christianson.
\newblock Classification of order preserving isomorphisms between algebras of
  semiclassical operators.
\newblock {\em Proc. Amer. Math. Soc.}, 139(2):499--510, 2011.

\bibitem{CO99b}
S.~Coriasco.
\newblock Fourier integral operators in SG classes II: Application to SG
  hyperbolic Cauchy problems.
\newblock {\em Ann. Univ. Ferrara, Sez. VII - Sc. Mat.}, 44:81--122, (1998),
  1999.

\bibitem{CO99}
S.~Coriasco.
\newblock Fourier integral operators in {SG} classes. {I}. {C}omposition
  theorems and action on {SG} {S}obolev spaces.
\newblock {\em Rend. Sem. Mat. Univ. Politec. Torino}, 57(4):249--302 (1999),
  2002.

\bibitem{DS76}
J.~J. Duistermaat and I.~M. Singer.
\newblock Order-preserving isomorphisms between algebras of pseudo-differential
  operators.
\newblock {\em Comm. Pure Appl. Math.}, 29(1):39--47, 1976.

\bibitem{EM98}
C.~Epstein and R.~Melrose.
\newblock Contact degree and the index of {F}ourier integral operators.
\newblock {\em Math. Res. Lett.}, 5(3):363--381, 1998.

\bibitem{EPI}
C.~L. Epstein.
\newblock Subelliptic {${\rm Spin}_{\mathbb C}$} {D}irac operators. {I}.
\newblock {\em Ann. of Math. (2)}, 166(1):183--214, 2007.

\bibitem{EPII}
C.~L. Epstein.
\newblock Subelliptic {${\rm Spin}_{\mathbb C}$} {D}irac operators. {II}.
  {B}asic estimates.
\newblock {\em Ann. of Math. (2)}, 166(3):723--777, 2007.

\bibitem{EPIII}
C.~L. Epstein.
\newblock Subelliptic {${\rm Spin}_{\mathbb C}$} {D}irac operators. {III}.
  {T}he {A}tiyah-{W}einstein conjecture.
\newblock {\em Ann. of Math. (2)}, 168(1):299--365, 2008.

\bibitem{GR87}
G.~Grubb.
\newblock Complex powers of pseudodifferential boundary value problems with the
  transmission property.
\newblock In {\em Pseudodifferential operators ({O}berwolfach, 1986)}, volume
  1256 of {\em Lecture Notes in Math.}, pages 169--191. Springer, Berlin, 1987.

\bibitem{GR96}
G.~Grubb.
\newblock {\em Functional Calculus for Boundary Value Problems}.
\newblock Birkah\"auser, Boston, 1996.
\newblock Reprint of the 1982 edition.

\bibitem{GH90}
G.~Grubb and L.~H{\"o}rmander.
\newblock The transmission property.
\newblock {\em Math. Scand.}, 67(2):273--289, 1990.

\bibitem{GU84}
V.~Guillemin.
\newblock Toeplitz operators in {$n$} dimensions.
\newblock {\em Integral Equations Operator Theory}, 7(2):145--205, 1984.

\bibitem{HP77}
A.~Hirschowitz and A.~Piriou.
\newblock La propri\'et\'e de transmission pour les distributions de {F}ourier;
  application aux lacunes.
\newblock In {\em S\'eminaire {G}oulaouic-{S}chwartz (1976/1977), \'{E}quations
  aux d\'eriv\'ees partielles et analyse fonctionnelle, {E}xp. {N}o. 14},
  page~19. Centre Math., \'Ecole Polytech., Palaiseau, 1977.

\bibitem{HO03}
L.~H{\"o}rmander.
\newblock {\em The analysis of linear partial differential operators {III}.
  Pseudo-differential operators}.
\newblock Classics in Mathematics. Springer, Berlin, 2007.
\newblock Reprint of the 1994 edition.

\bibitem{HO04}
L.~H{\"o}rmander.
\newblock {\em The analysis of linear partial differential operators {IV}.}
\newblock Classics in Mathematics. Springer, Berlin, 2007.
\newblock Reprint of the 1994 edition.

\bibitem{LSV94}
A.~Laptev, Y.~Safarov, and D.~Vassiliev.
\newblock On global representation of {L}agrangian distributions and solutions
  of hyperbolic equations.
\newblock {\em Comm. Pure Appl. Math.}, 47(11):1411--1456, 1994.

\bibitem{LNT01}
E.~Leichtnam, R.~Nest, and B.~Tsygan.
\newblock Local formula for the index of a {F}ourier integral operator.
\newblock {\em J. Differential Geom.}, 59(2):269--300, 2001.

\bibitem{MM12}
V.~Mathai and R.~B. Melrose.
\newblock Geometry of pseudodifferential algebras bundles and {F}ourier
  integral operators.
\newblock arXiv:1210.0990.

\bibitem{ME81}
R.~B. Melrose.
\newblock Transformation of boundary problems.
\newblock {\em Acta Math.}, 147(3-4):149--236, 1981.

\bibitem{NS06}
V.~E. Nazaikinskii and B.~Y. Sternin.
\newblock Surgery and the relative index in elliptic theory.
\newblock {\em Abstr. Appl. Anal.}, pages Art. ID 98081, 16, 2006.

\bibitem{NSS01}
V.~E. Naza{\u\i}kinski{\u\i}, B.~Y. Sternin, and B.-V. Shul{\cprime}tse.
\newblock The index of {F}ourier integral operators on manifolds with conical
  singularities.
\newblock {\em Izv. Ross. Akad. Nauk Ser. Mat.}, 65(2):127--154, 2001.

\bibitem{RS85}
S.~Rempel and B.-W. Schulze.
\newblock {\em Index theory of elliptic boundary problems}.
\newblock North Oxford Academic Publishing Co. Ltd., London, 1985.
\newblock Reprint of the 1982 edition.

\bibitem{S99}
E.~Schrohe.
\newblock Fr\'echet algebra techniques for boundary value problems on
  noncompact manifolds: {F}redholm criteria and functional calculus via
  spectral invariance.
\newblock {\em Math. Nachr.}, 199:145--185, 1999.

\bibitem{SC01}
E.~Schrohe.
\newblock A short introduction to {B}outet de {M}onvel's calculus.
\newblock In {\em Approaches to singular analysis ({B}erlin, 1999)}, volume 125
  of {\em Oper. Theory Adv. Appl.}, pages 85--116. Birkh\"auser, Basel, 2001.

\bibitem{SC91}
B.-W. Schulze.
\newblock {\em Pseudo-differential operators on manifolds with singularities},
  volume~24 of {\em Studies in Mathematics and its Applications}.
\newblock North-Holland Publishing Co., Amsterdam, 1991.

\bibitem{Seiler99}
J.~Seiler.
\newblock Continuity of edge and corner pseudodifferential operators.
\newblock {\em Math. Nachr.}, 205:163--182, 1999.

\bibitem{WE75}
A.~Weinstein.
\newblock Fourier integral operators, quantization, and the spectra of
  {R}iemannian manifolds.
\newblock In {\em G\'eom\'etrie symplectique et physique math\'ematique
  ({C}olloq. {I}nternat. {CNRS}, {N}o. 237, {A}ix-en-{P}rovence, 1974)}, pages
  289--298. \'Editions Centre Nat. Recherche Sci., Paris, 1975.
\newblock With questions by W. Klingenberg and K. Bleuler and replies by the
  author.

\bibitem{WE97}
A.~Weinstein.
\newblock Some questions about the index of quantized contact transformations.
\newblock {\em S\=urikaisekikenky\=usho K\=oky\=uroku}, (1014):1--14, 1997.
\newblock Geometric methods in asymptotic analysis (Japanese) (Kyoto, 1997).

\bibitem{ZE97}
S.~Zelditch.
\newblock Index and dynamics of quantized contact transformations.
\newblock {\em Ann. Inst. Fourier (Grenoble)}, 47(1):305--363, 1997.

\end{thebibliography}
\end{document}